\documentclass[9pt]{amsart}
\usepackage{amssymb}
\usepackage{hyperref}
\usepackage[all]{xy}
\usepackage{enumerate}
\usepackage{ marvosym }
\usepackage{ bbold }
\usepackage{ tensor }
\usepackage{url}

\usepackage{bbm}

\usepackage[dvipsnames]{xcolor}
\usepackage{graphicx}
\usepackage{extpfeil}

\DeclareMathOperator{\Gra}{{Gr}}

\DeclareMathOperator{\reg}{{reg}}

\usepackage{tikz-cd}

\newcommand{\bC}{{\mathbb C}}

\newcommand{\bE}{{\mathbb E}}

\newcommand{\bG}{{\mathbb G}}

\newcommand{\bL}{{\mathbb L}}

\newcommand{\bQ}{{\mathbb Q}}

\newcommand{\cE}{{\mathcal E}}
\newcommand{\cF}{{\mathcal F}}
\newcommand{\cG}{{\mathcal G}}
\newcommand{\cH}{{\mathcal H}}
\newcommand{\cI}{{\mathcal I}}

\newcommand{\cL}{{\mathcal L}}
\newcommand{\cM}{{\mathcal M}}
\newcommand{\cN}{{\mathcal N}}
\newcommand{\cO}{{\mathcal O}}
\newcommand{\cP}{{\mathcal P}}

\newcommand{\cY}{{\mathcal Y}}
\newcommand{\cZ}{{\mathcal Z}}

\newcommand{\Ran}{Ran_{X}}
\newcommand{\Ranp}{{\on{Ran}}}

\newcommand{\fs}{{\mathfrak s}}

\DeclareMathOperator*{\colim}{colim}

\newcommand{\nc}{\newcommand}
\nc{\renc}{\renewcommand}
\nc{\ssec}{\subsection}
\nc{\sssec}{\subsubsection}
\nc{\on}{\operatorname}

\nc\Gr{\on{Gr}}

\nc\Fl{\on{Fl}}

\newtheorem{thm}[subsubsection]{Theorem}

\newtheorem{conj}{Conjecture}
\DeclareMathOperator{\DGCat}{{DGCat}}
\DeclareMathOperator{\AGCat}{{AGCat}}

\DeclareMathOperator{\QCoh}{{QCoh}}

\DeclareMathOperator{\oblv}{{oblv}}

 \DeclareMathOperator{\Bun}{{Bun}}

\DeclareMathOperator{\irr}{{irr}} \DeclareMathOperator{\Eis}{{Eis}}

\DeclareMathOperator{\LocSys}{{LS}}

\DeclareMathOperator{\Sym}{{Sym}}

\newcommand{\limto}{{\displaystyle\lim_{\longrightarrow}}}
\newcommand{\rightlim}{\mathop{\limto}}

\newcommand{\leftlim}{\mathop{\displaystyle\lim_{\longleftarrow}}}
\newcommand{\limfromn}{\leftlim\limits_{\raise3pt\hbox{$n$}}}
\newcommand{\limton}{\rightlim\limits_{\raise3pt\hbox{$n$}}}

\newcommand{\rightlimit}[1]{\mathop{\lim\limits_{\longrightarrow}}\limits%
	_{\raise3pt\hbox{$\scriptstyle #1$}}}

\newcommand{\leftlimit}[1]{\mathop{\lim\limits_{\longleftarrow}}\limits%
	_{\raise3pt\hbox{$\scriptstyle #1$}}}

\DeclareMathOperator{\Id}{{Id}}

\DeclareMathOperator{\End}{{End}} 
\DeclareMathOperator{\Hom}{{Hom}}

 \DeclareMathOperator{\id}{{id}}
 \DeclareMathOperator{\Mmod}{{-mod}}

\DeclareMathOperator{\Nilp}{{Nilp}} \DeclareMathOperator{\op}{{op}}

\DeclareMathOperator{\Vect}{{Vect}}

\DeclareMathOperator{\GL}{{GL}}

\DeclareMathOperator{\Rep}{{Rep}}

\makeatletter

\newcommand{\Rmnum}[1]{\expandafter\@slowromancap\romannumeral #1@}
\makeatother

\newtheorem{pr}[subsubsection]{Proposition}
\newtheorem{lm}[subsubsection]{Lemma}

\newtheorem{cor}[subsubsection]{Corollary}

\newtheorem{cnstr}[subsubsection]{Construction}

\newtheorem{df}[subsubsection]{Definition}

\newtheorem{rem}[subsubsection]{Remark}

\newtheorem{ex}[subsubsection]{Example}
\newtheorem{ntn}[subsubsection]{Notation}

\numberwithin{equation}{section}

\newcommand{\Fun}{\operatorname{Fun}}

\DeclareMathOperator{\ad}{{ad}}

\DeclareMathOperator{\laxlim}{{laxlim}}

\DeclareMathOperator{\Alg}{{Alg}}
\DeclareMathOperator{\D}{{DMod}}

\DeclareMathOperator{\Conf}{{Conf}}

\DeclareMathOperator{\psid}{{Ps-Id}}
\DeclareMathOperator{\mir}{{Mir}}

\DeclareMathOperator{\cusp}{{cusp}}

\nc{\WFactAlg}{\on{WFactAlg}}
\nc{\FactAlg}{\on{FactAlg}}
\nc{\bFact}{\mathbf{Fact}}
\nc{\FactAlgCat}{\mathbf{FactAlgCat}}
\nc{\WFactAlgCat}{\mathbf{WFactAlgCat}}
\nc{\LWFactAlgCat}{\mathbf{LWFactAlgCat}}
\nc{\WFactModCat}{\on{-}\!\mathbf{WFactModCat}}
\nc{\LWFactModCat}{\on{-}\!\mathbf{LWFactModCat}}
\nc{\FactModCat}{\on{-}\!\mathbf{FactModCat}}
\nc{\WFactMod}{\on{-WFactMod}}
\nc{\FactMod}{\on{-FactMod}}
\nc{\weak}{{\on{weak}}}
\nc{\lax}{{\on{lax}}}
\nc{\str}{{\on{str}}}
\nc{\laxun}{{\on{laxun}}}
\nc{\strun}{{\on{strun}}}
\nc{\enh}{{\on{enh}}}

\nc{\ic}{{\on{IC}}}

\nc{\an}{{\on{an}}}
\nc{\re}{{\on{Re}}}
\nc{\Shv}{{\on{Shv}}}
\nc{\Tot}{{\on{Tot}}}
\nc{\eq}{{\on{equiv}}}
\nc{\rh}{{\on{RH}}}
\nc{\rs}{{\on{r.s.}}}
\nc{\hol}{{\on{hol}}}
\nc{\Kos}{{\on{Kos}}}
\nc{\SL}{{\on{SL}}}
\nc{\redu}{{\on{red}}}
\nc{\CC}{{\on{CC}}}
\nc{\Sph}{{\on{Sph}}}
\nc{\Av}{{\on{Av}}}
\nc{\temp}{{\on{Wh-temp}}}
\nc{\atemp}{{\on{wh-anti-temp}}}
\nc{\Whit}{{\on{Whit}}}
\nc{\irreg}{{\on{irreg}}}
\nc{\coeff}{{\on{coeff}}}
\nc{\loc}{{\on{loc}}}
\nc{\rel}{{\on{rel}}}
\nc{\lvl}{{\on{lvl}}}
\nc{\Sat}{{\on{Sat}}}
\nc{\ST}{{\on{StratTop}}}
\nc{\cshv}{{\on{cShv}}}
\nc{\Exit}{{\on{Exit}}}
\nc{\Enter}{{\on{Enter}}}
\nc{\LS}{{\on{LS}}}
\nc{\laxnat}{{\on{laxnat}}}
\nc{\Obj}{{\on{Obj}}}
\nc{\Poinc}{{\on{Poinc}}}
\nc{\CT}{{\on{CT}}}
\nc{\disj}{{\on{disj}}}
\nc{\co}{{\on{co}}}
\nc{\prsmall}{{\on{prsmall}}}
\nc{\Zas}{{\mathsf{Zas}}}
\nc{\ger}{{\on{Ge}}}
\nc{\fger}{{\on{FactGe}}}
\nc{\sic}{{\on{sc}}}
\nc{\triv}{{\on{triv}}}

\nc\ol{\overline}
\nc\ul{\underline}
\nc\wt{\widetilde}

\nc{\sA}{{\mathsf{A}}}
\nc{\sB}{{\mathsf{B}}}
\nc{\sC}{{\mathsf{C}}}
\nc{\sD}{{\mathsf{D}}}
\nc{\sF}{{\mathsf{F}}}
\nc{\sG}{{\mathsf{G}}}
\nc{\sH}{{\mathsf{H}}}
\nc{\sK}{{\mathsf{K}}}
\nc{\sM}{{\mathsf{M}}}
\nc{\sN}{{\mathsf{N}}}
\nc{\sO}{{\mathsf{O}}}
\nc{\sW}{{\mathsf{W}}}
\nc{\sQ}{{\mathsf{Q}}}
\nc{\sP}{{\mathsf{P}}}
\nc{\sR}{{\mathsf{R}}}
\nc{\sT}{{\mathsf{T}}}
\nc{\sZ}{{\mathsf{Z}}}
\nc{\sfi}{{\mathsf{i}}}
\nc{\sfj}{{\mathsf{j}}}
\nc{\sfp}{{\mathsf{p}}}
\nc{\sfq}{{\mathsf{q}}}
\nc{\sfs}{{\mathsf{s}}}
\nc{\sft}{{\mathsf{t}}}
\nc{\sr}{{\mathsf{r}}}
\nc{\sfk}{{\mathsf{k}}}
\nc{\sa}{{\mathsf{s}}}
\nc{\sg}{{\mathsf{g}}}
\nc{\sn}{{\mathsf{n}}}
\nc{\sh}{{\mathsf{h}}}
\nc{\sff}{{\mathsf{f}}}
\nc{\sfb}{{\mathsf{b}}}
\nc{\sfc}{{\mathsf{c}}}
\nc{\sfe}{{\mathsf{e}}}
\nc{\sd}{{\mathsf{d}}}
\nc{\sotimes}{\overset{!}\otimes}
\nc{\crit}{{crit}}

\nc{\BunPmt}{{\widetilde{\Bun}_{P^-}}}
\nc{\BunPmtpR}{{\widetilde{\Bun}_{P^-,\Ranp}}}
\nc{\BunPmtpZ}{{\widetilde{\Bun}_{P^-,\cZ}}}
\nc{\BunPmtpZp}{{\widetilde{\Bun}_{P^-,\cZ;}}}
\nc{\BunPmtpZsub}{{\widetilde{\Bun}_{P^-,\cZ^{\subseteq}}}}
\nc{\BunPmtpS}{{\widetilde{\Bun}_{P^-,S}}}
\nc{\BunPmtpx}{{\widetilde{\Bun}_{P^-,\ul{x}}}}
\nc{\BunNtpx}{{\widetilde{\Bun}_{N_P^-,\ul{x}}}}
\nc{\BunNbR}{{\ol{\Bun}_{N,\rho(\omega_X),\Ran}}}
\nc{\BunNbg}{{\ol{\Bun}_{N,\rho(\omega_X),\on{gen}}}}
\nc{\BunNbx}{{\ol{\Bun}_{N,\rho(\omega_X),\ul{x}}}}
\nc{\BunNbZ}{{\ol{\Bun}_{N,\rho(\omega_X),\cZ}}}
\nc{\BunNbZsub}{{\ol{\Bun}_{N,\rho(\omega_X),\cZ^\subseteq}}}
\nc{\BunNbS}{{\ol{\Bun}_{N,\rho(\omega_X),S}}}
\nc{\BunNbMg}{{\ol{\Bun}_{N(M),\rho(\omega_X),\on{gen}}}}
\nc{\BunNbMZ}{{\ol{\Bun}_{N(M),\rho(\omega_X),\cZ}}}
\nc{\BunNbMZsub}{{\ol{\Bun}_{N(M),\rho(\omega_X),\cZ^\subseteq}}}
\nc{\BunNbMR}{{\ol{\Bun}_{N(M),\rho(\omega_X),\Ranp}}}
\nc{\BunNbMgM}{{\ol{\Bun}_{N(M),\rho_M(\omega_X),\on{gen}}}}
\nc{\BunNbMZM}{{\ol{\Bun}_{N(M),\rho_M(\omega_X),\cZ}}}
\nc{\BunNbMZsubM}{{\ol{\Bun}_{N(M),\rho_M(\omega_X),\cZ^\subseteq}}}
\nc{\BunNbMRM}{{\ol{\Bun}_{N(M),\rho_M(\omega_X),\Ranp}}}

\begin{document}
	
	\title[Quantum Betti geometric Langlands functor]{Quantum Betti geometric Langlands functor}
	
	\author[E.~Bogdanova]{Ekaterina Bogdanova}
	\address{Harvard University,  USA}
	\email{ebogdanova@math.harvard.edu}
	
	\begin{abstract}
		We construct the quantum geometric Langlands functor in the Betti setting via Whittaker coefficients. We show that the functor is compatible with the 2-Fourier-Mukai equivalence between sheaves of categories over 2-stacks $\ger_{Z_G}$ and $\ger_{\pi_1(\check{G})}$, which classify gerbes on $X$ with respect to the center $Z_G$ of $G$ and algebraic fundamental group $\pi_1(\check{G})$ of $\check{G}$. 
	\end{abstract}
	
	\maketitle

	\tableofcontents
	
	\section{Introduction.}
	The goal of this paper is to construct the functor 
	$$\Shv_{\kappa, \Nilp}(\Bun_G) \xrightarrow[]{\bL_{\kappa}^{\operatorname{Betti}}} \int_X \Rep_{q}(\check{G})$$
	between the category of twisted automorphic sheaves with singular support in the global nilpotent cone to the topological factorization homology of $\Rep_{q}(\check{G})$. The existence of such functor was conjectured in \cite{BZN} and it is expected to be an equivalence. 
	
	At the limit $\kappa \rightarrow \kappa_{\operatorname{crit}}$ we recover the functor $\bL^{\operatorname{Betti}}$ constructed in \cite{GLCI}. However, our construction uses Whittaker coefficients instead of Hecke action, and thus works in the quantum context as well.

	\subsection{Betti quantum geometric Langlands theory.}
	
	The story of geometric Langlands correspondence begin with the following observation. Analogous to the space of automorphic forms, the category of automoprhic sheaves carries a family of commuting {\it Hecke operations} given by elements $V \in \Shv(\Gra_G)^{L^+G}$ for every point $x \in X$. Here $\Gra_G$ stands for the affine Grassmannian, i.e. the quotient $LG / L^+G$ of the loop group of $G$ by the arc group. 
	
	\begin{thm}\label{heckeeigen}
		For every irreducible $\check{G}$-local system $\sigma$ there exists a unique Hecke eigensheaf with eigenvalue $\sigma$.
	\end{thm}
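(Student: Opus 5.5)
The plan is to follow the strategy of Frenkel--Gaitsgory--Vilonen and Gaitsgory, using Drinfeld's construction via the Laumon sheaf and Whittaker data. Existence and uniqueness will be handled separately.

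\textbf{Existence.} The first step is to pass from $\sigma$ to a local system on the space of effective divisors. To the irreducible $\check{G}$-local system $\sigma$ and the standard representation (for $\GL_n$; in general one reduces via the Whittaker model) I associate the local system $E_\sigma$ and its symmetric powers $E_\sigma^{(d)}$ on $X^{(d)}$.

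Next I form the Laumon sheaf $\cL_\sigma$ on the stack of flagged coherent sheaves. This stack maps to $X^{(d)}$ for the torsion part and is built from the Springer-type resolution. I then restrict $\cL_\sigma$ to the open locus where the flag comes from a rank $n$ bundle with a $\rho(\omega_X)$-twisted generic $N$-reduction. On that locus I apply the Whittaker averaging functor, i.e.\ integrate against the Artin--Schreier (or, in the Betti setting, exponential) character along the fibers. The result is a sheaf $\mathrm{Aut}_\sigma$ on $\Bun_G$.

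To see that $\mathrm{Aut}_\sigma$ is Hecke-equivariant with eigenvalue $\sigma$, I use the geometric Satake equivalence $\Shv(\Gra_G)^{L^+G}\simeq \Rep(\check{G})$. Hecke functors then correspond to modifications of $\BunNbg$, and these act on the Laumon side by tensoring with $E_\sigma$ at the modified point.

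\textbf{The main obstacle.} The hard step is proving that the averaged sheaf is actually well defined on $\Bun_G$, i.e.\ that it descends and is perverse and nonzero. This is the FGV vanishing theorem: for irreducible $\sigma$ of rank $n$, the averaging functor applied to $\cL_\sigma$ vanishes for $d$ large. I would try the reduction used by Gaitsgory first, namely vanishing of the cohomology of $E_\sigma^{(d)}$ twisted by the ``Fourier transform'' on $\mathrm{Hom}$-spaces, together with the irreducibility of $\sigma$, which kills the global sections of the relevant tensor products. Nonvanishing follows from the Whittaker coefficient: the first coefficient of $\mathrm{Aut}_\sigma$ is $\Vect$-valued and equal to $\bC$ by construction.

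\textbf{Uniqueness.} Suppose $\cF$ is another Hecke eigensheaf with eigenvalue $\sigma$. Consider its Whittaker coefficient functor, i.e.\ pullback to $\BunNbg$ followed by $!$-pairing with the exponential sheaf. The Hecke property, transported through Casselman--Shalika / Frenkel--Gaitsgory--Vilonen, identifies this coefficient with $\mathrm{Whit}(\cF)\otimes$ a multiplicity space $M$, and gives $\cF\simeq \mathrm{Aut}_\sigma\otimes M$. It remains to show that an eigensheaf is determined by its Whittaker coefficient. For this I use that the cuspidal part of the category is generated, as a Hecke-module, by Poincar\'e sheaves; equivalently, the Whittaker coefficient is conservative on the $\sigma$-isotypic part when $\sigma$ is irreducible. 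This holds because Eisenstein series contribute only reducible eigenvalues.

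Finally, irreducibility of $\sigma$ makes the multiplicity space one-dimensional. This gives uniqueness up to isomorphism.
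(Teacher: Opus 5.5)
\textbf{There is a genuine gap: your argument covers only $\GL_n$, and the two key steps of your uniqueness argument are asserted rather than proved.}

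\textbf{Context and the existence half.} The paper gives no proof of this statement. It quotes it, attributing $\GL_1$, $\GL_2$ and $\GL_n$ to Deligne, Drinfeld and Frenkel--Gaitsgory--Vilonen, and general $G$ to the authors of the proof of the geometric Langlands conjecture. For general $G$ the statement is a corollary of the full equivalence. Hecke eigensheaves with eigenvalue $\sigma$ form $\D(\Bun_G)\otimes_{\QCoh(\LS_{\check{G}})}\Vect_\sigma$, which $\bL$ identifies with the fiber of $\on{IndCoh}_{\Nilp}(\LS_{\check{G}})$ at $\sigma$. Near an irreducible $\sigma$ the singular support condition is vacuous, so this fiber is $\Vect$; that gives existence and uniqueness at once. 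Your existence half is the FGV argument. Each of its ingredients is specific to $\GL_n$: Laumon's sheaf on torsion sheaves, Drinfeld's mirabolic space $\Bun'_n$, the Whittaker expansion of $\GL_n$-cusp forms, and Gaitsgory's vanishing theorem (which concerns $\Av^d_E$ on $\Bun_{n'}$ for $n'<n$). The clause ``in general one reduces via the Whittaker model'' names no actual reduction, and no such explicit construction of eigensheaves is known for other groups. Also, in the Betti setting there is no exponential character sheaf on $\bG_a$ to integrate against; this is exactly why the paper works with the Kirillov model.

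\textbf{The uniqueness half.} You assert the two deepest inputs of the general proof without justification.
\begin{enumerate}
\item Your remark that Eisenstein series only produce reducible eigenvalues shows the $\sigma$-isotypic part is cuspidal. It does not give conservativity of the Whittaker coefficient there (equivalently, generation by Hecke translates of $\Poinc_!$). That is a genuine non-vanishing theorem (Faergeman--Raskin); compare Theorem~\ref{generalgroup}, where the paper has to import exactly this kind of input.
\item Even granting conservativity, matching Whittaker coefficients does not produce an isomorphism $\cF\simeq \mathrm{Aut}_\sigma\otimes M$. Adjunction only gives a map $\Poinc_!\otimes M\to\cF$. To conclude, you need to know that the $\sigma$-localization of $\Poinc_!$ is a single copy of $\mathrm{Aut}_\sigma$. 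That is the multiplicity one theorem, the final and hardest step of the proof of geometric Langlands: it identifies the endomorphism algebra of the vacuum Poincar\'e sheaf, viewed over $\LS_{\check G}$, with the structure sheaf on the irreducible locus.
\end{enumerate}
Your closing sentence conflates this with the dimension of $M$. $M$ is just the first coefficient of $\cF$ and can be any vector space (take $\cF=\mathrm{Aut}_\sigma^{\oplus 2}$), so irreducibility of $\sigma$ does not make it one-dimensional. The correct form of uniqueness is that $\cF\mapsto M$ is an equivalence from $\sigma$-eigensheaves to $\Vect$, and proving that is precisely the missing multiplicity-one input.
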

	This statement is due to Deligne for $G=\GL_1$, Drinfeld for $G=\GL_2$, Frenkel-Gaitsgory-Vilonen for $G = \GL_n$, and Arinkin, Beraldo, Campbell, Chen, Faergeman, Gaitsgory, Lin, Raskin and  Rozenblyum for general $G$.
	
	\subsubsection{Global geometric Langlands conjectures.} Lifting the object-wise statement of Theorem \ref{heckeeigen} led Beilinson and Drinfeld to the formulation of the {\it global geometric Langlands correspondence}, which says (roughly) that 
	\begin{thm}[\cite{GLCI}, \cite{GLCII}, \cite{GLCIII}, \cite{GLCIV}, \cite{GLCV}]\label{GLC}
		There exists a canonical equivalence of derived categories
		$$\D(\Bun_{G}) \xrightarrow[\cong]{\bL} \QCoh(\LS_{\check{G}}),$$
	\end{thm}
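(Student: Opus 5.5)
Theorem \ref{GLC} is the global geometric Langlands equivalence, established in the cited series \cite{GLCI}--\cite{GLCV}, and I would follow the architecture of those papers. The correct formulation requires the Arinkin--Gaitsgory modification $\operatorname{IndCoh}_{\Nilp}(\LS_{\check G})$ on the spectral side. It reduces to $\QCoh$ on the irreducible locus, which is how the "roughly" in the statement should be read.

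\textbf{Constructing the functor.} I would build $\bL$ from Whittaker coefficients. Take the Whittaker coefficient functor
$$\D(\Bun_G)\to \Whit(\Gr_{G,\Ranp})\simeq \Rep(\check G)_{\Ranp},$$
where the identification is the geometric Casselman--Shalika equivalence. Since $\D(\Bun_G)$ is spectrally decomposed over $\LS_{\check G}$ by the Hecke action (Gaitsgory's spectral decomposition), the coefficient functor upgrades to a functor into $\QCoh(\LS_{\check G})$. This works because $\QCoh(\LS_{\check G})$ is identified with Hecke-equivariant objects of $\Rep(\check G)_{\Ranp}$ (localization / "Ran-space spectral decomposition"). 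One then checks that $\bL$ commutes with Eisenstein series: $\bL\circ \Eis_! \simeq \Eis^{\mathrm{spec}}\circ \bL_M$, by an induction on the semisimple rank of Levi subgroups.

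\textbf{Reducing to the cuspidal part.} Both sides decompose into Eisenstein and cuspidal parts, recollement-style. Compatibility with $\Eis$ together with induction reduces fully faithfulness to the cuspidal part $\D(\Bun_G)_{\cusp}$. The image of $\D(\Bun_G)_{\cusp}$ lands in sheaves supported on $\LS^{\irr}_{\check G}$.

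\textbf{Proving the equivalence on the cuspidal part.} First show that $\bL$ is an equivalence onto a full subcategory. The key input is that the Whittaker coefficient functor is conservative on the cuspidal part, which gives fully faithfulness once one identifies the monad $\bL\bL^R$ with the identity. Next show essential surjectivity. Over $\LS^{\irr}$ the action of $\QCoh(\LS^{\irr})$ makes $\D(\Bun_G)_{\cusp}$ a module category, and $\bL$ is a map of modules. It therefore suffices to show that this sheaf of categories is locally free of rank one. One proves this by showing that the Poincaré object generates, and that its endomorphisms are $\cO(\LS^{\irr})$, via a computation of the trace / Euler characteristic.

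\textbf{Main obstacle.} The hardest step is identifying $\bL\bL^R\simeq \Id$ on the cuspidal part, i.e. computing $\End(\Poinc)$ as functions on $\LS_{\check G}$. In \cite{GLCIII}--\cite{GLCV} this requires the ambidexterity of $\bL$, a comparison of the Kac--Moody and Beilinson--Drinfeld localization constructions (to see that the monad is given by an algebra that is ``classical'' in a suitable sense), and an argument that this algebra has no higher cohomology. I would attack it by realizing the Poincaré object through Kac--Moody localization at critical level, computing its coefficients via Feigin--Frenkel duality, and then comparing the two resulting descriptions of the monad.
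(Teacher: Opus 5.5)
You are right that the paper gives no proof of this statement: it quotes it, explicitly as a rough formulation, from the GLC series. So deferring to that series, and reading the target as $\operatorname{IndCoh}_{\Nilp}(\LS_{\check G})$, is the same route the paper takes. Your sketch of the series, however, has a concrete hole at its first step.

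\textbf{The missing lift to $\operatorname{IndCoh}_{\Nilp}$.} The functor you build from Whittaker coefficients and the spectral action is what the paper's introduction calls the \emph{coarse} functor, $\bL_{\mathrm{coarse}}\colon \D(\Bun_G)\to \QCoh(\LS_{\check G})$. It is the right adjoint of $\mathcal E\mapsto \mathcal E\star \Poinc_!^{\mathrm{Vac}}$. For non-abelian $G$ this functor is not an equivalence, and not even conservative. Indeed, it equals $\Psi\circ\bL$, and $\Psi\colon \operatorname{IndCoh}_{\Nilp}(\LS_{\check G})\to \QCoh(\LS_{\check G})$ has nonzero kernel, because $\Nilp$ contains nonzero singular codirections (already at the trivial local system). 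So your step ``compatibility with $\Eis$ plus induction reduces fully faithfulness to the cuspidal part'' cannot be run for the functor you actually have. This is exactly the phenomenon that forces $\operatorname{IndCoh}_{\Nilp}$: spectral Eisenstein series send perfect complexes to coherent sheaves whose singular support is nilpotent but not zero.

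The missing idea is the lift through $\Psi$. You must show that $\bL_{\mathrm{coarse}}$ sends compact objects to eventually coconnective objects, on which $\Psi$ is an equivalence, and that their $\operatorname{IndCoh}$-lifts have singular support in $\Nilp$. Then you ind-extend to obtain $\bL\colon \D(\Bun_G)\to \operatorname{IndCoh}_{\Nilp}(\LS_{\check G})$. Only for this $\bL$ do Eisenstein compatibility and the Arinkin--Gaitsgory description of $\operatorname{IndCoh}_{\Nilp}$ (via $\QCoh$ and Eisenstein images) reduce the problem to $\bL_{\cusp}$.

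\textbf{Two smaller points about the cuspidal step.} First, your two halves overlap. Suppose $\bL_{\cusp}$ is conservative (non-vanishing of Whittaker coefficients on cuspidal objects) and the unit $\Id\to \bL_{\cusp}\circ\bL_{\cusp}^L$ is an isomorphism. Then the triangle identities already make $\bL_{\cusp}$ an equivalence, so there is no separate essential-surjectivity step.

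Second, the statement to prove is sheaf-theoretic. By $\QCoh(\LS^{\irr}_{\check G})$-linearity, $\bL_{\cusp}\circ\bL_{\cusp}^L$ is tensoring with an algebra $\mathcal A_G\in \QCoh(\LS^{\irr}_{\check G})$. Ambidexterity is what lets you trade this for your $\bL_{\cusp}\circ\bL_{\cusp}^R$. What one needs is $\mathcal A_G\simeq \cO_{\LS^{\irr}_{\check G}}$. Identifying the global endomorphisms of the Poincar\'e object with $\cO(\LS^{\irr}_{\check G})$ would not suffice, since $\LS^{\irr}_{\check G}$ is not affine. In the series this identity is the multiplicity-one theorem; your ``trace/Euler characteristic'' remark points at it but does not supply it.
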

	where $\Bun_G$ is the moduli stack of $G$-bundles and $\LS_{\check{G}}$ is the stack parameterizing {\it de Rham} local systems on $X$. 
	
	The {\it Betti} global geometric Langlands correspondence was introduced by Ben-Zvi and Nadler in \cite{BZN} as a version of Theorem \ref{GLC} in the Betti sheaf-theoretic context, which still remembers information of Theorem \ref{heckeeigen} and fits into the context of topological field theory (compared
	to Theorem \ref{GLC} which fits into conformal field theory). The Betti Langlands correspondence states (roughly):
	\begin{thm}\label{BettiGLC}
		There exists a canonical equivalence of derived categories
		$$\Shv_{\Nilp}(\Bun_G) \xrightarrow[\cong]{\bL^{\operatorname{Betti}}} \QCoh(\LS^{\operatorname{Betti}}_{\check{G}}),$$
	\end{thm}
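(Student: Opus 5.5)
The plan is to deduce the Betti equivalence from the de Rham one in Theorem \ref{GLC}, via the Riemann--Hilbert correspondence together with a spectral decomposition over the de Rham and Betti character stacks.

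\textbf{Step 1: construct the functor.} First I build $\bL^{\operatorname{Betti}}$ directly, in the same way as the de Rham functor of \cite{GLCI}. The category $\Shv_{\Nilp}(\Bun_G)$ carries a Hecke action of $\Rep(\check{G})^{\otimes X}$, and by the nilpotent singular support condition this action factors through $\QCoh(\LS^{\operatorname{Betti}}_{\check{G}})$. This factorization is the Betti spectral decomposition; it follows from the fact that Hecke functors preserve nilpotent singular support and become locally constant along $X$. The Betti functor is then the coefficient functor: apply the first Whittaker coefficient and use the Casselman--Shalika identification $\Whit(\Gr_G)\simeq\Rep(\check{G})$, together with Beilinson's spectral projector. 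This produces a $\QCoh(\LS^{\operatorname{Betti}}_{\check{G}})$-linear functor to $\QCoh(\LS^{\operatorname{Betti}}_{\check{G}})$.

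\textbf{Step 2: compare with the de Rham functor.} Over $\bC$, Riemann--Hilbert identifies $\Shv_{\Nilp}(\Bun_G)$ with $\D_{\Nilp}(\Bun_G)$, because nilpotent singular support forces regular singularities. It also gives an analytic isomorphism $\LS_{\check{G}}^{\an}\simeq(\LS^{\operatorname{Betti}}_{\check{G}})^{\an}$, which is not algebraic. Theorem \ref{GLC} restricts to an equivalence $\D_{\Nilp}(\Bun_G)\simeq \QCoh(\LS_{\check{G}})$ on the nilpotent part, since every object on the spectral side is supported appropriately. I then argue formal-locally at each point $\sigma$ of the character stack: the completions of $\LS_{\check{G}}$ and $\LS^{\operatorname{Betti}}_{\check{G}}$ at $\sigma$ agree, because both are controlled by the same dg Lie algebra $C^\bullet(X,\check{\mathfrak g}_\sigma)$. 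Under this identification the two functors match on the fiber categories at $\sigma$.

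\textbf{Step 3: globalize.} Full faithfulness and essential surjectivity of a $\QCoh(\LS^{\operatorname{Betti}}_{\check{G}})$-linear functor between categories with compact generation can be checked after base change to formal completions at closed points. Here I use Lurie's Tannakian reconstruction and the fact that $\LS^{\operatorname{Betti}}_{\check{G}}$ is quasi-compact with affine diagonal, so that QCoh is generated compatibly with localization. To pass from formal neighborhoods to a global statement, I will use that both sides are dualizable and that the functor preserves compacts, which follows from the corresponding facts in the de Rham setting.

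I expect the main obstacle to be Step 2. The two spectral stacks are not algebraically isomorphic, so one cannot simply transport Theorem \ref{GLC}. I would first try to reduce to a statement about the monad $\End(\text{Poincaré object})$ acting on the vacuum Poincaré sheaf. On the automorphic side this monad is computed topologically and is insensitive to de Rham versus Betti. On the spectral side one must show it equals $\cO_{\LS^{\operatorname{Betti}}}$, and I would prove that through the completion-by-completion comparison above.
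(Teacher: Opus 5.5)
The paper does not prove this statement itself. It cites \cite{GLCI}, where the Betti statement is shown to be equivalent to the de Rham one, together with the proof of the latter. Your plan of deducing it from the de Rham theorem has the same overall shape, but Step 2 misidentifies what de Rham GLC plus Riemann--Hilbert actually gives you.

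In this paper $\Shv_{\Nilp}(\Bun_G)$ is the category of \emph{all} (weakly constructible) sheaves with nilpotent singular support, whose stalks may be infinite-dimensional. This is the category on which $\QCoh(\LS^{\operatorname{Betti}}_{\check G})$ acts. Riemann--Hilbert identifies only its ind-constructible part with $\D_{\Nilp}(\Bun_G)$. Moreover, the de Rham equivalence does not restrict to $\D_{\Nilp}(\Bun_G)\simeq\QCoh(\LS_{\check G})$. By the spectral decomposition of \cite{AGKRRV}, nilpotent singular support corresponds to restricted variation, i.e. to $\LS^{\on{restr}}_{\check G}$, a union of formal completions along loci of local systems with fixed semisimplification.

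The torus case already shows the difference. $\Shv_{\Nilp}(\Bun_T)$ is all local systems on $\Bun_T$, i.e. $\QCoh(\LS^{\operatorname{Betti}}_{\check T})$. By contrast, $\D_{\Nilp}(\Bun_T)$ is ind-(finite rank flat connections), i.e. quasi-coherent sheaves on $\LS_{\check T}$ set-theoretically supported at closed points. So Steps 1--2 give only the restricted Betti statement: $\bL^{\operatorname{Betti}}$ base-changed along $\LS^{\on{restr}}_{\check G}\to\LS^{\operatorname{Betti}}_{\check G}$. Your formal-completion comparison is correct, but it only feeds into this restricted statement.

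The passage from there to the full statement is where the proposal fails. The principle in Step 3 is false. $\QCoh(\on{Spec} k(t))$, as a module over $\QCoh(\mathbb{A}^1)$, becomes $0$ after base change to the formal completion at every closed point. So the zero functor to the zero category passes your test without being an equivalence.

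This phenomenon occurs in the actual problem. On $\Bun_T$, take the local system whose monodromy module is the fraction field of $k[\pi_1]$. It is right-orthogonal to all constructible sheaves with nilpotent singular support, and it dies on every formal completion.

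Dualizability and compact generation do not rule this out. You need a genuine finiteness input plus a separate conservativity argument. One form of the finiteness input: the cone of the unit $\cO\to\mathcal{A}^{\operatorname{Betti}}$ of the monad $\bL^{\operatorname{Betti}}\circ(\bL^{\operatorname{Betti}})^L$ is coherent, so that Nakayama on the finite-type stack $\LS^{\operatorname{Betti}}_{\check G}$ applies. Such finiteness cannot be transported from the de Rham side, because the two automorphic categories are not identified.

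For the same reason, endomorphisms of the Poincar\'e object do depend on the sheaf theory. For a torus, the Betti answer is $\Gamma(\LS^{\operatorname{Betti}}_{\check T},\cO)$, which contains an infinite-dimensional algebra of Laurent polynomials. In $\D(\Bun_T)$ one gets instead the finite-dimensional algebra $\Gamma(\LS_{\check T},\cO)$.

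Bridging the gap from the restricted statement to the full Betti statement is the nontrivial content of the comparison in \cite{GLCI} on which the paper relies, and your proposal does not supply it.
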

	where $\LS_{\check{G}}^{\operatorname{Betti}}$ is the stack of Betti $\check{G}$-local systems. In \cite{GLCI} it is proved that Theorem \ref{BettiGLC} is equivalent to the de Rham Langlands correspondence, and thus the proof of  Theorem \ref{GLC} gives a proof of the Betti version as well. 
	
	\subsubsection{Quantization of global geometric Langlands conjectures.} However, both the theory of modular/automorphic forms and quantum field theory suggest that there should be a generalization of  Theorem \ref{GLC} and Theorem \ref{BettiGLC}. And indeed, both sides of the equivalences $\bL$ and $\bL^{\operatorname{Betti}}$ deform over the space of {\it levels} for the groups $G$ and $\check{G}$ respectively. A level $\kappa$ for $G$ is by definition a $W$-invariant bilinear form on the Cartan Lie algebra $\mathfrak{h}$ of $\mathfrak{g}$,  whose inverse is a similar kind of datum for $\check{\mathfrak{g}}$. For a simple reductive group $G$ the space of levels is 1-dimensional. Let us temporarily be in that situation. The main idea of Drinfeld, further studied by Stoyanovsky and Gaitsgory-Lysenko is to consider {\it twisted} sheaves/$D$-modules. 
	
	Let $\cL_{\det}$ denote the determinant line bundle on $\Bun_G$, i.e., the line bundle whose fiber at a point $\cP_G \in \Bun_G$ is $\det R\Gamma(X, \mathfrak{g}_{\cP_G})$.
	Let $\cL_{\det}$ denote the determinant line bundle on $\Bun_G$, i.e., the line bundle whose fiber at a point $\cP_G \in \Bun_G$ is $\det R\Gamma(X, \mathfrak{g}_{\cP_G})$. Then for every $c \in k$ denote by
	$\D_c(\Bun_G) $ the derived category of D-modules twisted by ${\frac{c - \check{h}}{2\check{h}}}$-th power of  $\cL_{\det}$, where $\check{h}$ is the dual Coxeter number of $G$. Let $r$ be be the maximal multiplicity of arrows in the Dynkin diagram of $G$. Then the de Rham {\it quantum} global geometric Langlands conjecture says 
	
	\begin{conj}\label{qGLC}
		There exists a canonical equivalence of derived categories
		$$\D_c(\Bun_G) \xrightarrow[\cong]{\bL_c} \D_{-\frac{1}{rc}}(\Bun_{\check{G}}).$$
	\end{conj}
	At the limit $c \rightarrow 0$ the category $\D_{-\frac{1}{rc}}(\Bun_{\check{G}})$ becomes $\QCoh(\LS_{G})$, so Conjecture \ref{qGLC} limits to Theorem \ref{GLC}. However, note that unlike Theorem \ref{GLC}, Conjecture \ref{qGLC} is symmetric.
	
	The {\it quantum} Betti global geometric Langlands equivalence in the Betti setting was introduced in \cite{BZN} and states that
	
	\begin{conj}\label{qBettiGLC}
		There exists a canonical equivalence of derived categories
		$$\Shv_{c, \Nilp}(\Bun_G) \xrightarrow[\cong]{\bL_c^{\operatorname{Betti}}} \int_X \Rep_{q}(\check{G}).$$
	\end{conj}
	Here the right-hand side is the derived category of appropriately twisted Betti sheaves with singular support in the global nilpotent cone. Let us explain the left-hand side, which is {\it factorization homology} of the category of representations of the quantum group. Note that for $q=1$ we have 
	$$\int_X \Rep(\check{G}) \cong \QCoh(\LS_{\check{G}}),$$
	so in the limit $c \rightarrow 0$ Conjecture \ref{qBettiGLC} recovers Theorem \ref{BettiGLC}.
	
	\subsubsection{Topological factorization homology.} The theory of factorization homology derives from the factorization algebras of Beilinson and Drinfeld (\cite{BD1}), and is a topological version of their theory. Factorization homology with coefficients in $n$-disk algebras are homology theories for topological manifolds satisfying a generalization of the Eilenberg–Steenrod axioms for ordinary homology; as such, it generalizes ordinary homology in a way that is only defined on $n$-manifolds and not necessarily on arbitrary topological spaces. Second, these homology theories define topological quantum field theories (\cite{CG}). An important special case is that of associative algebras where factorization
	homology over the circle recovers Hochschild homology.
	
	Let us comment on the definition of $ \int_M A$ for a topological $n$-manifold $M$ and an $\bE_M$-algebra $A$ in the sense of \cite[5.4]{HA}. We can think of $A$ as a (appropriately twisted) family of $\bE_n$-algebras $A_x$ parameterized by $x \in M$. In \cite[Chapter 5]{HA}, Lurie proposes a convenient geometric way to encode this data. He defines the {\it topological Ran space} $\Ranp(M)$ as the collection of all nonempty finite subsets of $M$. Then he defines a cosheaf $\cF_A$, whose stalk at $S\in \Ranp(M)$ is $\otimes_{s \in S} A_s$. We can view $\cF_A$ as a constructible cosheaf of $\Ranp(M)$, obtained by gluing together locally constant cosheaves along the locally closet subsets of $\Ranp(M)$. 
	
	We refer to the global sections $\cF_A(\Ranp(M))$ as the
	{\it topological factorization homology} of $M$ with coefficients in $A$. A collection of striking properties of this construction is formulated and proved in \cite[Chapter 5.5]{HA}.
	
	\subsection{Main result.} The goal of this paper is to construct the quantum Betti Langlands functor $\bL_{\kappa}^{\operatorname{Betti}}$ of Conjecture \ref{qBettiGLC}. 
	
	First, let us mention that in the non-quantum case the construction of the functor is given in \cite{GLCI}. Namely, they define the functor $$\bL^{\operatorname{coarse},\operatorname{Betti}, L}: \QCoh(\LS_{\check{G}}^{\operatorname{Betti}}) \rightarrow \Shv_{\Nilp}(\Bun_G)$$
	as follows. Recall that via geometric Satake the Hecke action defines an action of $\Rep(\check{G})$ on $\Shv_{\Nilp}(\Bun_G)$ for every point $x \in X$. In \cite{NY} Nadler and Yun showed that this action is locally constant with
	respect to the point of $X$, and hence gives a natural action of $\QCoh(\LS_{\check{G}}^{\operatorname{Betti}}) $ on $\Shv_{\Nilp}(\Bun_G)$.  Then the functor $\bL^{\operatorname{coarse},\operatorname{Betti}, L}$ is given by acting on a certain object $$\Poinc_!^{\operatorname{Vac}, \operatorname{glob}} \in \Shv_{\Nilp}(\Bun_G).$$
	Lastly, in  \cite{GLCI} it is proved that $\bL^{\operatorname{coarse},\operatorname{Betti}, L}$ admits a right adjoint $\bL^{\operatorname{coarse},\operatorname{Betti}}$.
	
	However, one of the main defects of the quantum setting is that the Hecke action is more degenerate: for instance, for irrational $c$ the category $\Shv_{c}(\Gra_G)^{L^+G}$ is equivalent to the category of vector spaces, so the Hecke action carries no information. Therefore the strategy of \cite{GLCI} does not work.
	
	 But there is a tool that still works as expected in the quantum setting and provides finer information than Hecke action: {\it Whittaker coefficients}. Thus instead, we are going to use this tool to construct $\bL_{\kappa}^{\operatorname{Betti}}$. We also embed our construction of $\bL_{\kappa}^{\operatorname{Betti}}$ into the formalism of 2-Fourier-Mukai transform of \cite{GLCV}. Recall that this is an equivalence between {\it sheaves of categories} over the 2-stacks $\ger_{Z_G}$ and $\ger_{\pi_1(\check{G})}$, which classify gerbes on $X$ with respect to $Z_G$ and $\pi_1(\check{G})$. 
	
	In Section \ref{FM} we upgrade the categories $\Shv_{\kappa, \Nilp}(\Bun_G)$ and $\int_X \Rep_{q}(\check{G})$ to sheave of categories over and $\ger_{Z_G}$ and $\ger_{\pi_1(\check{G})}$ respectively, and the assertion is that the resulting two sheaves of categories map to one another under
	the 2-categorical Fourier-Mukai transform. This implies that $\bL_{\kappa}^{\operatorname{Betti}}$ intertwines convolution action of $\Shv(\Bun_{Z_G})$ on $\Shv_{\kappa, \Nilp}(\Bun_G)$ with the action of $\QCoh(\ger_{\pi_1(\check{G})})$ on $\int_X \Rep_{q}(\check{G})$ via the 1-categorical Fourier-Mukai equivalence
	$$\Shv(\Bun_{Z_G}) \cong\QCoh(\ger_{\pi_1(\check{G})}).$$
	
	As an application, using this we generalize the result of \cite{Bwhit} from the case of adjoint group $G$ to the case of arbitrary $G$.

	\subsection{Outline of the proof.}
	
	Recall that there are several properties of $\bL^{\operatorname{Betti}}$ that fix the functor uniquely. Among these is the condition that ``the first
	Fourier coefficient"  functor on $\D({\Bun_G})$ correspond to the functor $\Gamma( \LS_{\check{G}}^{\operatorname{Betti}}, -)$. 
	
	The quantum version of this compatibility can be expressed as the commutativity of the diagram 
	\begin{equation}
		\begin{tikzcd}
			\Whit_{\kappa}(G)_{\Ranp}\ar[rr,rightarrow, "\operatorname{FLE}"']&   	&\Gamma^{\lax}(\Ranp, \Rep_{q}(\check{G}))\\
			\Shv_{\kappa, \Nilp}(\Bun_G)\ar[rr, rightarrow, "\bL_{\kappa}^{\operatorname{Betti}}"']\ar[u, rightarrow, "\coeff^{\loc}"']&   &\int_X \Rep_{q}(\check{G}).\ar[u, rightarrow, "\Gamma_q"']
		\end{tikzcd}
	\end{equation}
	Here the top arrow is the Fundamental Local Equivalence (see Subsection \ref{FLE}) between the category of Whittaker ind-constructible sheaves on the Beilinson-Drinfeld Grassmannian and (the appropriate $\Ranp$-version of) the category of representations of the quantum group, $\coeff^{\loc}$ is the Whittaker coefficient functor defined in Subsection \ref{Betticoeff}, and $\Gamma_q$ is the functor discussed in Corollary \ref{glaxvsfactho}. Let us say more about the definition of $\Gamma^{\lax}(\Ranp, \Rep_{q}(\check{G}))$. As was mentioned, from the category $\Rep_{q}(\check{G})$ we can form a cosheaf of categories $\Rep_{q}(\check{G})_{\Ranp}$ on $\Ranp(X)$. Then $\Gamma^{\lax}(\Ranp, \Rep_{q}(\check{G}))$ is the category of {\it lax global sections} of this cosheaf. Informally, this means that  for every $x \in X$ we need to give an element $V_x \in \Rep_{q}(\check{G})_x$, and for every specialization $x \rightsquigarrow y$ we need to give a map $m_{x \rightsquigarrow y}(V_x) \rightarrow V_y$, where $m_{x \rightsquigarrow y}:  \Rep_{q}(\check{G})_x \rightarrow  \Rep_{q}(\check{G})_y$ is the corresponding specialization map.
	
	So to give the construction of $\bL_{\kappa}^{\operatorname{Betti}}$ the first thing we do is, parallel to the de Rham case (\cite[Proposition 4.3.4]{GL2}), prove that $\Gamma_q$ is fully faithful (Corollary \ref{glaxvsfactho}). Then we formulate the property that distinguished $\int_X \Rep_{q}(\check{G})$ inside $\Rep_{q}(\check{G})_{\Ranp}$. Informally, the property says that the collection $$\{V_x\}_{x \in X} \in \Gamma^{\lax}(\Ranp, \Rep_{q}(\check{G}))$$ lies in $\int_X \Rep_{q}(\check{G})$ if and only if for every specialization $x \rightsquigarrow y$ the natural map 
	$$V_x \rightarrow m_{x \rightsquigarrow y}^R(V_y)$$
	is an isomorphism.
	
	Let us explain how to check this property on $X^2$. Let us also assume, for simplicity, that the tangent bundle to $X$ is trivial. 
	It suffices to show that for every $\cG \in \Rep_{q}(\check{G})_{X^2 \setminus X_{\Delta}}$, and $$\cF \in \Shv_{\kappa \boxtimes \triv, \Nilp \times T^* X^2 }(\Bun_G \times X^2)$$ we have 
	\begin{equation}\label{outline eq}
		\langle \coeff^{\loc}(\cF )_{ X_{\Delta}}, m_{ \Rep_{q}(\check{G})_{X^2 \setminus X_{\Delta}}}(\cG)\rangle \cong \langle \coeff^{\loc}(\cF )_{X^2 \setminus X_{\Delta}},  \cG \rangle,
	\end{equation}
	where $\langle -, - \rangle$ are the duality pairings, and 
	$$m_{ \Rep_{q}(\check{G})_{X^2 \setminus X_{\Delta}}}: \Rep_{q}(\check{G})_{X^2 \setminus X_{\Delta}} \rightarrow \Rep_{q}(\check{G})_{X_{\Delta}}$$
	is the map 
	$$\LocSys(X^2 \setminus X_{\Delta}) \otimes  \Rep_{q}(\check{G})^{\otimes 2} \rightarrow 	\LocSys(X_{\Delta}) \otimes \Rep_{q}(\check{G})$$
	coming from the $\bE_2$-structure on $\Rep_{q}(\check{G})$ and encoding all $m_{x \rightsquigarrow y}$.
	
	To show (\ref{outline eq}), we consider the functors 
	\begin{equation}\label{eq1}
		\Poinc_{*, \co} := ( \coeff^{\loc})^{\vee} : \Rep_{q}(\check{G})_{X^2 \setminus X_{\Delta}}\rightarrow \Shv_{\kappa \boxtimes \triv, \Nilp \times T^* (X^2 \setminus X_{\Delta})}(\Bun_G \times (X^2 \setminus X_{\Delta}))
	\end{equation}
	and 
	\begin{equation}\label{eq2}
		F: \Shv_{\triv \boxtimes \kappa, T^*(X^2 \setminus X_{\Delta} )\times \Nilp}((X^2 \setminus X_{\Delta})\times \Bun_G) \rightarrow \Shv(X^2 \setminus X_{\Delta})
	\end{equation}
	defined as $$\cF^{\prime} \rightarrow p_{{X^2 \setminus X_{\Delta}}_{\disj}, *}(\cF \stackrel{!}{\otimes } \cF^{\prime}).$$
	We show that they have the property of being {\it defined and codefined by a kernel}, a notion introduced in \cite{AGKRRV2}. In particular, this means that both (\ref{eq1}) and (\ref{eq2}) commute with $m_{ \Rep_{q}(\check{G})_{X^2 \setminus X_{\Delta}}}$, which gives the desired equivalence (\ref{outline eq}).


		\subsection{Notations and conventions.}
		
		\subsubsection{Categories.} We use the formalism of $\infty$-categories developed in \cite{LHTT}, \cite{HA}, and theory of DG categories as understood in \cite{GR}. By a DG category we mean $k$-linear stable $\infty$-category. We let $\DGCat$ be the category
		of presentable DG categories with continuous functors with monoidal structure given by the Lurie tensor product $\otimes$. 
		
		\subsubsection{Lie theory.}
		
		Throughout the paper $G$ will stand for a reductive group over $k$ of adjoint type. We choose Borel subgroup $B \subset G$, the opposite Borel $B^- \subset G$ and the Cartan subgroup $T= B \cap B^-$. Let $N$ ($N^-$) be the unipotent radical of $B$ ($B^-$). Let $\Lambda$ ($\check{\Lambda}$) denote the lattice of weights (coweights) of $G$. Let $\Lambda^+$ ($\check{\Lambda}^+$) denote the subset of dominant weights (coweights). Let $\cI$ denote the set of nodes for the Dynkin diagram of $G$. For $i \in \cI$, we let $\alpha_i$ ($\check{\alpha}_i$) denote the
		corresponding simple root (coroot). Let $2\rho$ denote the sum of simple roots and $2\check{\rho}$ the sum of simple coroots. Let $\check{G}$ be the Langlands dual group over $k$ for $G$. We denote by $LG$ ($L^+G$) the loop (arc) group of $G$.
		
		Let $\mathfrak{g}_{\irr} \subset \mathfrak{g}$ denote the reduced closed subscheme of irregular elements.
		We let $\cN \subset \mathfrak{g}$ denote the nilpotent cone. We let $\cN^{\irr} := \cN \cap \mathfrak{g}_{\irr}$ denote the subscheme of irregular
		nilpotent elements. We let $\cN^{\reg} \subset \cN$ denote the open complement to $\cN^{\irr}$, which parametrizes
		of regular nilpotent elements.
		
		\subsubsection{Levels.} A level $\kappa$ for $G$ is a $G$-invariant symmetric bilinear form 
		$$\kappa: \Sym^2(\mathfrak{g}) \rightarrow \bC.$$
		Denote by $\kappa_{\mathfrak{g}, \operatorname{crit}}$ the critical level, i.e. $-\frac{1}{2}$ times the Killing form. 
		Throughout the paper we will assume that the level $\kappa$ is non-degenerate, i.e. that $\kappa - \kappa_{\mathfrak{g}, \operatorname{crit}}$ is nondegenerate as a bilinear form. The {\it dual} level $\check{\kappa}$ on $\check{G}$ for $\kappa$ is the unique nondegenerate level such that the restriction of $\check{\kappa} - \kappa_{\check{\mathfrak{g}}, \operatorname{crit}}$ to $\mathfrak{t}^*$ and the restriction of $\kappa - \kappa_{\mathfrak{g}, \operatorname{crit}}$ to $\mathfrak{t}$ are dual symmetric bilinear forms.
		
		Suppose $G$ is simple. A level $\kappa$ is {\it rational} if $\kappa$ is a rational multiple of the Killing form and {\it irrational} otherwise. A level $\kappa$ is {\it positive} if $\kappa - \kappa_{\mathfrak{g}, \operatorname{crit}}$ is a positive rational multiple of the Killing form. A level is {\it negative} if it is not positive or critical. For general reductive $G$, we say a level $\kappa$ is rational, irrational, positive, or negative if its restrictions to each simple factor are so.
		
		Let $q$ be the {\it quantum parameter} associated to $\kappa$. Namely,
		$$q = \exp(2\pi i q_{\kappa}),$$
		where $q_{\kappa}$ is the quadratic form $\Lambda \rightarrow k$ such that the associated bilinear form is $\kappa$.
		\subsubsection{(Twisted) sheaves.}
		We will work in one of the following sheaf-theoretic contexts:
		\begin{itemize}
			\item $\D(-) $ denotes the category of D-modules;
			\item $\Shv(-)$ denoted the category of \emph{weakly constructible} Betti sheaves (as in \cite{handbook}).
		\end{itemize}
		For a level $\kappa$, let $\D_{\kappa}(\Bun_{G})$ be the category of  $\kappa$-twisted D-modules as in \cite[Section 10.1]{GLCII}.
		
		By \cite[Proposition 3.1.9]{GL} from a fixed level $\kappa$ one gets a factorization gerbe $\cG_G$ on $\Gra_G$, which descends to $\Bun_G$. We let $\Shv_{\kappa}(\Bun_G)$ be the category of sheaves on $\Bun_G$ twisted by $\cG_G$.
		\begin{rem}
		Note that by \cite[G.2]{AGKRRV} all Betti sheaves on $\Bun_G$ with microsupport in $\Nilp \subset T^*\Bun_G$ are weakly constructible.
		\end{rem}
		
		\subsubsection{Ran space(s) and factorization.}
		In the geometric situation, the notation $\Ranp$ for the Ran space is borrowed from \cite{GLCII}. We refer to \emph{loc.cit.} for generalities on Ran spaces and factorization.
		
		The topological Ran space $\Ranp(M)$ is as in \cite[Chapter 5]{HA}.
		
		\subsection{Acknowledgements.} I am very grateful to Sam Raskin for suggesting this project and for many discussions along the way. I am also grateful to Dennis Gaitsgory and Lin Chen for many useful suggestions and explanations. I thank Lin Chen for explaining the contents of Section \ref{5} to me, and Dennis Gaitsgory for his comments on an earlier version of this text.
		
		\section{Generalities.}
	
	\subsection{Whittaker coefficients and Poincare functors.}\label{Betticoeff}
	
	\subsubsection{}
	Recall the Kirillov model of the Whittaker category from \cite[Section 1.6]{Whit}. Namely, consider a category $\mathbf{C}$ with the action of $\bG_m \ltimes \bG_a$. Let $\on{Kir}(\mathbf{C})$ denote the quotient $\mathbf{C}^{\bG_m} / \mathbf{C}^{\bG_m \ltimes \bG_a}$, where we use the forgetful functor 
		$$\mathbf{C}^{\bG_m \ltimes \bG_a} \hookrightarrow \mathbf{C}^{\bG_m}.$$
		The natural map $p: \mathbf{C}^{\bG_m} \rightarrow \on{Kir}(\mathbf{C})$ admits a fully faithful right adjoint denoted by $p^R$ and a fully fithful left adjoint denoted by $p^L$.

		Consider $LN$ and its non-degenerate character $\chi$, and $\mathbf{C}=\Shv_{\kappa}(\Gra_{G, \rho(\omega_X), x})^{\ker(\chi)}$. Then $\mathbf{C}$ admits an action of $\bG_m$ via $\check{\rho}$, and we get 
		\begin{ntn}
			$\Whit_{\kappa, x}(G):=\on{Kir}(\Shv_{\kappa}(\Gra_{G, \rho(\omega_X), x})).$
		\end{ntn}
		The similar construction works in the setting of Beilinson-Drinfeld affine Grassmannian:
		\begin{ntn}
			$\Whit_{\kappa}(G):=\on{Kir}(\Shv_{\kappa}(\Gra_{G, \rho(\omega_X)})).$
		\end{ntn}

		\subsubsection{}
		The Whittaker category admits a global model. Namely, for $\cZ \rightarrow \Ranp$ let $\BunNbZ$ be the stack introduced in \cite[Section 10.2.1]{GLCIII}. Denote by $\bar{p}_{\cZ}$ the projection 
		$$\BunNbZ \rightarrow \Bun_G \times \cZ.$$
		Denote by 
		$\Shv_{\kappa}(\BunNbZ)$ the corresponding category of twisted sheaves, by means of the pullback of the gerbe corresponding to $\kappa$. Following \cite[Section 4.7]{Whit} we can define a full Kirillov model subcategory $$\Whit_{\kappa}(G)^{\operatorname{glob}} \subset \Shv_{\kappa}(\BunNbZ)^{\mathbb{G}_m}.$$
		We have a naturally defined map 
		$$\pi_{\cZ}: \Gra_{G, \rho(\omega_X), \cZ} \rightarrow \BunNbZ.$$
		
		\begin{thm}\cite[Theorem 5.1.4]{Whit}
			The functor $(\pi_{\cZ})^*$ induces an equivalence 
			$$\Whit_{\kappa}(G)^{\operatorname{glob}} \xrightarrow{\cong}\Whit_{\kappa}(G).$$
		\end{thm}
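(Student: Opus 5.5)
The natural approach is the local-to-global comparison of \cite{Whit}, transplanted to the twisted Betti setting. The map $\pi_{\cZ}$ is compatible with the $\bG_m$-actions via $\check{\rho}$ and with the relevant unipotent symmetries. Hence $(\pi_{\cZ})^*$ sends $\Shv_{\kappa}(\BunNbZ)^{\bG_m}$ to $\Shv_{\kappa}(\Gra_{G,\rho(\omega_X),\cZ})^{\bG_m}$. The restriction of the gerbe to $\Gra_{G,\rho(\omega_X),\cZ}$ is the factorization gerbe of \cite[Proposition 3.1.9]{GL}. I would proceed in three steps.

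First, I would check that $(\pi_{\cZ})^*$ preserves the Kirillov subcategories. By construction $\Whit_{\kappa}(G)^{\operatorname{glob}}$ is cut out inside $\Shv_{\kappa}(\BunNbZ)^{\bG_m}$ by the condition that the object lies in the image of $p^R$ (equivalently, is right-orthogonal to the $\bG_m\ltimes\bG_a$-equivariant objects). This condition is tested against $\bG_a$-actions coming from the simple root directions, which are defined at the points of $\cZ$. Since these actions are compatible with $\pi_{\cZ}$, the functor $(\pi_{\cZ})^*$ commutes with the averaging functors, and therefore intertwines the functors $p$ and $p^R$ on the two sides.

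Second, I would prove full faithfulness. The plan is to stratify $\BunNbZ$ by the defect of the generalized $N$-reduction away from $\cZ$. The Kirillov/Whittaker condition forces objects to vanish on (or be cleaned from) all strata except the one where the reduction is nondegenerate off $\cZ$. On that open locus, $\pi_{\cZ}$ exhibits the space as the quotient of $\Gra_{G,\rho(\omega_X),\cZ}$ by the group ind-scheme $\mathfrak{L}_{\cZ}N^{\operatorname{out}}$ of $N$-maps on $X\setminus\cZ$. This group is ind-pro-unipotent, or at least has contractible fibers in the analytic topology. Pullback along such a quotient map is fully faithful on sheaves, and its essential image consists of the equivariant objects. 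A Betti version of the contractibility of the space of rational maps (\cite{Whit}, via Gaitsgory's contractibility theorem) should supply this.

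Third, I would prove essential surjectivity. I would identify the local Kirillov model with objects that are equivariant with respect to $\mathfrak{L}_{\cZ}N^{\operatorname{out}}$ against the character restricted by the residue map. The key input is that the residue pairing $\mathfrak{L}_{\cZ}N^{\operatorname{out}}\to \bG_a$ vanishes on out-maps, by the residue theorem. This lets Whittaker-type objects descend along $\pi_{\cZ}$.

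I expect this last step to be the main obstacle. Matching $\bG_m$-Kirillov models with $\chi$-twisted equivariance requires an argument at the level of categories of sheaves in the Betti setting with twisting. There is no Fourier transform available to convert the additive character into $\bG_m$-monodromy, which is precisely why the Kirillov model is used. I would first try to reduce to the statement over each $X^I$ and then to pointwise fibers over $\cZ$ using base change for twisted sheaves. I would then redo the untwisted argument of \cite{Whit}, noting that the gerbe is trivialized canonically along the unipotent directions, so that the twist does not interfere with the descent.
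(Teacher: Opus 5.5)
The paper gives no argument for this statement. It simply quotes \cite[Theorem 5.1.4]{Whit}, tacitly using that the $\kappa$-twist is harmless: the gerbe on $\BunNbZ$ is pulled back from $\Bun_G$ and, as you say, is canonically trivialized along the unipotent directions. Your closing suggestion to redo the argument of \cite{Whit} is therefore the same move as the paper's. Note also that the Betti setting is not where the difficulty lies, since the Kirillov model of \cite{Whit} exists precisely to avoid character sheaves. However, the three steps you place before that remark do not form a proof. They break exactly where the content of \cite[Theorem 5.1.4]{Whit} sits.

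Write $\Gra_{\cZ}:=\Gra_{G,\rho(\omega_X),\cZ}$ and $\mathbf{C}:=\Shv_{\kappa}(\Gra_{\cZ})^{\ker(\chi)}$.

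\emph{Step 1.} The $\bG_a$ of the local Kirillov model is not a family of simple-root $\bG_a$'s at the points of $\cZ$. It is the single quotient $\mathfrak{L}N/\ker(\chi)$, acting on $\mathbf{C}$. Neither this $\bG_a$ nor $\ker(\chi)$ acts on $\BunNbZ$, which on your locus is $\mathfrak{L}_{\cZ}N^{\operatorname{out}}\backslash\Gra_{\cZ}$: for non-abelian $N$ they do not normalize $\mathfrak{L}_{\cZ}N^{\operatorname{out}}$. For a general object of $\Shv_{\kappa}(\BunNbZ)^{\bG_m}$, $(\pi_{\cZ})^*$ produces something only $\mathfrak{L}_{\cZ}N^{\operatorname{out}}$-equivariant. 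That is not even an object of $\mathbf{C}^{\bG_m}$, where $p$ is defined. So ``$(\pi_{\cZ})^*$ commutes with averaging and intertwines $p$ and $p^R$'' is an assumption, not a consequence of equivariance.

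\emph{Step 3.} The residue theorem gives exactly $\mathfrak{L}_{\cZ}N^{\operatorname{out}}\subset\ker(\chi)$. This yields only the easy inclusion $\mathbf{C}\subset\Shv_{\kappa}(\Gra_{\cZ})^{\mathfrak{L}_{\cZ}N^{\operatorname{out}}}\simeq\Shv_{\kappa}(\mathfrak{L}_{\cZ}N^{\operatorname{out}}\backslash\Gra_{\cZ})$, i.e.\ local objects descend. Your identification of the local Kirillov model with objects that are $\mathfrak{L}_{\cZ}N^{\operatorname{out}}$-equivariant against the restricted character is false. That character is trivial, so you would get all sheaves.

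\emph{The missing idea} is the converse comparison inside $\Shv_{\kappa}(\BunNbZ)^{\bG_m}$. You must show that an object satisfies the global Kirillov condition of \cite[Section 4.7]{Whit} if and only if its pullback under $(\pi_{\cZ})^*$ satisfies two local conditions. The global condition is imposed on a stack on which $\mathfrak{L}_{\cZ}N$ does not act. The local conditions are:
\begin{itemize}
\item equivariance along the infinite-dimensional remaining directions $\mathfrak{L}_{\cZ}N^{\operatorname{out}}\backslash\ker(\chi)$;
\item right-orthogonality to $\mathbf{C}^{\bG_m\ltimes\bG_a}$, i.e.\ lying in the image of $p^R$.
\end{itemize}
Without this you get neither essential surjectivity nor, since $p$ is a localization, full faithfulness on $\Whit_{\kappa}(G)^{\operatorname{glob}}$.

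\emph{Step 2}, even granted, only says that pullback along $\Gra_{\cZ}\to\mathfrak{L}_{\cZ}N^{\operatorname{out}}\backslash\Gra_{\cZ}$ is fully faithful on all sheaves, which does not see the Whittaker condition. That fact is elementary because $\mathfrak{L}_{\cZ}N^{\operatorname{out}}$ is ind-unipotent. Gaitsgory's contractibility theorem for rational maps into $G$ plays no role.
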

		
		We refer to \cite[Section 9]{GLCII} and \cite[Section 10.2]{GLCIII} for more details.
		
		\begin{rem}\label{dualwhit}
			Note that the pairing $\Gamma_!(- \overset{*}{\otimes}-)$ induced the duality between between $\Whit_{\kappa}(G)$ and $\Whit_{-\kappa}(G)$.
		\end{rem}
		
		\subsubsection{}
		
		We define the functor $$\coeff^{\loc}:\Shv_{\kappa, \Nilp}(\Bun_G) \rightarrow \Whit_{\kappa}(G)$$
		as $\coeff^{\loc}:= \Av_{*, \chi}\circ\pi^!$. Alternatively, the functor $\coeff^{\loc}$ can also be described as the composition
		\begin{equation}
			\Shv_{\kappa, \Nilp}(\Bun_G) \xrightarrow{!-\text{pull}} \Shv_{\kappa}(\BunNbZ)^{\bG_m} \xrightarrow{\Av_{*}}  \Shv_{\kappa}(\BunNbZ/\ker(\chi))^{\bG_m} \xrightarrow{p} \Whit_{\kappa}(G)
		\end{equation}
		We define the functor $$\coeff_!^{\loc}:\Shv_{\kappa, \Nilp}(\Bun_G) \rightarrow \Whit_{\kappa}(G)$$
		as the composition 
		\begin{equation}
			\Shv_{\kappa, \Nilp}(\Bun_G) \xrightarrow{*-\text{pull}} \Shv_{\kappa}(\BunNbZ)^{\bG_m} \xrightarrow{\Av_{!}}  \Shv_{\kappa}(\BunNbZ/\ker(\chi))^{\bG_m} \xrightarrow{p} \Whit_{\kappa}(G).
		\end{equation}
		To define Poincare functors, recall the Beilinson's projector functor $P$ of \cite[Section 18.2]{AGKRRV}. We have an adjunction 
		\[
		\begin{tikzcd}
			P:\Shv_{\kappa}(\Bun_G)\ar[r,  shift left, ""]
			& \arrow[l, hookrightarrow, shift left, ""]   \Shv_{\kappa, \Nilp}(\Bun_G): \iota.
		\end{tikzcd}
		\]
		We define functor $\Poinc_!, \Poinc_* : \Whit_{\kappa} \rightarrow \Shv_{\Nilp}(\Bun_G)$ to be the compositions 
		\begin{equation}
			\begin{split}
				\Whit_{\kappa}(G) \xrightarrow{p^L} \Shv_{\kappa}(\BunNbZ/\ker(\chi))^{\bG_m} \xrightarrow{\oblv} \Shv_{\kappa}(\BunNbZ)^{\bG_m} \\
				\Shv_{\kappa}(\BunNbZ)^{\bG_m}  \xrightarrow{ !-\text{push}}\Shv_{\kappa}(\Bun_G)\xrightarrow{P}\Shv_{\kappa, \Nilp}(\Bun_G)
			\end{split}
		\end{equation} and \begin{equation}
			\begin{split}
				\Whit_{\kappa}(G) \xrightarrow{p^R} \Shv_{\kappa}(\BunNbZ/\ker(\chi))^{\bG_m} \xrightarrow{\oblv} \Shv_{\kappa}(\BunNbZ)^{\bG_m} \\
				\Shv_{\kappa}(\BunNbZ)^{\bG_m}  \xrightarrow{ *-\text{push}}\Shv_{\kappa}(\Bun_G)\xrightarrow{P}\Shv_{\kappa, \Nilp}(\Bun_G)
			\end{split}
		\end{equation} respectively. 
		\begin{rem}
			Note that the functors $(\Poinc_!, \coeff^{\loc})$ form an adjoint pair.
		\end{rem}
		\begin{rem}\label{duall}
			The functor $\Poinc_!$ is identified with the dual of $\coeff^{\loc}_!$ under the duality between  $\Shv_{\kappa, \Nilp}(\Bun_G)$ and $\Shv_{-\kappa, \Nilp}(\Bun_G)$ induced by $\Gamma_!(- \overset{*}{\otimes}-)$ ({\cite[Theorem G.5.2]{AGKRRV}}) and the duality between $\Whit_{\kappa}(G)$ and $\Whit_{-\kappa}(G)$ in Remark \ref{dualwhit}.
		\end{rem}
		
		\subsection{Constant term and Eisenstein series functors.}
		
		\subsubsection{} Let $\kappa$ be a level for $\mathfrak{g}$, and recall that we use the same character $\kappa$
		to denote the corresponding level for $\mathfrak{m}$. 
		
		We let
		$$\operatorname{CT}^-_!:\Shv_{\crit_G+\kappa}(\Bun_G) \to \Shv_{\crit_M+\kappa}(\Bun_M)$$ 
		denote the composition
		\begin{multline*} 
			\Shv_{\kappa}(\Bun_G)
			\overset{(p^-)^*}\longrightarrow 
			\Shv_{\kappa}(\Bun_{P^-}) \simeq \Shv_{\on{dlog}(\det^{\otimes \frac{1}{2}}_{\Bun_{G,M}})+\kappa}(\Bun_{P^-})
			\overset{\otimes \det^{\otimes -\frac{1}{2}}_{\Bun_{G,M}}}\longrightarrow \\
			\to \Shv_{\kappa}(\Bun_{P^-})\overset{(q^-)_!}\longrightarrow 
			\Shv_{\kappa}(\Bun_M)=  \Shv_{\kappa}(\Bun_M)\overset{[-\on{shift}]}\longrightarrow \Shv_{\kappa}(\Bun_M),
		\end{multline*} 
		
		where $$\operatorname{shift}:=  \dim. \operatorname{rel.}(\Bun_{P^-} / \Bun_M).$$
		
		By \cite[Proposition 1.1.2]{DG} the functor $\operatorname{CT}^-_!$ admits a right adjoint $\Eis_*$ when restricted to one connected component. 
		
		Moreover, one can also define the functor $\Eis_!$ as $[\on{shift}]\circ (p^-)_! \circ (\otimes \det^{\otimes -\frac{1}{2}}_{\Bun_{G,M}})\circ (q^-)^*$. By  \cite[Proposition 1.2.3]{DG}, the right adjoint $\on{CT}^-_*$ to $\Eis_!$ exists and is canonically isomorphic to $\operatorname{CT}_!$ when restricted to one connected component.
		
		\begin{pr}\label{eisctnilp}
			The functors $\operatorname{CT}^-_!$, $\Eis_*$, $\Eis_!$, $\on{CT}^-_*$ preserve nilpotence of singular support.
		\end{pr}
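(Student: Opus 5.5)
The plan is to reduce everything to the untwisted statement, which is known: by \cite[Theorem 1.2.2 / Proposition 1.2.3]{DG} and \cite[Section 18]{AGKRRV} (see also the Betti version in \cite{GLCI}), constant term and Eisenstein functors preserve nilpotent singular support.

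\emph{Step 1: reduce to one pair of functors and use adjunction.} Since $\Shv_{\kappa,\Nilp}(\Bun_G)\subset\Shv_\kappa(\Bun_G)$ is compactly generated and preserved by colimits, it is enough to treat $\Eis_!$ and $\on{CT}^-_!$, and then pass to $\Eis_*$ and $\on{CT}^-_*$ by adjunction. Suppose $\Eis_!$ maps $\Shv_{\kappa,\Nilp}(\Bun_M)$ into $\Shv_{\kappa,\Nilp}(\Bun_G)$, and $\on{CT}^-_!$ does the same in the other direction. The right adjoints are determined by the left adjoints. Moreover, $\iota$ has the right adjoint given by Beilinson's projector (the variant that is right adjoint), and the right adjoint of the restricted functor is $P\circ \on{CT}^-_*\circ\iota$. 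Using $\on{CT}^-_*\simeq\on{CT}_!$ (\cite[Proposition 1.2.3]{DG}, componentwise), the restriction of $\on{CT}^-_*$ agrees with a constant term functor, for the opposite parabolic, that we have already shown preserves $\Nilp$. The same argument handles $\Eis_*$ as the right adjoint of $\on{CT}^-_!$, via its identification with an $\Eis_!$-type functor through the second adjointness.

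\emph{Step 2: singular support estimate.} For $\Eis_!=(p^-)_!\circ(q^-)^*$ up to twist and shift, I would use the standard microlocal bounds. Pullback along the smooth map $q^-$ gives $SS((q^-)^*\cF)\subset (dq^-)^*SS(\cF)$. The line bundle twist and the gerbe $\kappa$ do not affect singular support, since locally the twisted category is identified with the untwisted one. For the pushforward along $p^-$, which is schematic but not proper, I would use Drinfeld's compactification $\widetilde{\Bun}_{P^-}$ and its proper map to $\Bun_G$, exactly as in \cite{DG} or \cite[Section 18]{AGKRRV}. The resulting cotangent computation then reduces to a fiberwise statement. A Higgs field $A\in\Gamma(X,\mathfrak g^*_{\cP}\otimes\omega)$ that annihilates the tangent directions of $\Bun_{P^-}$ and comes from a nilpotent $M$-Higgs field is itself nilpotent. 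Dually, for $\on{CT}^-_!$, restricting a nilpotent $G$-Higgs field that is compatible with $P^-$ to $\mathfrak m$ gives a nilpotent element. Both facts are linear algebra: nilpotence is preserved by the projection $\mathfrak p^-\to\mathfrak m$, and by extension from $\mathfrak m$ through $\mathfrak p^-$.

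\emph{Step 3: the main obstacle.} The hard step is that $p^-$ is not proper, so the naive bound $SS(f_!\cF)\subset$ (push of $SS$) can fail for the $!$-pushforward. I would first try to transplant the \cite{AGKRRV}/\cite{GLCI} argument verbatim. That argument works \'etale-locally on $\Bun_G$, where the gerbe $\cG_G$ is trivialized, and twisting is compatible with all the functors involved. So the twisted claim becomes the untwisted claim locally, and singular support is a local notion. If that proves awkward, the fallback is to use instead the characterization of $\Shv_{\Nilp}$ as the sheaves that are lisse along Hecke modifications, in the form given by \cite{NY}. With that characterization, preservation follows from the compatibility of $\Eis$ and $\on{CT}$ with the $x$-local Hecke functors via the geometric Satake restriction to $M$; this compatibility is valid for twisted Satake as well.
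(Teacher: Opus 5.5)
Your primary route (Steps 1--2 and the \'etale-local reduction in Step 3) does not prove the statement.

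The Higgs-field linear algebra is correct: an element of $(\mathfrak n_P^-)^\perp=\mathfrak p^-$ with nilpotent image in $\mathfrak m$ is nilpotent. But it only feeds the naive microlocal bounds, and those bounds are exactly what is unavailable here.
\begin{enumerate}
\item Replacing $p^-$ by the proper map $\widetilde{\Bun}_{P^-}\to\Bun_G$ does not remove the non-properness. It moves it into the $!$-extension along the open embedding $\Bun_{P^-}\hookrightarrow\widetilde{\Bun}_{P^-}$, and the singular support of that extension is not controlled by the singular support on the open part.
\item For $\on{CT}^-_!$, the pullback along $p^-$ is neither smooth nor, in general, non-characteristic for $\Nilp$: sections of $\mathfrak n_P^-\otimes\omega_X$ are nilpotent and are killed by $dp^-$. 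Also, $q^-$ is not proper.
\item The reduction to the untwisted case \'etale-locally on $\Bun_G$ does not work. The untwisted theorem is a global statement about sheaves on $\Bun_M$ and $\Bun_G$, and its standard proof goes through Hecke functors; it is not a local microlocal estimate. So after trivializing $\cG_G$ on a chart there is nothing to apply it to.
\item Step 1 fails for $\Eis_*$. Preservation of $\Nilp$ does not pass to right adjoints, and describing the right adjoint of the \emph{restricted} functor says nothing about the unrestricted one. For $\on{CT}^-_*$ you are saved by the genuine isomorphism $\on{CT}^-_*\simeq\on{CT}_!$ of \cite[Proposition 1.2.3]{DG}. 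But second adjointness identifies constant term functors, not Eisenstein functors, and there is no identification of $\Eis_*$ with an $\Eis_!$-type functor.
\end{enumerate}
What would suffice for $\Eis_*$ is that $\on{CT}^-_!$ sends $\ker P$ for $G$ into $\ker P$ for $M$. Then for $\cF\in\Shv_{\kappa,\Nilp}(\Bun_M)$ and $K\in\ker P$ one has $\Hom(K,\Eis_*\cF)\simeq\Hom(\on{CT}^-_!K,\cF)=0$, so $\Eis_*\cF\in(\ker P)^\perp=\Shv_{\kappa,\Nilp}(\Bun_G)$. That is an additional input your argument does not supply.

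Your fallback is exactly the paper's proof. It combines Hecke compatibility of the functors (\cite{BG}, and \cite{FH} in the twisted setting) with the Hecke description of nilpotent singular support from \cite{AGKRRV}, as in \cite[Proposition 1.4.2]{GR}. Since this treats all four functors uniformly, your Step 1 becomes unnecessary. Two remarks on it:
\begin{enumerate}
\item The direction you need at the end, Hecke-lisse implies singular support in $\Nilp$, is the theorem of \cite{AGKRRV}. \cite{NY} gives only the converse.
\item A caveat your fallback shares with the paper's one-line argument: as the introduction notes, for irrational $\kappa$ the twisted spherical category is $\Vect$. Hecke-lisseness is then automatic and cannot by itself cut out $\Nilp$. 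For example, a skyscraper at a point of $\Bun_G$ where the gerbe is trivialized is Hecke-lisse but not nilpotent. So the Hecke route has content only when the twisted Satake category is large enough to detect non-nilpotent Higgs fields.
\end{enumerate}
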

		\begin{proof}
			Follows from the Hecke compatibility of the functors (\cite{BG} and \cite{FH}) and Hecke description of the nilpotent singular support of \cite{AGKRRV} as in \cite[Proposition 1.4.2]{GR}.
		\end{proof}
		\begin{ntn}
			We are going to use the same notation for the functors between $\Shv_{\kappa, \Nilp}(\Bun_G)$ and $\Shv_{\kappa, \Nilp}(\Bun_M)$ induced by $\operatorname{CT}^-_!$, $\Eis_*$, $\Eis_!$, $\on{CT}^-_*$ via Proposition \ref{eisctnilp}.
		\end{ntn}
		
		Recall the following statement from \cite{DG}:
		\begin{thm}\cite[Theorem 1.2.3]{DG}\label{secondadj}
			The functors $\operatorname{CT}^{\mu, -}_!$ and $\operatorname{CT}^{\mu}_*$ are canonically isomorphic.
		\end{thm}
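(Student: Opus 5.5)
This is Drinfeld–Gaitsgory's "strange" second adjointness, stated here in the twisted Betti setting. I would prove it by transporting their argument rather than re-deriving it. Fix the connected component $\Bun_M^{\mu}$ of $\Bun_M$, so that all four functors are defined by \cite[Proposition 1.1.2, 1.2.3]{DG}. The goal is a natural transformation $\operatorname{CT}^{\mu}_* \to \operatorname{CT}^{\mu,-}_!$ that is an isomorphism.

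The map comes from Drinfeld's compactification $\BunPmt$ and its opposite, together with the stack $\widetilde{Z}$ of generically transverse pairs of a $P$-reduction and a $P^-$-reduction. Over $\widetilde{Z}$ there is a Braden-type correspondence
\[
\Bun_P \leftarrow \Bun_M \rightarrow \Bun_{P^-},
\]
which should be read as the hyperbolic localization picture for a $\bG_m$-action contracting onto $\Bun_M$. The transformation is built as follows.
\begin{itemize}
\item Use the unit of the adjunction $(\Eis_!, \operatorname{CT}^{\mu}_*)$ and the counit of $(\operatorname{CT}^{-}_!, \Eis_*)$.
\item This reduces the construction to producing a map $\Eis^-_* \circ \Eis_! \to \on{Id}$-type kernel on $\Bun_M\times\Bun_M$.
\item Equivalently, one needs a map between the two kernels
\[
(q\times q^-)_!(p\times p^-)^*\Delta \quad\text{and}\quad \Delta_{\Bun_M,*}\,,
\]
restricted to the open stratum $\Bun_M \subset \widetilde{Z}$.
\end{itemize}
This is \cite[Section 3]{DG}. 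The twisting by $\kappa$ is harmless here: the gerbe $\cG_G$ pulls back compatibly to $\Bun_P$, $\Bun_{P^-}$ and $\Bun_M$, and the $\det^{1/2}$ twists in the definition of $\operatorname{CT}^-_!$ were chosen exactly to match the two pullbacks of the gerbe to $\Bun_M$.

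To show the map is an isomorphism, I would follow the DG strategy. Stratify $\widetilde{Z}$ by the defect of transversality (strata indexed by the Weyl group relative to $M$ and by coweights). Then show that the contribution of each non-open stratum to the kernel vanishes after pairing. The vanishing is a contraction principle: on each such stratum there is a $\bG_m$-action, via a central cocharacter of $M$, that contracts it onto a fixed locus, and the kernel on it is $\bG_m$-monodromic with a nonzero weight. By Braden's theorem, available for Betti sheaves and in the twisted setting because the gerbe is $\bG_m$-equivariantly trivialized on the fixed locus, the $!$-pushforward of such a kernel vanishes. On the open stratum the correspondence is literally Braden's hyperbolic localization diagram, so the transformation there is the Braden isomorphism $(q^-)_!(p^-)^* \simeq q_*p^!$ up to the stated shifts and twists.

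The main obstacle is the vanishing on the boundary strata in the twisted weakly-constructible Betti context. DG work with D-modules and use an argument on $\Bun_M\times\Bun_M$ that is effectively local, and one must check that the twisting gerbe is $\bG_m$-equivariant along the contracting actions with trivial monodromy. I would try to verify this first using the factorization structure of $\cG_G$ \cite[Proposition 3.1.9]{GL}. Central cocharacters of $M$ act on $\Gra_M$ compatibly with the factorization gerbe, and its monodromy along a central $\bG_m$ is given by $q$ restricted to that cocharacter. If this monodromy is nontrivial, I would instead absorb it into a $\bG_m$-equivariant twist of the kernel. Braden's theorem holds for monodromic sheaves with any fixed monodromy, so the argument should go through unchanged.
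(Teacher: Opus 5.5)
The paper gives no argument of its own here; it simply imports \cite[Theorem 1.2.3]{DG}. So your sketch has to stand as a reconstruction of Drinfeld--Gaitsgory's proof, and it is missing its central ingredient.

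\textbf{The adjunction data.} From the unit of $(\Eis_!,\CT_*)$ and the counit of $(\CT^-_!,\Eis^-_*)$ alone you cannot manufacture a map between $\CT_*$ and $\CT^-_!$; ``$\Eis^-_*\circ\Eis_!$'' does not even compose. What must be produced is an adjunction, e.g.\ $\CT_*\dashv\Eis^-_*$. That means a counit $\CT_*\circ\Eis^-_*\to\Id_{\Bun_M}$ and a unit $\Id_{\Bun_G}\to\Eis^-_*\circ\CT_*$ satisfying the triangle identities.
\begin{itemize}
\item The counit is the easy half. It is restriction to the open substack $\Bun_M\subset\Bun_P\times_{\Bun_G}\Bun_{P^-}$ of everywhere-transverse pairs, and it is essentially the only datum your construction yields.
\item The other half is a map of kernels on $\Bun_G\times\Bun_G$, relating the diagonal to the pushforward along $\Bun_P\times_{\Bun_M}\Bun_{P^-}\to\Bun_G\times\Bun_G$. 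These have different supports. There is also no $\bG_m$-action on $\Bun_G$ with attractor $\Bun_P$ to supply this map: the contracting action of a central cocharacter of $M$ on $G$ is by inner automorphisms, so it acts trivially on $\Bun_G$ and $\Bun_P$.
\end{itemize}
Drinfeld and Gaitsgory obtain this $\Bun_G\times\Bun_G$ datum as a specialization map along Drinfeld's degeneration of the diagonal $\Bun_G\to\Bun_G\times\Bun_G$, built from the Vinberg semigroup. This is a family over $\mathbb{A}^1$ whose generic fiber is the diagonal and whose special fiber contains $\Bun_P\times_{\Bun_M}\Bun_{P^-}$ as an open substack. That family is the substitute for the nonexistent $\bG_m$-action, and the adjunction identities are then verified geometrically. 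Neither $\widetilde{\Bun}_{P^-}$ nor a stack of generically transverse pairs (a Zastava-type object) can play this role.

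\textbf{The isomorphism step.} The step you propose would also fail. Showing that the non-open strata of $\Bun_P\times_{\Bun_G}\Bun_{P^-}$ contribute nothing to the kernel on $\Bun_M\times\Bun_M$ amounts to $\CT\circ\Eis\simeq\Id$, which is false. Already for $G=\SL_2$, $P=B$, the stratum where the two line subbundles coincide lies over the graph of the nontrivial Weyl element acting on $\Bun_T$, and it produces the functional-equation term in $\CT\circ\Eis$.
\begin{itemize}
\item In the untwisted case the kernel is built from constant sheaves, so there is no nonzero weight to exploit.
\item Braden's theorem is an isomorphism statement, not a vanishing statement.
\item Even an isomorphism $\CT\circ\Eis\simeq\Id$ would be a different statement from second adjointness, and it would still not give you the $\Bun_G\times\Bun_G$ datum.
\item The claim ``on the open stratum the correspondence is literally Braden's diagram'' presupposes exactly the $\bG_m$-action that does not exist.
\end{itemize}

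\textbf{The twisted Betti setting.} Here the point that genuinely needs checking is that the $\kappa$-gerbe extends over the Vinberg degeneration, compatibly with its pullbacks to $\Bun_P$, $\Bun_{P^-}$ and $\Bun_M$, so that the specialization map is defined for twisted sheaves. Monodromy of the gerbe along central cocharacters is not the relevant issue.
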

		
		Note that under duality in Remark \ref{duall} we have 
		\begin{equation}
			\Eis_! \cong (\operatorname{CT}_!)^\vee;
		\end{equation}
		\begin{equation}
			\operatorname{CT}_! \cong (\Eis_!)^\vee.
		\end{equation}
		
		\subsubsection{}	For an $M$-torsor $\cP_M$ we define the translated constant term and Eisenstein functors
		$$\Eis^-_{!,\cP_{Z_M}}:\Shv_{\crit_G+\kappa}(\Bun_G)\rightleftarrows \Shv_{\crit_M+\kappa}(\Bun_M):\on{CT}_{*,\cP_{Z_M}}^-$$
		by
		$$\Eis^-_{!,\cP_{Z_M}}:=\Eis^-_!\circ (\on{transl}_{\cP_{Z_M}})_* \text{ and }
		\on{CT}_{*,\cP_{Z_M}}^-:=(\on{transl}_{\cP_{Z_M}})^*\circ \on{CT}_*^-,$$
		where $$\on{transl}_{\cP_{Z_M}}:\Bun_M\to \Bun_M$$
		is the automorphism given by translation by $\cP_M$.
		
		\subsection{Naive and corrected quantum Jacquet functors.} 
		The goal of the rest of this section is to recall the constructions of local and global versions of enhanced Jacquet functors, which are going to be used in the sequel.
		
		We now recall the contents of \cite[Section 5.1]{GL}. Let $P$ be a parabolic group of $G$, let $P \twoheadrightarrow M$ be the Levi quotient, and let $N_P$ be the corresponding unipotent group. Let $A$ denote roots of unity in $k$. 
		
		Consider
		\[
		\begin{tikzcd}
			\Gra_G & \arrow[l, "p"']  \Gra_P  \arrow[r, "q"] & \Gra_M.
		\end{tikzcd}
		\]
		
		\begin{lm}\cite[5.1]{GL}
			For any $S \rightarrow \Ranp$ the pullback along $q$ induces the equivalences
			$$\ger(S \times_{\Ranp} \Gra_M) \rightarrow \ger(S \times_{\Ranp} \Gra_P)$$
			and 
			$$\fger(S \times_{\Ranp} \Gra_M) \rightarrow \fger(S \times_{\Ranp} \Gra_P)$$
		\end{lm}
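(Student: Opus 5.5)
The approach is to show that $q: S\times_{\Ranp}\Gra_P \to S\times_{\Ranp}\Gra_M$ behaves, for the purposes of étale/Betti cohomology with coefficients in $A$, like a fibration with contractible fibers. Gerbes with band $A$ are governed by $H^1$ and $H^2$ with $A$-coefficients, together with the corresponding groupoid of $A$-torsors. So it suffices to show that $q^*$ is an equivalence on the relevant truncation of the derived category of $A$-modules, i.e. that the unit $A \to q_*q^*A$ is an isomorphism, universally in $S$.

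\emph{Step 1 (fibers are contractible).} Fix a point of $\Gra_M$ over $\ul{x}\in S$. The fiber of $q$ is a twisted form of $\Gra_{N_P}$ at $\ul{x}$. This ind-scheme is an increasing union of affine spaces: filter $LN_P$ by $t^{-n}$-congruence subgroups modulo $L^+N_P$, and use the filtration of $N_P$ by vector groups. Moreover the $\bG_m$-action coming from a dominant regular cocharacter of $Z_M$ contracts it to the unit point. By a Braden-type / homotopy-invariance argument, $q_*q^*A \cong A$ on each closed finite-type piece.

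\emph{Step 2 (uniformity and passage to colimits).} Use the $\bG_m$-action on $\Gra_P$ contracting onto the image of $\Gra_M\hookrightarrow \Gra_P$ via the Levi splitting $M\subset P$. This exhibits $\Gra_M$ as the fixed locus and $q$ as the attracting map. The contraction principle for $\bG_m$-equivariant ind-schemes then gives $q_*q^* \cong \id$ on $A$-sheaves over arbitrary $S$, compatibly with base change in $S$. Gerbes and their morphisms are computed as limits over the ind-system and over $S$, and $\ger(-)$ sends colimits of prestacks to limits. So the equivalence on each finite-type piece assembles to the claimed equivalence for $\ger$.

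\emph{Step 3 (factorization).} A factorization gerbe is a gerbe together with identifications over disjoint loci of $\Ranp$ and higher coherences. These data live in the gerbe categories on products $(S\times_{\Ranp}\Gra_M)^{\times I}$ restricted to disjoint loci. Since $q$ is compatible with factorization ($\Gra_P$ factorizes compatibly), Step 2 applied to $q^{\times I}$ over the disjoint loci gives fully faithfulness and essential surjectivity levelwise. Hence $q^*$ is an equivalence on $\fger$.

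The main obstacle is Step 2. Here $\Gra_P\to\Gra_M$ is only an ind-map whose connected components over $\Gra_M$ are indexed by $\pi_1$-data, and the contraction must be made uniform over $\Ranp$ with moving points, where $\Gra_P$ is not ind-proper. I would first try the attractor formalism of Drinfeld–Gaitsgory. In that formalism, $\Gra_P$ is (up to a connected-component issue) the attractor of $\Gra_G$ for a suitable cocharacter. Contractibility of $\bG_m$-attracting fibrations then holds on constructible $A$-sheaves in families.
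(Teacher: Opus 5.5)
Your argument is correct. The paper gives no proof of its own here: the lemma is quoted from \cite[5.1]{GL}. The mechanism you isolate is the standard argument for this statement: contract $\Gra_P$ onto the Levi section $s\colon \Gra_M\to\Gra_P$ using a cocharacter $\gamma$ of $Z(M)$ that is positive on the roots of $N_P$.

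Where you differ is in the packaging. You pass through the sheaf-level contraction principle $A\simeq q_*q^*A$ on finite-type pieces. That choice is what creates the ``main obstacle'' you flag: choosing ind-presentations, base change of $q_*$ for a non-proper $q$ over arbitrary $S$, and attractors in $\Gra_G$. None of this is needed if you use the contraction as a homotopy.

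\begin{itemize}
\item The family of endomorphisms $\on{Ad}_{\gamma(t)}$ of $P$, $t\in\mathbb{A}^1$, degenerates at $t=0$ to $P\to M\to P$.
\item By functoriality it therefore defines an action of the monoid $\mathbb{A}^1$ on $S\times_{\Ranp}\Gra_P$, for every $S$ at once. The action is over $\Gra_M$, with $t=1$ acting by the identity and $t=0$ by $s\circ q$.
\item Being induced from the group, this action is compatible with factorization.
\item For any prestack $Y$ one has $\ger(Y\times\mathbb{A}^1)\simeq\ger(Y)$. Reduce to affine $Y$, where this is $\mathbb{A}^1$-invariance of cohomology with torsion coefficients.
\item The homotopy then gives $q^*\circ s^*\simeq \on{id}$ on gerbes, while $s^*\circ q^*=\on{id}$ holds on the nose.
\end{itemize}

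Hence $q^*$ is an equivalence with inverse $s^*$, with no finite-type approximation, no base-change issue and no properness needed. Your Step~1 becomes superfluous; on its own, fiberwise contractibility of $\Gra_{N_P}$ would not control $q_*$ for a non-proper map anyway.

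Your Step~3 is fine. By factorization, $(\Gra_P^{\times I})_{\disj}\simeq \Ranp^I_{\disj}\times_{\Ranp}\Gra_P$ compatibly with $q$. So the statement for all affine $S$, hence for all prestacks over $\Ranp$, gives a levelwise equivalence of the diagrams whose limits compute $\fger(\Gra_M)$ and $\fger(\Gra_P)$.
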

		
		For $\cG_G \in \fger(\Gra_G)$, denote its pullback to $\Gra_P$ by $\cG_P$. Let $\cG_G$ be the corresponding factorization gerbe on $\Gra_M$. Then for any $S \rightarrow \Ranp$ the functors 
		$$p^!: \Shv_{\kappa}(\Gra_G) \rightarrow \Shv_{\kappa}(\Gra_P)$$
		and
		$$q_*: \Shv_{\kappa}(\Gra_P) \rightarrow \Shv_{\kappa}(\Gra_M)$$
		are well-defined by \cite{DG}. We refer to the resulting map $ q_*\circ p^!$ of sheaves of categories over $\Ranp$ 
		$$\Shv_{\kappa}(\Gra_G) \rightarrow  \Shv_{\kappa}(\Gra_M)$$
		as the {\it naive Jacquet functor}.
		
		However, to make this functor compatible with geometric Satake and with factorization, one need to perform a certain correction by applying a cohomlogical shift. This procedure is described in \cite[5.3]{GL}.
		
		We denote the resulting functor 
		$$\Shv_{\kappa}(\Gra_G) \rightarrow  \Shv_{\kappa}(\Gra_M)$$
		by $J_{\Gra}^{-. *}$.
		However, we will also need a version of Jacquet functors which encode the Hecke structure as well, i.e., the so-called {\it enhanced} Jacquet functors. We give the definition in Section \ref{enhjacq}. Before that, we digress to introduce the background, mostly discussed in \cite{GLsmall} and \cite{GLCIII}.
		
		\subsection{The metaplectic semi-infinite category.}
		
		Consider the factorization category 
		$$I(G, P^{-}) := \Shv_{\kappa}(\Gra_G)^{LN_P^{-} \cdot L^+M}.$$
		Here the renormalization procedure is with respect to $L^+M$ and is given as in \cite[Section 1.2]{GLCIII}.
		
		As in \cite[14.2]{GLsmall}, we have that $I(G, P^{-})$ admits a natural action of the spherical category $\Sph_q(G)$. Also, as in \cite[Section 1.3]{GLCIII}, we get that $I(G, P^{-})$ admits a (corrected) action of $\Sph_q(M)$ commuting with the action $\Sph_q(G)$.
		
		\subsubsection{} We have a factorization functor
		\begin{equation}\label{SI1}
			\begin{split}
				I(G, P^{-}) := \Shv_{\kappa}(\Gra_G)^{LN_P^{-} \cdot L^+M} \rightarrow \Shv_{\kappa}(\Gra_{P^-})^{LN_P^{-} \cdot L^+M} \cong \\
				\cong \Sph_{q}(M) \xrightarrow{[\operatorname{shift}]} \Sph_{q}(M),
			\end{split}
		\end{equation}
		where 
		\begin{itemize}
			\item the second arrow is $(p^-)^!$,
			\item the last arrow is the shift by $\langle \lambda, 2 \check{\rho}_{P} \rangle$ on $\Gr_M^{\lambda}$.
		\end{itemize}
		
		Analogous to \cite[Proposition 1.5.3]{ICII} and \cite{FH}, the functor  (\ref{SI1}) admits a factorization left adjoint:
		
		\begin{equation}
			\textbf{ind}_{\Sph_{q} \rightarrow \frac{\infty}{2}}: \Sph_{q}(M) \xrightarrow{-[\operatorname{shift}]} \Sph_{q}(M) \xrightarrow{\cong} \Shv_{\kappa}(\Gra_{P^-})^{LN_P^{-} \cdot L^+M} \xrightarrow{(p^-)_!}I(G, P^{-}) .
		\end{equation}
		
		Define 
		$$\Delta^{-, \frac{\infty}{2}} := 	\textbf{ind}_{\Sph_{q} \rightarrow \frac{\infty}{2}}(\delta_{1, \Gra_M}).$$
		In other words, $\Delta^{-, \frac{\infty}{2}}$ is the $!$-extension of the dualizing sheaf on the $LN_{P^-}$-orbit through the
		origin in $\Gra_G$. In the quantum context the object $\Delta^{-, \frac{\infty}{2}}$ still satisfies the properties described in \cite[1.3.5-1.3.9]{GLCIII}.
		We also have a functor 
		\begin{equation}
			\Sph_{q}(M) \xrightarrow{-[\operatorname{shift}]} \Sph_{q}(M) \xrightarrow{(p^-)^!} \Shv_{\kappa}(\Gra_{P^-})^{LN_P^{-} \cdot L^+M} \xrightarrow{(q^-)_*}I(G, P^{-}) ,
		\end{equation}
		denoted by $	\textbf{ind}^*_{\Sph_{q} \rightarrow \frac{\infty}{2}}$. Set 
		$$\nabla_{\Sph_{q} \rightarrow \frac{\infty}{2}} := 	\textbf{ind}^*_{\Sph_{q} \rightarrow \frac{\infty}{2}}(\delta_{1, \Gra_M}).$$
		The object $\nabla_{\Sph_{q} \rightarrow \frac{\infty}{2}}$ satisfies the properties described in \cite[Section 1.3.10-1.3.12]{GLCIII}.
		Recall that there is also a factorization algebra 
		$$\ic_q^{-, \frac{\infty}{2}}$$
		in $I(G, P^{-})$. It is constructed in \cite[Section 13]{GLsmall} for the principle parabolic. The construction for general parabolic follows \cite{FH} and \cite{DL}.
		This algebra is unital and  equipped with homomorphisms of unital factorization algebras
		$\Delta_{\Sph_{q} \rightarrow \frac{\infty}{2}} \rightarrow \ic^{-, \frac{\infty}{2}}_q \rightarrow \nabla_{\Sph_{q}\rightarrow \frac{\infty}{2}}.$
		\subsection{Enhanced quantum Jacquet functors.}\label{enhjacq}
		In this section we recall the construction given in \cite[Section 15]{GLsmall} and \cite[1.8]{GLCIII} of the generalized Jacquet functors.
		The functors $J_{\Gra}^{-. !*}$, $J_{\Gra}^{-. *}$, and $J_{\Gra}^{-. !}$ are defined using the correspondence
		$$\Gra_M \leftarrow \Gra_M \times_{\Ranp} \Gra_G \rightarrow \Gra_G$$
		using the kernels $\ic_{q^{-1}}^{-, \frac{\infty}{2}}$, $\nabla_{\Sph_{q^{-1}} \rightarrow \frac{\infty}{2}}$, and $\Delta_{\Sph_{q^{-1}}\rightarrow \frac{\infty}{2}}$ respectively. We can also perform the twisting by $\rho(\omega_X)$ as in \cite[1.8.5]{GLCII}, and we denote the result by the same notation $J_{\Gra}^{-. !*}$, $J_{\Gra}^{-. *}$, and $J_{\Gra}^{-. !}$.
		\begin{rem}
			The functors $J_{\Gra}^{-, !*}$, $J_{\Gra}^{-, *}$, and $J_{\Gra}^{-, !}$, when restricted to $\Whit_{\kappa}(G)$ map to $\Whit_{\kappa}(M) \subset \Shv_{\kappa}(\Gra_{M})$. 
		\end{rem}
		\begin{ntn}
			We will denote by $J_{\Whit}^{-, !*}$, $J_{\Whit}^{-, *}$, and $J_{\Whit}^{-, !}$ the resulting functors 
			$$\Whit_{\kappa}(G) \rightarrow \Whit_{\kappa}(M).$$
		\end{ntn}
		
		\section{Miraculous duality for Betti sheaves.}
		
		In this section we develop the miraculous duality functor in the context of Betti sheaves and prove that it is an equivalence. The main resultion of this section, Theorem \ref{miraculous}, will be used in Section \ref{kerbun}.
		
		\subsection{The unit.} Let us introduce the following category (defined in the de Rham context in \cite{DrGa}):
		
		\begin{equation}
			\Shv_{\kappa}(\Bun_G)_{\co} := \colim_{U \subset \Bun_G} \Shv_{\kappa}(U),
		\end{equation}
		where the colimit is taken over the poset of open substacks such that the intersection with every connected component of $\Bun_G$ is quasi-compact, and the transition functors are given by $j_*$. 
		
		\begin{rem}
			There is a natural functor $$\Shv_{\kappa}(\Bun_G)_{\co}  \rightarrow \Shv_{\kappa}(\Bun_G).$$
			However, it fails to be an equivalence.
		\end{rem}
		\subsubsection{} Let $\cY$ be a QCA stack (see \cite{DrGa} for the terminology). Let $i: \cZ \hookrightarrow \cY$ be a closed substack, let $j: U \hookrightarrow \cY$ be the complement open. 
		
		Following \cite[Definition 2.1.6]{DrGa}, we introduce:
		\begin{df}
			The substack $i: \cZ \hookrightarrow \cY$ is called \emph{truncative} (resp. $j: U \hookrightarrow \cY$ is called \emph{co-truncative}) if $i^!$ is defined on all $\Shv(\cY)$ and admits a continuous right adjoint. Equivalently, if $j_*$ is defined on all $\Shv(U)$ and admits a continuous right adjoint.
		\end{df}
		
		\begin{df}
			A locally QCA stack $\cY$ is called \emph{truncatable} if it can be covered by open substacks which are co-truncative. 
		\end{df}
		
		\begin{pr}
			The stack $\Bun_G$ is truncatable.
		\end{pr}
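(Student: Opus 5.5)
The plan is to follow the Drinfeld–Gaitsgory argument (\cite{DrGa}), checking at each step that it only uses formal properties available for Betti sheaves. The open substacks to use are the Harder–Narasimhan truncations: for each $\check{\theta}$ in the dominant rational coweight cone, let $\Bun_G^{(\leq \check{\theta})} \subset \Bun_G$ be the open substack of bundles whose Harder–Narasimhan slope is bounded by $\check{\theta}$. It is a standard result that these are quasi-compact on each connected component and exhaust $\Bun_G$. So it suffices to show that, for $\check{\theta}$ sufficiently deep in the dominant chamber, $j: \Bun_G^{(\leq \check{\theta})} \hookrightarrow \Bun_G$ is co-truncative. In the twisted setting, first fix a connected component; the gerbe twist is locally trivial, so the geometric steps below do not see it.

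The criterion will be \cite[Proposition 2.2.2 / Section 3]{DrGa}. An open embedding is co-truncative if its complement $\cZ$ is stratified by locally closed substacks $\cZ_i$ with the following property. Each $\cZ_i$ admits a "contracting" presentation: a map $\cZ_i \to \cY_i$ with a $\bG_m$-action (or a $\bG_m$-equivariant cone structure) contracting the fibers, and the embedding $\cZ_i \hookrightarrow \cY$ extends through an attractor-type map. For $\Bun_G$ the strata are the Harder–Narasimhan strata $\Bun_G^{(\check{\lambda})}$. Each such stratum is identified with $\Bun_{P}^{(\check{\lambda})}$ for the corresponding parabolic $P$. Once $\check{\lambda}$ is deep enough, the map $\Bun_P^{(\check{\lambda})} \to \Bun_G$ is a locally closed embedding, and $\Bun_P^{(\check{\lambda})} \to \Bun_M^{(\check{\lambda})}$ is a vector-bundle-like fibration contracted by $Z_M$ acting through $\check{\lambda}$. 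The key Betti input is then the contraction principle (Braden's hyperbolic localization) for sheaves on stacks with a contracting $\bG_m$-action. This principle shows that $i^!$ on each stratum can be computed as $q_! \circ p^*$ along $\Bun_{P^-}$, an Eisenstein/constant-term pattern that is continuous. Theorem \ref{secondadj} (the second adjointness $\operatorname{CT}^-_! \cong \operatorname{CT}_*$) is exactly the statement producing the continuous right adjoint on each stratum.

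The proof then has three steps. First, show that for each HN stratum $i_{\check{\lambda}}$ the functor $i_{\check{\lambda}}^!$ is defined on all of $\Shv_\kappa(\Bun_G)$ and admits a continuous right adjoint, using the $\operatorname{CT}/\Eis$ description and Theorem \ref{secondadj}. Second, glue along the finite stratification of $\cZ \cap$ (connected component), by induction on strata, to get the same property for $i^!$ on $\cZ$. For this, use the recollement triangles and the fact that truncativeness is stable under unions of truncative strata ordered compatibly, as in \cite[Section 2]{DrGa}. Third, conclude that $j_*$ has a continuous right adjoint, hence $j$ is co-truncative, so these opens cover $\Bun_G$.

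The main obstacle is the first step. One must verify that Braden's theorem and the second adjointness hold for weakly constructible twisted Betti sheaves on the non-quasi-compact stacks involved, where $!$-pushforwards need not preserve constructibility. The first thing to try is to reduce to quasi-compact opens of $\Bun_P$ and $\Bun_M$, where the DG argument of \cite{DG} is purely formal (it uses only the $\bG_m$-contraction and base change). It then remains to check that the gerbe pulled back to $\Bun_{P^-}$ is trivialized compatibly along the contraction, which follows since it is pulled back from $\Bun_M$.
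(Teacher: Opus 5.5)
Your architecture is the one the paper intends: its proof just reruns \cite[Theorem 4.1.12]{DrGa}, redoing \cite[Propositions 2.5.2 and 2.3.4]{DrGa} for Betti sheaves. Where you differ is that you take the key continuity statement from second adjointness, Theorem \ref{secondadj}, which is not how that argument goes.

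Taken as a black box, second adjointness does work on a deep stratum. There $\Shv(\Bun_P^{(\check\lambda)})\simeq\Shv(\Bun_M^{(\check\lambda)})$, so up to shift $i^!_{\check\lambda}$ is $\CT_*$ followed by restriction to $\Bun_M^{(\check\lambda)}$. The isomorphism $\CT_*\simeq\CT^-_!$ then supplies the right adjoint $\Eis^-_*\circ j_*$, i.e.\ $*$-pushforward from the quasi-compact $\Bun_{P^-}^{(\check\lambda)}$, which is continuous.

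But your own contraction step already finishes the job without it. For deep $\check\lambda$ the map $p^-\colon\Bun_{P^-}^{(\check\lambda)}\to\Bun_G$ is smooth, because the twist of $\mathfrak{g}/\mathfrak{p}^-$ has no $H^1$. The stratum pulls back to the fixed locus $\Bun_M^{(\check\lambda)}$ of the $\mathbb{A}^1$-action, so the contraction principle identifies $i^!_{\check\lambda}$ with $q^-_!\circ p^{-,*}$ up to shift. Its right adjoint $p^-_*\circ q^{-,!}$ is continuous because $p^-$ is schematic and quasi-compact there. Note that what is needed is continuity of this right adjoint, not continuity of $q^-_!\circ p^{-,*}$ itself, which is automatic.

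This local route uses only the contraction principle together with smooth-locality and gluing of truncativeness, and it is what the citation of \cite{DrGa} refers to. Second adjointness, by contrast, is a much deeper global theorem. Its proof in \cite{DG} is not formal (it needs a degeneration of $\Bun_G$ and Braden's theorem for stacks) and builds on \cite{DrGa}, so relying on it here buys nothing and risks circularity.

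Finally, do not assume every Harder--Narasimhan stratum in the complement of a deep $\Bun_G^{(\le\check\theta)}$ is itself deep for its own parabolic. For example, type $(a+1,a,-2a-1)$ for $\SL_3$ with $a\gg 0$ and $g\geq 2$ has the Borel as its parabolic and is not deep for it. Such strata must be grouped into strata attached to coarser parabolics for which deepness holds, rather than treated one HN stratum at a time.
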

		\begin{proof}
			Follows the proof of \cite[Theorem 4.1.12]{DrGa} by adapting the proofs of Propositions 2.5.2 and 2.3.4 to the Betti context.
		\end{proof}
		
		\begin{cor}\label{cotrunc}
			We have 
			\begin{equation}
				\Shv_{\kappa}(\Bun_G) \cong \lim_{U \subset \Bun_G} \Shv_{\kappa}(U),
			\end{equation}
			where the limit is taken over the poset of co-truncative open quasi-compact substacks of $\Bun_G$;
				\begin{equation}
				\Shv_{\kappa}(\Bun_G)_{\co} \cong \colim_{U \subset \Bun_G} \Shv_{\kappa}(U),
			\end{equation}
			where the limit is taken over the poset of co-truncative open quasi-compact substacks of $\Bun_G$.
		\end{cor}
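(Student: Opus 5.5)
The plan is to deduce both statements from truncatability of $\Bun_G$ together with a cofinality argument, following \cite[Section 4]{DrGa}.

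\emph{Step 1: a cofinal family.} Choose, for each connected component of $\Bun_G$, an exhaustion by co-truncative quasi-compact opens. This uses the Harder--Narasimhan stratification: the opens $\Bun_G^{\le \theta}$ are quasi-compact and co-truncative, which is exactly what the truncatability proposition provides via the Betti adaptation of \cite[Propositions 2.5.2, 2.3.4]{DrGa}. Every quasi-compact open $U$ of a component lies in some $\Bun_G^{\le\theta}$, since it has bounded Harder--Narasimhan type. Hence the co-truncative quasi-compact opens form a cofinal subposet of the poset of all opens that are quasi-compact on each component. I also need this subposet to be filtered. For that I will show that a finite union of co-truncative opens is co-truncative, or more simply that the $\Bun_G^{\le\theta}$ form a directed family, which suffices by cofinality.

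\emph{Step 2: the limit statement.} Here $\Shv_\kappa(\Bun_G)$ is, by definition (or by descent for the Zariski cover), the limit of $\Shv_\kappa(U)$ over all quasi-compact opens $U$ with $j^*$ restrictions. Since $\Bun_G$ is a disjoint union of components, this equals the product over components of the limit over each component's opens. Restricting to a cofinal subposet does not change a limit, so Step 1 gives the first isomorphism.

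\emph{Step 3: the colimit statement.} $\Shv_\kappa(\Bun_G)_{\co}$ was defined as a colimit along $j_*$ over all opens quasi-compact on each component. Two points need care.
\begin{enumerate}[(a)]
\item The functors $j_*$ must be continuous so that the colimit lives in $\DGCat$. For co-truncative $j$ this holds by definition, since $j_*$ admits a continuous right adjoint. For general quasi-compact opens in a QCA stack it holds by the QCA pushforward results (adapted to the Betti setting).
\item A cofinal subposet computes the same filtered colimit.
\end{enumerate}
The cofinality from Step 1 then gives the second isomorphism.

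I expect the main obstacle to be Step 1 in the Betti twisted setting. This means verifying that the Harder--Narasimhan opens are co-truncative for $\kappa$-twisted weakly constructible sheaves, i.e.\ that $j_*$ has a continuous right adjoint. The approach is to transport the contraction argument of \cite{DrGa} to twisted Betti sheaves: the gerbe is trivialized along the contracting $\bG_m$-actions (it is pulled back from $\Bun_M$), and the contraction principle then holds for Betti sheaves as well. Once this is established, the remaining steps are formal.
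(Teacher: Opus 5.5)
Your proposal is correct and follows the same route as the paper, which gives no separate proof and deduces the corollary from the truncatability of $\Bun_G$ (the Betti, twisted adaptation of the Drinfeld--Gaitsgory contraction argument for Harder--Narasimhan opens). As you argue, the co-truncative quasi-compact opens are then cofinal, so the limit along $j^*$ and the colimit along $j_*$ can both be computed over them; one small precision is that Drinfeld--Gaitsgory establish co-truncativeness of $\Bun_G^{(\le\theta)}$ only for $\theta$ sufficiently deep in the dominant chamber, which is harmless because those $\theta$ still give a directed family cofinal both among all quasi-compact opens and among the co-truncative ones.
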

		
		\begin{cnstr}
			By Corollary \ref{cotrunc}, the category $\Shv_{-\kappa}(\Bun_G)_{\co}$ is dualizable. Therefore, the category $$\Shv_{\kappa}(\Bun_G) \otimes \Shv_{-\kappa}(\Bun_G)_{\co}$$ can be identified with 
			\begin{equation}\label{limco}
				\lim_{U \subset \Bun_G} (\Shv_{\kappa}(U) \otimes \Shv_{-\kappa}(\Bun_G)_{\co}).
			\end{equation}
			Let us define an object $$\Delta_{*,co}(\omega) \in \Shv_{\kappa}(\Bun_G) \otimes \Shv_{-\kappa}(\Bun_G)_{\co}.$$ As a compatible system of objects in the formula (\ref{limco}), the object $\Delta_{*,co}(\omega)$ is $$j_{*, \co}\circ \Delta_*(\omega_U) \in \Shv_{\kappa}(U) \otimes \Shv_{-\kappa}(\Bun_G)_{\co}.$$ Here $\omega$ is the dualizing object.
		\end{cnstr}
		
			The object $\Delta_{*,co}(\omega)$ defines a functor $$\Vect \rightarrow  \Shv_{\kappa}(\Bun_G) \otimes \Shv_{-\kappa}(\Bun_G)_{\co},$$ and by duaity between $\Shv_{\kappa}(\Bun_G)$ and $\Shv_{-\kappa}(\Bun_G)$ described in Remark \ref{duall} this gives a functor
		$$\psid^{\mir}: \Shv_{-\kappa}(\Bun_G) \rightarrow \Shv_{-\kappa}(\Bun_G)_{\co}.$$

		The goal of this section is to prove the following Betti analog of the \emph{Miraculous duality} for $\Bun_G$:
		\begin{thm}\label{miraculous}
			The object $$\Delta_{*,co}(\omega) \in \Shv_{\kappa}(\Bun_G) \otimes \Shv_{-\kappa}(\Bun_G)_{\co}$$ defines a duality between $\Shv_{\kappa}(\Bun_G)$ and $\Shv_{-\kappa}(\Bun_G)_{\co}$. In other words, the functor $\psid^{\mir}$ is an equivalence.
		\end{thm}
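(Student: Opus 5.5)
We follow the strategy of Drinfeld--Gaitsgory for the de Rham miraculous duality \cite{DrGa} (and its twisted/quantum refinements), checking that each step only uses formal properties of sheaf theory that hold equally in the Betti setting. The statement is local on connected components of $\Bun_G$, so we fix one. Using Corollary \ref{cotrunc}, it suffices to show that for every co-truncative quasi-compact open $j:U \hookrightarrow \Bun_G$ the composite
$$\Shv_{-\kappa}(U) \xrightarrow{j_*} \Shv_{-\kappa}(\Bun_G) \xrightarrow{\psid^{\mir}} \Shv_{-\kappa}(\Bun_G)_{\co}$$
behaves as expected. Concretely, we first identify $\psid^{\mir}$ on $\Shv_{-\kappa}(U)$ with the \emph{pseudo-identity} functor $\psid_{U,*}$ of \cite[Section 4.4]{DrGa}, i.e. the functor given by the kernel $\Delta_*(\omega_U)$ with respect to Verdier duality $\Shv_{\kappa}(U)^\vee \cong \Shv_{-\kappa}(U)$. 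Since $U$ is QCA, this duality is available in the Betti context. This identification is formal from the definition of $\Delta_{*,\co}(\omega)$ as the compatible system $j_{*,\co}\circ\Delta_*(\omega_U)$.

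The key reduction is that, for co-truncative $U \subset U'$, the functors $\psid_{U,*}$ intertwine $j_*$ on one side with the right adjoint of $j_*$ (the $\co$-transition) on the other. Granting this, $\psid^{\mir}$ is the colimit/limit of the $\psid_{U,*}$. It is then an equivalence provided:
\begin{enumerate}[(a)]
\item each $\psid_{U,*}$ is an equivalence \emph{modulo} the boundary strata; and
\item the resulting system is compatible with the truncation.
\end{enumerate}
Following \cite{DrGa} and Gaitsgory's ``strange functional equation'', we stratify $\Bun_G$ by Harder--Narasimhan strata, indexed by pairs (parabolic $P$, dominant coweight). On each stratum we express $\psid_{!}$ and $\psid_*$ in terms of $\Eis_!$, $\on{CT}^-_*$ and their $\cP_{Z_M}$-translates. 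The ingredients here are Proposition \ref{eisctnilp}, Theorem \ref{secondadj} (second adjointness, $\on{CT}^-_!\cong \on{CT}_*$), and the dualities $\Eis_!\cong(\on{CT}_!)^\vee$. We then prove by induction on semisimple rank that $\psid^{\mir}$ is fully faithful and essentially surjective. The inductive step uses that, on the deep HN strata, $\Bun_G$ is contracted onto $\Bun_M$ via $\Bun_{P^-}$, so the kernel $\Delta_*(\omega)$ restricted there is computed by $\Eis_!\circ \on{CT}^-_*$ applied to the $M$-kernel. The latter is invertible by induction, with the abelian case $M=T$ being trivial because $\Bun_T$ is a disjoint union of quotients of abelian varieties by $T$.

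The main obstacle is the Betti input into the ``contractiveness'' of the HN strata. In \cite{DrGa} this is Braden's hyperbolic localization in the form $i^!\circ(\text{contracted}) \cong \pi_*$, applied to $\bG_m$-actions on deep strata. In the Betti setting this requires hyperbolic localization for twisted sheaves on stacks with weakly constructible coefficients. We would address it first by proving the contraction principle for twisted Betti sheaves on QCA stacks with $\bG_m$-action, via Verdier duality and the constructible version of Braden's theorem. Along the way we must ensure the twisting gerbe $\cG_G$ is $\bG_m$-equivariantly trivialized along the contraction, which follows since the gerbe pulls back from $\Bun_M$ by the Lemma on $\ger(\Gra_P)\cong\ger(\Gra_M)$. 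Once this holds, the truncatability results (the Proposition on $\Bun_G$ and Corollary \ref{cotrunc}) let the argument of \cite[Section 4]{DrGa} go through verbatim.
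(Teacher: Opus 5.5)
There is a genuine gap. The contraction of deep Harder--Narasimhan strata onto Levi strata is the input \cite{DrGa} use to prove \emph{truncatability}. The paper takes that as given (the Proposition that $\Bun_G$ is truncatable, and Corollary \ref{cotrunc}). It makes $\Shv_{-\kappa}(\Bun_G)_{\co}$ well behaved, but it does not show that $\psid^{\mir}$ is an equivalence.

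Your step (a) is where this breaks. The failure of the pseudo-identity to be a shifted identity is not concentrated on boundary strata. Take the quasi-compact open $U_G$ of Proposition \ref{ug}, which carries all cuspidal objects. There the diagonal of $\Bun_G$ is already not proper. The discrepancy $\psid_{\on{diff}}$ between $\Id_{U_G}$ and $j_{G,\co}^*\psid^{\mir}j_{G,*}$ (up to shift) is in general nonzero. By Proposition \ref{3.2.6} it is a finite extension of functors that first apply $\CT_*$ and then $\Eis^-_*$ for proper parabolics. So the correction is ``Eisenstein'', not ``boundary'', and your stratification never addresses it.

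The inductive step has the same defect. On a deep stratum the Levi piece is an open substack of semistable bundles in $\Bun_M$. Miraculous duality for all of $\Bun_M$, which is your inductive hypothesis, says nothing about the pseudo-identity of such an open. Step (b) also does not explain how $\psid^{\mir}$ interacts with the gluing of strata. That is where $!$-type and $*$-type functors get exchanged: compare Proposition \ref{4.1.2}, with $\Eis^-_!$ on one side and $\Eis_{\co,*}$ on the other.

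The missing idea is the cuspidal/Eisenstein mechanism of \cite{strange}, which is what the paper adapts. Its steps are:
\begin{enumerate}
\item $\psid^{\mir}$ is a shifted identity on cuspidal objects and preserves $\Hom$'s out of them (Propositions \ref{ug}, \ref{3.2.6}, \ref{3.3.5} and Corollary \ref{mircusp}).
\item $\Eis_{\co,*}\circ\psid^{\mir}\cong\psid^{\mir}\circ\Eis^-_!$ (Proposition \ref{4.1.2}, a base change).
\item $\psid^{\mir}\circ\CT_!\cong\Eis^R_{\co,*}\circ\psid^{\mir}$ (Proposition \ref{cteisra}).
\end{enumerate}
Essential surjectivity then follows because the cuspidal and Eisenstein parts generate. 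Full faithfulness on $\Hom(\Eis_!(F_M),F)$ is reduced by adjunction to $\Hom(F_M,\CT_*F)$ on $\Bun_M$. There the induction applies to the whole of $\Bun_M$, not to a stratum.

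Accordingly, the Betti inputs you would need are not a contraction principle. They are analogues of Schieder's filtration (Proposition \ref{3.2.6}), of the filtration of $\Eis_*$ by $\Eis_!\circ H^\alpha$ used in Proposition \ref{eisra}, and of the two Eisenstein/constant-term intertwining statements.
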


		Define $$\Eis_{\co, *}^{\mu}: \Shv_{\kappa}(\Bun_M^{\mu})_{\co} \rightarrow \Shv_{\kappa}(\Bun_G)_{\co}$$
		as a compatible system of $\Eis_{*}^{\mu}$ functors on open co-truncative substacks. 
		Define $$\Eis_{\co, *}: = \bigoplus \Eis_{\co, *}^{\mu}.$$
		
			\begin{pr}\label{eisra}
			The functor $\Eis_{\co, *}$ admits a continuous right adjoint  $\Eis_{\co, *}^R$. 
		\end{pr}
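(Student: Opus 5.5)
Both categories are colimits over co-truncative quasi-compact opens (Corollary \ref{cotrunc}), with transition functors given by $*$-pushforwards. So a continuous right adjoint to the induced functor on colimits exists as soon as the functors on the pieces have continuous right adjoints that are compatible with the transition maps. The plan is to produce $\Eis_{\co,*}^R$ piecewise: for each connected component $\mu$, a right adjoint of $\Eis^{\mu}_{*}$ restricted to opens, assembled using the right adjoints of the co-truncative $j_*$'s.

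\textbf{Step 1: the pieces.} Fix $\mu$ and a co-truncative quasi-compact open $U\subset \Bun_G$. The preimage of $U$ under $\Bun_{P^-}^{\mu}\to\Bun_G$ is quasi-compact. Hence, by the standard finiteness of the Eisenstein correspondence (as in \cite{DG}), there is a co-truncative quasi-compact open $U_M\subset\Bun_M^\mu$ such that $\Eis^\mu_*\circ j_{U_M,*}$, restricted to $U$, is the restriction of $\Eis^\mu_*$ to $U$.

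The functor $\Eis^{\mu}_{*}$ on the piece $U_M\to U$ is $p^-_*\circ(q^-)^!$ up to twist and shift. It therefore has a right adjoint only if $p^-_*$ does, which fails in general. I would use Theorem \ref{secondadj} instead. It gives $\on{CT}^{\mu,-}_!\cong \on{CT}^{\mu}_*$. Since $\on{CT}^{\mu,-}_!\dashv \Eis^{\mu,-}_*$ and $\Eis^\mu_!\dashv \on{CT}^\mu_*$, this yields $\Eis^\mu_!\dashv\on{CT}^{\mu}_*\cong\on{CT}^{\mu,-}_!\dashv\Eis^{\mu,-}_*$. Swapping the roles of $P$ and $P^-$ then exhibits $\Eis^\mu_*$ as a left adjoint of a functor built from $\on{CT}_!$, and that functor is continuous. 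Concretely, $\Eis_*^{\mu}\cong (\on{CT}^{\mu}_!)^{\vee}$-type identifications make the right adjoint $\on{CT}^{\mu,-}_{*}$ composed with $j^*$'s and $j_*^R$'s, all of which are continuous: $j^*$ trivially, and $j_*^R$ by co-truncativeness.

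\textbf{Step 2: compatibility.} I would check that these right adjoints intertwine the transition functors $j_{*}$ for $U\subset U'$, i.e.\ that the Beck--Chevalley maps are isomorphisms. Writing the right adjoint on each piece as $j_{U_M}^*\circ \on{CT}^{-}_{*}\circ j_{U,*}$-type composites, this reduces to base change for the Eisenstein correspondence against open embeddings.

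\textbf{Step 3: assembly.} The colimit in $\DGCat$ of a diagram with left adjoint transition maps is the limit along the right adjoints. So $\Eis_{\co,*}$ is described componentwise, and its right adjoint is the compatible family from Steps 1--2. Summing over $\mu$, the sum $\bigoplus_\mu$ on $\Shv(\Bun_G)_{\co}$ is a colimit of continuous functors and is controlled componentwise on each connected component of $\Bun_G$. The only $\mu$ that contribute to a given quasi-compact $U$ lie in a range bounded below, by quasi-compactness. Hence the right adjoint is $\prod_\mu=\bigoplus_\mu$ on each piece and is continuous.

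\textbf{Main obstacle.} I expect the main obstacle to be Step 1: showing that the right adjoint of the restriction of $\Eis^\mu_*$ to co-truncative opens is continuous. This requires combining the second adjointness (Theorem \ref{secondadj}) with the truncation lemma: $\Eis^\mu_*$ applied to objects $*$-extended from $U_M$ stays within finitely many strata. I would first adapt the de Rham argument of \cite{DrGa} (their treatment of $\on{CT}$ on $\Bun_G{}_{\co}$) to the Betti setting verbatim.
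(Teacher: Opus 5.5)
Your overall architecture (produce continuous right adjoints on co-truncative quasi-compact pieces, then assemble over the colimit) is the same as the paper's. The gap is Step 1, which carries the whole content of the proposition: the input you propose for it runs the wrong way.

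Second adjointness does not produce a right adjoint of $\Eis^\mu_*$. Swapping $P$ and $P^-$ gives $\Eis^{\mu,-}_!\dashv \CT^{\mu,-}_*\cong \CT^{\mu}_!\dashv \Eis^{\mu}_*$. This only re-exhibits $\Eis^\mu_*$ as the \emph{right} adjoint of $\CT^\mu_!$, which is the defining adjunction, and shows that $\CT_!$ has adjoints on both sides.

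Your candidate right adjoint, $\CT^{\mu,-}_*$ decorated with $j^*$'s and $j_*^R$'s, cannot be correct. $\CT^{\mu,-}_*$ already has the left adjoint $\Eis^{\mu,-}_!$, so you would be asserting $\Eis^\mu_*\cong\Eis^{\mu,-}_!$ on pieces, and that fails in general. The actual relation between $\Eis_{\co,*}$ and $\Eis^-_!$ is the strange functional equation (Proposition \ref{4.1.2}), which is mediated by $\psid^{\mir}$. Likewise, $\Eis^R_{\co,*}$ matches $\CT_!$ only after conjugating by $\psid^{\mir}$ (Proposition \ref{cteisra}). Those results, and Theorem \ref{miraculous}, depend on the present proposition, so you cannot invoke them here.

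The truncation statement in your last paragraph only controls where $\Eis^\mu_*(j_{U_M,*}-)$ is supported. What you actually need is that $j_U^*\circ\Eis^\mu_*\circ j_{U_M,*}$ preserves compact objects, and this is not formal because $p_*$ does not. Your remark that a right adjoint would require one for $p^-_*$ is also not a valid inference: a composite can have a continuous right adjoint even when a factor does not.

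The missing ingredient, which is what the paper uses, is \cite[Proposition 2.4.3]{strange}. On each co-truncative piece, $\Eis_*$ admits a finite filtration whose subquotients are $\Eis_!\circ H^\alpha$, with $H^{\alpha}(-)= \overset{\leftarrow}{h}_!(\overset{\rightarrow}{h^*}(-) \overset{*}{\otimes} K^{\alpha})$. The filtration comes from the defect stratification of Drinfeld's compactification. Each subquotient has a continuous right adjoint:
\begin{itemize}
\item $\Eis_!$ has the continuous right adjoint $\CT_*$. This is where second adjointness legitimately helps, since $\CT_*\cong\CT^-_!$ is continuous.
\item $H^\alpha$ has a continuous right adjoint because $K^\alpha$ is ULA relative to $\overset{\rightarrow}{h}$.
\end{itemize}
Right adjoints of a cofiber sequence of functors form a fiber sequence, so by induction on the filtration $\Eis_*$ has a continuous right adjoint on each piece. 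Your Step 3 assembly then finishes the argument.

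Within Step 3, the claim that only a bounded range of $\mu$ contributes, so that $\prod_\mu=\bigoplus_\mu$, is unjustified. A one-sided bound still leaves infinitely many $\mu$ with $\Bun_P^\mu\times_{\Bun_G}U\neq\emptyset$. The claim is also unnecessary: the right adjoint of a functor out of $\bigoplus_\mu\Shv_\kappa(\Bun_M^\mu)_{\co}$ is the tuple of componentwise right adjoints, and it is continuous as soon as each component is.
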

		
		\begin{proof}
			By \cite[Proposition 2.4.3]{strange}, on every open co-truncative substack the functor $\Eis_{*}$ admits a finite filtration by functors $\Eis_{ !} \circ H^{\alpha}(-)$, where $H^{\alpha}(-):= \overset{\leftarrow}{h}_!(\overset{\rightarrow}{h^*}(-) \overset{*}{\otimes} K^{\alpha})$. We note that $\Eis_!$ admits a continuous right adjoint, and $H^{\alpha}$ does as well since $K^{\alpha}$ is ULA relative to $\overset{\rightarrow}{h}$. Therefore on every open co-truncative substack the functor $\Eis_{*}$ admits a continuous right adjoint, and thus $\Eis_{\co, *}$ admits a continuous right adjoint as well. 
		\end{proof}

		\subsection{Cuspidality in the co-category.}
		 As in \cite{DG}, let $$\Shv_{\kappa}(\Bun_G)_{\cusp} \subset \Shv_{\kappa}(\Bun_G)$$ be the intersection of kernels of $\CT_*$ for all proper parabolic subgroups. Equivalently, we have 
		 \begin{equation}
		 	\Shv_{\kappa}(\Bun_G)_{\cusp} \cong (\Shv_{\kappa}(\Bun_G)_{\Eis})^\perp,
		 \end{equation}
		 where $\Shv_{\kappa}(\Bun_G)_{\Eis} \subset \Shv_{\kappa}(\Bun_G)$ denotes the subcategory generated by $\Eis_!$ for all proper parabolic subgroups. 
		
		\begin{df}
			Define $\Shv_{\kappa}(\Bun_G)_{\co, \cusp} \subset \Shv_{\kappa}(\Bun_G)_{\co}$ as 
			$$\Shv_{\kappa}(\Bun_G)_{\co, \cusp} \cong (\Shv_{\kappa}(\Bun_G)_{\co, \Eis})^\perp,$$
			where $\Shv_{\kappa}(\Bun_G)_{\co, \Eis} \subset  \Shv_{\kappa}(\Bun_G)_{\co}$ is the subcategory generated by $\Eis_{*, \co}$ for all proper parabolic subgroups. Equivalently, $\Shv_{\kappa}(\Bun_G)_{\co, \cusp}$ is the subcategory  generated by $\Eis_{\co, *}^R.$
		\end{df}
		
		Recall from \cite[Proposition 1.4.6]{DG} the following characterization of the support of the cuspidal objects: 
		
		\begin{pr}\label{ug}
			There exists a quasi-compact open $j_G: U_G \subset \Bun_G$ such that for any $F \in \Shv_{\kappa}(\Bun_G)_{\cusp}$ the maps 
			$$j_{G, !} j^*_G F \rightarrow F \rightarrow j_{G, *}j^{*}_G F$$
			are isomorphisms.
		\end{pr}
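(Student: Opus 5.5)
The approach is to follow the proof of \cite[Proposition 1.4.6]{DG}, which is essentially geometric and only uses the adjunctions between constant term and Eisenstein functors together with the structure of $\Bun_G$ near infinity. Therefore it transfers to twisted Betti sheaves with little change. The plan is to first produce, for each proper parabolic $P$ with Levi $M$, a quasi-compact open $U \subset \Bun_G$ such that the complement $\Bun_G \setminus U$ is covered by the images of ``deep'' strata $\Bun_{P^-}^{\mu} \to \Bun_G$. On these strata the map $p^-: \Bun_{P^-}^{\mu} \to \Bun_G$ is a locally closed embedding onto a Harder--Narasimhan type stratum, with $\mu$ sufficiently dominant relative to the genus. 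This is the Harder--Narasimhan reduction theory, and it is purely about the stack, so it is independent of the sheaf theory and of the twist $\kappa$.

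Second, for $\mu$ deep enough, I would use the contractiveness of $q^-: \Bun_{P^-}^{\mu} \to \Bun_M^{\mu}$. Its fibers are quotients of vector spaces by vector groups, since $H^1$ of the unipotent radical is the only nonvanishing cohomology. The twisting gerbe is pulled back from $\Bun_M$ via $q^-$, by the lemma of \cite[5.1]{GL} on gerbes on $\Gra_P$ versus $\Gra_M$ and its global analog. Hence for $F$ cuspidal, meaning $\CT_*(F)=0$ (equivalently $\CT^-_!$ vanishes by Theorem \ref{secondadj}), the $*$-restriction of $F$ to the stratum, followed by $q^-_!$, vanishes. Contraction then gives that the restriction itself vanishes. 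Concretely, on a deep stratum $Z$ with locally closed embedding $i_Z$, the functor $(q^-)_!\circ i_Z^*$ is the relevant piece of $\CT^-_!$. The plan is to show that $F|_Z$ is pulled back from $\Bun_M^{\mu}$ up to shift, using homotopy invariance along the vector-group fibers in the Betti setting, so that its vanishing is detected by $q^-_!$. Doing this for both $i_Z^*$ and $i_Z^!$ gives vanishing of both restrictions to $\Bun_G\setminus U_G$, using the two versions $\CT_!^-$ and $\CT_*$ and Theorem \ref{secondadj}.

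Third, I would take $U_G$ to be the union of the opens obtained for all proper parabolics, intersected appropriately; it is quasi-compact once restricted to each connected component. Let $i: \Bun_G\setminus U_G\hookrightarrow \Bun_G$ be the closed complement. Then $i^*F=0$ and $i^!F=0$ by the previous step, after a stratification argument on the closed complement. The cone of $j_{G,!}j_G^*F\to F$ is $i_*i^*F$, and the fiber of $F\to j_{G,*}j_G^*F$ is $i_*i^!F$. Both vanish, which gives the claim.

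The main obstacle I expect is the second step. One has to show that vanishing of $\CT$ really implies vanishing of the restriction to the deep stratum, and not merely vanishing of its pushforward. In \cite{DG} this uses that, for deep strata, $p^-$ is an isomorphism onto its image; I would verify that this persists with the gerbe twist. I would also need to handle the non-constructible (weakly constructible) setting. If homotopy invariance for weakly constructible sheaves with twist is problematic, I would first try restricting to $\Nilp$-singular-support objects, where \cite[G.2]{AGKRRV} gives constructibility.
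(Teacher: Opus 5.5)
The paper gives no argument of its own here. It simply imports \cite[Proposition 1.4.6]{DG}, and your overall architecture matches the argument there: isolate finitely many shallow Harder--Narasimhan types, show that cuspidal objects have vanishing $*$- and $!$-restrictions to every deep stratum, then glue by the open--closed triangles. However, the step you yourself flag as the main obstacle rests on a wrong geometric input.

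Take the parabolic $P$ that actually cuts out a deep stratum, i.e.\ the one for which $\Bun_P^{(\lambda)}\to\Bun_G$ identifies with $\Bun_G^{(\lambda)}$. Its unipotent radical, twisted by the semistable $M$-bundle, has large \emph{positive} slopes. So it is $H^1$ that vanishes and $H^0$ that survives, and the fibers of $q\colon\Bun_P^{(\lambda)}\to\Bun_M^{(\lambda)}$ are classifying stacks of unipotent groups. For $\GL_2$ with HN filtration $L_1\subset E$, the fiber is $B\,H^0(X,L_1\otimes L_2^{-1})$ once $\deg L_1-\deg L_2>2g-2$.

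In your description, $H^1$ is the only nonvanishing cohomology, so the fibers would be affine spaces. Then two things break. First, the induction map is no longer an embedding onto an HN stratum: generic nonsplit extensions $0\to L_2\to E\to L_1\to 0$ with $\deg L_1\gg\deg L_2$ are semistable. Second, vanishing of $q_!$ no longer detects vanishing of the sheaf: $j_!$ of a nontrivial rank-one local system on $\mathbb{A}^1\setminus 0$ has zero compactly supported cohomology. No contraction principle rescues this, since $F$ restricted to the stratum carries no $\bG_m$-monodromicity.

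The missing input is \cite[Lemma B.3.5]{DG}, which the paper invokes in the very next proof. For deep $\lambda$, $q^!$ (equivalently $q^*$ up to shift) is an equivalence $\Shv_\kappa(\Bun_M^{(\lambda)})\to\Shv_\kappa(\Bun_P^{(\lambda)})$. The reason is that sheaves on the classifying stack of a unipotent group form just $\Vect$, and the gerbe is then automatically pulled back from $\Bun_M^{(\lambda)}$. This holds for all weakly constructible sheaves. So there is no need to retreat to $\Nilp$-objects, which in any case would not prove the proposition as stated.

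With this input the argument closes as follows:
\begin{enumerate}
\item $\CT_*(F)|_{\Bun_M^{(\lambda)}}\cong q_*i_\lambda^!F=0$, which forces $i_\lambda^!F=0$.
\item Up to shift, $\CT_!(F)|_{\Bun_M^{(\lambda)}}\cong q_!i_\lambda^*F$. This vanishes because $\CT_!\cong\CT^-_*$ by Theorem \ref{secondadj} with $P$ and $P^-$ exchanged, and $P^-$ is also a proper parabolic. Hence $i_\lambda^*F=0$.
\end{enumerate}
Both constant terms must be taken along the parabolic that cuts out the stratum. Your pairing of $\CT_*$ for $P$ with strata coming from $P^-$ only recovers the $*$-restriction, since $\CT_*\cong\CT^-_!$. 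With these corrections, your third step goes through as written.
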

		
		We claim the parallel statement also holds for $\Shv_{\kappa}(\Bun_G)_{\co, \cusp}$:
		\begin{pr}
			For any $F \in \Shv_{\kappa}(\Bun_G)_{\co, \cusp}$ the map 
			\begin{equation}
				F \rightarrow j_{G, *, \co} j^*_{G, \co} F
			\end{equation}
			is an equivalence.
		\end{pr}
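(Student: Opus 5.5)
We would prove the statement by reducing it to an analogous assertion about the co-version of the Eisenstein subcategory, using the structure of $\Shv_{\kappa}(\Bun_G)_{\co}$ as a colimit along $j_*$-functors over co-truncative quasi-compact opens (Corollary \ref{cotrunc}).

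First, we interpret the functors. For the open $U_G$ of Proposition \ref{ug}, enlarge it if necessary to be co-truncative. Then $j_{G,*,\co}: \Shv_{\kappa}(U_G) \to \Shv_{\kappa}(\Bun_G)_{\co}$ is the tautological insertion into the colimit. Its continuous right adjoint $j^*_{G,\co}$ is computed on each term $\Shv_{\kappa}(U)$ with $U \supset U_G$ as restriction $j^*_{U_G \subset U}$; this is compatible with the transition functors because $j^*\circ j_* \cong \Id$ for open embeddings. So the assertion is that the cone $C$ of the unit $F \to j_{G,*,\co}j^*_{G,\co}F$ vanishes for cuspidal $F$. Since $j^*_{G,\co}$ of this map is an isomorphism, $C$ lies in the kernel of $j^*_{G,\co}$. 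That kernel is generated, compatibly in the colimit, by objects of the form $i_{*}$ of sheaves on the complements $U \setminus U_G$.

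The key step is to show that the kernel of $j^*_{G,\co}$ is contained in the subcategory generated by the images of $\Eis_{\co,*}$ for proper parabolics, i.e. in $\Shv_{\kappa}(\Bun_G)_{\co,\Eis}$. Granting this, cuspidality of $F$ gives $\Hom(C', F)=0$ for every $C'$ in that kernel. The map $F \to j_{G,*,\co}j^*_{G,\co}F$ is a localization map. Indeed, $j_{G,*,\co}j^*_{G,\co}$ is the colocalization onto the image of the fully faithful $j_{G,*,\co}$, so $F$ sits in a triangle $K \to F \to j_{G,*,\co}j^*_{G,\co}F$ with $K$ in the kernel of $j^*_{G,\co}$. (Here we use that $j_{G,*,\co}$ has a right adjoint, so that this is a recollement in which $K$ is the left piece.) The map $K \to F$ is then zero, since $F$ is right orthogonal to $K$, and the triangle splits: $j_{G,*,\co}j^*_{G,\co}F \cong F \oplus K[1]$. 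Because $j^*_{G,\co}$ is an isomorphism on the second map, applying $j^*_{G,\co}$ to this splitting forces $j^*_{G,\co}K[1]=0$, which is automatic. To conclude $K=0$ we instead argue directly: $K$ is both in the kernel (hence in the Eisenstein part) and a retract-summand, and $\Hom(K,K)$ factors through $\Hom(K,F)=0$, so $K=0$.

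The main obstacle is the key step: proving that sheaves supported on the complement of $U_G$ inside a co-truncative $U$ lie in $\Shv_{\kappa}(\Bun_G)_{\co,\Eis}$. We would attack it as in the proof of Proposition \ref{ug} in \cite{DG}. Choose $U_G$ so that its complement is covered by the Harder–Narasimhan strata $\Bun_G^{(P,\mu)}$ with $\mu$ deep in the dominant chamber. On such strata the map $\Bun_{P}^{\mu} \to \Bun_G$ is a locally closed embedding and $\Bun_P^\mu \to \Bun_M^\mu$ is smooth with contractible fibers (an affine space bundle). Any sheaf $i_*\cF$ on a stratum is then, up to the finite filtration of \cite[Proposition 2.4.3]{strange} used in Proposition \ref{eisra}, built out of $\Eis_{*}^{\mu}$ of objects from $\Shv_{\kappa}(\Bun_M^\mu)$, restricted to $U$. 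Induction on strata, ordered by closure within $U$, then gives generation. Finally, these descriptions must be checked to be compatible with the $j_*$-transitions defining $\Eis_{\co,*}$.
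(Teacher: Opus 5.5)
Your reduction is the paper's: show that the fiber $K$ of $F\to j_{G,*,\co}j^*_{G,\co}F$ lies in $\Shv_{\kappa}(\Bun_G)_{\co,\Eis}$. Your steps toward this are sound: write $K$ as a colimit of objects $j_{U,*,\co}(i_*i^!F_U)$ supported on $U\setminus U_G$, d\'evissage along the finitely many deep Harder--Narasimhan strata there, and show that $*$-pushforwards from each stratum lie in the co-Eisenstein part.

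\textbf{The gap is in the last of these steps, which is the heart of the proof.} You need \emph{every} sheaf on the stratum to be of the form $(p|_{q^{-1}(V)})_*q^!N$. That is, $q^!$ must be an equivalence, not merely fully faithful. If $q$ were an affine space bundle, as you describe it, this fails: a skyscraper on $\mathbb{A}^n$ is not pulled back from a point. ``Smooth with contractible fibers'' only gives full faithfulness.

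The missing input is that for deep slope one has $H^1(X,\mathfrak{n}_{P,\cP_M})=0$. So over the open $V\subset\Bun_M$ of \cite[Lemma B.3.2]{DG}, the map $q$ is a gerbe for a unipotent group scheme, and hence $q^!$ is an equivalence; this is \cite[Lemma B.3.5]{DG}, which the paper uses. Also, $\Bun_P^\mu\to\Bun_G$ is not a locally closed embedding; only the part over $V$ maps to the stratum.

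Once you have this, the filtration of \cite[Proposition 2.4.3]{strange} plays no role, since it compares $\Eis_*$ with $\Eis_!$. What you need is plain base change: $i_{\lambda,*,\co}\bigl((p|_{q^{-1}(V)})_*q^!N\bigr)\cong\Eis_{*,\co}(i_{V,*,\co}N)$. The paper then gets generation from conservativity of $(p|_{q^{-1}(V)})^*$, using the surjectivity in \cite[Lemma B.3.2]{DG}. An isomorphism of $q^{-1}(V)$ onto the stratum would serve equally well.

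\textbf{There is also a slip in the formal part: you have the adjunction backwards.} $j^*_{G,\co}$ is the \emph{left} adjoint of $j_{G,*,\co}$, not the right one. For $M\in\Shv_{\kappa}(U')$ with $U'\supset U_G$ co-truncative, one has $\Hom(j_{U',*,\co}M,\,j_{G,*,\co}A)\cong\Hom(M,j_*A)\cong\Hom(j^*M,A)$. So the map in the statement is the unit of a localization.

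Your closing claim, that $\id_K$ factors through $\Hom(K,F)=0$, is not right as stated. The correct finish is as follows. Since $K\to F$ is zero, the map $j_{G,*,\co}j^*_{G,\co}F\to K[1]$ admits a section. That section lies in $\Hom(K[1],j_{G,*,\co}j^*_{G,\co}F)\cong\Hom(j^*_{G,\co}K[1],j^*_{G,\co}F)=0$, so $\id_{K[1]}=0$ and $K=0$.
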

		
		\begin{proof}
			It suffices to show that
			\begin{equation} \label{fib1}
				 (\on{Fib}(F \rightarrow j_{G, *, \co} j^*_{G, \co} F)) \in \Shv_{\kappa}(\Bun_G)_{\co, \Eis}.
			\end{equation}
		We will show that for every $\lambda \in \Lambda^+_{\bQ}$, such that $\lambda \notin \Sigma$ (for $\Sigma$ as in \cite[B.1.2]{DG}), and the Harder-Narasimhan component $$i_{\lambda}: \Bun_G^{(\lambda)} \rightarrow \Bun_G$$ and any $W \in \Shv_{\kappa}(\Bun_G^{(\lambda)})$ we have
		\begin{equation}\label{lambdaeis}
			i_{\lambda, *, \co}(W) \in \Shv_{\kappa}(\Bun_G)_{\co, \Eis}.
		\end{equation}
		Take $V$ as in \cite[Lemma B.3.2]{DG}. We claim that $\Shv_{\kappa}(\Bun_G^{(\lambda)})$ is generated by images of
		\begin{equation}\label{pq}
			(p|_{q^{-1}(V)})_*q^!: \Shv_{\kappa}(V) \rightarrow \Shv_{\kappa}(\Bun_G^{(\lambda)}).
		\end{equation}
		First, notice that by \cite[Lemma B.3.5]{DG} the map $q^!$ is an equivalence. Then let $C_{\lambda} \subset \Shv_{\kappa}(\Bun_G^{(\lambda)})$ be the subcategory generated by (\ref{pq}). Let $K \in \prescript{\perp}{}{C_{\lambda}}$. Then for any $N \in \Shv_{\kappa}(V)$ we have
		$$0 \cong \Hom(K,(p|_{q^{-1}(V)})_*q^!(N)) \cong \Hom((p|_{q^{-1}(V)})^*K,q^!(N)).$$
		Since $q^!$ is an equivalence and $(p|_{q^{-1}(V)})^*$ is conservative by surjectivity proved in \cite[Lemma B.3.2]{DG}, we get that $C_{\lambda} = \Shv_{\kappa}(\Bun_G^{(\lambda)})$. Finally, to see (\ref{lambdaeis}), notice that 
		$$i_{\lambda, *, \co}((p|_{q^{-1}(V)})_*q^!(N)) \cong \Eis_{*, \co}(i_{V, *, \co}(N)).$$
		\end{proof}
		
		\begin{pr}
			Let $F \in \Shv_{\kappa}(\Bun_G)_{\co}$ be such that there exists a quasi-compact open $j: U \subset \Bun_G$ and the map 
			$$F \rightarrow j_{*, \co} j^{*}_{\co}(F)$$
			is an equivalence. Then $F \in \Shv_{\kappa}(\Bun_G)_{\co, \cusp}$ if and only if $\CT_{*, \co}(F) = 0$ for all proper parabolic subgroups.
		\end{pr}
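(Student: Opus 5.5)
The statement says that, for objects of the co-category that are "supported on" a quasi-compact open $U$, cuspidality is detected by the functors $\CT_{*,\co}$. Here I read $\CT_{*,\co}$ as $\Eis_{\co,*}^R$, the continuous right adjoint of Proposition \ref{eisra}. The plan is to use that adjunction to turn the orthogonality condition defining $\Shv_{\kappa}(\Bun_G)_{\co,\cusp}$ into a vanishing condition.

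By definition, $F$ lies in $\Shv_{\kappa}(\Bun_G)_{\co,\cusp}$ if and only if it is right orthogonal to the subcategory generated by the essential images of $\Eis_{\co,*}$, over all proper parabolics $P$. Since this subcategory is generated under colimits by the images of $\Eis_{\co,*}$, right orthogonality of $F$ is equivalent to
$$\Hom(\Eis_{\co,*}(W),F)=0 \quad\text{for all } P \text{ and all } W\in \Shv_\kappa(\Bun_M)_{\co}.$$
By adjunction this is $\Hom(W,\Eis_{\co,*}^R(F))=0$ for all $W$, which holds if and only if $\Eis_{\co,*}^R(F)=0$. If $\CT_{*,\co}$ is defined as $\Eis_{\co,*}^R$, this direction is formal. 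In that case the hypothesis that $F\simeq j_{*,\co}j^*_{\co}F$ is used only to identify $\CT_{*,\co}$ with the concrete functor below.

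The substantive step is therefore to compare $\Eis_{\co,*}^R$ with the functor built from the usual $\CT_*$ on quasi-compact opens. I would proceed as follows.
\begin{enumerate}
\item Write $F = j_{*,\co}(F_U)$ with $F_U := j^*_{\co}F \in \Shv_\kappa(U)$, and enlarge $U$ to be co-truncative, which is possible by Corollary \ref{cotrunc}.
\item Compute
$$\Hom(\Eis_{\co,*}W, j_{*,\co}F_U) = \Hom\bigl(j^*_{\co}\Eis_{\co,*}W, F_U\bigr)$$
using the adjunction $(j^*_{\co}, j_{*,\co})$. This adjunction holds in the co-category because the transition maps of the colimit are $j_*$, so for a co-truncative $U$ the structure functor into the colimit has the restriction as its left adjoint.
\item Observe that $j^*\Eis_*^\mu$ only sees a quasi-compact open $U_M\subset \Bun_M^\mu$, and for each $\mu$ this open is non-empty only for finitely many $\mu$. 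Hence the right-hand side equals $\Hom\bigl(W, j_{M,*,\co}\,\CT_*(j_*F_U)|_{U_M}\bigr)$.
\end{enumerate}
This shows that $\Eis_{\co,*}^R(F)$ is computed by the ordinary $\CT_*$ of $j_*F_U$, restricted to quasi-compact pieces. The finiteness in step 3 should come from the same Harder–Narasimhan bookkeeping as in \cite[Proposition 1.4.6]{DG}, applied to the finite filtration from \cite[Proposition 2.4.3]{strange} that was used in Proposition \ref{eisra}.

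With that identification, $F$ is cuspidal if and only if $\CT_*$ of the honest sheaf $j_*F_U$ vanishes for all proper $P$. For the converse, if all $\CT_{*,\co}(F)$ vanish, then the right adjoint kills $F$, so $F$ is orthogonal to all generators of $\Shv_{\kappa}(\Bun_G)_{\co,\Eis}$ and hence cuspidal.

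I expect the main obstacle to be step 2 together with the finiteness in step 3. Concretely, one must check that $j^*_{\co}\circ \Eis_{\co,*}$ agrees with $j^*\circ\Eis_*$ applied on a suitable co-truncative open of $\Bun_M$, with no contributions from infinitely many $\mu$. For this I would use the compatibility of $\Eis_*^\mu$ with $j_*$ on contractive opens, following \cite[Lemma B.3.2]{DG}, since the transition maps $j_*$ commute with $\Eis_*$ there.
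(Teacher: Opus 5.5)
\textbf{There is a genuine gap, in your step 3.}

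Your first paragraph is correct, but only because reading $\CT_{*,\co}$ as $\Eis_{\co,*}^R$ turns the statement into a tautology in which the support hypothesis plays no role. That is not the intended content. In the paper, $\CT_{*,\co}$ is the concrete constant term. By \cite[Proposition 1.5.7]{strange} (this is where the hypothesis $F\simeq j_{*,\co}j^*_{\co}F$ is used) one has $\CT_{*,\co}(F)\simeq j_{Q,*,\co}\,j_Q^*\,\CT_*\,j_*\,j^*_{\co}(F)$ for a quasi-compact open $Q\subset\Bun_M$. So its vanishing means that $j^*_{\co}F$ is right-orthogonal to all $j^*\Eis_!\,j_{Q,!}A$. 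Co-cuspidality, by contrast, means right-orthogonality to all $j^*\Eis_*\,j_{W,*}N$.

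You rightly single out this comparison as the substantive step, but step 3 does not establish it. You pass from $\Hom(j^*\Eis_*W',F_U)=\Hom(\Eis_*W',j_*F_U)$ to $\Hom(W',\CT_*(j_*F_U))$, restricted to $U_M$. This treats $\CT_*$ as a right adjoint of $\Eis_*$, which it is not. The adjunctions are $\Eis_!\dashv\CT_*$ and $\CT_!\dashv\Eis_*$. A right adjoint of $\Eis_*$ on quasi-compact pieces is only obtained in Proposition \ref{eisra}, via a filtration, and it is a priori different from $\CT_*$. Harder--Narasimhan bookkeeping controls which components and opens intervene, but it cannot convert $\Eis_*$ into $\Eis_!$.

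The missing ingredient is \cite[Proposition 2.4.3]{strange}, which is exactly what the paper uses. On finitely many connected components of $\Bun_M$, the functor $\Eis_*$ admits a finite filtration with subquotients $\Eis_!\circ H^{\alpha}$. Since both $j_{W,*}N$ and $j_{Q,!}A$ live on finitely many components, this lets you pass between the two orthogonality conditions:
\begin{itemize}
\item orthogonality of $j^*_{\co}F$ to all $j^*\Eis_*j_{W,*}N$;
\item orthogonality of $j^*_{\co}F$ to all $j^*\Eis_!j_{Q,!}A$.
\end{itemize}
The second condition is the vanishing of $j_Q^*\CT_*j_*j^*_{\co}F$, by the genuine adjunction $\Eis_!\dashv\CT_*$.

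Without this input, or an equivalent strange-functional-equation type statement, your identification of $\Eis_{\co,*}^R(F)$ with the $\CT_*$ of $j_*F_U$ is unjustified. So the non-formal part of the proposition remains unproved.
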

		
		\begin{proof}
		First, note that under these condition $F \in \Shv_{\kappa}(\Bun_G)_{\co, \cusp}$ if and ony if for every co-truncative $j_W: W \subset \Bun_M$ and $N \in \Shv_{\kappa}(W)$ we have 
		\begin{equation}\label{coCT1}
			\Hom(j^*\Eis_*j_{W, *}N, j^{*}_{\co}(F)) \cong 0
		\end{equation}
		for every proper parabolic subgroup.
		
		Now consider $\CT_{*, \co}(F)$. By \cite[Proposition 1.5.7]{strange} for a certain quasi-compact open $j_Q: Q \subset \Bun_M$ we have 
		$$\CT_{*, \co}(F) \cong j_{Q, *, \co} j_Q^*\CT_{*} j_* j^{*}_{\co}(F).$$
		Now the condition that $\CT_{*, \co}(F)$ is zero for all proper parabolics translates to 
		\begin{equation}\label{coCT2}
		\Hom(A,  j_Q^*\CT_{*} j_* j^{*}_{\co}(F)) \cong \Hom(j^* \Eis_! j_{Q, !}A,   j^{*}_{\co}(F))
		\end{equation}
		for all $A \in \Shv(Q)$.
		
		Since both $j_{Q, !}A$ and $j_{W, *}N$ are supported on finitely many connected components of $\Bun_M$, the fact that conditions (\ref{coCT1}) and (\ref{coCT2}) are equivalent follows from \cite[Proposition 2.4.3]{strange}.
		\end{proof}
		
		\begin{rem}
			Consider the quasi-compact open $U_G$ from Proposition \ref{ug}. Set $$\Shv_{\kappa}(U_G)_{\cusp} := \Shv_{\kappa}(U_G) \cap \Shv_{\kappa}(\Bun_G)_{\cusp}$$
			as subcategories of $\Shv_{\kappa}(\Bun_G)$, where $\Shv_{\kappa}(U_G)$ is viewed as such via the embedding $j_{G, *}$.
			
			Set $$\Shv_{\kappa}(U_G)_{\co, \cusp} := \Shv_{\kappa}(U_G) \cap \Shv_{\kappa}(\Bun_G)_{\co, \cusp}$$
			as subcategories of $\Shv_{\kappa}(\Bun_G)_{\co}$, where $\Shv_{\kappa}(U_G)$ is viewed as such via the embedding $j_{G, *, \co}$.
			
			Note that the identity functor on $\Shv_{\kappa}(U_G)$ identifies subcategories $\Shv_{\kappa}(U_G)_{\cusp}$ and $\Shv_{\kappa}(U_G)_{\co, \cusp}$.
		\end{rem}
		
		\begin{rem}
			As in \cite[3.2.4]{strange} we have a natural transformation 
			\begin{equation}\label{diff}
				\Id_{U_G} \rightarrow j_{G, \co}^*\psid^{\mir}j_{G, *}[-2\dim(\Bun_G) - \dim(Z_G)].
			\end{equation}
		\end{rem}
		
		Let 
		\begin{equation}
			\psid_{\operatorname{diff}}: \Shv_{\kappa}(U_G) \rightarrow \Shv_{\kappa}(U_G)
		\end{equation}
		denote the cone of (\ref{diff}).
		
		Let $Q$ be the open in $\Bun_M$ as in \cite[Proposition 1.5.7]{strange}. Recall the following (\cite[Theorem 4.3.1]{sch}, \cite[Proposition 3.2.6]{strange}):
		\begin{pr}\label{3.2.6}
			The functor $\psid_{\operatorname{diff}}$ admits a finite decreasing filtration, indexed by a
			poset, with subquotients being functors of the form
			$$\Shv_{\kappa}(U_G) \xrightarrow{j_{Q^{\mu}}^*\CT_* j_{G, *}} \Shv_{\kappa}(Q^\mu) \xrightarrow{F^{\mu, \mu^{\prime}}} \Shv_{\kappa}(Q^{\mu^{\prime}}) \xrightarrow{j_G^*\Eis_{*}^{\mu^{\prime}, -} j_{Q^{\mu^{\prime}, *}}} \Shv_{\kappa}(U_G),$$
			for a proper parabolic $P$ with Levi quotient $M$, where $\mu, \mu^{\prime} \in \pi_1(\Bun_M)$ and $F^{\mu, \mu^{\prime}}$ is some functor $\Shv_{\kappa}(Q^\mu)\rightarrow \Shv_{\kappa}(Q^{\mu^{\prime}})$.
		\end{pr}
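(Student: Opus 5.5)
The plan is to transport the de Rham argument of \cite[Theorem 4.3.1]{sch} and \cite[Proposition 3.2.6]{strange} to the twisted Betti setting. Every ingredient in that argument is a base change or a filtration on the level of kernels, so it is insensitive to the sheaf theory once the relevant functors are continuous and satisfy base change.

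\textbf{Step 1: write $\psid_{\operatorname{diff}}$ as a kernel on $U_G \times U_G$.} By definition of $\Delta_{*,\co}(\omega)$, restricted to $U_G$ on both sides, the functor $j_{G,\co}^*\psid^{\mir}j_{G,*}$ is given by the kernel
$$(j_G\times j_G)^*\bigl(\Delta_{*}(\omega_{U'})\bigr)$$
for a sufficiently large co-truncative quasi-compact open $U' \supset U_G$. Here we use Corollary \ref{cotrunc} to compute inside $\Shv_{\kappa}(U')\otimes \Shv_{-\kappa}(U')$. The natural transformation (\ref{diff}) corresponds to the map from the $!$-version of the diagonal to the $*$-version. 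Hence $\psid_{\operatorname{diff}}$ is given by the cone kernel, namely the "difference" between $\Delta_!$ and $\Delta_*$ of the dualizing sheaf on $U_G$, measured on $\Bun_G$.

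\textbf{Step 2: stratify and filter.} Following Schieder, I would use the Drinfeld-type compactification $\widetilde{\Bun}_{P^-}$ (the stacks $\BunPmt$) together with its defect stratification. The cone kernel is computed by a $!$/$*$ comparison along the complement of the open stratum in a relative compactification of the diagonal. The complement is stratified by strata indexed by parabolics $P$ and defect data $\mu,\mu'$, and the stratification induces a finite filtration of the kernel. Finiteness holds because $U_G$ is quasi-compact, so only finitely many strata meet $U_G\times U_G$. This finiteness should follow from \cite[Proposition 1.5.7]{strange}, which places all relevant $\CT$ images into the quasi-compact $Q^\mu$.

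\textbf{Step 3: identify each stratum's contribution.} Each stratum maps to $\Bun_M\times\Bun_M$ via the product of the maps $\Bun_{P^-}\to\Bun_G$ and $\Bun_P\to\Bun_G$. Its contribution is a kernel of the shape
$$(p\times p^-)_*\bigl(\text{something on } \Bun_P\times_{\Bun_M}{}^{\text{defect}}\Bun_{P^-}\bigr).$$
Base change identifies this kernel with the composite
$$j_G^*\Eis_*^{\mu',-}\,j_{Q^{\mu'},*}\circ F^{\mu,\mu'}\circ j_{Q^\mu}^*\CT_*\,j_{G,*},$$
where $F^{\mu,\mu'}$ is given by a kernel on $Q^\mu\times Q^{\mu'}$. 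That kernel comes from the local contribution, namely the $!$/$*$ discrepancy of IC-type sheaves along the defect. The twisting gerbes match because the gerbe on $\Bun_P$ and $\Bun_{P^-}$ is pulled back from $\Bun_M$, by the factorization gerbe lemma \cite[5.1]{GL} applied in the global setting. Theorem \ref{secondadj} is used to convert between the $\CT^-_!$ and $\CT_*$ forms, so that the filtration quotients take exactly the stated shape.

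\textbf{Main obstacle.} I expect the main difficulty to be the Betti analogue of Schieder's local computation. One has to check that the relevant base change and projection formulas hold for the non-representable, non-proper maps $\widetilde{\Bun}_{P^-}\to\Bun_G$ in the weakly constructible twisted context. One also has to check that the filtration is finite and compatible with the twist. My first attempt would be to reduce to quasi-compact opens, where the maps are finite type and the six operations behave as in the constructible setting. Then I would invoke the ULA property of the kernels $K^\alpha$ already used in Proposition \ref{eisra} to justify exchanging $*$- and $!$-pushforwards.
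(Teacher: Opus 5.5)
Your proposal takes the same route as the paper, which gives no independent argument for this statement and simply imports the de Rham result of \cite[Theorem 4.3.1]{sch} and \cite[Proposition 3.2.6]{strange} to twisted Betti sheaves; your sketch of that argument is the intended one: identify $\psid_{\operatorname{diff}}$ with the cone of $\Delta_!(k)\to\Delta_*(k)$ on $U_G\times U_G$, filter it along the parabolic boundary strata of a relative compactification of the diagonal, and read off $\Eis$--$\CT$ composites by base change. One correction to Step 2: that compactification is the Vinberg-type (Drinfeld--Lafforgue--Vinberg) compactification of $\Delta$, not $\widetilde{\Bun}_{P^-}$ on its own, and its boundary strata, built from $\Bun_P\times_{\Bun_M}(\text{defect})\times_{\Bun_M}\Bun_{P^-}$, map to $\Bun_G\times\Bun_G$ via $p\times p^-$ rather than to $\Bun_M\times\Bun_M$.
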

		
		\begin{cor}
			The morphism (\ref{diff}) induces an isomorphism
			$$\Id_{\Shv_{\kappa}(U_G)_{\cusp}} \cong \psid^{\mir}|_{\Shv_{\kappa}(U_G)_{\cusp}}[-2\dim(\Bun_G) - \dim(Z_G)].$$
		\end{cor}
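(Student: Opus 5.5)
The plan is to deduce the corollary directly from Proposition \ref{3.2.6}, by showing that the cone $\psid_{\operatorname{diff}}$ of (\ref{diff}) vanishes on $\Shv_{\kappa}(U_G)_{\cusp}$. Since $\psid_{\operatorname{diff}}$ is defined as the cone of the natural transformation (\ref{diff}), it suffices to prove that $\psid_{\operatorname{diff}}(F)=0$ for every $F\in \Shv_{\kappa}(U_G)_{\cusp}$. Here we regard $\Shv_{\kappa}(U_G)_{\cusp}$ as a subcategory of $\Shv_{\kappa}(U_G)$, identifying $F$ with $j_{G,*}F$.

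\emph{Step 1.} Use the finite decreasing filtration of Proposition \ref{3.2.6}. Since the indexing poset is finite, vanishing of the functor on $F$ follows by induction along the filtration from vanishing of each subquotient on $F$. Cones and fibers of zero objects are zero, so no further input is needed for this reduction.

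\emph{Step 2.} Each subquotient begins with the functor $j_{Q^{\mu}}^*\CT_* j_{G,*}$, for a proper parabolic $P$ and $\mu\in\pi_1(\Bun_M)$. By the definition of $\Shv_{\kappa}(U_G)_{\cusp}$, the object $j_{G,*}F$ lies in $\Shv_{\kappa}(\Bun_G)_{\cusp}$. This category is the intersection of the kernels of $\CT_*$ over all proper parabolics. Hence $\CT_*(j_{G,*}F)=0$, and so its restriction to $Q^\mu$ and every subsequent composite $j_G^*\Eis_*^{\mu',-}j_{Q^{\mu'},*}\circ F^{\mu,\mu'}$ applied to it vanish.

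One point needs care: the subquotients in Proposition \ref{3.2.6} are written with $\CT_*$, whereas cuspidality is phrased via $\CT^-_*$ for opposite parabolics. This is harmless, because the definition ranges over all proper parabolics, and $P$ is conjugate to an opposite parabolic. Alternatively, Theorem \ref{secondadj} identifies the relevant constant term functors, so either formulation of cuspidality kills every term.

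\emph{Step 3.} Conclude that (\ref{diff}) is an isomorphism on $\Shv_{\kappa}(U_G)_{\cusp}$, that is,
$$F\cong j_{G,\co}^*\psid^{\mir}j_{G,*}F[-2\dim(\Bun_G)-\dim(Z_G)].$$
To get the form stated in the corollary, I would also check that $\psid^{\mir}(j_{G,*}F)$ is itself recovered from its restriction to $U_G$, i.e. lies in the image of $j_{G,*,\co}$. This follows from the preceding proposition: cuspidal objects of the co-category satisfy $F'\cong j_{G,*,\co}j_{G,\co}^*F'$. Showing that $\psid^{\mir}$ sends $\Shv_{\kappa}(\Bun_G)_{\cusp}$ into $\Shv_{\kappa}(\Bun_G)_{\co,\cusp}$ requires $\psid^{\mir}$ to intertwine $\Eis_!$ with $\Eis_{\co,*}$ (up to shift), in parallel with \cite{strange}.

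I expect this compatibility of $\psid^{\mir}$ with Eisenstein series to be the only nontrivial point. I would obtain it from the definition of $\Delta_{*,\co}(\omega)$ via base change along $\Bun_P\to\Bun_G\times\Bun_M$, arguing on each co-truncative open separately.
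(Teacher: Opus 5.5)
Your Steps 1 and 2 are exactly the paper's argument. By Proposition \ref{3.2.6}, the cone $\psid_{\operatorname{diff}}$ of (\ref{diff}) is an iterated extension of functors that all begin with $j_{Q^{\mu}}^*\CT_* j_{G,*}$, and these kill $\Shv_{\kappa}(U_G)_{\cusp}$ by definition. The paper defines cuspidality via $\CT_*$ for all proper parabolics, so the $\CT_*$ versus $\CT^-_*$ worry does not arise.

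This already proves the corollary, so Step 3 is unnecessary. The target of (\ref{diff}) is $j^*_{G,\co}\psid^{\mir}j_{G,*}[-2\dim(\Bun_G)-\dim(Z_G)]$, and the statement says precisely that this map is invertible on cuspidal objects. The target then lies in $\Shv_{\kappa}(U_G)_{\cusp}$ automatically, since it is isomorphic to $F$.

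The stronger assertion you aim for in Step 3, $\psid^{\mir}(j_{G,*}F)\cong j_{G,*,\co}F$ up to shift, is what Corollary \ref{mircusp} needs. Your suggested route to it does not work as stated.

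\begin{itemize}
\item The intertwining $\Eis_{\co,*}\circ\psid^{\mir}\cong\psid^{\mir}\circ\Eis^-_!$ of Proposition \ref{4.1.2} (the base-change statement) only computes $\Hom(\Eis_{\co,*}(N),\psid^{\mir}(F))$ when $N$ lies in the image of $\psid^{\mir}$ for $M$.
\item Even then you would need $\psid^{\mir}$ to be fully faithful on maps from Eisenstein objects to $F$. That is exactly what this corollary is later used to prove.
\end{itemize}

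What actually shows that $\psid^{\mir}(F)$ is co-cuspidal is the adjoint compatibility $\psid^{\mir}\circ\CT_!\cong\Eis^R_{\co,*}\circ\psid^{\mir}$ of Proposition \ref{cteisra}, together with $\CT_!F=0$, which follows from second adjointness (Theorem \ref{secondadj}). That compatibility is not a base-change computation: the paper obtains it from the $D$-module strange functional equation and Verdier duality.
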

		
		\begin{proof}
			Follows from Proposition \ref{3.2.6} as in \cite[Corollary 3.2.2]{strange}.
		\end{proof}
		
		\begin{cor}\label{mircusp}
			The functor $\psid^{\mir}$ induced an equivalence
			$$\Shv_{\kappa}(\Bun_G)_{\cusp} \rightarrow \Shv_{\kappa}(\Bun_G)_{\co, \cusp}.$$
		\end{cor}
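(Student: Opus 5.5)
The plan is to reduce everything to the open $U_G$ of Proposition \ref{ug}, where both categories of cuspidal objects live, and then invoke the preceding corollary.

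First, by Proposition \ref{ug}, the functor $j_{G,*}$ identifies $\Shv_{\kappa}(U_G)_{\cusp}$ with $\Shv_{\kappa}(\Bun_G)_{\cusp}$, with inverse $j_G^*$. By the proposition proved above for the co-category, any $F\in \Shv_{\kappa}(\Bun_G)_{\co,\cusp}$ satisfies $F\cong j_{G,*,\co}j^*_{G,\co}F$. Combined with the remark identifying $\Shv_{\kappa}(U_G)_{\cusp}$ and $\Shv_{\kappa}(U_G)_{\co,\cusp}$, this shows that $j_{G,*,\co}$ identifies $\Shv_{\kappa}(U_G)_{\cusp}$ with $\Shv_{\kappa}(\Bun_G)_{\co,\cusp}$, with inverse $j^*_{G,\co}$. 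For the inclusion of this image into $\Shv_{\kappa}(\Bun_G)_{\co,\cusp}$, I would use the criterion just proved: an object of the form $j_{G,*,\co}(-)$ is co-cuspidal iff all $\CT_{*,\co}$ vanish. Under $j_{G,*,\co}$ and $j_{G,*}$ these constant terms agree with $\CT_*$ of the corresponding ordinary object, via \cite[Proposition 1.5.7]{strange}, so they vanish.

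Second, I would show that $\psid^{\mir}$ sends $\Shv_{\kappa}(\Bun_G)_{\cusp}$ into objects of the form $j_{G,*,\co}(-)$, i.e.\ that $\psid^{\mir}\circ j_{G,*}\cong j_{G,*,\co}\circ j^*_{G,\co}\circ\psid^{\mir}\circ j_{G,*}$ on cuspidal objects. The tool here is the adjunction/compatibility of $\psid^{\mir}$ with Eisenstein series. From the definition via $\Delta_{*,\co}(\omega)$, one expects $\psid^{\mir}$ to intertwine $\Eis_!$ with $\Eis_{\co,*}$, up to shift and a change of parabolic to its opposite. This is the Betti analog of \cite[Section 3]{strange}. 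Granting this, $\Hom(\Eis_{\co,*}(-),\psid^{\mir}F)$ becomes $\Hom(\Eis_!(-),F)$ after dualizing, which vanishes for $F$ cuspidal. So $\psid^{\mir}F$ is co-cuspidal, and by the first step it is $j_{G,*,\co}$-extended from $U_G$.

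Third, the resulting functor $\Shv_{\kappa}(U_G)_{\cusp}\to\Shv_{\kappa}(U_G)_{\co,\cusp}=\Shv_{\kappa}(U_G)_{\cusp}$ is $j^*_{G,\co}\psid^{\mir}j_{G,*}$. By the previous corollary it is isomorphic to the identity shifted by $[2\dim(\Bun_G)+\dim(Z_G)]$, hence an equivalence. Composing with the identifications of the first step gives the claimed equivalence.

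The main obstacle is the second step, namely the compatibility of $\psid^{\mir}$ with Eisenstein and constant term functors in the Betti, twisted setting. If a clean intertwining is unavailable, I would fall back on working directly with the kernel $\Delta_{*,\co}(\omega)$ restricted to $U_G\times\Bun_G$. There I would use the filtration of Proposition \ref{3.2.6}: its subquotients factor through $\CT_*$, so they kill cuspidal objects, and hence $j^*_{G,\co}\psid^{\mir}j_{G,*}$ is already controlled. Co-cuspidality of the image can then be checked through the $\CT_{*,\co}$ criterion.
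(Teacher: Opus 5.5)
Your steps 1 and 3 are exactly the deduction the paper intends. Corollary \ref{mircusp} is stated without separate proof, as a consequence of the preceding corollary, Proposition \ref{ug}, its co-analogue, and the remark identifying $\Shv_{\kappa}(U_G)_{\cusp}$ with $\Shv_{\kappa}(U_G)_{\co,\cusp}$. You are right that something like your step 2 is needed, because (\ref{diff}) only concerns $j^*_{G,\co}\psid^{\mir}j_{G,*}$.

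However, the sentence carrying step 2 is not correct as written. Proposition \ref{4.1.2} only gives $\Hom(\Eis_{\co,*}\psid^{\mir}(F_M),\psid^{\mir}F)\cong\Hom(\psid^{\mir}\Eis^-_!(F_M),\psid^{\mir}F)$. Passing from this to $\Hom(\Eis^-_!(F_M),F)$ is precisely the full faithfulness of $\psid^{\mir}$ on Eisenstein objects, which is established only at the end of the proof of Theorem \ref{miraculous}. You would also need the objects $\psid^{\mir}(F_M)$ to generate $\Shv_{\kappa}(\Bun_M)_{\co}$. What you actually want is the adjoint form, Proposition \ref{cteisra}: $\Eis^R_{\co,*}(\psid^{\mir}F)\cong\psid^{\mir}(\CT_!F)$. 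For cuspidal $F$ we have $\CT_!F\cong\CT^-_*F=0$ by Theorem \ref{secondadj}, with the roles of $P$ and $P^-$ swapped. So $\psid^{\mir}F$ is co-cuspidal, hence $j_{G,*,\co}$-extended from $U_G$ by the co-analogue of Proposition \ref{ug}, and your step 3 applies. This is not circular, since Proposition \ref{cteisra} is proved without using Corollary \ref{mircusp}.

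There is also a cheaper way to get step 2 that avoids the functional equation entirely.
\begin{enumerate}
\item Take $U_G$ co-truncative, enlarging it if necessary; this does not affect Proposition \ref{ug}.
\item By construction, the image of $\Delta_{*,\co}(\omega)$ under $j_G^*\otimes\Id$ is $(\Id\otimes j_{G,*,\co})\Delta_*(\omega_{U_G})$.
\item By the projection formula, $j_{G,!}$ is dual to $j_G^*$ for the pairing $\Gamma_!(-\overset{*}{\otimes}-)$.
\item Hence $\psid^{\mir}\circ j_{G,!}$ factors through $j_{G,*,\co}$.
\item A cuspidal $F$ satisfies $F\cong j_{G,!}j_G^*F\cong j_{G,*}j_G^*F$, so $\psid^{\mir}F$ is automatically $j_{G,*,\co}$-extended from $U_G$. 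Moreover $j^*_{G,\co}\psid^{\mir}j_{G,!}$ agrees with $j^*_{G,\co}\psid^{\mir}j_{G,*}$ on such objects, and the preceding corollary finishes the proof.
\end{enumerate}

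Your fallback does not settle this point. The filtration of Proposition \ref{3.2.6} again only sees $j^*_{G,\co}\psid^{\mir}j_{G,*}$. The $\CT_{*,\co}$-criterion presupposes that the object is already $j_{*,\co}$-extended from a quasi-compact open, which is exactly what is in question.
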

		
		\begin{pr}\label{3.3.5}
			The functor $\psid^{\mir}$ induced an isomorphism
			\begin{equation}
				\Hom_{\Shv_{\kappa}(\Bun_G)}(F^{\prime}, F) \rightarrow \Hom_{\Shv_{\kappa}(\Bun_G)_{\co}}(\psid^{\mir}(F^{\prime}), \psid^{\mir}(F)),
			\end{equation}
			given that $F^{\prime} \in \Shv(\Bun_G)_{\cusp}.$
		\end{pr}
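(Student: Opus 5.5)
This follows the argument of \cite[Proposition 3.3.5]{strange} (Drinfeld--Gaitsgory), transposed to the Betti setting. Since $F^{\prime}$ is cuspidal, Proposition \ref{ug} gives $F^{\prime} \cong j_{G,!}j_G^*F^{\prime} \cong j_{G,*}j_G^*F^{\prime}$. Hence
$$\Hom_{\Shv_{\kappa}(\Bun_G)}(F^{\prime}, F) \cong \Hom_{\Shv_{\kappa}(U_G)}(j_G^*F^{\prime}, j_G^*F).$$
On the other side, Corollary \ref{mircusp} shows that $\psid^{\mir}(F^{\prime})$ lies in $\Shv_{\kappa}(\Bun_G)_{\co,\cusp}$. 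By the co-analogue of Proposition \ref{ug} proved above, the map $\psid^{\mir}(F^{\prime}) \to j_{G,*,\co}j^*_{G,\co}\psid^{\mir}(F^{\prime})$ is therefore an isomorphism.

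The plan is then:
\begin{enumerate}
\item Show that $\psid^{\mir}(F^{\prime})$ is also $j_{G,!,\co}$-extended from $U_G$; equivalently, that the functor $\Hom(\psid^{\mir}(F^{\prime}), -)$ factors through $j^*_{G,\co}$. Given this, the target becomes $\Hom_{\Shv_{\kappa}(U_G)}\bigl(j^*_{G,\co}\psid^{\mir}(F^{\prime}),\, j^*_{G,\co}\psid^{\mir}(F)\bigr)$.
\item Reduce everything to $U_G$ by establishing the local description $j^*_{G,\co}\circ \psid^{\mir} \cong \psid^{\mir}_{U_G}\circ j_G^*$ up to the shift $[-2\dim(\Bun_G)-\dim(Z_G)]$. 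Here $\psid^{\mir}_{U_G}$ is given by the kernel $\Delta_*(\omega_{U_G})$ corrected by the Eisenstein-type terms.
\end{enumerate}

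For step (1) I would argue by adjunction. $j_{G,*,\co}$ has left adjoint $j^*_{G,\co}$, and the co-category is compactly generated by $j_{U,*,\co}$ of compacts on the co-truncative opens $U$. Testing against these generators, it suffices to show that $\Hom(\psid^{\mir}(F^{\prime}), G)$ vanishes whenever $j^*_{G,\co}G=0$. Objects of this kind are built from the $i_{\lambda,*,\co}(W)$ with $\lambda$ outside $U_G$. By (\ref{lambdaeis}) these lie in $\Shv_{\kappa}(\Bun_G)_{\co,\Eis}$, and $\psid^{\mir}(F^{\prime})$ is right-orthogonal to that subcategory.

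I expect this step to be the main obstacle, because it asks for \emph{left} orthogonality. The way I would handle it is to use the filtration of $\Eis_{\co,*}$ by the functors $\Eis_!\circ H^{\alpha}$ (from the proof of Proposition \ref{eisra}). Combined with the cuspidality condition phrased through $\CT_{*,\co}$ (the proposition preceding Proposition \ref{3.2.6}), this converts left orthogonality to $\Eis$ into vanishing of constant terms, and that vanishing holds for $F^{\prime}$ by \cite[Theorem 1.2.3]{DG}.

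For the comparison map itself, I would use the natural transformation (\ref{diff}). By Proposition \ref{3.2.6}, the cone $\psid_{\operatorname{diff}}$ has a finite filtration whose subquotients begin with $j_{Q^{\mu}}^*\CT_*j_{G,*}$. Each of these subquotients kills $j_G^*F^{\prime}$, since $F^{\prime}$ is cuspidal. For the non-cuspidal argument $F$, I would move the Eisenstein end of each subquotient to the other side by adjunction: $\Hom(j_G^*F^{\prime}, j_G^*\Eis_*(\cdots))$ becomes $\Hom(\CT\text{-type}(F^{\prime}), \cdots)=0$, using the second adjointness of Theorem \ref{secondadj}. So the cone terms contribute nothing to $\Hom(j_G^*F^{\prime},-)$. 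Combining this with step (1) and the identification of $\Hom$ on $U_G$ coming from $\psid^{\mir}$ being identity-plus-these-terms, the map in the statement is an isomorphism.
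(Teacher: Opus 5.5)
Your argument is sound for $F$ of the form $j_{G,*}N$. There (\ref{diff}) and Proposition \ref{3.2.6} apply literally. Each subquotient of $\psid_{\operatorname{diff}}$ is killed by $\Hom(j_G^*F',-)$, using $j_{G,!}j_G^*F'\simeq F'$ and second adjointness. Together with $j^*_{G,\co}\psid^{\mir}(F')\simeq j_G^*F'$ (up to shift), this gives the claim.

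The gap is the passage to arbitrary $F$. On the source side it is harmless, since $\Hom(F',F)\simeq\Hom(F',j_{G,*}j_G^*F)$. On the target side you need
\begin{equation*}
\Hom\bigl(\psid^{\mir}(F'),\psid^{\mir}(F'')\bigr)=0 \quad\text{for } F''=\on{Fib}(F\to j_{G,*}j_G^*F).
\end{equation*}
Your step (2), $j^*_{G,\co}\circ\psid^{\mir}\simeq\psid^{\mir}_{U_G}\circ j_G^*$, would give this, but it is not proved and is false in general. The pseudo-identity is not local for $*$-restriction:
\begin{itemize}
\item the kernel of $j^*_{G,\co}\circ\psid^{\mir}$ is the $*$-extension $(j_G\times\id)_*\Delta_*(\omega_{U_G})$;
\item the kernel of $\psid^{\mir}_{U_G}\circ j_G^*$ is $(j_G\times\id)_!\Delta_*(\omega_{U_G})$;
\item these differ because the diagonal of $\Bun_G$ is not proper, so the graph of $U_G\hookrightarrow\Bun_G$ is not a closed embedding into $\Bun_G\times U_G$.
\end{itemize}
Hence $j^*_{G,\co}\psid^{\mir}(F)$ really does depend on $F$ near the boundary of $U_G$. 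Also, (\ref{diff}) and Proposition \ref{3.2.6} only describe $j^*_{G,\co}\psid^{\mir} j_{G,*}$, so they cannot supply the missing vanishing either.

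What the construction does give is compatibility with $!$-extension. Defining $\Delta_{*,\co}(\omega)$ as the compatible system $j_{*,\co}\Delta_*(\omega_U)$ amounts to
\begin{equation*}
\psid^{\mir}\circ j_{U,!}\simeq j_{U,*,\co}\circ\psid^{\mir}_U
\end{equation*}
for co-truncative quasi-compact $U$. Dually, $j^?_{U,\co}\circ\psid^{\mir}\simeq\psid^{\mir}_U\circ j_U^*$, where $j^?_{U,\co}$ is the right adjoint of $j_{U,*,\co}$. Enlarge $U_G$ to be co-truncative if needed. Since $F'\simeq j_{G,!}j_G^*F'$, you get $\psid^{\mir}(F')\simeq j_{G,*,\co}\psid^{\mir}_{U_G}(j_G^*F')$, and therefore
\begin{equation*}
\Hom\bigl(\psid^{\mir}(F'),\psid^{\mir}(F'')\bigr)\simeq\Hom\bigl(\psid^{\mir}_{U_G}(j_G^*F'),\psid^{\mir}_{U_G}(j_G^*F'')\bigr)=0 .
\end{equation*}
This closes the gap: $\Hom(\psid^{\mir}(F'),\psid^{\mir}(F))\simeq\Hom(\psid^{\mir}(F'),\psid^{\mir}(j_{G,*}j_G^*F))$, and then your $j_{G,*}N$ argument finishes.

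The same identity makes your step (1) immediate, so the detour through the filtration of $\Eis_{\co,*}$ by $\Eis_!\circ H^\alpha$ is unnecessary. That detour is also aimed at the wrong side: cuspidality gives right-orthogonality to $\Eis_!$-images. The left-orthogonality you want comes directly from $\Eis_*$-images and second adjointness, once you know $j^*_{G,\co}\psid^{\mir}(F')\simeq j_G^*F'$ is cuspidal.
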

		\begin{proof}
			Follows from results of the present section as in \cite[3.4]{strange}.
		\end{proof}

		\subsection{The strange functional equation.}

		\begin{pr}\label{4.1.2}
			For a parabolic $P$ and an opposite parabolic $P^-$ we have an equivalence of functors
			\begin{equation}\label{strange1}
				\Eis_{\co, *} \circ \psid^{\mir} \cong \psid^{\mir} \circ \Eis_!^-.
			\end{equation}
		\end{pr}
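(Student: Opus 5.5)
The plan is to follow the proof of the strange functional equation in \cite[Theorem 4.1.2]{strange} (Drinfeld--Gaitsgory), adapted to the Betti, twisted setting. The key point is that $\psid^{\mir}$ is defined by the kernel $\Delta_{*,\co}(\omega)$. Composing a functor defined by a kernel with a geometric correspondence functor can therefore be computed by base change. So I will compare the two kernels in
$$\Shv_{-\kappa}(\Bun_M)^\vee \otimes \Shv_{-\kappa}(\Bun_G)_{\co},$$
dualizing via Remark \ref{duall}, and reduce to a statement about objects on products of stacks.

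\textbf{Step 1: right-hand side.} Write $\Eis_!^- = [\on{shift}]\circ (p^-)_!\circ(\otimes \det^{-1/2})\circ (q^-)^*$. Then $\psid^{\mir}\circ \Eis^-_!$ is given by the kernel obtained from $\Delta_{*,\co}(\omega)$ by applying $(\Eis^-_!)^\vee \cong \CT^-_*$ in the first factor. Since $\CT_*^- \cong \CT_!$ componentwise (Theorem \ref{secondadj}), working on each co-truncative open $U$ this kernel becomes
$$(q^-\times \id)_!\,(p^-\times\id)^*\,\Delta_*(\omega_U),$$
up to the determinant twist and shift. By base change this is the $!$-pushforward of (a twist of) the dualizing sheaf along
$$\Bun_{P^-}\times_{\Bun_G}U \to \Bun_M\times U.$$

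\textbf{Step 2: left-hand side.} Similarly, $\Eis_{\co,*}\circ\psid^{\mir}$ (with $\psid^{\mir}$ now for $M$) is given by the kernel
$$(\id\times p)_{*,\co}(\id\times q)^!\,\Delta_{*,\co}(\omega_{\Bun_M}).$$
By base change this is the $*$-pushforward of the dualizing sheaf along the graph-type map
$$\Bun_P \to \Bun_M\times\Bun_G,$$
taken in the co-category. Since the Levi quotients of $P$ and $P^-$ coincide, both kernels live on $\Bun_M\times\Bun_G$, and the task becomes identifying
$$(\pi_P)_{*,\co}\,\omega_{\Bun_P}\quad\text{with}\quad (\pi_{P^-})_!\,\omega_{\Bun_{P^-}}$$
up to the shift and twist, where $\pi_P$ and $\pi_{P^-}$ denote these two maps. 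The identification is to be made as compatible systems over co-truncative opens.

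\textbf{Step 3: comparison.} I expect this to be the main obstacle. It is precisely the content of \cite[Section 4]{strange}, which proceeds through Drinfeld's compactifications $\widetilde{\Bun}_P$ and $\widetilde{\Bun}_{P^-}$ and the Zastava / Vinberg degeneration. The key input is the ``second adjointness'' identification $\CT_*^-\cong\CT_!$ of Theorem \ref{secondadj}, proved in \cite{DG} using the hyperbolic localization theorem of Drinfeld--Gaitsgory. Hyperbolic localization holds verbatim for Betti sheaves and for twisted sheaves, since the gerbe is pulled back from $\Bun_G$ and is $\bG_m$-equivariant.

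My first approach is therefore indirect. I will avoid comparing kernels directly and instead deduce (\ref{strange1}) formally from Theorem \ref{secondadj}, together with Proposition \ref{eisra} and the truncatability statement Corollary \ref{cotrunc}. Concretely, I will check that both sides are left adjoint to the same functor after composing with the duality pairings, namely
$$\Hom(\Eis_{\co,*}\psid^{\mir}_M(-),-)\cong\Hom(\psid^{\mir}_M(-),\Eis_{\co,*}^R(-)).$$
Both pairings reduce, on each quasi-compact co-truncative $U$, to cohomology of $*$-restrictions along $\Bun_P\times_{\Bun_G}U$ and $\Bun_{P^-}\times_{\Bun_G}U$. These are matched by the Braden-type contraction used in \cite[Proposition 1.2.3]{DG}.

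Finally, I will check compatibility of the identifications with the transition functors $j_*$ in the colimit defining the co-category. This uses \cite[Proposition 1.5.7]{strange}, which says that $\CT$ of objects supported on a quasi-compact open is supported on the quasi-compact open $Q$, so only finitely many components of $\Bun_M$ enter at each stage and the colimits commute with the comparison.
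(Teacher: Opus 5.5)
Your Step 2 is correct and matches the paper's treatment of the left-hand side. On each co-truncative $U_M\subset\Bun_M$, the kernel $(\Id\otimes\Eis_{\co,*})(\Delta_{*,\co}(\omega_{\Bun_M}))$ is the $*$-pushforward of $\omega$ along $q\times p\colon\Bun_P\to\Bun_M\times\Bun_G$, by base change of $q^!$ against $\Delta_*$.

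The gap is in Step 1. The duality of Remark \ref{duall} uses the pairing $\Gamma_!(-\overset{*}{\otimes}-)$, so the projection formula gives $(f_!)^\vee\cong f^*$ and $(f^*)^\vee\cong f_!$. Hence $(\Eis^-_!)^\vee\cong\CT^-_!$, not $\CT^-_*$. The expression $(q^-\times\id)_!(p^-\times\id)^*\Delta_*(\omega_U)$ you arrive at is the right kernel, but not for the reason you give. More seriously, you then commute $(p^-\times\id)^*$ past $\Delta_*(\omega_U)$ "by base change". The isomorphism $g^*f_*\cong f'_*g'^*$ is only available when $f$ is proper or $g$ is smooth. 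Here the diagonal of $U$ is affine but not proper, and $p^-$ is not smooth. The identification of $(\pi_P)_*\omega_{\Bun_P}$ with $(\pi_{P^-})_!\omega_{\Bun_{P^-}}$, which you isolate as the main obstacle, is an artifact of this step and is false in general. Already for $G$ of semisimple rank one, take a component of $\Bun_T$ of large degree. One of the maps $\Bun_B\to\Bun_G$, $\Bun_{B^-}\to\Bun_G$ lands in the unstable locus, while the other hits the stable locus. So the two objects have different supports, whatever shifts and twists are allowed.

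The missing idea is to apply second adjointness to the dual functor \emph{before} forming the kernel. By Theorem \ref{secondadj}, $(\Eis^-_!)^\vee\cong\CT^-_!\cong\CT_*$. Now $\CT_*$ is built from $p^!$ and $q_*$, and $!$-pullback base-changes against $*$-pushforward with no hypotheses. So $(\CT_*\otimes\Id)(\Delta_{*,\co}(\omega_{\Bun_G}))$ is again the system $(q\times p)_*\omega$ from your Step 2. That is the entire argument in the paper. Drinfeld compactifications and the Vinberg degeneration enter only through Theorem \ref{secondadj} itself.

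Your fallback "indirect" argument does not repair Step 1. The displayed isomorphism $\Hom(\Eis_{\co,*}\psid^{\mir}_M(-),-)\cong\Hom(\psid^{\mir}_M(-),\Eis^R_{\co,*}(-))$ is just the definition of $\Eis^R_{\co,*}$. To compare it with $\psid^{\mir}\circ\Eis^-_!$ you would need control of the right adjoint of $\psid^{\mir}$. That is essentially Theorem \ref{miraculous}, which the paper deduces from the present proposition, so that route is circular. The "Braden-type contraction" that is supposed to match the two pairings is also never specified.
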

		
		\begin{proof}
			Both sides correspond to objects in 
			\begin{equation}
					\Shv_{-\kappa}(\Bun_M) \otimes \Shv_{\kappa}(\Bun_G)_{\co} \cong \lim_{U_M \subset \Bun_M} (\Shv_{-\kappa}(U_M) \otimes \Shv_{\kappa}(\Bun_G)_{\co}),
			\end{equation}
			where $U_M \subset \Bun_M$ are co-truncative opens. We claim that both sides of (\ref{strange1}) correspond to the system 
			\begin{equation}
				\{ (\Id \otimes j_{*, \co})\circ (q \times p)_*\circ \Delta_{*}(\omega_{q^{-1}(U_M)}) \in \Shv_{-\kappa}(U_M) \otimes \Shv_{\kappa}(\Bun_G)_{\co} \}_{U_M \subset \Bun_M}.
			\end{equation}
			Indeed, the left-hand side of (\ref{strange1}) corresponds to 
			$$(\Id \otimes \Eis_{\co, *})(\Delta_{*,co}(\omega_{\Bun_M})),$$
			and the right-hand side corresponds to 
			$$(\CT_* \otimes \Id)(\Delta_{*,co}(\omega_{\Bun_G})),$$
			and the assertion follows by base-change.
			
		\end{proof}
		
		Recall from Proposition \ref{eisra} that the functor $\Eis_{\co, *}$ admits a continuous right adjoint $\Eis_{\co, *}^R$.
		
		\begin{pr}\label{cteisra}
			For a parabolic $P$ the natural transformation of functors 
				\begin{equation}\label{strange2}
				\psid^{\mir} \circ \CT_!\rightarrow    \Eis_{\co, *}^R \circ \psid^{\mir} 
			\end{equation}
			is an equivalence.
		\end{pr}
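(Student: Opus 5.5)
The natural transformation (\ref{strange2}) is obtained from Proposition \ref{4.1.2} by adjunction. We apply (\ref{strange1}) to the opposite parabolic; up to the notational symmetry between $P$ and $P^-$, this gives
$$\Eis_{\co,*}\circ\psid^{\mir}\cong\psid^{\mir}\circ\Eis_!.$$
We then pass to right adjoints on the appropriate side. Composing the unit of $(\Eis_!,\CT_!)$ (more precisely $\CT^-_*\cong\CT_!$ by Theorem \ref{secondadj}, restricted to each connected component) with the counit of $(\Eis_{\co,*},\Eis_{\co,*}^R)$ yields
$$\psid^{\mir}\circ\CT_!\to \Eis^R_{\co,*}\circ\Eis_{\co,*}\circ\psid^{\mir}\circ\CT_!\cong \Eis^R_{\co,*}\circ\psid^{\mir}\circ\Eis_!\circ\CT_!\to\Eis^R_{\co,*}\circ\psid^{\mir}.$$
This is the map in (\ref{strange2}). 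Both sides are continuous: $\CT_!$ is a left adjoint, and $\Eis_{\co,*}^R$ is continuous by Proposition \ref{eisra}. Since we work connected component by connected component of $\Bun_M$, it suffices to check that the map is an isomorphism on generators of $\Shv_{\kappa}(\Bun_G)$, evaluated against test objects.

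Fix $F\in\Shv_\kappa(\Bun_G)$ and $N\in\Shv_\kappa(\Bun_M)$. The map (\ref{strange2}) induces
$$\Hom(\psid^{\mir}(N),\psid^{\mir}\CT_!F)\to\Hom(\psid^{\mir}(N),\Eis^R_{\co,*}\psid^{\mir}F)\cong\Hom(\Eis_{\co,*}\psid^{\mir}N,\psid^{\mir}F)\cong\Hom(\psid^{\mir}\Eis_!^-N,\psid^{\mir}F),$$
where the last isomorphism is Proposition \ref{4.1.2}. On the other side, $\Hom(N,\CT_!F)\cong\Hom(\Eis^-_!N,F)$, using the second adjointness of Theorem \ref{secondadj} in the form $\CT_!\cong\CT^-_*$. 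So the claim reduces to $\psid^{\mir}$ being fully faithful on the relevant pairs, compatibly with these identifications. The strategy is induction on the semisimple rank of $G$, assuming Theorem \ref{miraculous} for all proper Levi subgroups $M$. Under this hypothesis $\psid^{\mir}_M$ is an equivalence, so the objects $\psid^{\mir}(N)$ exhaust $\Shv_\kappa(\Bun_M)_{\co}$ and detect isomorphisms. It therefore suffices to show that $\psid^{\mir}_G$ induces an isomorphism $\Hom(\Eis^-_!N,F)\to\Hom(\psid^{\mir}\Eis^-_!N,\psid^{\mir}F)$. We also need to check that the composite of the identifications above is this map; that is a diagram chase with the base-change isomorphisms from the proof of Proposition \ref{4.1.2}.

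The main obstacle is this full faithfulness of $\psid^{\mir}$ on Hom's out of Eisenstein objects, since Proposition \ref{3.3.5} only covers cuspidal sources. I would first try to avoid it by arguing at the level of kernels, as in the proof of Proposition \ref{4.1.2}. Both sides of (\ref{strange2}) are given by objects of $\Shv_{-\kappa}(\Bun_G)\otimes\Shv_{\kappa}(\Bun_M)_{\co}$, and by Corollary \ref{cotrunc} such objects are compatible systems over co-truncative quasi-compact opens $U\subset\Bun_G$. Restricted to each $U$, Eisenstein and constant term functors with $\co$-supports reduce, by \cite[Proposition 1.5.7]{strange}, to quasi-compact opens $Q\subset\Bun_M$. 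There the kernel of $\psid^{\mir}\circ\CT_!$ is $(p\times q)_!$ of the $*$-dualizing-type kernel on $\Bun_{P}$, and the kernel of $\Eis^R_{\co,*}\circ\psid^{\mir}$ is computed via the Verdier-dual description of $\Eis^R_{\co,*}$ from the filtration in the proof of Proposition \ref{eisra}. Identifying these two kernels, the difference between $!$- and $*$-pushforward along $\Bun_P\to\Bun_G\times\Bun_M$ should be absorbed exactly as in \cite[Section 4]{strange}, using that the relevant maps are safe and Proposition \ref{eisctnilp}-type ULA properties. If this direct kernel comparison fails, I would fall back on the inductive argument above, following the structure of \cite[4.1--4.3]{strange}.
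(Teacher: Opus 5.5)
Your primary argument is circular. Testing (\ref{strange2}) against $\psid^{\mir}(N)$ and using the induction hypothesis for $M$ together with second adjointness, you reduce the claim to $\psid^{\mir}$ inducing an isomorphism $\Hom(\Eis^-_!N,F)\to\Hom(\psid^{\mir}\Eis^-_!N,\psid^{\mir}F)$. That is exactly the part of Theorem \ref{miraculous} not covered by Proposition \ref{3.3.5}. In the paper it is \emph{deduced from} Proposition \ref{cteisra}, via Propositions \ref{4.1.2} and \ref{eisra}. Under the inductive hypothesis the two statements are equivalent, so this reduction adds nothing: you still need an independent proof of (\ref{strange2}). (A minor point: in your mate construction the first arrow is the unit of $(\Eis_{\co,*},\Eis^R_{\co,*})$ and the last is the counit of $(\Eis_!,\CT_!)$, not the other way around.)

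Your fallback has the right shape and matches the first half of the paper's proof. There one passes to the map of kernels $(\Eis_!\otimes\Id)(\Delta_{*,\co}(\omega_{\Bun_M}))\to(\Id\otimes\Eis^R_{\co,*})(\Delta_{*,\co}(\omega_{\Bun_G}))$. Using the limit over quasi-compact co-truncative opens $j:U_G\subset\Bun_G$, $u:U_M\subset\Bun_M$ with transition functors $(j^*,u^?)$, this reduces to comparing $(j^*\Eis_!u_!\otimes\Id)(\Delta_*(\omega_{U_M}))$ with $(\Id\otimes(j^*\Eis_{*,\co}u_{*,\co})^R)(\Delta_*(\omega_{U_G}))$.

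However, identifying these two kernels is the entire content of the proposition. Saying that the difference between $!$- and $*$-pushforward is ``absorbed as in \cite[Section 4]{strange}'' using safety and ULA properties is not an argument. The filtration in the proof of Proposition \ref{eisra} only shows that $\Eis^R_{\co,*}$ exists and is continuous; it does not identify its kernel.

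The missing input, which the paper uses, is the D-module strange functional equation \cite[Theorem 4.1.2]{strange}, $\Eis_!\circ\psid_!\cong\psid_!\circ\Eis^-_{*,\co}$. The argument runs as follows:
\begin{enumerate}
\item Restricted to $U_G,U_M$, it gives $(j^!\Eis_!u_!)\circ\psid_!\cong\psid_!\circ(j^?\Eis^-_{*,\co}u_{*,\co})$.
\item Pass to left adjoints. They exist by \cite[Proposition 2.4.3]{strange} for the first functor and by construction for the second. This yields an identity of $!$-type kernels $((u^!\CT^-_*j_!)^R\otimes\Id)\Delta_!(k_{U_M})\cong(\Id\otimes(j^!\Eis_!u_!)^L)\Delta_!(k_{U_G})$.
\item These kernels are holonomic on quasi-compact stacks, so Verdier duality (plus Riemann--Hilbert) turns this into precisely the required Betti identity of $*$-kernels.
\end{enumerate}
Without this transfer, or a genuine Betti re-proof of the strange functional equation, your kernel comparison remains unproved.
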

		
		\begin{proof}
			We need to prove that the natural map 
			\begin{equation}\label{strange2obj}
				(\Eis_! \otimes \Id)(\Delta_{*,\co}(\omega_{\Bun_M})) \rightarrow (\Id \otimes  \Eis_{\co, *}^R )(\Delta_{*,\co}(\omega_{\Bun_G})) \in \Shv_{-\kappa}(\Bun_G) \otimes \Shv_{\kappa}(\Bun_M)_{\co} 
			\end{equation}
			is an equivalence. 
			
			Recall that 
			\begin{equation}
				\Shv_{-\kappa}(\Bun_G) \otimes \Shv_{\kappa}(\Bun_M)_{\co}  \cong \lim_{U_G \subset \Bun_G, U_M \subset \Bun_M} \Shv_{-\kappa}(U_G) \otimes \Shv_{\kappa}(U_M),
			\end{equation}
			where $j:U_G \subset \Bun_G$, $u:U_M \subset \Bun_M$ are quasi-compact co-truncative opens and the connecting functors are given by $(j^*, u^?)$ (here $u^?$ is the right adjoint to $u_*$).
			We claim that for every quasi-compact co-truncative opens $U_G \subset \Bun_G$, $u:U_M \subset \Bun_M$ the map corresponding to (\ref{strange2obj}), i.e.
			\begin{equation}\label{strange2obj2}
				(j^* \otimes u^?)\circ (\Eis_! \otimes \Id)(\Delta_{*,\co}(\omega_{\Bun_M})) \rightarrow (j^* \otimes u^?)\circ(\Id \otimes  \Eis_{\co, *}^R )(\Delta_{*,\co}(\omega_{\Bun_G}))
			\end{equation}
			is an equvalence. Then we can further rewrite (\ref{strange2obj2}) as 
			\begin{equation}\label{strange2obj3}
					(j^*\circ \Eis_! \otimes \Id)\circ (\Id \otimes u^?)(\Delta_{*,\co}(\omega_{\Bun_M})) \rightarrow  (\Id \otimes ( j^* \circ\Eis_{*, \co} \circ u_{*, co})^R)(\Delta_{*, U_G}(\omega)),
			\end{equation}
			which by definition of $\Delta_{*,\co}(\omega_{\Bun_M}))$ is the same as
			\begin{equation}\label{goal}
				(j^*\circ \Eis_! \circ u_! \otimes \Id)\circ (\Delta_{*}(\omega_{U_M})) \rightarrow  (\Id \otimes ( j^* \circ\Eis_{*, \co} \circ u_{*, co})^R)(\Delta_{*, U_G}(\omega)).
			\end{equation}
			
			In the D-module setting, by \cite[Theorem 4.1.2]{strange} we have 
			\begin{equation}
				\Eis_! \circ \psid_! \cong \psid_! \circ \Eis_{*, \co}^-.
			\end{equation}
			Therefore we also have 
			\begin{equation}
					(j^! \Eis_! u_!) \circ \psid_! \cong \psid_! \circ (j^? \Eis_{*, \co}^- u_{*, \co}).
			\end{equation}
			Note that both $(j^! \Eis_! u_!)$ and $(j^? \Eis_{*, \co} u_{*, \co})$ admit left adjoints. Indeed, $(j^? \Eis_{*, \co} u_{*, \co})$ admits a left adjoint by definition and $(j^! \Eis_! u_!)$ admits a left adjoint by \cite[Proposition 2.4.3]{strange} Thus we have 
			\begin{equation}
				(j^! \Eis_! u_!)^L \circ \psid_! \cong \psid_! \circ (j^? \Eis_{*, \co}^- u_{*, \co})^L,
			\end{equation}
			or, in other words, 
			\begin{equation}\label{aaa}
			(((j^? \Eis_{*, \co}^- u_{*, \co})^L)^\vee \otimes \Id)\Delta_!(k_{U_M}) \cong (\Id \otimes (j^! \Eis_! u_!)^L)\Delta_!(k_{U_G}).
			\end{equation}
			Rewriting (\ref{aaa}) we get
			\begin{equation}
				((u^! \CT_{*}^- j_{!})^R \otimes\Id)\Delta_!(k_{U_M}) \cong (\Id \otimes (j^! \Eis_! u_!)^L)\Delta_!(k_{U_G}).
			\end{equation}
			Now both sides of this equiation are holonomic D-modules, so applying Verdier duality we get
			\begin{equation}
				((j^! \Eis_! u_{!}) \otimes \Id)\Delta_*(\omega_{U_M}) \cong (\Id \otimes (j^* \Eis_* u_*)^R)\Delta_*(\omega_{U_G}).
			\end{equation}
			But this coincides with (\ref{goal}), hence we get that (\ref{goal}) is an equivalence.
		\end{proof}
		
		\subsection{The miraculous functor is an equivalence.}
		
		Now we are ready to prove Theorem \ref{miraculous}.
		\begin{proof}[Proof of Theorem \ref{miraculous}]
			We first notice that the case when $G$ is torus follows from Corollary \ref{mircusp}. We will use induction on the semi-simple rank of $G$, i.e. assume that the assertion holds for all proper Levi subgroups of $G$.
			
			By Proposition \ref{4.1.2} the image of subcategory $\Shv_{\kappa}(\Bun_G)_{\Eis}$ under $\psid^{\mir}$ generates $\Shv_{\kappa}(\Bun_G)_{\co, \Eis}$. By Corollary \ref{mircusp}, the image of the subcategory $\Shv_{\kappa}(\Bun_G)_{\cusp}$ under $\psid^{\mir}$ generated $\Shv_{\kappa}(\Bun_G)_{\co, \cusp}$. Thus we need to show that $\psid^{\mir}$ is fully faithful.
			
			Recall from Proposition \ref{3.3.5} that the functor $\psid^{\mir}$ induced an isomorphism
			\begin{equation}
				\Hom_{\Shv_{\kappa}(\Bun_G)}(F^{\prime}, F) \rightarrow \Hom_{\Shv_{\kappa}(\Bun_G)_{\co}}(\psid^{\mir}(F^{\prime}), \psid^{\mir}(F)),
			\end{equation}
			given that $F^{\prime} \in \Shv_{\kappa}(\Bun_G)_{\cusp}.$ Hence we need to show that the functor $\psid^{\mir}$ induced an isomorphism
			\begin{equation}
				\Hom_{\Shv_{\kappa}(\Bun_G)}(F^{\prime}, F) \rightarrow \Hom_{\Shv_{\kappa}(\Bun_G)_{\co}}(\psid^{\mir}(F^{\prime}), \psid^{\mir}(F)),
			\end{equation}
			given that $F^{\prime} \in \Shv_{\kappa}(\Bun_G)_{\Eis}.$ In other words, we need to show that for $F_M \in \Shv_{\kappa}(\Bun_M)$ the map 
			\begin{equation}
				\Hom_{\Shv_{\kappa}(\Bun_G)}(\Eis_!(F_M), F) \rightarrow \Hom_{\Shv_{\kappa}(\Bun_G)_{\co}}(\psid^{\mir}(\Eis_!(F_M)), \psid^{\mir}(F))
			\end{equation}
			is an isomoprhism.
		\end{proof}
		However, we have
			\[
		\begin{tikzcd}
			\Hom_{\Shv_{\kappa}(\Bun_G)}(\Eis_!(F_M), F)  \ar[r, rightarrow, ""']\ar[ddd, rightarrow, "\cong"']&   	\Hom_{\Shv_{\kappa}(\Bun_G)_{\co}}(\psid^{\mir}(\Eis_!(F_M)), \psid^{\mir}(F)) \ar[d, rightarrow, "\text{Proposition \ref{4.1.2}}"', "\cong"]  \\
			& \Hom_{\Shv_{\kappa}(\Bun_G)_{\co}}(\Eis_{*, \co}^{-}(\psid^{\mir}(F_M)), \psid^{\mir}(F)) \ar[d, rightarrow, "\text{Proposition \ref{eisra}}"', "\cong"] \\
			&  \Hom_{\Shv_{\kappa}(\Bun_M)_{\co}}(\psid^{\mir}(F_M), \Eis_{*, \co}^{-, R}(\psid^{\mir}(F)))\ar[d, rightarrow, "\text{Proposition \ref{cteisra}}"', "\cong"] \\
			\Hom_{\Shv_{\kappa}(\Bun_M)}(F_M, \CT_*F) \ar[r, rightarrow, "\cong"']&   \Hom_{\Shv_{\kappa}(\Bun_M)_{\co}}(\psid^{\mir}(F_M), \psid^{\mir}(\CT_*F)),
		\end{tikzcd}
		\]
		where the bottom horizontal arrow is an equivalence by induction. Hence the top horizontal arrow is an equivalence.
		
		\subsection{Miraculous duality for sheaves with nilpotent singular support.}
		
		Recall from \cite[Section 1.6]{AGKRRV2} the projector $P$ onto the category $\Shv_{\kappa, \Nilp}(\Bun_G)$. Similarly (since \cite[Theorem 1.3.7]{AGKRRV2} also holds for $\Shv_{\kappa}(\Bun_G)_{\co}$), by \cite[Remark 13.4.8]{AGKRRV} there exists a projector $P_\co$ onto the category $\Shv_{\kappa, \Nilp}(\Bun_G)_{\co}$ defined in \cite[2.5.8]{AGKRRV2}.
		
		\begin{pr}\label{2.2.2}
		We have canonical isomorphisms
		\begin{equation}\label{unit}
			(P \boxtimes \Id)(\Delta_{*,co}(\omega)) \cong (P \boxtimes P_{\co})(\Delta_{*,co}(\omega)) \cong (\Id \boxtimes P_{\co})(\Delta_{*,co}(\omega)).
		\end{equation}
		\end{pr}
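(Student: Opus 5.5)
We deduce the statement from the standard fact that Beilinson's projector is ``self-dual'' with respect to a duality datum, and then use Theorem \ref{miraculous} to treat $\Delta_{*,\co}(\omega)$ as the unit of a duality. By Theorem \ref{miraculous}, $\Delta_{*,\co}(\omega)$ is the unit of a duality between $\Shv_{\kappa}(\Bun_G)$ and $\Shv_{-\kappa}(\Bun_G)_{\co}$. For any duality datum $(\mathbf{C},\mathbf{C}^\vee,u)$ and any endofunctor $\Phi$ of $\mathbf{C}$ we have $(\Phi\otimes\Id)(u)\cong(\Id\otimes\Phi^\vee)(u)$. It therefore suffices to show two things: that $P^\vee\cong P_{\co}$ under this duality, and that $P_{\co}$ is idempotent. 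The middle term of (\ref{unit}) then equals
\[
(P\otimes\Id)(\Id\otimes P_\co)(\Delta_{*,\co}(\omega))\cong(\Id\otimes P_\co P_\co)(\Delta_{*,\co}(\omega))\cong(\Id\otimes P_\co)(\Delta_{*,\co}(\omega)),
\]
and the outer terms are identified in the same way.

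To identify $P^\vee$ with $P_\co$, recall from \cite[Section 1.6]{AGKRRV2} that $P$ is given by convolution with a kernel: the Beilinson projector is the action of a specific object $\mathsf{R}$ (the ``Hecke projector'', built from the factorization homology of $\Rep(\check G)$ over $X$ / the unit of $\QCoh(\LS^{\operatorname{Betti}}_{\check G})$ transported by spectral action) acting on $\Shv_\kappa(\Bun_G)$ through Hecke functors at points of $X$. Likewise $P_\co$ is the action of the same object on $\Shv_{-\kappa}(\Bun_G)_\co$, defined via \cite[Theorem 1.3.7]{AGKRRV2} and \cite[Remark 13.4.8]{AGKRRV}. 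The key step is to show that the Hecke functors $H_{V,x}$ on $\Shv_\kappa(\Bun_G)$ and $H_{V^*,x}$ (with the Cartan involution) on $\Shv_{-\kappa}(\Bun_G)_\co$ are mutually dual with respect to $\Delta_{*,\co}(\omega)$. This amounts to the identity
\[
(H_{V}\otimes\Id)(\Delta_{*,\co}(\omega))\cong(\Id\otimes H_{\tau(V)^*})(\Delta_{*,\co}(\omega)).
\]
We check it on each co-truncative quasi-compact $U$ through the limit presentation (\ref{limco}). There it reduces to base change along the Hecke correspondence $\Bun_G\leftarrow\mathcal{H}\to\Bun_G$ together with the fact that Hecke functors commute with $j_{*}$ up to enlarging $U$, since Hecke correspondences have bounded defect. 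Compatibility with the colimit defining $\Shv(\Bun_G)_\co$ uses that the Hecke correspondence is proper over each side. Dualizing the monoidal action then gives $P^\vee\cong P_\co$, because $\mathsf{R}$ is self-dual under $V\mapsto\tau(V)^*$.

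The main obstacle is precisely this compatibility of the Hecke action, and hence of the Beilinson projector, with the miraculous unit on the $\co$-side. If the direct base-change argument becomes unwieldy, I would instead characterize both $P\otimes\Id$ and $\Id\otimes P_\co$ intrinsically as projections onto the full subcategory of $\Shv_\kappa(\Bun_G)\otimes\Shv_{-\kappa}(\Bun_G)_\co$ whose objects have singular support in $\Nilp\times T^*$ (respectively $T^*\times\Nilp$). I would then use that $\Delta_{*,\co}(\omega)$ is Hecke-equivariant for the diagonal action. On such equivariant objects, applying the projector in one factor lands in $\Nilp\times\Nilp$, using \cite[Theorem 1.3.7]{AGKRRV2}, which is stated to hold for the $\co$-category. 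This yields both isomorphisms in (\ref{unit}) directly.
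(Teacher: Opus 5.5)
Your proposal is correct and is essentially the paper's argument. The paper's proof just reruns \cite[Proposition 2.2.2]{AGKRRV2}, with the co-side input taken from \cite[Lemma 2.5.7]{AGKRRV2}, and that argument is the one you give: realize $P$ and $P_{\co}$ as the Hecke action of Beilinson's object $\mathsf{R}$, and use that $\Delta_{*,\co}(\omega)$ intertwines the Hecke actions on the two factors.

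Routing this through Theorem \ref{miraculous} and $P^\vee\cong P_{\co}$ is harmless but unnecessary, since the kernel identity for Hecke functors plus idempotence already gives (\ref{unit}). In the $\kappa$-twisted setting the relevant Hecke functors are those of $\Sph_q(G)$ rather than $\Rep(\check{G})$, and these degenerate for irrational $\kappa$; the paper likewise leaves this point implicit.
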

		
		\begin{proof}
			Follows the proof of \cite[Proposition 2.2.2]{AGKRRV2} using \cite[Lemma 2.5.7]{AGKRRV2}.
		\end{proof}
		
		\begin{pr}
			The object (\ref{unit}) of $\Shv_{-\kappa, \Nilp}(\Bun_G) \otimes \Shv_{\kappa,\Nilp}(\Bun_G)_{\co}$ and the functor 
			\begin{equation}
				\Shv_{-\kappa, \Nilp}(\Bun_G) \otimes \Shv_{\kappa, \Nilp}(\Bun_G)_{\co} \rightarrow \Shv_{-\kappa}(\Bun_G) \otimes \Shv_{\kappa}(\Bun_G)_{\co} \xrightarrow{\Gamma_{*, \co}(- \stackrel{!}{\otimes} -)} \Vect
			\end{equation}
			define a datum of duality.
		\end{pr}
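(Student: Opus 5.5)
We will deduce the statement from Theorem \ref{miraculous} by a formal argument about projectors, as in \cite[Proposition 2.2.2 and Theorem 2.2.4]{AGKRRV2}. By Theorem \ref{miraculous}, $\Delta_{*,\co}(\omega)$ together with the pairing $\Gamma_{*,\co}(-\stackrel{!}{\otimes}-)$ is a datum of duality between $\Shv_{-\kappa}(\Bun_G)$ and $\Shv_{\kappa}(\Bun_G)_{\co}$; write $\epsilon$ for this counit. So the two zig-zag identities already hold on the big categories, and we only have to check they survive restriction to the nilpotent subcategories. Here $\iota$, $\iota_\co$ are the inclusions, with left adjoints $P$, $P_\co$.

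\emph{Step 1: the projectors are mutually dual.} We first check that under this duality $P^\vee\cong P_\co$, i.e. $\epsilon\circ(P\otimes\Id)\cong\epsilon\circ(\Id\otimes P_\co)$, and similarly for the unit. This is the content of Proposition \ref{2.2.2}. Since the unit determines the pairing, (\ref{unit}) gives $(P\otimes \Id)\circ\mathrm{unit}\cong(\Id\otimes P_\co)\circ\mathrm{unit}$. Dualizing then yields $P^\vee\cong P_\co$ as endofunctors, compatibly with the idempotent structures. Concretely, both $P$ and $P_\co$ are given by the same Hecke-type kernel construction of \cite[2.5.8]{AGKRRV2}, and Verdier self-duality of that kernel gives the identification.

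\emph{Step 2: the formal lemma.} Let $\mathbf C$ and $\mathbf D$ be dual with unit $u$ and counit $\epsilon$. Let $e$ be an idempotent on $\mathbf C$ with image $\mathbf C_0$, and suppose $e^\vee$ is an idempotent on $\mathbf D$ with image $\mathbf D_0$. Then $u_0:=(e\otimes e^\vee)(u)$ and $\epsilon_0:=\epsilon|_{\mathbf C_0\otimes\mathbf D_0}$ form a duality datum. To verify the zig-zag on $\mathbf C_0$, compute
$$(\epsilon_0\otimes\Id)\circ(\Id\otimes u_0)\cong e\circ\big((\epsilon\otimes\Id)\circ(\Id\otimes u)\big)\circ\iota\cong e\circ\iota\cong\Id_{\mathbf C_0}.$$
The first isomorphism moves $e^\vee$ across the pairing, which is legitimate because $\epsilon\circ(\Id\otimes e^\vee)\cong \epsilon\circ(e\otimes \Id)$. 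It also uses that $e$ is the identity on $\mathbf C_0$. The zig-zag on $\mathbf D_0$ is symmetric. Proposition \ref{2.2.2} identifies $u_0$ with the object (\ref{unit}) in all three of its forms, so the resulting duality datum is the one in the statement.

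\emph{Expected obstacle.} The main difficulty is Step 1 in the co-setting. There we need \cite[Theorem 1.3.7]{AGKRRV2} for $\Shv_\kappa(\Bun_G)_\co$ and the fact that the colimit along $j_*$ commutes with the projector. Granting this, we would argue as follows. On each quasi-compact co-truncative $U$, the functor $P_\co$ restricts to the projector $P_U$, and $j_*$ intertwines these restrictions. One can then reduce the unit identity to the quasi-compact statement $(P_U\otimes\Id)\Delta_*\omega_U\cong(\Id\otimes P_U)\Delta_*\omega_U$, which is \cite[Proposition 2.2.2]{AGKRRV2}. Finally one passes to the limit (\ref{limco}).
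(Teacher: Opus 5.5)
Your argument is correct and is essentially the paper's proof. The paper deduces the statement from Theorem \ref{miraculous} and Proposition \ref{2.2.2} ``as in \cite[Proposition 2.3.3]{AGKRRV2}'', and your Steps 1--2 unpack exactly that: first use Proposition \ref{2.2.2} to identify $(\iota P)^\vee\cong\iota_\co P_\co$, then apply the formal lemma that a retract of a duality datum along dual idempotents is again a duality datum.

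The ``expected obstacle'' paragraph is unnecessary, since Step 1 is precisely Proposition \ref{2.2.2}, which you may take as given. Its sketch via separate projectors $P_U$ on quasi-compact opens is also doubtful, because the Hecke-type projector does not obviously preserve such opens; the paper instead proves that proposition following \cite[Proposition 2.2.2]{AGKRRV2} together with \cite[Lemma 2.5.7]{AGKRRV2}.
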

		
		\begin{proof}
			Follows from Theorem \ref{miraculous} and Proposition \ref{2.2.2} as in \cite[Proposition 2.3.3]{AGKRRV2}.
		\end{proof}
		
		\begin{cor}\label{mirtwo}
			The following diagram commutes:
				\[
			\begin{tikzcd}
				\Shv_{-\kappa, \Nilp}(\Bun_G) \otimes \Shv_{\kappa, \Nilp}(\Bun_G) \ar[rr,rightarrow, "\Gamma_!(- \stackrel{*}{\otimes} -)"]\ar[d, rightarrow, "\Id \otimes \psid^{\mir}"']&   	&	\Vect \\
				\Shv_{-\kappa, \Nilp}(\Bun_G) \otimes \Shv_{\kappa, \Nilp}(\Bun_G)_{\co}\ar[rru, rightarrow, "\Gamma_{*, \co}(- \stackrel{!}{\otimes} -)"'].&  & 
			\end{tikzcd}
			\]
		\end{cor}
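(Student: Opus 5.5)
The plan is to deduce the commutativity from the fact, established in the preceding proposition, that both pairings are counits of dualities whose units are related in a controlled way. On the full categories, Theorem \ref{miraculous} says that the object $\Delta_{*,\co}(\omega)$, together with the pairing $\Gamma_{*,\co}(-\stackrel{!}{\otimes}-)$, exhibits $\Shv_{\kappa}(\Bun_G)_{\co}$ as the dual of $\Shv_{-\kappa}(\Bun_G)$. On the other hand, Remark \ref{duall} says that $\Gamma_!(-\stackrel{*}{\otimes}-)$ exhibits $\Shv_{\kappa}(\Bun_G)$ as the dual of the same category. Whenever a category has two duality data, the unique equivalence of duals intertwining them is obtained by pairing one unit against the other counit. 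By construction, $\psid^{\mir}$ is exactly the functor obtained from $\Delta_{*,\co}(\omega)$ via the $\Gamma_!(-\stackrel{*}{\otimes}-)$ duality. The commutativity of the triangle is therefore the tautological identity
\[
\Gamma_{*,\co}\bigl(-\stackrel{!}{\otimes}\psid^{\mir}(-)\bigr)\cong\Gamma_!(-\stackrel{*}{\otimes}-),
\]
provided one checks that the functor induced by the unit of one duality, composed with the counit of the other, recovers the counit. This is the standard zig-zag argument.

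Concretely, the steps would be as follows.

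\begin{enumerate}
\item For $\cF\in\Shv_{-\kappa}(\Bun_G)$ and $\cF'\in\Shv_{\kappa}(\Bun_G)$, write $\psid^{\mir}(\cF')=(\Gamma_!(\cF'\stackrel{*}{\otimes}-)\otimes\Id)(\Delta_{*,\co}(\omega))$.
\item Pair this with $\cF$ using $\Gamma_{*,\co}(-\stackrel{!}{\otimes}-)$. By commuting the two pairings, one obtains $\Gamma_!\bigl(\cF'\stackrel{*}{\otimes}(\Id\otimes\Gamma_{*,\co}(\cF\stackrel{!}{\otimes}-))(\Delta_{*,\co}(\omega))\bigr)$.
\item Apply the zig-zag identity for the miraculous duality of Theorem \ref{miraculous}. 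This gives $(\Id\otimes\Gamma_{*,\co}(\cF\stackrel{!}{\otimes}-))(\Delta_{*,\co}(\omega))\cong\cF$, so the expression reduces to $\Gamma_!(\cF'\stackrel{*}{\otimes}\cF)$.
\end{enumerate}

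This establishes the diagram before restricting singular support.

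To pass to the $\Nilp$ subcategories, I will use the preceding proposition and Proposition \ref{2.2.2}. The unit of the nilpotent miraculous duality is $(P\boxtimes P_\co)(\Delta_{*,\co}(\omega))$, and the functor $\psid^{\mir}$ restricted to $\Shv_{\kappa,\Nilp}(\Bun_G)$ lands in $\Shv_{\kappa,\Nilp}(\Bun_G)_\co$. For the nilpotent version of the $\Gamma_!(-\stackrel{*}{\otimes}-)$ duality I will use \cite[Theorem G.5.2]{AGKRRV}, where the unit is obtained by applying $P\boxtimes P$. When both arguments are nilpotent, the projectors can be absorbed: $P$ is the right adjoint/identity on the nilpotent subcategory, and the equality of the three expressions in (\ref{unit}) lets us replace $(P\boxtimes P_\co)$ by $(\Id\boxtimes P_\co)$ or $(P\boxtimes\Id)$. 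The same three-step computation then goes through verbatim inside the nilpotent categories, and is compatible with the embeddings into the full categories, which is what the top-left route through $\Shv_{-\kappa}(\Bun_G)\otimes\Shv_{\kappa}(\Bun_G)_\co$ uses.

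I expect the main obstacle to be this last compatibility. The issue is whether $\psid^{\mir}$ restricted to nilpotent objects agrees with the ``nilpotent'' pseudo-identity, that is, the functor defined from the projected unit. This requires the statement that, for $\cF'$ with nilpotent singular support, pairing $\cF'$ against $(\Id\boxtimes P_\co)(\Delta_{*,\co}(\omega))$ agrees with pairing it against $\Delta_{*,\co}(\omega)$. I would first try to deduce this from the self-adjointness of $P$ with respect to $\Gamma_!(-\stackrel{*}{\otimes}-)$ together with Proposition \ref{2.2.2}, following \cite[Proposition 2.3.3]{AGKRRV2}.
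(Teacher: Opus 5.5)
Your argument is correct and is essentially the paper's: the corollary is the formal comparison of the two duality data on the nilpotent categories (Remark \ref{duall} and the preceding proposition, which comes from Theorem \ref{miraculous} and Proposition \ref{2.2.2} as in \cite[Proposition 2.3.3]{AGKRRV2}), with $\psid^{\mir}$ as the comparison functor. Your zig-zag computation, together with identifying $\psid^{\mir}$ on nilpotent objects via Proposition \ref{2.2.2}, is exactly this deduction, though note that Remark \ref{duall} gives the $\Gamma_!(-\stackrel{*}{\otimes}-)$ duality only for the $\Nilp$ categories; your three steps on the full categories never actually use it (only the functional $\Gamma_!(\cF'\stackrel{*}{\otimes}-)$ and the zig-zag for $\Delta_{*,\co}(\omega)$), so nothing breaks.
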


		\section{Functors defined and codefined by a kernel.}

		\subsection{Functors (co)defined by a kernel.}
		For quasi-compact stacks $\cY_1$ and $\cY_2$, the notion of \emph{functors} $$\Shv(\cY_1) \rightarrow \Shv(\cY_2)$$
		\emph{defined and/or codefined by a kernel} was introduced in \cite[Appendix B]{AGKRRV2}. The following description from loc.cit. will be useful.
		
		\begin{lm}\cite[B.1.5]{AGKRRV2}\label{kernels}
			The functor 
			$$\mathbf{Q}: \Shv(\cY_1) \rightarrow \Shv(\cY_2)$$
			is defined by a kernel if an only if for any algebraic stack $\cZ$ the functors $\id_{\cZ} \boxtimes \mathbf{Q}$ commute with the following operations:
			\begin{itemize}
				\item for a map $f: \cZ^{\prime} \rightarrow \cZ$, the diagram
				\[
				\begin{tikzcd}
					\Shv(\cZ^{\prime}  \times \cY_1)\ar[rr,rightarrow, "\id_{\cZ^{\prime}} \boxtimes \mathbf{Q}"']\ar[d, rightarrow, "(f\times \id)_{\blacktriangle}"']&   	&	\Shv(\cZ^{\prime}  \times \cY_2)\ar[d, rightarrow, "(f\times \id)_{\blacktriangle}"']\\
					\Shv(\cZ  \times \cY_1)\ar[rr, rightarrow, "\id_{\cZ} \boxtimes \mathbf{Q}"']&  & \Shv(\cZ  \times \cY_2)
				\end{tikzcd}
				\]
				commutes;
				\item for a map $f: \cZ^{\prime} \rightarrow \cZ$, the diagram
				\[
				\begin{tikzcd}
					\Shv(\cZ^{\prime}  \times \cY_1)\ar[rr,rightarrow, "\id_{\cZ^{\prime}} \boxtimes \mathbf{Q}"']&   	&	\Shv(\cZ^{\prime}  \times \cY_2)\\
					\Shv(\cZ  \times \cY_1)\ar[rr, rightarrow, "\id_{\cZ} \boxtimes \mathbf{Q}"']\ar[u, rightarrow, "(f\times \id)^!"']&   &\Shv(\cZ  \times \cY_2)\ar[u, rightarrow, "(f\times \id)^!"']
				\end{tikzcd}
				\]
				commutes;
				
				\item for $\cF \in \Shv(\cZ)$, the diagram 
				\[
				\begin{tikzcd}
					\Shv(\cZ^{\prime}\times \cZ  \times \cY_1)\ar[rr,rightarrow, "\id_{\cZ^{\prime}\times \cZ} \boxtimes \mathbf{Q}"']&   	&	\Shv(\cZ^{\prime}  \times \cZ \times \cY_2)\\
					\Shv(\cZ  \times \cY_1)\ar[rr, rightarrow, "\id_{\cZ} \boxtimes \mathbf{Q}"']\ar[u, rightarrow, "\cF \boxtimes -"']&  & \Shv(\cZ  \times \cY_2)\ar[u, rightarrow, "\cF \boxtimes -"']
				\end{tikzcd}
				\]
				commutes.
			\end{itemize}
		\end{lm}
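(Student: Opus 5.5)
The plan is to prove both implications directly. We use the description of a functor defined by a kernel as one of the form
$$\mathbf{Q}_{\cK}(\cF)=(p_2)_{\blacktriangle}\bigl(p_1^!\cF \stackrel{!}{\otimes} \cK\bigr), \qquad \cK\in \Shv(\cY_1\times\cY_2),$$
as in \cite[Appendix B]{AGKRRV2}. Throughout, $\id_{\cZ}\boxtimes \mathbf{Q}_{\cK}$ is itself given by the kernel $\Delta_{\cZ,*}(\omega_{\cZ})\boxtimes \cK$ on $(\cZ\times\cY_1)\times(\cZ\times\cY_2)$.

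\emph{``Only if''.} We verify the three diagrams for $\mathbf{Q}_{\cK}$ by formal manipulations.
\begin{itemize}
\item For the pullback square, $!$-pullback along $f\times\id$ commutes with the $!$-tensor product with $\cK$. It also commutes with $(p_2)_{\blacktriangle}$ by base change, which holds for the renormalized pushforward on quasi-compact stacks.
\item For the pushforward square, we use the projection formula for $(f\times\id)_{\blacktriangle}$ with respect to $\stackrel{!}{\otimes}$ with a sheaf pulled back from $\cY_1\times\cY_2$, combined with the functoriality $(g\circ h)_{\blacktriangle}\cong g_{\blacktriangle}\circ h_{\blacktriangle}$.
\item For the external product square, we use that $\cF\boxtimes-$ commutes with $!$-pullbacks, with $!$-tensor products, and with $\blacktriangle$-pushforwards along maps of the form $\id\times g$ (the Künneth property).
\end{itemize}

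\emph{``If''.} Given $\mathbf{Q}$ with the three compatibilities, take $\cZ=\cY_1$ and set
$$\cK:=(\id_{\cY_1}\boxtimes\mathbf{Q})\bigl(\Delta_{*}(\omega_{\cY_1})\bigr)\in \Shv(\cY_1\times\cY_2).$$
The key observation is that the identity of $\Shv(\cY_1)$ is produced by a string of the three permitted operations applied to the first factors. For $\cF\in\Shv(\cY_1)$:
\begin{enumerate}
\item form $\cF\boxtimes\Delta_*(\omega_{\cY_1})$ on $\cY_1\times\cY_1\times\cY_1$;
\item $!$-pull back along $\Delta\times\id:\cY_1\times\cY_1\to\cY_1\times\cY_1\times\cY_1$, which gives $p_1^!\cF\stackrel{!}{\otimes}\Delta_*(\omega_{\cY_1})$;
\item push forward along $p_2$ via $(-)_{\blacktriangle}$, which returns $\cF$.
\end{enumerate}
We regard the last $\cY_1$ factor as the one on which $\mathbf{Q}$ acts. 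Steps (1), (2) and (3) are then exactly operations of types three, two and one for suitable $\cZ,\cZ'$. Applying $\mathbf{Q}$ and commuting it past each operation gives
$$\mathbf{Q}(\cF)\cong (p_2)_{\blacktriangle}\bigl(p_1^!\cF\stackrel{!}{\otimes}\cK\bigr)=\mathbf{Q}_{\cK}(\cF).$$
This isomorphism is natural in $\cF$, because all the commutation isomorphisms are natural.

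\emph{Main obstacle.} The delicate point is the third step of the identity computation: $(p_2)_{\blacktriangle}$ of $p_1^!\cF\stackrel{!}{\otimes}\Delta_*\omega$ must really return $\cF$. This is the base change
$$(p_2)_{\blacktriangle}\circ\Delta_*\cong\id,$$
which requires that $\blacktriangle$-pushforward agree with $*$-pushforward along the diagonal, a representable map. If $\cY_1$ is not separated, the diagonal is not proper, so I would check that agreement on the renormalized categories first. A further bookkeeping issue is that the identifications must be coherent enough to produce a functor isomorphism, not merely objectwise isomorphisms. I would handle this by carrying out the whole argument for $\id_{\cZ}\boxtimes\mathbf{Q}$ uniformly in $\cZ$.
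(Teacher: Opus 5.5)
The paper gives no proof of this lemma; it is quoted from \cite[B.1.5]{AGKRRV2}, and your argument is the standard, correct one. You prove the ``only if'' direction by base change, the projection formula and K\"unneth for $(-)_{\blacktriangle}$, and the ``if'' direction by recovering the kernel as $(\id_{\cY_1}\boxtimes\mathbf{Q})(\Delta_*(\omega_{\cY_1}))$ and writing $\id_{\Shv(\cY_1)}$ as a composite of the three permitted operations: $\cZ'=\cZ=\cY_1$ for the external product, $f=\Delta$ for the pullback, and $f:\cY_1\to\on{pt}$ for the pushforward.

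One correction to your ``main obstacle'': properness of the diagonal plays no role. Since $\Delta$ is representable and quasi-compact, $\Delta_*$ is continuous and coincides with $\Delta_{\blacktriangle}$, and then $(p_2)_{\blacktriangle}\circ\Delta_*\cong\id$ follows from the compatibility of $(-)_{\blacktriangle}$ with composition, which you already use in the ``only if'' direction.
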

		Motivated by Lemma \ref{kernels}, we introduce the following definition. Let $\AGCat$ and its dual $\AGCat^{\operatorname{left}}$ be the categories introduced in \cite{GKRVnew}.
		
		\begin{df}
			For $\mathbf{C}$, $\mathbf{D} \in \AGCat$, we say that a functor between plain DG categories 
			$$f:\mathbf{C}_{k} \rightarrow \mathbf{D}_{k}$$
			is \emph{defined by a kernel} if it upgrades to a functor 
			$$F: \mathbf{C} \rightarrow \mathbf{D} \in \AGCat.$$
		\end{df}
		
		\begin{df}
			For $\mathbf{C}$, $\mathbf{D} \in \AGCat^{\operatorname{left}}$, we say that a functor between plain DG categories 
			$$f:\mathbf{C}_{k}\rightarrow \mathbf{D}_{k}$$
			is \emph{codefined by a kernel} if it upgrades to a functor 
			$$F: \mathbf{C} \rightarrow \mathbf{D} \in\AGCat^{\operatorname{left}}.$$
		\end{df}
		
		The following is a corollary of \cite[Proposition 2.7.2]{GKRVnew}:
		
		\begin{lm}\label{cod}
			A functor $$f:\mathbf{C}_{k} \rightarrow \mathbf{D}_{k}$$ defined by a kernel is also codefined by a kernel if $F$ admits a right adjoint in $\AGCat$.
		\end{lm}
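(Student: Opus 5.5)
The plan is to use the formal relationship between $\AGCat$ and $\AGCat^{\operatorname{left}}$ from \cite{GKRVnew}: they are related by passing to adjoints. Suppose $f:\mathbf{C}_k\to\mathbf{D}_k$ is defined by a kernel, with upgrade $F:\mathbf{C}\to\mathbf{D}$ in $\AGCat$, and suppose $F$ admits a right adjoint $F^R$ in $\AGCat$. We must produce an upgrade of $f$ to a morphism in $\AGCat^{\operatorname{left}}$.

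First I would recall the content of \cite[Proposition 2.7.2]{GKRVnew}, in the form I expect: a $1$-morphism of $\AGCat$ admitting a right adjoint there corresponds, under the identification of objects of $\AGCat$ and $\AGCat^{\operatorname{left}}$, to a $1$-morphism of $\AGCat^{\operatorname{left}}$. Concretely, the structure maps of $F$ are lax-compatibility data, namely natural transformations $F\circ(-)\to(-)\circ F$ for the operations of Lemma \ref{kernels}: $!$-pullback, $\blacktriangle$-pushforward, and external product. The structure defining $\AGCat^{\operatorname{left}}$ uses the mates of these. Taking mates of the structure transformations of $F^R$ along the adjunction $F\dashv F^R$ gives transformations with the direction required in $\AGCat^{\operatorname{left}}$, with underlying functor $f=F_k$.

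The steps are as follows.

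1. Identify the underlying plain functor of $F$ with $f$, and that of $F^R$ with the plain right adjoint $f^R$. The latter holds because the forgetful $2$-functor $\AGCat\to\DGCat$ preserves adjunctions.

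2. Apply the mate construction. For each operation $\Phi$ among $(g\times\id)^!$, $(g\times\id)_\blacktriangle$ and $\cF\boxtimes-$, the commutation isomorphism $\Phi\circ(\id\boxtimes F^R)\cong(\id\boxtimes F^R)\circ\Phi$ yields by adjunction a map $(\id\boxtimes F)\circ\Phi\to\Phi\circ(\id\boxtimes F)$, or in the reverse direction. These maps assemble into the coherence data for an object of $\AGCat^{\operatorname{left}}$.

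3. Check that the mates are isomorphisms wherever $\AGCat^{\operatorname{left}}$ requires it. This is the step I expect to be the main obstacle. The claim is that this invertibility is exactly what \cite[Proposition 2.7.2]{GKRVnew} guarantees once the right adjoint exists inside $\AGCat$ rather than merely in $\DGCat$. Since the plain adjoint $f^R$ is only continuous, its compatibility with the operations is not automatic. Here the key point is that $F^R$ being a $1$-morphism of $\AGCat$ means its lax structure maps are themselves isomorphisms, so the mates of the unit and counit inherit invertibility.

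Granting these steps, $f$ upgrades to a morphism of $\AGCat^{\operatorname{left}}$, i.e.\ it is codefined by a kernel.
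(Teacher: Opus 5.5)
Your overall strategy matches the paper's: it records this lemma as a direct corollary of \cite[Proposition 2.7.2]{GKRVnew}, and you also rest on that proposition in the end. The gap is in the mechanism you sketch in steps 2--3, which does not produce the required data. In step 2 you take mates along $F\dashv F^R$ alone and keep the operation $\Phi$ (one of $(g\times\id)^!$, $(g\times\id)_\blacktriangle$). From $\Phi\circ F^R\cong F^R\circ\Phi$ this gives the composite $F\Phi\to F\Phi F^RF\cong FF^R\Phi F\to \Phi F$. In an adjunction inside $\AGCat$ the unit and counit are compatible with the structure isomorphisms, so this composite is just the inverse of the isomorphism $\Phi\circ F\cong F\circ\Phi$ that $F$ already carries. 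It says nothing about the operations relevant to $\AGCat^{\operatorname{left}}$. Those are the left adjoints $\Phi^L$ of the structure functors: $!$-pushforwards, and $*$-pullbacks where $\blacktriangle$-pushforward is $*$-pushforward.

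The correct step is to conjugate with respect to both adjunctions $F\dashv F^R$ and $\Phi^L\dashv\Phi$ at once. The two sides of $\Phi\circ F^R\cong F^R\circ\Phi$ are right adjoint to $F\circ\Phi^L$ and $\Phi^L\circ F$ respectively, so uniqueness of adjoints gives $\Phi^L\circ F\cong F\circ\Phi^L$. The ``main obstacle'' of your step 3 therefore disappears: invertibility is automatic once you pass to left adjoints of an isomorphism. The hypothesis that $F^R$ is a morphism of $\AGCat$, and not merely a continuous right adjoint of $f$, enters exactly by supplying that isomorphism. What must be imported from \cite[Proposition 2.7.2]{GKRVnew} is that these objectwise isomorphisms assemble coherently into a $1$-morphism of $\AGCat^{\operatorname{left}}$, not their invertibility. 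Your step 3 also mixes lax and strict language. In the paper's usage (compare Lemma \ref{kernels}), being defined by a kernel already means commuting with the operations up to isomorphism, so ``lax structure maps that happen to be invertible'' is not the relevant distinction.
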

		
		%
		\subsection{Poincare series functors are defined and codefined by a kernel.}
		
		The goal of this section is to prove the following:
		
		\begin{pr}\label{poinc}
			The functor 
			$$\Poinc_!: \Whit_{\kappa}(G)\rightarrow \Shv_{\kappa, \Nilp}(\Bun_G) \in \DGCat$$
			is defined and codefined by a kernel.
		\end{pr}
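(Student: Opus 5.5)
Proposition \ref{poinc} splits into two claims: that $\Poinc_!$ is defined by a kernel, and that it is codefined by a kernel. For the second, Lemma \ref{cod} reduces matters to showing that the upgrade of $\Poinc_!$ to $\AGCat$ has a right adjoint in $\AGCat$. The natural candidate is $\coeff^{\loc}$, since $(\Poinc_!,\coeff^{\loc})$ is already an adjoint pair at the level of plain DG categories. So the plan is to show that both $\Poinc_!$ and $\coeff^{\loc}$ are defined by a kernel, and that the unit and counit of their adjunction upgrade to $\AGCat$.

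\textbf{Step 1: $\Poinc_!$ is defined by a kernel.} By construction, $\Poinc_!$ is the composite
\[
\Whit_\kappa(G)\xrightarrow{p^L}\Shv_{\kappa}(\BunNbZ/\ker(\chi))^{\bG_m}\xrightarrow{\oblv}\Shv_{\kappa}(\BunNbZ)^{\bG_m}\xrightarrow{(\bar p_{\cZ})_!}\Shv_\kappa(\Bun_G)\xrightarrow{P}\Shv_{\kappa,\Nilp}(\Bun_G).
\]
I will check the three conditions of Lemma \ref{kernels} one factor at a time.
\begin{itemize}
\item $(\bar p_{\cZ})_!$ is a sheaf-theoretic pushforward, so commutes with $(f\times\id)^!$ and $(f\times\id)_\blacktriangle$ by base change, and with $\cF\boxtimes -$ by the Künneth formula.
\item $\oblv$ and the averaging functors are given by convolution with sheaves on $\bG_m\ltimes\bG_a$ and on $\ker(\chi)$. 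Hence they are kernel-defined, and so is the Kirillov adjoint $p^L$, which is expressed through them.
\item For Beilinson's projector $P$, I will invoke the main result of \cite{AGKRRV2}: $P$ is given by a kernel, namely the object (\ref{unit}) produced by the Hecke/Nadler--Yun action. Proposition \ref{2.2.2} is the twisted version of this.
\end{itemize}
Running the same argument on the factors $\pi^!$, $\Av_{*,\chi}$ and $p$ shows that $\coeff^{\loc}$ is also defined by a kernel.

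\textbf{Step 2: the right adjoint in $\AGCat$.} Here I use the dualities to identify the kernels. By Remark \ref{duall}, $\Poinc_!$ is dual to $\coeff^{\loc}_!$, and by Corollary \ref{mirtwo}, miraculous duality exchanges the $!$- and $*$-pairings. Together these should match the kernel of $\coeff^{\loc}$ with the transpose of the kernel of $\Poinc_!$, after applying $\psid^{\mir}$ on the automorphic side. In $\AGCat$, a morphism has a right adjoint exactly when the unit and counit 2-morphisms exist compatibly with all the structure. Since both functors are kernel-defined and the unit/counit maps are assembled from canonical base-change and adjunction maps for $\bar p_{\cZ}$, $\oblv$ and $P$, these maps are natural in the auxiliary stack $\cZ$. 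I would verify this by checking that the adjunction $(\Poinc_!,\coeff^{\loc})$ persists after tensoring with $\Shv(\cZ)$, that is, for $\id_{\cZ}\boxtimes\Poinc_!$ and $\id_{\cZ}\boxtimes\coeff^{\loc}$.

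\textbf{Main obstacle.} The hard part is the interaction of $P$ with the non-quasi-compactness of $\Bun_G$ and with $(\bar p_{\cZ})_!$. The ambient categories are not of the form $\Shv(\cY)$ for quasi-compact $\cY$, so Lemma \ref{kernels} does not apply directly. In the same vein, $\id_\cZ\boxtimes P$ must agree with the projector for $\cZ\times\Bun_G$ with singular support in $T^*\cZ\times\Nilp$; this is \cite[Theorem 1.3.7]{AGKRRV2}, which is also used in its co-version. To handle both issues, I would write $\Shv_\kappa(\Bun_G)$ as the limit over co-truncative quasi-compact opens given by Corollary \ref{cotrunc}, prove kernel-definedness on each open, and pass to the limit. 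Miraculous duality (Theorem \ref{miraculous}) is what guarantees that this limit presentation is compatible with the duality used to transpose kernels in Step 2. That compatibility is precisely the reason Section 3 was developed.
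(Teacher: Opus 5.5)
Your proposal has a genuine gap in Step 1, and it sits exactly where the content of the proposition lies. You assert that $(\bar p_{\cZ})_!$ commutes with $(f\times\id)^!$ and $(f\times\id)_\blacktriangle$ ``by base change''. Base change gives $g^*f_!\simeq f'_!g'^*$ and $g^!f_*\simeq f'_*g'^!$. Lemma \ref{kernels}, however, demands the mixed pairs: $!$-pullback against $!$-pushforward, and $*$-type pushforward against $!$-pushforward. For these one only has a natural map, which is not an isomorphism in general. It is one when the map is (pseudo-)proper, and $\bar p_{\cZ}$ is not proper.

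A $!$-pushforward is naturally \emph{codefined} by a kernel, not defined by one. Your argument would prove that every $!$-pushforward is defined by a kernel. This already fails for $j_!$ along $j:\mathbb{A}^1\setminus 0\hookrightarrow\mathbb{A}^1$: test the second condition of Lemma \ref{kernels} on the sheaf supported on $\{(z,z):z\neq 0\}\subset\mathbb{A}^1\times(\mathbb{A}^1\setminus 0)$ and restrict to $z=0$. One order gives $0$, the other gives a nonzero object.

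So you have the two halves reversed. In the paper's logic (compare the treatment of $\Eis_!$ in Lemma \ref{p_eis}), codefinedness of $\Poinc_!$ is the formal half, and definedness is the real claim. Your Step 2 presupposes Step 1. It is also not automatic on its own terms: since $\Shv(\cZ\times\Bun_G)\neq\Shv(\cZ)\otimes\Shv(\Bun_G)$ in the Betti setting, the adjunction $(\Poinc_!,\coeff^{\loc})$ does not upgrade to $\AGCat$ merely by tensoring with $\Shv(\cZ)$.

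The missing idea is how to get definedness despite the $!$-pushforward. The paper splits along the cuspidal/Eisenstein recollement. It uses Lemma \ref{p_eis}, proved via second adjointness and the strange functional equation, to know that $\iota_{\cusp}\circ p_{\cusp}$ and $\iota_{\Eis,!}\circ p_{\Eis,!}$ are defined and codefined by a kernel.

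\begin{itemize}
\item On the cuspidal side, the cone of $\Poinc^{\on{all}}_!\to\Poinc^{\on{all}}_*$ is Eisenstein. Hence $\iota_{\cusp}\circ p_{\cusp}\circ\Poinc_!\simeq\iota_{\cusp}\circ p_{\cusp}\circ\Poinc_*$. The functor $\Poinc_*$, built from $p^R$ and $*$-pushforward, is defined by a kernel by construction.
\item On the Eisenstein side, one reduces to $\CT_*\circ\Poinc_!$. Proposition \ref{poincct}, a Zastava computation, rewrites this as $(\on{pr}_{\on{small},\cZ})_!\circ\Poinc_{M,!}\circ J^{-,!}_{\Whit,\tau}\circ\on{ins.unit}_{\cZ}$. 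This allows induction on the semisimple rank.
\end{itemize}

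Accordingly, your ``main obstacle'' is misplaced. The difficulty is the non-properness of $\bar p_{\cZ}$, not the non-quasi-compactness of $\Bun_G$. Miraculous duality enters the paper through Theorem \ref{nilpdefined}, not through this proposition.
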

		
		\begin{ntn}
			Fix an an affine scheme $S$. Let $$\Shv_{\kappa\boxtimes \triv, \Nilp \times T^*S}(\Bun_G \times S)_{\Eis, !} \subset \Shv_{\kappa\boxtimes \triv, \Nilp \times T^*S}(\Bun_G \times S)$$ denote the subcategory generated by essential images of functors $$\Eis_!: \Shv_{\kappa, \Nilp}(\Bun_M) \rightarrow  \Shv_{\kappa, \Nilp}(\Bun_G)$$ for all proper parabolics. Set 
			$$\Shv_{\kappa\boxtimes \triv, \Nilp \times T^*S}(\Bun_G \times S)_{\cusp} :=  (\Shv_{\kappa\boxtimes \triv,\Nilp \times T^*S}(\Bun_G \times S)_{\Eis, !})^{\perp}.$$
		\end{ntn}
		
		We have a recollement
		\[
		\begin{tikzcd}
			\Shv_{\kappa\boxtimes \triv, \Nilp \times T^*S}(\Bun_G \times S)_{\Eis, !} \ar[r, hookrightarrow,  shift left, "\iota_{\Eis, !}"]
			& \arrow[l,  shift left, "p_{\Eis, !}"]  \Shv_{\kappa\boxtimes \triv, \Nilp \times T^*S}(\Bun_G \times S)\ar[r, rightarrow,  shift left, "p_{\cusp}"] & \arrow[l, hookrightarrow, shift left, "\iota_{\cusp}"]   \\
			\ar[r, rightarrow,  shift left, "p_{\cusp}"]& \arrow[l, hookrightarrow, shift left, "\iota_{\cusp}"]\Shv_{\kappa\boxtimes \triv, \Nilp \times T^*S}(\Bun_G \times S)_{\cusp}.& 
		\end{tikzcd}
		\]
		
		\begin{lm}\label{p_eis}
			The endofunctors 
			$\iota_{\Eis, !} \circ p_{\Eis, !}$ and $
			\iota_{\cusp}\circ p_{\cusp}$
			are defined and codefined by a kernel.
		\end{lm}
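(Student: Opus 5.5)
Since $\iota_{\Eis,!}\circ p_{\Eis,!}$ and $\iota_{\cusp}\circ p_{\cusp}$ sit in the cofiber sequence
$$\iota_{\Eis,!}\circ p_{\Eis,!}\rightarrow \Id \rightarrow \iota_{\cusp}\circ p_{\cusp}$$
coming from the recollement, and since the class of endofunctors defined (resp. codefined) by a kernel is closed under cofibers (the criteria of Lemma \ref{kernels} are exact conditions, and $\AGCat$, $\AGCat^{\operatorname{left}}$ are stable under finite colimits of morphisms), it suffices to treat $\iota_{\Eis,!}\circ p_{\Eis,!}$. Here $p_{\Eis,!}$ is the right adjoint of $\iota_{\Eis,!}$, so this endofunctor is the counit-type colocalization onto the Eisenstein part. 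The plan is to express it through the parabolic functors. The category $\Shv_{\kappa\boxtimes \triv,\Nilp\times T^*S}(\Bun_G\times S)_{\Eis,!}$ is generated by the images of $\Eis_!\boxtimes \Id_S$ for proper parabolics, which are left adjoint to $\CT_*\boxtimes\Id_S$. One shows, by induction on the semisimple rank and using a filtration indexed by the poset of standard parabolics (as in the proof of Theorem \ref{miraculous} and \cite{DG}), that $\iota_{\Eis,!}\circ p_{\Eis,!}$ is assembled by finitely many cones from composites $\Eis_!\circ \Phi\circ \CT_*$, where $\Phi$ is built from the analogous projectors for Levi subgroups. Here one uses Theorem \ref{secondadj} to identify $\CT_*$ with $\CT^-_!$.

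The key steps are therefore as follows. First, check that $\Eis_!$, $\Eis_*$, $\CT^-_!$ and $\CT_*$ are defined by a kernel. They are given by pull–push along the correspondence $\Bun_G\leftarrow\Bun_{P}\rightarrow\Bun_M$, twisted by a line bundle and shifted, so the three conditions of Lemma \ref{kernels} follow from base change and the projection formula. Nilpotence of singular support is preserved by Proposition \ref{eisctnilp}. Second, deduce codefinedness via Lemma \ref{cod}: each of these functors has a right adjoint that is again of the same type ($\Eis_!\dashv\CT_*$, $\CT^-_!\dashv\Eis_*$). Since these right adjoints are themselves defined by a kernel, the adjunctions lift to $\AGCat$. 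The one subtlety is quasi-compactness. On one connected component the adjoints exist by \cite[Proposition 1.1.2, 1.2.3]{DG}, and one passes to all of $\Bun_M$ by summing over $\mu$, using that only finitely many components contribute after restricting to a quasi-compact open of $\Bun_G$ (\cite[Proposition 1.5.7]{strange}). Third, run the induction: the Levi projectors are defined and codefined by a kernel by the inductive hypothesis, and compositions and finite cones preserve both properties. The torus case is trivial, because the Eisenstein part is zero and $\iota_{\cusp}\circ p_{\cusp}=\Id$.

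The main obstacle is the second half of the first step: producing the explicit finite filtration of $\iota_{\Eis,!}\circ p_{\Eis,!}$ by kernel-type functors. The right adjoint of an inclusion of a subcategory generated by images is not obviously given by a kernel. My first approach would be to use Proposition \ref{ug}, which says cuspidal objects are clean extensions from the fixed quasi-compact open $U_G$. This gives $\iota_{\cusp}\circ p_{\cusp}$ as $j_{G,*}$ applied to the cofiber of the composites $\Eis\circ(\text{Levi})\circ\CT$ restricted to $U_G$, in the manner of Proposition \ref{3.2.6}. The functors $j_{G,*}$ and $j_G^*$ are clearly defined by a kernel, and both have continuous adjoints because $U_G$ is co-truncative. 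Hence $\iota_{\cusp}\circ p_{\cusp}$ is defined and codefined by a kernel, and $\iota_{\Eis,!}\circ p_{\Eis,!}$ follows by taking the fiber of $\Id\rightarrow\iota_{\cusp}\circ p_{\cusp}$.
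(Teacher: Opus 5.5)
Your reduction to a single projector via the cofiber sequence $\iota_{\Eis,!}\circ p_{\Eis,!}\to\Id\to\iota_{\cusp}\circ p_{\cusp}$ is fine. Your route to codefinedness of $\CT_*$ also matches the paper: identify $\CT_*$ with $\CT^-_!$ by Theorem \ref{secondadj}, then apply Lemma \ref{cod} using its right adjoint $\Eis^-_*$, which is defined by a kernel.

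The gap is in your first step. It is not true that $\Eis_!$ (or $\CT^-_!$) is defined by a kernel ``by base change and the projection formula''. In the sense of Lemma \ref{kernels}, $!$-pullbacks, $*$-pushforwards and $\overset{!}{\otimes}$ produce functors defined by a kernel. By contrast, $*$-pullbacks, $!$-pushforwards and $\overset{*}{\otimes}$ produce functors \emph{codefined} by a kernel. Thus $\CT_*=q_*\circ p^!$ and $\Eis_*$ are defined by construction, while $\Eis_!=p_!\circ q^*$ is only codefined by construction.

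The issue is that $p\colon\Bun_P\to\Bun_G$ is not proper. The isomorphisms you would need, $(f\times\id)^!\circ(\id_{\cZ}\times p)_!\cong(\id_{\cZ}\times p)_!\circ(f\times\id)^!$ and the analogue for $(f\times\id)_{\blacktriangle}$, are not base-change isomorphisms. In the Betti setting they fail for general objects; already $!$-extension along $\mathbb{G}_m\hookrightarrow\mathbb{A}^1$ with $\cZ=\mathbb{A}^1$ violates them. This Künneth-type failure is exactly why ``defined'' and ``codefined'' are different notions.

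For $\CT^-_!$ you can repair the claim through Theorem \ref{secondadj}, but for $\Eis_!$ nothing in your proposal does. Lemma \ref{cod} only passes from defined to codefined, so it cannot supply this. The paper needs a genuinely non-formal input here: the strange functional equation \cite[Theorem 0.1.8]{strange}, combined with the fact that $\CT_*$ is defined by a kernel. Without that, you cannot show that the Eisenstein projector, and hence the cuspidal one, is defined by a kernel.

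The assembly step is also left open, and the fix you suggest does not work. Proposition \ref{ug} only gives $\iota_{\cusp}\circ p_{\cusp}\cong j_{G,*}\circ j_G^*\circ\iota_{\cusp}\circ p_{\cusp}$. Computing $j_G^*\circ\iota_{\cusp}\circ p_{\cusp}$ still requires the Eisenstein projector, so the argument is circular. Proposition \ref{3.2.6} describes $\psid_{\operatorname{diff}}$, the discrepancy between $\Id$ and $\psid^{\mir}$ on $U_G$, not the cuspidal projector.

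No filtration by parabolics or induction on rank is needed. Set $L=\bigoplus_P\Eis_!$, with continuous right adjoint $R=\bigoplus_P\CT_*$. Then $\iota_{\Eis,!}\circ p_{\Eis,!}\cong|(L\circ R)^{\bullet+1}|$. Indeed, the realization lies in the Eisenstein subcategory, and $R$ applied to the augmented bar complex is split, so $R$ kills the cofiber. Consequently, once $L$ and $R$ (with the adjunction data) are both defined and codefined by a kernel, so is the projector, since both classes are closed under composition and colimits. This is presumably the formal step behind the paper's ``the assertion of the lemma follows''.
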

		
		\begin{proof}
			Note that by construction the functor $\CT_*$ is defined by a kernel. On the other hand, by the Second Adjointness Theorem (\cite[Theorem 1.2.3]{DG}), the fact that $\Eis_*$ is defined by a kernel, and Lemma \ref{cod} we get that $\CT_*$ is also codefined by a kernel.

			Note that the functor $\Eis_!$ is codefined by a kernel by construction. However, by \cite[Theorem 0.1.8]{strange} and the fact that  $\CT_*$ is defined by a kernel, we get that $\Eis_!$ is also defined by a kernel. The assertion of the lemma follows. 
		\end{proof}
		
		Recall the construction of the quantum constant term functor from \cite[Section 8.1.8]{GLCII}. For an algebraic group $K$ denote by $\tau_K$ the Chevalley involution. Let $\delta_K:= \dim \Bun_K$. Recall the translation maps $\operatorname{transl}$ from \cite[Section 1.7]{GLCIII}.
		\begin{pr}\label{poincct}
			For $\cZ \rightarrow \Ranp$, the following diagram of functors commutes:
			
			\begin{equation}\label{poincctdiag}
				\begin{tikzcd}
					\Whit_{\kappa}(G)_{\cZ}\ar[d, rightarrow, "\Poinc_{G, !, \cZ}{[d]}"']\ar[rrr, rightarrow, "\operatorname{ins.unit}_{\cZ}"']&   &	&	\Whit_{\kappa}(G)_{\cZ^{\subseteq}} \ar[d, rightarrow, "J^{-, !}_{\Whit, \tau}"']  \\
					\Shv_{\kappa \boxtimes \triv,\Nilp \times T^*\cZ}(\Bun_G \times \cZ)\ar[d, rightarrow, "\CT_{*, \rho_P(\omega_X), \cZ}"']&   & &\Whit_{\kappa}(M)_{\cZ^{\subseteq}} \ar[d, rightarrow, "\Poinc_{M, !, \cZ}"']\\
					\Shv_{\kappa_M \boxtimes \triv, \Nilp \times T^*\cZ}(\Bun_M \times \cZ)\ar[rrr, leftarrow, "(\on{pr}_{\on{small},\cZ})_!"]&   & 	&\Shv_{\kappa_M \boxtimes \triv, \Nilp \times T^*\cZ}(\Bun_M \times \cZ^{\subseteq}),
				\end{tikzcd}
			\end{equation}
			where 
			
			\begin{itemize}
				
				\item $\on{CT}_{*,\rho_P(\omega_X)}\simeq (\on{transl}_{2\rho_P(\omega_X)})^*\circ \tau_M\circ \on{CT}^-_{*,\rho_P(\omega_X)}\circ \tau_G \simeq$  $$\simeq
				\tau_M\circ (\on{transl}_{-2\rho_P(\omega_X)})^*\circ \on{CT}^-_{*,\rho_P(\omega_X)}\circ \tau_G;$$

				\item $J^{-,!}_{\Whit,\tau}=\tau_M\circ J^{-,!}_{\Whit}\circ \tau_G$;

				\item $d=-2\delta_{(N_P)_{\rho_P(\omega_X)}}-\delta_{(N^-_P)_{\rho_P(\omega_X)}}+2\delta_{N^-_P}$. 
				
			\end{itemize}
		\end{pr}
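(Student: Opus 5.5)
This is the quantum, Betti analogue of the compatibility of Poincaré series with constant terms, \cite[Theorem 1.9.2]{GLCIII} in the de Rham non-quantum setting. I would prove Proposition \ref{poincct} by following that argument, and the approach is a base change through a global model of the semi-infinite Jacquet functor. Before starting, I would forget the projector $P$: by Proposition \ref{eisctnilp}, $\CT_*$ preserves nilpotence of singular support, and it intertwines the Beilinson projectors (as in \cite[Proposition 1.4.2]{GR}). So it suffices to prove the commutativity of (\ref{poincctdiag}) with $\Poinc_{G,!,\cZ}$ replaced by the un-projected functor $(\bar p_{\cZ})_!\circ \oblv\circ p^L$ on $\Shv_{\kappa\boxtimes\triv}(\Bun_G\times\cZ)$, and similarly for $M$.

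The first step is to express $\CT_{*}\circ(\bar p_{\cZ})_!$ via the correspondence
$$\BunNbZ \leftarrow \BunNbZ\times_{\Bun_G}\widetilde{\Bun}_{P^-} \to \Bun_M\times\cZ.$$
I would then use a stratification of $\BunNbZ\times_{\Bun_G}\Bun_{P^-}$ by the relative position of the generic $N$-reduction and the $P^-$-reduction, as in \cite[Section 10]{GLCIII} and \cite{GLsmall}. The open stratum is where the two reductions are generically transversal. After the twist by $\rho_P(\omega_X)$, this stratum maps to $\BunNbMZsub$, and the fibers are modeled locally on the $LN_P^-$-orbits in $\Gra_{G,\rho(\omega_X),\cZ^{\subseteq}}$. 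Pulling back along $\pi_{\cZ^{\subseteq}}$ and using \cite[Theorem 5.1.4]{Whit} (local-to-global for Whittaker), the contribution of the open stratum becomes
$$\Poinc_{M,!}\circ J^{-,!}_{\Whit}\circ \operatorname{ins.unit}.$$
The kernel $\Delta_{\Sph_{q^{-1}}\to\frac{\infty}{2}}$ appears because the $!$-extension along the $LN_P^-$-orbit through the origin is exactly what the $!$-pushforward produces. The Chevalley involutions $\tau_G,\tau_M$ and the translation by $2\rho_P(\omega_X)$ enter to pass between $P$ and $P^-$. The shift $d$ comes from tracking relative dimensions: the $N_P$ and $N_P^-$ parts of $\Bun_{N,\rho(\omega_X)}$ and the fibers of $\Bun_{P^-}\to\Bun_M$.

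The second step is the vanishing of the contributions of the non-open strata, which I expect to be the main obstacle. On these strata the $N$-reduction and the $P^-$-reduction are degenerate relative to each other, so the character $\chi$ restricted to the relevant unipotent group is nontrivial along a $\bG_a$-direction acting fiberwise. The $!$-averaging against a nontrivial character then kills the contribution. In the metaplectic Betti setting I would argue this using the Kirillov model: objects in the image of $p^L$ are left-orthogonal to $\bG_m\ltimes\bG_a$-equivariant objects. On the degenerate strata, the relevant $!$-pushforward factors through coinvariants for that $\bG_a$. Alternatively, one could invoke the quantum versions of \cite[1.3.5--1.3.9]{GLCIII} for $\Delta^{-,\frac{\infty}{2}}$, which the paper says still hold. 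The gerbe twist must be checked to be trivial along these $\bG_a$-fibers, which holds because $\bG_a$ is unipotent.

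Finally, the projection $(\on{pr}_{\on{small},\cZ})_!$ accounts for the map from $\cZ^{\subseteq}$ to $\cZ$ that forgets the extra points where the $M$-reduction has defects. With this, the identification is natural in $\cZ\to\Ranp$, and the diagram commutes.
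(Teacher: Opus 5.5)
Your strategy matches the paper's at the level of geometry. You strip off the projector $P$ using Proposition \ref{eisctnilp}, then compute $\CT_*\circ\Poinc^{\on{all}}_!$ by base change through the Zastava space, i.e.\ the generic-transversality locus in $\BunNbZ\times_{\Bun_G}\BunPmtpZ$. For this base change you are implicitly using second adjointness to write $\CT_*$ as a $!$-pushforward along $\Bun_{P^-}$; you should say so.

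The paper runs the computation from the other end and imports both halves. It starts from $\Poinc_{M,!}\circ J^{-,!}_{\Whit,\tau}\circ\on{ins.unit}_\cZ$, globalizes it via \cite[Proposition 10.6.8]{GLCIII}, collapses to $\Zas_\cZ$, and identifies the result with $\CT_*\circ\Poinc^{\on{all}}_!$ by \cite[Proposition 4.1.8]{Lin}. Your stratification with Kirillov-model vanishing on the degenerate strata is a hands-on substitute for that last citation. Some substitute is needed, since the paper remarks that the \cite{GLCIII} argument you propose to follow relies on $\kappa=\kappa_{\operatorname{crit}}$. Your substitute needs care, though. On $\BunNbZ$ the equivariance is by a groupoid, not a group. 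You must check on each degenerate stratum that the relevant unipotent directions surject onto $LN/\ker(\chi)\simeq\bG_a$ compatibly with the $\bG_m$; only then does the vanishing of $\Av^{\bG_a}_!$ on the image of $p^L$ apply.

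The genuine gap is the central step: identifying the open-stratum contribution with $\Poinc_{M,!}\circ J^{-,!}_{\Whit}\circ\on{ins.unit}$. \cite[Theorem 5.1.4]{Whit} only identifies the local and global Whittaker categories. It says nothing about matching the local kernel $\Delta_{\Sph_{q^{-1}}\to\frac{\infty}{2}}$ with a global correspondence. That matching is the content of \cite[Proposition 10.6.8]{GLCIII}, which realizes $J^{-,!}_{\Whit}\circ\on{ins.unit}_\cZ$ via
$$\Zas_\cZ\xleftarrow{\sff_{M,\cZ}}\Zas^{\Ranp}_\cZ\xrightarrow{\fs_{\cZ^\subseteq}}\BunNbMZsub$$
with kernel $({}'\ol\sfp_\cZ)^!(j_\cZ)_!\omega$.

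Contrary to your claim, the open stratum does not map to $\BunNbMZsub$ directly. It does so only after choosing a finite set containing the defect points. Removing that choice is exactly what your treatment of $(\on{pr}_{\on{small},\cZ})_!$ as bookkeeping hides. It requires pseudo-properness of $\fs_{\cZ^\subseteq}$ and of $\sff_{M,\cZ}$, and universal homological acyclicity of $\sff_{M,\cZ}$.

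Likewise, your heuristic that the $!$-extension along the $LN^-_P$-orbit is what $!$-pushforward produces has to become a precise statement:
$$({}'\wt\sfp^-_\cZ)^!(-)\sotimes({}'\ol\sfp_\cZ)^!(j_\cZ)_!\omega\simeq(j_{\Zas_\cZ})_!(j_{\Zas_\cZ})^!({}'\wt\sfp^-_\cZ)^!(-)$$
This is \cite[Lemma 4.1.10]{Lin}. You then need smoothness of ${}'\ol\sfp_\cZ\circ j_{\Zas_\cZ}$ to pass from $!$- to $*$-pullbacks, and that is also where the shift $d$ is actually accounted for. With these inputs your outline closes up; without them the decisive identification is asserted rather than proved.
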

		
		\begin{rem}
			Proposition \ref{poincct} is a generalization of \cite[Corollary 10.1.8]{GLCIII}, where the statement is for $\kappa = \kappa_{\operatorname{crit}}$, and the proof relies on that condition.
		\end{rem}
		
		In what follows, we refer to \cite[Section 10.5]{GLCIII} for the notation, definitions, and properties related to Zastava spaces.
		\begin{ntn}
			Denote by $\Poinc_!^{\on{all}}$ and $\Poinc_*^{\on{all}}$ the compositions $\pi_! \circ \oblv$ and $\pi_* \circ \oblv$ respectively.
		\end{ntn}
		
		\begin{proof}
			Note that by Proposition \ref{eisctnilp} we have an equivalence of functors $$P \circ \CT_{*, \rho_P(\omega_X), \cZ} \cong \CT_{*, \rho_P(\omega_X), \cZ} \circ P: \Shv_{\kappa\boxtimes \triv}(\Bun_G \times \cZ) \rightarrow \Shv_{\kappa\boxtimes \triv, \Nilp \times T^*S}(\Bun_M \times \cZ).$$
			Hence it suffices to prove that the diagram 
			\begin{equation}
				\begin{tikzcd}
					\Whit_{\kappa}(G)_{\cZ}\ar[d, rightarrow, "\Poinc_{G, !, \cZ}^{\on{all}}{[d]}"']\ar[rrr, rightarrow, "\operatorname{ins.unit}_{\cZ}"']&   &	&	\Whit_{\kappa}(G)_{\cZ^{\subseteq}} \ar[d, rightarrow, "J^{-, !}_{\Whit, \tau}"']  \\
					\Shv_{\kappa \boxtimes \triv}(\Bun_G \times \cZ)\ar[d, rightarrow, "\CT_{*, \rho_P(\omega_X), \cZ}"']&   & &\Whit_{\kappa}(M)_{\cZ^{\subseteq}} \ar[d, rightarrow, "\Poinc_{M, !, \cZ}^{\on{all}}"']\\
					\Shv_{\kappa_M \boxtimes \triv}(\Bun_M \times \cZ)\ar[rrr, leftarrow, "(\on{pr}_{\on{small},\cZ})_!"]&   & 	&\Shv_{\kappa_M \boxtimes \triv}(\Bun_M \times \cZ^{\subseteq}),
				\end{tikzcd}
			\end{equation}
			By \cite[Proposition 10.6.8]{GLCIII} the functor
			$$\Whit_{\kappa, !}(G)_\cZ \overset{\on{ins.unit}_\cZ}\longrightarrow \Whit_{\kappa, !}(G)_{\cZ^\subseteq} \overset{J^{-,!}_{\Whit}}\longrightarrow  \Whit_{\kappa_M, !}(M)_{\cZ^\subseteq}$$
			corresponds to 
			\[
			\begin{split}
				&(\on{transl}_{\rho_P(\omega_X)})^*\circ (\fs_{\cZ^{\subseteq}})_*\circ \sff_{M,\cZ}^! \circ \\
				&\left(\left(({}'\wt\sfp^-_\cZ)^!(-)\right) \sotimes ({}'\ol\sfp_\cZ)^!\circ (j_\cZ)_!(\omega_{\Bun_{P^-}\times \cZ})\right)[-\on{shift}+2\delta_{(N_P)_{\rho_P(\omega_X)}}+\delta_{(N^-_P)_{\rho_P(\omega_X)}}],
			\end{split}
			\]
			where $$ \on{shift}:= \operatorname{dim.rel.(\Bun_{P^{-}} / \Bun_M)},$$ and the maps are as in the diagram 
			\begin{equation} \label{e:CT co of Poinc diag 3}
				\vcenter
				{\xy
					(40,20)*+{\Bun_M\times \cZ^\subseteq}="X";
					(60,0)*+{\Bun_M\times \cZ.}="X'";
					(20,40)*+{\BunNbMZsub}="Z";
					(-40,20)*+{\BunNbZ}="U";
					(-20,40)*+{\Zas_\cZ}="V";
					(0,60)*+{\Zas^{\Ranp}_{\cZ}}="S";
					(-60,40)*+{\Gra_{G,\rho(\omega_X),\cZ}}="T";
					(20,0)*+{\BunPmtpZ}="R";
					(-10,0)*+{\Bun_{P^-}\times \cZ}="P";
					{\ar@{->}^{\ol\sfp_{M,\cZ^\subseteq}} "Z";"X"};
					{\ar@{->}^{\fs_{\cZ^\subseteq}} "S";"Z"};
					{\ar@{->}_{'\wt\sfp^-_\cZ} "V";"U"};
					{\ar@{->}_{\pi_\cZ} "T";"U"};
					{\ar@{->}_{\sff_{M,\cZ}} "S";"V"};
					{\ar@{^{(}->}_j "P";"R"};  
					{\ar@{->}^{'\ol\sfp_\cZ} "V";"R"};  
					{\ar@{->}^{\wt\sfq^-_\cZ} "R";"X'"};  
					{\ar@{->}^{\on{id}\times \on{pr}_{\on{small},\cZ}} "X";"X'"};  
					\endxy}
			\end{equation} 	
			
			Therefore the composition 
			\begin{equation}
				(\Id \times \on{pr}_{\on{small},\cZ})_! \circ \Poinc_{M, !, \cZ}^{\on{all}} \circ J^{-, !}_{\Whit, \tau} \circ \operatorname{ins.unit}_{\cZ}
			\end{equation}
			identifies with
			\begin{equation}\label{poinccteq1}
				\begin{split}
					&(\Id \times \on{pr}_{\on{small},\cZ} )_!\circ (\ol\sfp_{M,\cZ^\subseteq})_!\circ (\on{transl}_{\rho_P(\omega_X)})^*\circ (\fs_{\cZ^{\subseteq}})_*\circ \sff_{M,\cZ}^! \circ \\
					& \left(\left(({}'\wt\sfp^-_\cZ)^!(-)\right) \sotimes ({}'\ol\sfp_\cZ)^!\circ (j_\cZ)_!(\omega_{\Bun_{P^-}\times \cZ})\right) [-\on{shift}+2\delta_{(N_P)_{\rho_P(\omega_X)}}+\delta_{(N^-_P)_{\rho_P(\omega_X)}}].
				\end{split}
			\end{equation}
			Note that by \cite[Lemma 10.3.2]{GLCIII} the map $\sff_{M,\cZ}$ is pseudo-proper and universally homologically acylic, and also the map $\fs_{\cZ^{\subseteq}}$ is pseudo-proper, thus we can rewrite (\ref{poinccteq1}) as 
			\begin{equation}\label{poinccteq2}
				(\wt\sfq^-_\cZ)_! \circ ('\ol\sfp_\cZ)_!	\circ
				\left(\left(({}'\wt\sfp^-_\cZ)^!(-)\right) \sotimes ({}'\ol\sfp_\cZ)^!\circ (j_\cZ)_!(\omega_{\Bun_{P^-}\times \cZ})\right)[-\on{shift}+2\delta_{(N_P)_{\rho_P(\omega_X)}}+\delta_{(N^-_P)_{\rho_P(\omega_X)}}].
			\end{equation}
			Moreover, by \cite[Lemma 4.1.10]{Lin} we rewrite (\ref{poinccteq2}) as 
			\begin{equation}\label{poinccteq3}
				(\wt\sfq^-_\cZ)_! \circ ('\ol\sfp_\cZ)_!	\circ (j_{\Zas_{\cZ}})_! \circ (j_{\Zas_{\cZ}})^!
				\circ ({}'\wt\sfp^-_\cZ)^!(-)[-\on{shift}+2\delta_{(N_P)_{\rho_P(\omega_X)}}+\delta_{(N^-_P)_{\rho_P(\omega_X)}}],
			\end{equation}
			where the maps are as in the diagram 
			\begin{equation}
				\vcenter
				{\xy
					(60,0)*+{\Bun_M\times \cZ.}="X'";
					(-40,20)*+{\BunNbZ}="U";
					(-20,40)*+{\Zas_\cZ}="V";
					(-20,60)*+{\stackrel{\circ}{\Zas}_{_\cZ}}="S";
					(-60,40)*+{\Gra_{G,\rho(\omega_X),\cZ}}="T";
					(20,0)*+{\BunPmtpZ}="R";
					(-10,0)*+{\Bun_{P^-}\times \cZ}="P";
					{\ar@{->}_{'\wt\sfp^-_\cZ} "V";"U"};
					{\ar@{->}_{\pi_\cZ} "T";"U"};
					{\ar@{->}_{j_{\Zas_{\cZ}}} "S";"V"};
					{\ar@{^{(}->}_j "P";"R"};  
					{\ar@{->}^{'\ol\sfp_\cZ} "V";"R"};  
					{\ar@{->}^{\wt\sfq^-_\cZ} "R";"X'"};  
					
					\endxy}
			\end{equation} 	
			
			But the composition $$'\ol\sfp_\cZ \circ j_{\Zas_{\cZ}}$$ is smooth, so (\ref{poinccteq3}) identifies with 
			
			\begin{equation}\label{poinccteq4}
				\begin{split}
					(\wt\sfq^-_\cZ)_! \circ ('\ol\sfp_\cZ)_!	\circ (j_{\Zas_{\cZ}})_! \circ (j_{\Zas_{\cZ}})^*
					\circ ({}'\wt\sfp^-_\cZ)^*(-)[2(\dim({\stackrel{\circ}{\Zas}_{_\cZ}}) - \dim(\BunPmtpZ) )] \cong \\
					\cong 	(\wt\sfq^-_\cZ)_! \circ (({}'\wt\sfp^-_\cZ)^*(-)\stackrel{*}{\otimes} ({}'\ol\sfp^-_\cZ)^*(j_! \underline{k}))
				\end{split}
			\end{equation}
			by \cite[Theorem 4.1.10]{Lin}.
			Finally, right-hand side of (\ref{poinccteq4}) identifies with 
			$$\CT_{*, \rho_P(\omega_X), \cZ} \circ \Poinc_{G, !, \cZ}^{\on{all}}{[\operatorname{shift}]}$$
			by \cite[Proposition 4.1.8]{Lin}.
		\end{proof}
		\begin{proof}[Proof of Proposition \ref{poinc}]
			By Lemma \ref{p_eis} it suffices to check that the compositions 
			\begin{equation}
				\iota_{\cusp} \circ p_{\cusp}\circ \Poinc_!
			\end{equation}
			and 
			\begin{equation}\label{poinceq1}
				\iota_{\Eis, !} \circ p_{\Eis, !}\circ \Poinc_!
			\end{equation}
			are defined by a kernel.
			
			By \cite[Section 8.5]{GR} the cone of 
			$$\Poinc_!^{\on{all}} \rightarrow \Poinc_*^{\on{all}}$$ lies in the full subcategory generated by 
			$$\Eis_!: \Shv_{\kappa}(\Bun_M) \rightarrow  \Shv_{\kappa}(\Bun_G)$$
			for all proper parabolics of $G$. Therefore, by Propositon \ref{eisctnilp}, the cone of $$\Poinc_! \rightarrow \Poinc_*$$ lies in the full subcategory generated by $$\Eis_!: \Shv_{\kappa, \Nilp}(\Bun_M) \rightarrow  \Shv_{\kappa, \Nilp}(\Bun_G).$$
			Hence we can identify 
			$$\iota_{\cusp} \circ p_{\cusp}\circ \Poinc_! \cong \iota_{\cusp} \circ p_{\cusp}\circ \Poinc_*,$$
			and the latter composition is defined by a kernel by definition of $\Poinc_*$.
			
			To check that (\ref{poinceq1}) is defined by a kernel, it suffices to show that 
			$\CT_* \circ \Poinc_!$ is defined by a kernel, which follows from Proposition \ref{poincct} by induction.
		\end{proof}
		\begin{cor}\label{poincco}
			The functor $(\coeff^{\loc}_!)^{\vee} \cong \Poinc_!$ is defined and codefined by a kernel. 
		\end{cor}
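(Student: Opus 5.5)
The statement has two parts: the identification $(\coeff^{\loc}_!)^{\vee} \cong \Poinc_!$, and the kernel property. I would treat them separately and then combine.

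\emph{Step 1: the duality identification.} This is exactly Remark \ref{duall}. That remark uses two dualities. The first is the pairing $\Gamma_!(-\overset{*}{\otimes}-)$ between $\Shv_{\kappa,\Nilp}(\Bun_G)$ and $\Shv_{-\kappa,\Nilp}(\Bun_G)$. The second is the pairing $\Gamma_!(-\overset{*}{\otimes}-)$ between $\Whit_{\kappa}(G)$ and $\Whit_{-\kappa}(G)$ from Remark \ref{dualwhit}.

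To double-check it, I would unwind both functors step by step.
\begin{itemize}
\item $\coeff^{\loc}_!$ is $*$-pullback to $\BunNbZ$, followed by $\Av_!$ and then $p$.
\item The duals of these pieces, in reverse order, are $p^L$, $\oblv$, $!$-pushforward, and finally the projector $P$.
\item The projector $P$ appears because $\Shv_{\Nilp}$ is a retract of $\Shv$, and the dual of the inclusion $\iota$ is $P$ (cf.\ Proposition \ref{2.2.2}).
\end{itemize}
This composite is precisely the definition of $\Poinc_!$.

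\emph{Step 2: the kernel property.} I would simply invoke Proposition \ref{poinc}, which says that $\Poinc_!$, viewed as a morphism in $\AGCat$ (resp.\ $\AGCat^{\operatorname{left}}$), is defined and codefined by a kernel. Transporting this along the equivalence of Step 1 gives the claim for $(\coeff^{\loc}_!)^{\vee}$.

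\emph{Step 3: the expected obstacle.} The main subtlety is that the identification in Step 1 must hold as functors upgraded to $\AGCat$, not merely as plain DG functors. Only then does "defined by a kernel" pass across it.

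To handle this, I would check that the duality data are compatible with the operations of Lemma \ref{kernels}: $\blacktriangle$-pushforward, $!$-pullback along maps $\cZ'\to\cZ$, and external tensor products. This amounts to checking base change for the correspondence $\Bun_G \times \cZ \leftarrow \BunNbZ$ after replacing $\Bun_G$ by $\cZ\times\Bun_G$, and such base change holds because all the maps involved are defined uniformly in families.

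The projector $P$ is also compatible with these operations. This follows from \cite[Theorem 1.3.7]{AGKRRV2}, which is already used implicitly in Lemma \ref{p_eis}.

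Once these compatibilities are in place, the corollary follows from Proposition \ref{poinc} together with Lemma \ref{cod}.
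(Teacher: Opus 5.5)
Your proposal is correct and follows the paper's route: the corollary is stated without separate proof, obtained by combining the identification of Remark \ref{duall} with Proposition \ref{poinc}. Your Step 3 and the appeal to Lemma \ref{cod} are unnecessary. Being defined (or codefined) by a kernel is a property of the underlying DG functor that is invariant under isomorphism, and Proposition \ref{poinc} already gives both the defined and the codefined halves.
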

		\subsection{Kernels given by objects from \texorpdfstring{$\Shv_{\Nilp}(\Bun_G)$}{Shv_Nilp(Bun_G)}.}\label{kerbun}
		
		We get the all Betti sheaves version of \cite[Corollary 4.2.8]{AGKRRV2}:
		\begin{thm}\label{nilpdefined}
			For any $$\cF \in \Shv_{\triv \boxtimes -\kappa, T^*X^k_{\disj} \times \Nilp}(X^k_{\disj} \times \Bun_G)$$ the functor 
			$$F: \Shv_{\triv \boxtimes \kappa, T^*X^k_{\disj} \times \Nilp}(X^k_{\disj} \times \Bun_G) \rightarrow \Shv(X^k_{\disj})$$
			defined as $$\cF^{\prime} \rightarrow p_{X^k_{\disj}, !}(\cF \stackrel{*}{\otimes } \cF^{\prime})$$
			is defined and codefined by a kernel.
		\end{thm}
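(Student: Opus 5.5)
We follow the strategy of \cite[Section 4.2]{AGKRRV2}, replacing the de Rham/\'etale inputs by the Betti ones established above. The plan has three steps. First, use Corollary \ref{mirtwo} (with parameters over $X^k_{\disj}$) to rewrite $F$ as a $*$-pairing in the co-category. Second, use the miraculous duality of Theorem \ref{miraculous} to present that pairing by an explicit kernel. Third, use the projector $P$ to pass to the nilpotent subcategory. Since the statement is local on $X^k_{\disj}$, we may replace $X^k_{\disj}$ by an affine $S$ throughout.

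\emph{Step 1: reduction to $F$ being defined by a kernel.} By Lemma \ref{cod}, it suffices to show that $F$ is defined by a kernel and that its upgrade to $\AGCat$ admits a right adjoint there. For the right adjoint we would argue as follows. By the Betti analogue of \cite[Theorem 1.3.7]{AGKRRV2}, the category
$$\Shv_{\triv\boxtimes \kappa, T^*S\times \Nilp}(S\times \Bun_G)\cong \Shv(S)\otimes \Shv_{\kappa,\Nilp}(\Bun_G)$$
is dualizable in a way compatible with the $\AGCat$-structure. Under this identification, $F$ corresponds to an object of $\Shv(S)\otimes \Shv_{-\kappa,\Nilp}(\Bun_G)$; namely, via the duality of Corollary \ref{mirtwo}, it corresponds to $\psid^{\mir}$ applied relatively to $\cF$. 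The right adjoint of $F$ is then the functor $\Shv(S)\to \Shv(S)\otimes\Shv_{\kappa,\Nilp}(\Bun_G)_{\co}$ given by $\star$-tensoring with that object. It is continuous, so it lies in $\AGCat$ once we know it commutes with the three operations of Lemma \ref{kernels}.

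\emph{Step 2: $F$ is defined by a kernel.} Write $F=p_{S,!}\circ(\cF\overset{*}{\otimes}-)$. The operations $*$-tensor and $!$-pushforward are codefined by a kernel, but not obviously defined by one; this is where miraculous duality enters. By Corollary \ref{mirtwo}, applied relatively over $S$,
$$p_{S,!}(\cF\overset{*}{\otimes}\cF')\cong p_{S,*,\co}\bigl(\psid^{\mir}_S(\cF)\overset{!}{\otimes}\cF'\bigr).$$
The right-hand side is built from $!$-pullback along the diagonal, $!$-tensor, and $*$-pushforward on each co-truncative quasi-compact open. Each of these is manifestly defined by a kernel, since they commute with $(f\times\id)_{\blacktriangle}$, $(f\times \id)^!$, and external products (base change and the projection formula). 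Passing to the colimit over co-truncative opens (Corollary \ref{cotrunc}) is compatible with the three conditions of Lemma \ref{kernels}. Finally, the projector $P$ is defined by a kernel by \cite[Section 1.6]{AGKRRV2}, and Proposition \ref{2.2.2} lets us insert $P$ or $P_{\co}$ freely. Together these give the $\AGCat$ upgrade on the nilpotent subcategory.

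\emph{Main obstacle.} The hard step is justifying the relative, $S$-parametrized version of Theorem \ref{miraculous} and Corollary \ref{mirtwo}: namely, that $\psid^{\mir}\otimes\id_{\Shv(S)}$ is still an equivalence, and that it commutes with $(f\times\id)^!$ and $(f\times \id)_{\blacktriangle}$. We would first try to deduce this formally. The key input is that $\Shv_{\kappa,\Nilp}(\Bun_G)$ is dualizable and that $\Shv(S)\otimes\Shv_{\kappa,\Nilp}(\Bun_G)\to\Shv_{T^*S\times\Nilp}(S\times\Bun_G)$ is an equivalence. Given these, $\psid^{\mir}\otimes\id$ is an equivalence, being a tensor product of an equivalence with the identity. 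Its compatibility with the kernel operations would then follow because $\Delta_{*,\co}(\omega)$ is given by a single object independent of $S$. If this formal route fails because of non-compactly generated issues with weakly constructible sheaves, the fallback is to repeat the inductive argument of Theorem \ref{miraculous} with parameters. That argument reduces to the cuspidal part, which lives on a quasi-compact open where everything is classical, together with the Eisenstein part, which is handled via Propositions \ref{4.1.2} and \ref{cteisra}; both are base-change statements and hold over $S$.
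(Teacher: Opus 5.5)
Your Step 2 is essentially the paper's proof, which is just the remark that the statement follows from Corollary \ref{mirtwo}. The functor $\cF'\mapsto p_{S,!}(\cF\overset{*}{\otimes}\cF')$ is manifestly codefined by a kernel. Miraculous duality rewrites it as $\cF'\mapsto p_{S,*,\co}(\psid^{\mir}(\cF)\overset{!}{\otimes}\cF')$, which is built from $!$-restriction, $!$-tensor and $*$-pushforward and is therefore defined by a kernel. Applying $\psid^{\mir}$ to $\cF$ rather than to $\cF'$, as Corollary \ref{mirtwo} is stated, is harmless by the symmetry $\kappa\leftrightarrow-\kappa$.

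You should drop Step 1, which is unnecessary and incorrect as written.

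\begin{itemize}
\item For a general $\cF$ the functor $F$ has no continuous right adjoint, so Lemma \ref{cod} cannot be invoked. Take $\cF=\bigoplus_{n\geq 0}\cF_0$ with $\cF_0\neq 0$. Then $F$ sends a compact $\cF'$ with $p_{S,!}(\cF_0\overset{*}{\otimes}\cF')\neq 0$ to an infinite direct sum of copies of a nonzero object. That sum is not compact, so $F$ does not preserve compacts.
\item The functor you describe lands in $\Shv(S)\otimes\Shv_{\kappa,\Nilp}(\Bun_G)_{\co}$, not in the source of $F$, so it cannot be a right adjoint of $F$.
\item What miraculous duality actually produces from $F$ is the representing kernel $\psid^{\mir}(\cF)$, not an adjoint.
\end{itemize}

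Codefinedness should simply be read off from the formula for $F$, via base change and the projection formula for $!$-pushforward and $*$-tensor, as you yourself note at the start of Step 2.
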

		
		\begin{proof}
			Follows from Corollary \ref{mirtwo}.
		\end{proof}

		\section{Constructible cosheaves of categories and lax global sections.}\label{5}
		
		\subsection{Constructible cosheaves of categories.}
		
		\begin{ntn}\cite[Definition 2.12]{Lej}
			Denote by $\ST$ the category of conically stratified $D_{\omega}$ topological spaces.
		\end{ntn}
		
		\begin{ntn}\cite[Definition 1.22]{Lej}
			For a topological space $X$ let $\Shv(X, \mathbf{D})$ (resp. $\cshv(X, \mathbf{D})$)  be the category of $\mathbf{D}$-valued hyper(co)complete (co)sheaves on $X$. 
			For $$X \rightarrow A \in \ST$$ and any symmetic monoidal category $\mathbf{D}$, let $\Shv_A(X, \mathbf{D})$ (resp. $\cshv_A(X, \mathbf{D})$) be the category of $\mathbf{D}$-valued hyper(co)complete $A$-constructible (co)sheaves on $X$. 
		\end{ntn}
		
		\begin{ex}
			Informally, a cosheaf $\cF \in \cshv(X, \mathbf{D})$ gives the data of an element $\cF_x \in \mathbf{D}$ for every $x \in X$, and for every specialization map $x \rightsquigarrow y$ the data of a map $\cF_x \rightarrow \cF_y$.
		\end{ex}
		
		For a map $f: (X \rightarrow A) \rightarrow (Y \rightarrow B) \in \ST$ we have a pair of adjoint functors
		\begin{equation}
			\begin{tikzcd}
				f_*: \cshv_A(X, \mathbf{D}) \ar[r, rightarrow,  shift right, ""]
				& \arrow[l,  shift right, ""]  \cshv_B(Y, \mathbf{D}) : f^*,
			\end{tikzcd}
		\end{equation}
		where $f_*$ stands for the cosheaf pushforward, and $f^*$ stands for cosheaf hyperpullback (\cite[Definition 1.5, Corollary 2.18]{Lej}).
		
		\begin{rem}
			The category $\cshv(X, \mathbf{D})$ has a canonical symmetric monoidal structure. The subcategory $\cshv_A(X, \mathbf{D})$ is stable under this tensor product.
		\end{rem}
		
		Let $\Exit_A$ be the exit-path category of \cite[Appendix A.6]{HA}. Let $\Enter_A:= \Exit_A^{\op}$.
		
		\begin{rem}
			A symmetic monoidal functor $\mathbf{D}_1 \rightarrow \mathbf{D}_2$ induces a symmetric monoidal functor 
			$$\cshv_A(X, \mathbf{D}_1) \rightarrow \cshv_A(X, \mathbf{D}_2).$$
		\end{rem}
		
		\begin{thm}\cite[Corollary 3.12]{Lej}\label{shvexit}
			For $X\rightarrow A \in \ST$ we have 
			$$\operatorname{Fun}(\Exit_A, \mathbf{D}) \cong \Shv_A(X, \mathbf{D}),$$
			$$\operatorname{Fun}(\Enter_A, \mathbf{D}) \cong \cshv_A(X, \mathbf{D}).$$
			
			For a map of $A$-stratified spaces $f:X \rightarrow Y$ we have an obvious functor 
			\begin{equation}\label{enterpath}
				f: \Enter_A(X) \rightarrow \Enter_A(X),
			\end{equation}
			such that the diagram
			\[
			\begin{tikzcd}
				\operatorname{Fun}(\Enter_A(Y), \mathbf{D}) \ar[r, rightarrow, "f^*"']\ar[d, rightarrow, "\cong"']&   	\operatorname{Fun}(\Enter_A(X)), \mathbf{D}) \ar[d, rightarrow, "\cong"']  \\
				\cshv_A(Y, \mathbf{D})\ar[r, rightarrow, "f^*"']&   \cshv_A(X, \mathbf{D}).
			\end{tikzcd}
			\]
			is commutative. Also, the diagram 
			\[
			\begin{tikzcd}
				\operatorname{Fun}(\Enter_A(X), \mathbf{D}) \ar[r, rightarrow, "f_*"']\ar[d, rightarrow, "\cong"']&   	\operatorname{Fun}(\Enter_A(Y)), \mathbf{D}) \ar[d, rightarrow, "\cong"']  \\
				\cshv_A(X, \mathbf{D})\ar[r, rightarrow, "f_*"']&   \cshv_A(y, \mathbf{D}).
			\end{tikzcd}
			\]
			is commutative, where the top arrow is given by the left Kan extension along (\ref{enterpath}).
		\end{thm}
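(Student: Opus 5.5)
The plan is to reduce everything to Lurie's exodromy equivalence \cite[Theorem A.9.3]{HA} for space-valued constructible sheaves. I would then pass to coefficients in $\mathbf{D}$, and to cosheaves by duality. Finally, the two functoriality statements would follow from a costalk computation together with uniqueness of adjoints.

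\emph{Step 1: the sheaf equivalence.} For $X\to A\in\ST$, Lurie's theorem identifies $A$-constructible sheaves of spaces on $X$ with $\operatorname{Fun}(\Exit_A,\mathcal S)$. The functor sends $\cF$ to the stalk functor $x\mapsto\cF_x$, with specialization maps induced by exit paths. Here the conical and $D_\omega$ hypotheses guarantee that $\Exit_A$ is an $\infty$-category and that stalks are computed on small conical neighborhoods. For presentable $\mathbf{D}$, I would tensor this equivalence with $\mathbf{D}$. On the functor side, $\operatorname{Fun}(\Exit_A,\mathcal S)\otimes\mathbf{D}\simeq\operatorname{Fun}(\Exit_A,\mathbf{D})$ because $\Exit_A$ is small. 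On the sheaf side, hypercomplete $A$-constructible $\mathbf{D}$-valued sheaves are identified with $\Shv_A(X,\mathcal S)\otimes\mathbf{D}$. For this I would use that constructibility and hypercompleteness are detected stalkwise, and that stalks commute with the tensoring.

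\emph{Step 2: the cosheaf equivalence.} A $\mathbf{D}$-valued cosheaf is the same as a $\mathbf{D}^{\op}$-valued sheaf. So Step 1, applied with the opposite coefficients, gives
$$\cshv_A(X,\mathbf{D})^{\op}\simeq\operatorname{Fun}(\Exit_A,\mathbf{D}^{\op})=\operatorname{Fun}(\Enter_A,\mathbf{D})^{\op}.$$
Explicitly, the value at $x$ is the costalk, i.e.\ the (stabilizing) limit of $\cF(U)$ over conical neighborhoods $U\ni x$, and entrance paths induce the corestriction maps. If $\mathbf{D}^{\op}$ is not presentable, I would run Step 1 directly for cosheaves rather than formally dualizing. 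The needed inputs are the same: costalks are computed on conical charts, and hypercocompleteness is detected by costalks.

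\emph{Step 3: functoriality.} For $f\colon X\to Y$ of stratified spaces, I would first prove that hyperpullback $f^*$ preserves constructibility and satisfies $(f^*\cF)_x\simeq\cF_{f(x)}$. Next I would check that this isomorphism is compatible with the maps along entrance paths. Together these say that $f^*$ corresponds to precomposition with $\Enter_A(X)\to\Enter_A(Y)$. I expect this to be the main obstacle. My first attempt would be a local computation: around $x$ and $f(x)$ choose conical charts such that $f$ maps the basic neighborhood of $x$ into that of $f(x)$ in a stratum-compatible way. Then compare the defining colimits (or limits) of the hyperpullback with the limit computing the costalk, using that both stabilize on the conical charts. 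The statement about $f_*$ then comes for free. On the cosheaf side, $f_*$ is left adjoint to $f^*$, and on the functor side left Kan extension is left adjoint to precomposition. Since the two right adjoints are intertwined by the equivalences of Step 2, so are their left adjoints.
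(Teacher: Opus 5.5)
The paper gives no proof here; the statement is quoted from \cite[Corollary 3.12]{Lej}, so your outline has to stand on its own. It has a genuine gap in Step~2, which is the half of the statement the paper actually uses: cosheaves with values in $\Vect$ in Lemma~\ref{end}, and in $\DGCat$ for $\Theta_{X,A}$ and Corollary~\ref{strata}. For these coefficients $\mathbf{D}^{\op}$ is not presentable, so your fallback ``run Step~1 directly for cosheaves'' is the entire proof. As written it has no content. The only mechanism in Step~1 is tensoring a sheaf-of-spaces equivalence with a presentable category, and that can never produce $\mathbf{D}^{\op}$-valued sheaves. The inputs you list instead (costalks stabilize on conical charts and see hypercocompleteness) only pin down a constructible cosheaf on the conical charts themselves. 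They neither compute $F(U)$ for a general open $U$ nor construct an inverse $\Fun(\Enter_A,\mathbf{D})\to\cshv_A(X,\mathbf{D})$. What is missing is the actual exodromy input with general coefficients. This is a stratified van Kampen/hyperdescent property of $U\mapsto\Exit_A(U)$ over a basis of conical opens. It gives $F(U)\simeq\colim_{x\in\Enter_A(U)}F_x$ for every open $U$ and every constructible hypercocomplete $F$. Conversely, it shows that $U\mapsto\colim_{\Enter_A(U)}\varphi$ is a hypercocomplete cosheaf for every $\varphi$. Alternatively, you would need Step~1 for arbitrary coefficient categories with small limits, so that it applies to $\mathbf{D}^{\op}$; tensor products cannot give that.

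Step~1 also needs repair, for two reasons.

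\emph{The ascending chain condition.} \cite[Theorem A.9.3]{HA} assumes the ascending chain condition on $A$. The stratifications this theorem is later applied to are $\Ranp(M)$ by cardinality and $A_{\Ranp}$ on $\Gra_{G,\rho(\omega_X)}$. These sit over $\mathbb{Z}_{\geq 1}$ with larger cardinalities open, so they contain the infinite chain $1<2<3<\cdots$.

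\emph{The change of coefficients.} The identification of $\Shv_A(X,\mathcal{S})\otimes\mathbf{D}$ with hypercomplete constructible $\mathbf{D}$-valued sheaves does not follow from your stalk remark. An object and its hypercompletion have the same stalks, so hypercompleteness is not a stalkwise condition. Moreover, $-\otimes\mathbf{D}$ is only guaranteed to preserve fully faithful functors under extra hypotheses, e.g.\ $\mathbf{D}$ dualizable, or the inclusion having a continuous right adjoint. The inclusion of constructible sheaves lacks a continuous right adjoint already for $X=\mathbb{R}$.

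Finally, your adjoint-uniqueness argument for the $f_*$-square is formally fine, but it presupposes that cosheaf pushforward preserves constructibility. That fails for general stratified maps. Take $f(x)=x^2$ on $\mathbb{R}$ with trivial stratifications. Then $f_*$ of the constant cosheaf with value $d$ has costalks $d\sqcup d$ at $y>0$, $d$ at $y=0$, and the initial object at $y<0$. Left Kan extension along $\Enter(\mathbb{R})\simeq *\to\Enter(\mathbb{R})\simeq *$ instead returns the constant cosheaf $d$. So that square needs a hypothesis on $f$, or $f_*$ must be read as the left adjoint of $f^*$ on constructible objects, and your proof should say where this enters.
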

		\begin{ex}
			When $A$ is trivial, we have $$\Enter_A \cong X^{\operatorname{Spc}},$$ and 
			$$\cshv_A(X, \mathbf{D}) \cong \lim_{X^{\operatorname{Spc}}} \mathbf{D} =: \LS(X, \mathbf{D}).$$
		\end{ex}
		
		\subsection{Lax global sections of a constructible cosheaf of categories.}
		\begin{ntn}
			Denote by $\underline{\DGCat}$ the $(\infty, 2)$-category of DG categories.  
		\end{ntn}
		
		\begin{df}
			For an $(\infty, 2)$-category $\mathbf{D}$, define 
			\begin{equation}\label{cshvlaxnat}
				\cshv_A(X, \mathbf{D})^{\laxnat}:= \mathbf{Fun}(\Enter_A, \mathbf{D})^{\laxnat},
			\end{equation}
			where the right-hand side is constructed in \cite[Construction 4.5]{cospan}. This is the $(\infty, 2)$-category of constructible cosheaves valued in $\mathbf{D}$ and lax natural transformations.
		\end{df}
		
		\begin{ex}
			Informally, an arrow $$\cF_1 \rightarrow \cF_2 \in \cshv_A(X, \mathbf{D})^{\laxnat}$$ is the data of a natural transformation in $\mathbf{D}$
			\[
			\begin{tikzcd}
				\cF_{1, x} \ar[r, rightarrow, ""']\ar[d, rightarrow, ""']&   	\cF_{1, y} \ar[d, rightarrow, ""']  \\
				\cF_{2, x}\ar[ru, Rightarrow, ""']\ar[r, rightarrow, ""']&   \cF_{2, y}
			\end{tikzcd}
			\]
			for every specialization $x \rightsquigarrow y$.
		\end{ex}
		\begin{ntn}
			Denote by $$\Vect_X \in \cshv_A(X, \underline{\DGCat})^{\laxnat}:= \mathbf{Fun}(\Enter_A, \underline{\DGCat})^{\laxnat}$$ the constant functor with values $\Vect \in \underline{\DGCat}$.
		\end{ntn}
		
		\begin{thm}\cite[Theorem 3.5.4]{AGH}\label{laxnatgroth}
			For any scaled simplicial set $S$ there is an equivalence of $(\infty, 2)$-categories
			$$2\operatorname{-coCart}_{/ S} \cong \mathbf{Fun}(S, \operatorname{BiCat}_{\infty})^{\laxnat},$$
			where $2\operatorname{-coCart}_{/ S}$ stands for an $(\infty, 2)$-category of 2-coCartesian fibrations and maps of scaled simplicial sets over $S$.
		\end{thm}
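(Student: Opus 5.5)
The plan is to deduce the statement from the (strict) scaled straightening--unstraightening equivalence. The lax comparison will then be obtained by a Gray-tensor-product / mapping-cylinder argument.

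First, I would fix the underlying $(\infty,1)$-level equivalence. Lurie's scaled straightening, in the form proved by Gagna--Harpaz--Lanari, gives an equivalence
\[
\operatorname{St}: 2\operatorname{-coCart}^{\str}_{/S} \xrightarrow{\ \cong\ } \mathbf{Fun}(S, \operatorname{BiCat}_{\infty}).
\]
Here, on the left, morphisms are required to preserve coCartesian edges, and on the right, morphisms are strong natural transformations. Both sides are then viewed as $(\infty,2)$-categories, and I would check that $\operatorname{St}$ is compatible with the enrichment. Concretely, I would show that $\operatorname{St}$ is natural in $S$ (i.e.\ compatible with base change along maps of scaled simplicial sets), and that it commutes with cotensoring by $\Delta^1$. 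This upgrades it to an equivalence of $(\infty,2)$-categories on the strict subcategories.

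Second, I would describe both sides of the desired equivalence as corepresented by the same gadget. A lax natural transformation $F \Rightarrow G$ between functors $S \to \operatorname{BiCat}_\infty$ is the same as a functor
\[
\Delta^1 \otimes^{\lax} S \to \operatorname{BiCat}_\infty
\]
out of the lax Gray tensor product, whose restrictions to $\{0\}\otimes S$ and $\{1\}\otimes S$ are $F$ and $G$. This is essentially the definition of $\mathbf{Fun}(S,-)^{\laxnat}$ in \cite[Construction 4.5]{cospan}.

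On the fibration side, I would take a map $\phi: \cX \to \cY$ over $S$ of scaled simplicial sets that does not necessarily preserve coCartesian edges, and form its relative mapping cylinder $M_\phi \to \Delta^1 \times S$. I would then show that $M_\phi$ is a 2-coCartesian fibration over the scaled simplicial set underlying $\Delta^1 \otimes^{\lax} S$. The lax Gray product is exactly the scaling in which the squares $\Delta^1 \times (s \to s')$ are not thin, and this is what allows $\phi$ to fail to preserve coCartesian edges. Conversely, every 2-coCartesian fibration over $\Delta^1 \otimes^{\lax} S$ with prescribed fibers over the two ends arises this way, by pulling back along the two inclusions and taking the induced comparison map.

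Combining these two descriptions with the strict equivalence of the first step, applied over the base $\Delta^1 \otimes^{\lax} S$, yields a bijection on $1$-morphisms. Iterating the argument with $\Delta^2$ and $\Delta^1\times\Delta^1$, or equivalently using cotensors by $\Delta^1$ in the $2$-direction, gives compatibility with composition and with $2$-morphisms. From this I would assemble the equivalence of $(\infty,2)$-categories $2\operatorname{-coCart}_{/ S} \cong \mathbf{Fun}(S, \operatorname{BiCat}_{\infty})^{\laxnat}$.

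The main obstacle is the second step: verifying that the mapping cylinder of a non-coCartesian-preserving map really is $2$-coCartesian over the lax Gray product, and that straightening commutes with this construction. For this I would first work out the case $S=\Delta^n$ explicitly, using the combinatorial model of $\Delta^1\otimes^{\lax}\Delta^n$ as a scaled simplicial set. I would then reduce general $S$ to this case, using that both sides send colimits of scaled simplicial sets in $S$ to limits, which holds because straightening is compatible with base change.
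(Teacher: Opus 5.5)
The paper gives no proof of this statement. It is imported from \cite[Theorem 3.5.4]{AGH} and used as a black box in Lemma \ref{end} and Remark \ref{sect}, so your outline has to stand on its own. Its architecture is sensible: lax transformations corepresented by $\Delta^1\otimes^{\lax}S$, plus strict straightening over that base. But the step you call the main obstacle is really the whole theorem, and as formulated it fails.

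\textbf{The mapping cylinder step.} The relative mapping cylinder $M_\phi=(\Delta^1\times\mathcal X)\amalg_{\{1\}\times\mathcal X}\mathcal Y\to\Delta^1\times S$ does not even map to $\Delta^1\otimes^{\lax}S$ as a scaled simplicial set. Take an edge $e\colon x\to x'$ of $\mathcal X$ over $g\colon s\to s'$. The triangles $(0,x)\to(1,x)\to(1,x')$ and $(0,x)\to(0,x')\to(1,x')$ are degenerate in the $\mathcal X$-direction, hence thin in $M_\phi$. One of their images, however, is a non-thin triangle of the Gray scaling.

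So you must use the Gray cylinder $(\Delta^1\otimes\mathcal X)\amalg_{\{1\}\otimes\mathcal X}\mathcal Y$ (or rescale). That object is not fibrant. What has to be proved is that fibrant replacement in the coCartesian model structure over $\Delta^1\otimes^{\lax}S$ does not change the fibres over $\{0\}\otimes S$ and $\{1\}\otimes S$.

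Over $\Delta^1\times S$ this is false as soon as $\phi$ fails to preserve coCartesian edges. Transport in a 2-coCartesian fibration over $\Delta^1\times S$ is a coCartesian-preserving map over $S$. Over the Gray product it holds only because the total space acquires new non-invertible 2-cells. For $x$ over $(0,s)$ and $y$ over $(1,s')$, $\operatorname{Hom}(x,y)$ is fibred over the non-groupoidal category $\operatorname{Hom}_{\Delta^1\otimes^{\lax}S}((0,s),(1,s'))$. Its fibres $\operatorname{Hom}(g_!\phi_s x,y)$ and $\operatorname{Hom}(\phi_{s'}g_!x,y)$ are glued along the comparison $g_!\phi_s\Rightarrow\phi_{s'}g_!$.

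Computing $\Delta^1\otimes^{\lax}\Delta^n$ combinatorially does not supply this; you need a genuine fibrancy or equivalence argument. For $\operatorname{Cat}_\infty$-valued functors your cylinder does work essentially as stated. There, scaled straightening only asks for locally coCartesian fibrations over the underlying simplicial set $\Delta^1\times S$ whose locally coCartesian edges compose over thin triangles. The $\operatorname{BiCat}_\infty$ case is where the difficulty sits.

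\textbf{The remaining steps.} These are asserted in the same way.
\begin{itemize}
\item \emph{The converse.} The edges $(0,s)\to(1,s)$ form only a lax transformation between the two inclusions $S\to\Delta^1\otimes^{\lax}S$. So coCartesian transport along them does not by itself give a functor $\mathcal X\to\mathcal Y$ over $S$. You must show these transports assemble into one, which uses that the triangles $(0,s)\to(1,s)\to(1,s')$ are thin. You must also show the two constructions are inverse as maps of spaces, not merely that every fibration over $\Delta^1\otimes^{\lax}S$ is hit.
\item \emph{Coherence.} Equivalences of mapping spaces, even ``iterated'' over $\Delta^2$ and a 2-cell, do not produce a functor of $(\infty,2)$-categories. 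You first need a comparison functor, natural in $S$ and in $[n]$. One option is to build it from $[n]\mapsto\Delta^n\otimes^{\lax}S$ and Gray cylinders of strings $\mathcal X_0\to\cdots\to\mathcal X_n$ over $S$. Only then can you check it is an equivalence.
\item \emph{Reduction to simplices.} Reducing from general $S$ to simplices requires that $2\operatorname{-coCart}_{/S}$, with arbitrary maps over $S$, sends colimits in $S$ to limits. Base change for strict straightening gives this only for coCartesian-preserving maps. You also need that a 2-coCartesian fibration over $\operatorname{colim}_i S_i$ is the colimit of its restrictions.
\end{itemize}
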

		
		\begin{lm}\label{end}
			We have an equivalence of categories $$\End_{ \cshv_A(X, \underline{\DGCat})^{\laxnat}}(\Vect_X) \cong \cshv_A(X, \Vect).$$
		\end{lm}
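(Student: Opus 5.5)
We will compute the endomorphisms of the constant functor $\Vect_X$ directly from the definition of lax natural transformations. The route is to pass through the Grothendieck construction of Theorem \ref{laxnatgroth}: a lax natural transformation between functors $\Enter_A \to \underline{\DGCat}$ amounts to a map of the corresponding 2-coCartesian fibrations over $\Enter_A$ that need not preserve coCartesian edges. For the constant functor $\Vect_X$, the associated fibration is the trivial one, $\Enter_A \times \Vect \to \Enter_A$.

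\textbf{Step 1 (reduction to sections).} A lax endotransformation of $\Vect_X$ consists of three pieces of data:
\begin{itemize}
\item for each $x$, a continuous functor $\eta_x\colon \Vect \to \Vect$;
\item for each exit/entrance path $x \rightsquigarrow y$, a 2-cell $\eta_x \Rightarrow \eta_y$ (the transition functors are identities);
\item the higher coherences.
\end{itemize}
Using Theorem \ref{laxnatgroth}, this data is identified with the $\infty$-category of functors $\Enter_A \to \Fun^{L}(\Vect,\Vect)$. Indeed, a lax transformation between constant functors with value $\mathbf{C}$ corresponds to a functor over $\Enter_A$ into $\Enter_A\times \Fun(\mathbf{C},\mathbf{C})$, with no coCartesian condition imposed. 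We will make this precise by writing the fibration for a constant functor as a product. Maps of fibrations over $\Enter_A$ from $\Enter_A\times\Vect$ to itself are then the same as functors $\Enter_A \to \Fun(\Vect,\Vect)$. Restricting to continuous ones, by continuity in the $\underline{\DGCat}$ setting, gives $\Fun(\Enter_A, \End(\Vect))$.

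\textbf{Step 2 (identify the answer).} Since $\End_{\DGCat}(\Vect)\cong \Vect$ via evaluation on $k$, we obtain $\Fun(\Enter_A,\Vect)$. By Theorem \ref{shvexit}, this is $\cshv_A(X,\Vect)$. We will also check that the identification is compatible with composition. Composition of endomorphisms becomes pointwise composition of functors $\Vect\to\Vect$, which corresponds to the pointwise tensor product of cosheaves; this is the monoidal structure mentioned in the Remark on $\cshv_A(X,\mathbf{D})$. The statement only asks for an equivalence of categories, however.

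\textbf{Main obstacle.} The delicate point is Step 1. We must verify that, in the model of \cite[Construction 4.5]{cospan} for $\mathbf{Fun}(\Enter_A,\mathbf{D})^{\laxnat}$, the mapping category between two constant functors really is $\Fun(\Enter_A, \Hom_{\mathbf D}(c,c))$ with the correct variance. Lax rather than oplax matters here: we want the 2-cells to point from $x$ to $y$, so that we get covariant functors out of $\Enter_A$ and not out of $\Exit_A$. We would first try the explicit description of lax natural transformations as functors $\Enter_A\times[1] \to \mathbf{D}$ with Gray-product-type lax squares. For constant source and target, such a functor restricts on $\{0\}$ and $\{1\}$ to the constant value, and what remains is a functor into $\Hom_{\mathbf D}(c,c)$. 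If that description is hard to access, we would instead use the straightening of Theorem \ref{laxnatgroth} as described above and compute mapping spaces in $2\operatorname{-coCart}_{/\Enter_A}$ between trivial fibrations.
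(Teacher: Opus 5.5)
Your proposal is correct and follows essentially the paper's argument: the paper also uses Theorem \ref{laxnatgroth} to identify lax endotransformations of $\Vect_X$ with $\Fun(\Enter_A,\Vect)$, and then concludes by Theorem \ref{shvexit}. The paper phrases the first step as lax transformations $\ast_X\to\Vect_X$, i.e.\ sections of the trivial fibration $\Enter_A\times\Vect\to\Enter_A$, using that $\Vect$ is free on a point. The only difference is the order of these two moves---you first compute maps of trivial fibrations and then use $\End_{\DGCat}(\Vect)\cong\Vect$---and in that step ``continuous'' should be read as ``continuous $k$-linear'', since linearity is extra structure rather than a property.
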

		
		\begin{proof}
			By Theorem \ref{laxnatgroth}, we have $$\End_{ \cshv_A(X, \underline{\DGCat})^{\laxnat}}(\Vect_X)  \cong \Fun_{\Fun(\Enter_A, \operatorname{BiCat}_{\infty})^{\laxnat}}(\ast_X, \Vect_X) \cong \Fun(\Enter_A, \Vect),$$
			where $\ast$ is the point space. But by Theorem \ref{shvexit} \[\Fun(\Enter_A, \Vect) \cong  \cshv_A(X, \Vect).\]
		\end{proof}
		
		\begin{rem}
			By Verdier duality (\cite[Theorem 5.5.5.1]{HA}) we have $$ \cshv_A(X, \Vect) \cong \Shv_A(X, \Vect).$$
		\end{rem}
		
		\begin{df}
			For an object $\cF \in \cshv_A(X, \underline{\DGCat})^{\laxnat}$, define its \emph{lax global sections} as 
			$$\Gamma^{\lax}(X, \cF) := \Fun_{\cshv_A(X,\underline{\DGCat})^{\laxnat}} (\Vect_X, \cF).$$
			
		\end{df}
		
		\begin{rem}
			By Lemma \ref{end}, $\Gamma^{\lax}(X, \cF)$ is acted on by $ \cshv_A(X, \Vect) \cong \Shv_A(X, \Vect).$
		\end{rem}
		
		\begin{rem}\label{sect}
			Let $\tilde{\cF}$ be the coCartesian fibration corresponding to $$\cF \in \cshv_A(X, \underline{\DGCat})^{\laxnat}$$ by Proposition \ref{laxnatgroth}. Then 
			$$\Gamma^{\lax}(X, \cF)  \cong \operatorname{Sect}_{/ \Enter_A}(\tilde{\cF}).$$
		\end{rem}
		
		\subsection{Comparison of \texorpdfstring{$\Gamma^{\lax}(X,  -)$}{Gamma^lax(X, -)} with \texorpdfstring{$\Theta_{X, A}$}{Theta_(X, A)}.}
		
		Let us first recall the construction of 
		$$\Theta_{X, A}: \cshv_A(X, \DGCat) \rightarrow \cshv_A(X, \Vect)\Mmod(\DGCat).$$
		from \cite[6.2]{CF}.
		
		\begin{ntn}
			Let $\DGCat_{\Obj}$ denote the pith of the $(\infty, 2)$-category $\underline{\DGCat}_{\Vect /}$.
		\end{ntn}
		
		The coCartesian fibration 
		\begin{equation}
			\DGCat_{\Obj}\rightarrow 	\DGCat
		\end{equation}
		induces a coCartesian fibration 
		\begin{equation}\label{fib}
			\Fun(\Enter_A, \DGCat_{\Obj}) \rightarrow \Fun(\Enter_A, \DGCat).
		\end{equation}
		The latter corresponds to a functor 
		\begin{equation}\label{Theta}
			\Fun(\Enter_A, \DGCat) \rightarrow \operatorname{Cat}.
		\end{equation}
		Since under Grothendieck construction symmetric monoidal coCartesian fibrations correspond to lax symmetric monoidal functors (\cite[Theorem 2.1]{ramzi}), the functor (\ref{Theta}) upgrades to 
		\begin{equation}\label{upTheta}
			\Theta_{X, A}:\Fun(\Enter_A, \DGCat) \rightarrow \cshv_A(X, \Vect)\Mmod(\DGCat).
		\end{equation}
		
		\begin{lm}
			The functor $\Gamma^{\lax}(X, -)$ precomposed with $$\cshv_A(X, \DGCat) \rightarrow \cshv_A(X, \underline{\DGCat})^{\laxnat}$$ consides with $\Theta_{X, A}$.
		\end{lm}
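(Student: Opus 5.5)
We compare the two functors on objects and then check that the comparison respects the $\cshv_A(X,\Vect)$-module structures. Fix $\cF \in \cshv_A(X, \DGCat) \cong \Fun(\Enter_A, \DGCat)$, and regard it as a strict functor inside $\mathbf{Fun}(\Enter_A, \underline{\DGCat})^{\laxnat}$.

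\emph{Identifying $\Gamma^{\lax}(X,\cF)$ as a category of sections.} By Remark \ref{sect}, which rests on Theorem \ref{laxnatgroth}, we have
$$\Gamma^{\lax}(X,\cF)\cong \operatorname{Sect}_{/\Enter_A}(\tilde{\cF}),$$
where $\tilde{\cF}\to\Enter_A$ is the coCartesian unstraightening of $\cF$. Here the sections are all sections, not only the coCartesian ones. This matches the informal description: objects $V_x\in\cF_x$ together with maps $m_{x\rightsquigarrow y}(V_x)\to V_y$.

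\emph{Identifying $\Theta_{X,A}(\cF)$ as the same category.} By construction, $\Theta_{X,A}(\cF)$ is the fiber over $\cF$ of the coCartesian fibration (\ref{fib}). That fiber is
$$\Fun(\Enter_A,\DGCat_{\Obj})\times_{\Fun(\Enter_A,\DGCat)}\{\cF\}.$$
Since $\DGCat_{\Obj}\to\DGCat$ is the universal coCartesian fibration with fiber $\mathbf{C}\simeq\Fun(\Vect,\mathbf{C})$ over $\mathbf{C}$, this fiber is the category of lifts of $\cF$ to $\DGCat_{\Obj}$. Such lifts are exactly sections of the pulled-back fibration $\cF^*\DGCat_{\Obj}=\tilde{\cF}$. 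Hence $\Theta_{X,A}(\cF)\cong\operatorname{Sect}_{/\Enter_A}(\tilde{\cF})$. Both identifications are natural in $\cF$, because each comes from pulling back the universal fibration. Composing them gives the equivalence of underlying functors to $\operatorname{Cat}$.

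\emph{Matching the module structures.} On the $\Gamma^{\lax}$ side, the action of $\cshv_A(X,\Vect)\cong\End(\Vect_X)$ from Lemma \ref{end} is precomposition. Under Theorem \ref{laxnatgroth}, an endomorphism of $\Vect_X$ is a functor $E\colon\Enter_A\to\Vect$, and it acts on a section $s$ pointwise by $s_x\mapsto E_x\otimes s_x$, with the maps tensored accordingly. On the $\Theta$ side, the action comes from the symmetric monoidal structure on (\ref{fib}) via \cite[Theorem 2.1]{ramzi}. There, the unit object $\Vect_X$ of $\Fun(\Enter_A,\DGCat)$ has $\Theta(\Vect_X)=\Fun(\Enter_A,\Vect)$, and the lax structure map
$$\Theta(\Vect_X)\otimes\Theta(\cF)\to\Theta(\Vect_X\otimes\cF)=\Theta(\cF)$$
is again pointwise tensoring. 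So the two actions agree on objects and on structure maps.

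\emph{Main obstacle.} The real work is making this agreement coherent: the module structures must be identified as $\infty$-categorical data, not just object by object. My plan is to observe that both actions are restrictions of the single monoidal functor
$$\mathbf{Fun}(\Enter_A,\underline{\DGCat})^{\laxnat}\to\operatorname{Cat},\qquad \cG\mapsto\Fun^{\laxnat}(\Vect_X,\cG),$$
which is corepresented by the monoidal unit. One then shows that the straightening of \cite{AGH} sends (\ref{fib}) to this corepresented functor restricted to strict functors. By Yoneda, the monoidal structure on a corepresented functor is unique, which gives the required coherence.
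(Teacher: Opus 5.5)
Your proposal is correct and is essentially the paper's argument. Both reduce $\Theta_{X,A}(\cF)$ and $\Gamma^{\lax}(X,\cF)$ to $\operatorname{Sect}_{/\Enter_A}(\tilde{\cF})$. The coherence step you defer is exactly the identification the paper asserts: (\ref{fib}) is the slice projection $\mathbf{Fun}(\Enter_A,\underline{\DGCat})_{\Vect_X/}\to\mathbf{Fun}(\Enter_A,\underline{\DGCat})$, i.e.\ it straightens to $\Fun^{\laxnat}(\Vect_X,-)$. The paper then simply says the comparison ``directly upgrades'' to module categories.

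Your pullback computation of the fibers and your Yoneda remark add detail beyond the paper. Two caveats on the Yoneda step: it gives uniqueness of the monoidal structure only up to automorphisms of the unit, and it needs the slice identification to hold as symmetric monoidal fibrations. The paper is no more precise on this point.
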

		
		\begin{proof}
			The functor (\ref{fib}) identifies with 
			\begin{equation}
				\mathbf{Fun}(\Enter_A, \underline{\DGCat})_{\Vect_X /} \rightarrow \mathbf{Fun}(\Enter_A, \underline{\DGCat}).
			\end{equation}
			Therefore the functor (\ref{Theta}) is described as 
			$$\cF \mapsto  \operatorname{Sect}_{/ \Enter_A}(\tilde{\cF}),$$
			where $\tilde{\cF}$ be the coCartesian fibration corresponding to $\cF$. This coincides with the description in Remark \ref{sect}. The comparison directly upgrades to $\cshv_A(X, \Vect)$-module categories.
		\end{proof}
		
		\begin{cnstr}\label{dia}
			Construct a diagram $$\operatorname{fSet} \rightarrow \DGCat$$ as follows. For $I \in \operatorname{fSet}$, the corresponding DG category is $\Gamma(M^I_{\operatorname{disj}}, \cF)$.
			Here
			\begin{itemize}
				\item $\operatorname{fSet}$ is the category of finite sets with surjective maps,
				\item $M^I_{\operatorname{disj}} \subset M^I$ the open subspace consisting of $I$-tuples of disjoint points.
			\end{itemize}
			Let us now describe the connecting functors
			$$\Gamma(M^I_{\operatorname{disj}}, \cF) \rightarrow \Gamma(M^J_{\operatorname{disj}}, \cF)$$
			for $\alpha: I \twoheadrightarrow J \in \operatorname{fSet}$. 
			Note that 
			\[
			\begin{tikzcd}
				M^J \ar[r, hookrightarrow, ""']&   	M^I\\
				M^J_{\operatorname{disj}}\ar[u, hookrightarrow, ""']&   M^I_{\operatorname{disj}},\ar[u, hookrightarrow, ""']
			\end{tikzcd}
			\]
			and choose a tubular neighborhood $U_{\alpha}$ of $ M^J_{\operatorname{disj}}$ in $M^I$. 
			Then the connecting functor is defined as the composition
			\[
			\begin{tikzcd}
				\Gamma(M^I_{\operatorname{disj}}, \cF) \ar[r, rightarrow, ""]\ar[d, rightarrow, "\phi_{1}"']&   	\Gamma(M^J_{\operatorname{disj}}, \cF),  \\
				\Gamma(U_{\alpha} \cap M^I_{\operatorname{disj}}, \cF)\ar[ru, rightarrow, "\phi_{2}"']&   
			\end{tikzcd}
			\]
			where $\phi_1$ and $\phi_2$ are described as follows. The functor $\phi_1$ is the natural restriction coming from the fact that $\cF|_{M^I_{\disj}}$ is locally constant and thus also has a sheaf structure. The functor $\phi_2$ is the natural induction using the cosheaf structure of $\cF$.
		\end{cnstr}
		
		\begin{ex}
			Let $M$ be such that its tangent bundle is trivial. Note that in that case
			\begin{equation}
				U_{\alpha} \cap M^I_{\operatorname{disj}} \cong M^J_{\operatorname{disj}} \times \prod_{j \in J} \Conf_{\alpha^{-1}(j)}.
			\end{equation}
			Then for $\cF = \mathbf{A}_{\Ranp}$ with $ \mathbf{A} \in \Alg_{\bE_M}$ the composition $\phi_2 \circ \phi_1$ coincides with the composition 
			\begin{equation}
				\LocSys(M^I_{\operatorname{disj}}) \otimes \mathbf{A}^{\otimes I} \rightarrow \LocSys(U_{\alpha} \cap M^I_{\operatorname{disj}}) \otimes \mathbf{A}^{\otimes I} \rightarrow 	\LocSys(M^J_{\operatorname{disj}}) \otimes \mathbf{A}^{\otimes J},
			\end{equation}
			where the first map is the natural restriction, and the second map comes from the $\bE_M$-structure on $\mathbf{A}$.
		\end{ex}
		
		\begin{cor}\cite[Proposition 6.2.6]{CF}\label{strata}
			For $\cF \in \cshv_A(X, \DGCat)$, we have 
			$$\Gamma^{\lax}(\Ranp(M), \cF) \cong \laxlim_{I \in \operatorname{fSet}}\Gamma(M^I_{\operatorname{disj}}, \cF),$$
			where the lax limit is taken over the diagram constructed in Construction \ref{dia}
		\end{cor}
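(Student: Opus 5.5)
We reduce the statement to a computation of sections of the coCartesian fibration $\tilde{\cF}\to\Enter_A$ attached to $\cF$, using the stratification of $\Ranp(M)$ by cardinality. By Remark \ref{sect} and the preceding lemma, $\Gamma^{\lax}(\Ranp(M),\cF)\cong \operatorname{Sect}_{/\Enter_A}(\tilde{\cF})$. Here $A$ is the stratification of $\Ranp(M)$ whose strata are $\Ranp^{=n}(M)\cong M^{n}_{\disj}/\Sigma_n$. The plan is to identify $\Enter_A$, up to the relevant equivalence, with the Grothendieck construction of a functor $\operatorname{fSet}\to \operatorname{Spc}$, $I\mapsto (M^I_{\disj})^{\operatorname{Spc}}$. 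A section over such a total space is then exactly an object of the lax limit over $\operatorname{fSet}$ of the categories of sections over the fibers.

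The first step is to describe $\Enter_A$ for $\Ranp(M)$. Its objects are points of $\Ranp(M)$, and its morphisms $S\to T$ are entrance paths, which move from a stratum of larger cardinality into one of smaller cardinality (collisions). Choosing labellings, we pass to the category whose objects are pairs $(I,\underline{x}\in M^I_{\disj})$. A morphism over $\alpha:I\twoheadrightarrow J$ is a path in $U_\alpha$ that starts in $M^I_{\disj}$ and ends in $M^J_{\disj}$. The tubular neighborhoods $U_\alpha$ from Construction \ref{dia} show that the space of such paths is homotopy equivalent to $(U_\alpha\cap M^I_{\disj})$, mapping to $M^I_{\disj}$ and to $M^J_{\disj}$ by the two endpoint maps. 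Thus $\Enter_A$ is the localization of an unstraightening over $\operatorname{fSet}^{\op}$ or $\operatorname{fSet}$, with the correct variance. The $\Sigma_n$-quotients disappear when we take lax limits over $\operatorname{fSet}$, because $\operatorname{fSet}$ already contains the automorphisms of $I$.

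The second step uses the fact that, restricted to each stratum, $\cF$ is locally constant. The restriction of $\tilde{\cF}$ to the fiber over $I$ is therefore a local system of categories on $M^I_{\disj}$, and its sections form $\Gamma(M^I_{\disj},\cF)$, a strict limit over $(M^I_{\disj})^{\operatorname{Spc}}$. A section of $\tilde{\cF}$ over the whole of $\Enter_A$ amounts to two pieces of data: a section over each stratum, and, for each $\alpha$, a morphism in the fiber over $J$. That morphism goes from the coCartesian transport of the $I$-section to the $J$-section. This transport is exactly $\phi_2\circ\phi_1$: restrict to $U_\alpha\cap M^I_{\disj}$, then push along the specialization maps given by the cosheaf structure. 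These morphisms are not required to be invertible, which is why the limit is lax. The higher coherences are supplied by the general statement that sections of a coCartesian fibration over a Grothendieck construction $\int_{\operatorname{fSet}}\mathcal{X}_I$ form the lax limit over $\operatorname{fSet}$ of the sections over the $\mathcal{X}_I$. We would deduce this from Theorem \ref{laxnatgroth} by taking lax-natural transformations out of $\Vect$.

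The main obstacle is the first step, namely identifying $\Enter_A(\Ranp(M))$ with the Grothendieck construction over $\operatorname{fSet}$ built from the $U_\alpha$. We need the tubular neighborhoods to be chosen compatibly under composition of surjections, or else we must argue that the space of choices is contractible. We would handle this using the conical structure of $\Ranp(M)$ and Lurie's description of exit paths via links (\cite[A.6]{HA}). Alternatively, we could simply invoke \cite[Proposition 6.2.6]{CF} for this comparison, and combine it with the preceding lemma identifying $\Gamma^{\lax}$ with $\Theta_{X,A}$.
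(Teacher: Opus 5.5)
Your closing fallback is the paper's argument. The corollary comes from combining the preceding lemma (on strict constructible cosheaves, $\Gamma^{\lax}(X,-)$ agrees with $\Theta_{X,A}$) with \cite[Proposition 6.2.6]{CF}, which computes $\Theta$ on $\Ranp(M)$; the paper gives nothing beyond this citation.

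Your direct route, however, has a genuine gap in its first step. The functor $\Enter_A(\Ranp(M))\to\operatorname{fSet}$ that records the collision pattern is not the unstraightening of any functor $\operatorname{fSet}\to\operatorname{Spc}$, in either variance.
\begin{itemize}
\item Up to homotopy, the morphisms over $\alpha:I\twoheadrightarrow J$ form the span $M^I_{\disj}\leftarrow U_\alpha\cap M^I_{\disj}\to M^J_{\disj}$, and neither leg is an equivalence. For $M=S^2$, $|I|=2$, $|J|=1$, both legs are the circle bundle $\mathbb{RP}^3\to S^2$, using $\Conf_2(S^2)\simeq S^2$.
\item More seriously, morphisms over a composite $\beta\circ\alpha$ do not factor through morphisms over $\alpha$ and $\beta$. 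Three points may enter the one-point stratum simultaneously. On fibres, composition is the operadic insertion $\Conf_2(\mathbb R^d)\times\Conf_2(\mathbb R^d)\to\Conf_3(\mathbb R^d)$, which is not an equivalence.
\end{itemize}
Correspondingly, writing $\phi^{\alpha}:=\phi_2\circ\phi_1$ for the functor of Construction \ref{dia}, the assignment $\alpha\mapsto\phi^{\alpha}$ is only lax-functorial. Already for the unit algebra $\Vect$ on a surface, take $I=\{1,2,3\}\twoheadrightarrow J=\{a,b\}$ (with $1,2\mapsto a$) followed by $J\twoheadrightarrow K=\{*\}$. Applying $\phi^{JK}\circ\phi^{IJ}$ and $\phi^{IK}$ to the constant local system gives stalks $C_*(S^1\times S^1)$ and $C_*(\Conf_3(\mathbb R^2))$, of total dimensions $4$ and $6$. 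So no compatible choice of tubular neighbourhoods, and no contractibility-of-choices argument, can present $\Enter_A$ as a Grothendieck construction $\int_{\operatorname{fSet}}\mathcal X_I$. The obstacle you name in your last paragraph is therefore misdiagnosed.

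Your second step correctly describes the low-dimensional data of a section. One needs a section $a_I$ on each stratum and, for each $\alpha$, a map $\phi^{\alpha}(a_I)\to a_J$ adjoint to the value of the section on entrance paths over $\alpha$. But the higher coherences cannot come from the statement about sections over a Grothendieck construction that you quote, because its hypothesis fails here. What is needed is a way to compute the lax limit over $\Enter_A$ in stages along a functor to $\operatorname{fSet}$ that is not a fibration. In that computation the $2$-cells $\phi^{JK}\circ\phi^{IJ}\Rightarrow\phi^{IK}$, induced by including the two-step collision region into $U_{\beta\alpha}\cap M^I_{\disj}$, enter the coherence data. This is exactly the input that \cite[Proposition 6.2.6]{CF} supplies. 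As written, only your fallback constitutes a proof.
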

		
		\subsection{Comparison with factortization homology.}\label{glaxvsfactho}
		
		Let $\mathbf{C}$ be a symmetric monoidal category. Assume tensor product on $C$ preserves colimits separately in each variable. Let $M$ be a $k$-dimensional manifold. 
		
		\begin{thm}\cite[Theorem 5.5.4.10, Proposition 5.4.5.15]{HA}
			The category of $\bE_M$-algebra objects in $\mathbf{C}$ is equivalent to the category of $\mathbf{C}$-valued constructible factorization cosheaves on $\Ranp(M)$.
		\end{thm}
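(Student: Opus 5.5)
The statement is Lurie's theorem identifying $\bE_M$-algebras in $\mathbf{C}$ with constructible factorization cosheaves on $\Ranp(M)$. I would reproduce the argument of \cite[5.5.4]{HA}, passing through the intermediate notion of a locally constant factorizable $\mathrm{Disk}(M)$-algebra. The plan has three steps.

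\emph{Step 1: $\bE_M$-algebras are algebras over disks.} Recall that $\bE_M$-algebras (for a framing-free $M$, via the $\infty$-operad $\bE_M$ of \cite[5.4.5]{HA}) are equivalent to locally constant $\mathrm{Disk}(M)$-algebras, i.e. functors $A$ from the poset of finite disjoint unions of disks $U \subset M$ to $\mathbf{C}$ that are
\begin{itemize}
\item symmetric monoidal for disjoint unions, so that $A(U \sqcup V) \cong A(U)\otimes A(V)$;
\item locally constant, meaning isotopy equivalences go to isomorphisms.
\end{itemize}
This is exactly \cite[Proposition 5.4.5.15]{HA}. I take it as the given input; it amounts to the comparison of the operad of embedded disks with the poset of disks in $M$ via weak approximation.

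\emph{Step 2: from disk algebras to constructible cosheaves on $\Ranp(M)$.} Given such an $A$, define a precosheaf on $\Ranp(M)$ on basic opens $\Ranp(\{U_i\}) = \{S \subset \bigcup U_i \text{ meeting each } U_i\}$, for $U_i$ disjoint disks, by $\bigotimes_i A(U_i)$.

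First, the sets $\Ranp(\{U_i\})$ form a basis of the topology, and they are contractible. Hence a cosheaf is determined by its values on them, through left Kan extension.

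Next, I would verify constructibility with respect to the cardinality stratification. The exit-path description of Theorem \ref{shvexit} identifies constructible cosheaves with functors out of $\Enter$. The local constancy of $A$ gives that specializations within a stratum act by isomorphisms.

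The factorization property, namely that the restriction to the disjoint locus $\Ranp(U)\times\Ranp(V) \to \Ranp(U\sqcup V)$ is the external tensor product, follows from the symmetric monoidal property of $A$. Here I use the hypothesis that $\otimes$ commutes with colimits separately in each variable, so that the cosheaf tensor product is computed stalkwise.

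\emph{Step 3: the inverse construction, and the main obstacle.} Conversely, a factorization cosheaf $\cF$ yields $A(U) := \cF(\Ranp(U))$. The main obstacle is showing that the precosheaf of Step 2 actually satisfies codescent, i.e. that $\cF(\Ranp(M)) = \colim$ over the disk basis, rather than merely being a precosheaf. I would handle this as Lurie does: show that the category of disks $U$ with $S \subset U$ is weakly contractible (indeed cofiltered up to isotopy), using \cite[Theorem 5.5.4.10]{HA} (equivalently, the Seifert--van Kampen type Theorem A.3.1 of \emph{loc.\ cit.}). This gives hypercodescent for covers by basic opens.

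Finally, I would check that the two constructions are mutually inverse. On disks this is tautological. On cosheaves it follows because both sides are left Kan extended from the basis.
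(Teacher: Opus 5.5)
The paper gives no argument of its own here: the statement is quoted from Lurie (Higher Algebra, Theorem 5.5.4.10 together with Proposition 5.4.5.15). Your outline follows the general shape of Lurie's argument, but the step you identify as the main obstacle is not actually established. You justify it by invoking Higher Algebra, Theorem 5.5.4.10, which is precisely the result being proved, so Step 3 is circular as written.

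The fallback you offer, the Seifert--van Kampen theorem (Higher Algebra, A.3.1), is not equivalent to it and does not suffice. The fact you point to is true: the basic opens containing a given $S\in\Ranp(M)$, even inside any covering sieve $\mathcal{S}$ of a basic open $W$, form a cofiltered, hence weakly contractible, poset. But A.3.1 then only gives $\operatorname{Sing}(W)\simeq|\mathcal{S}|$, which controls colimits of \emph{locally constant} precosheaves. Your precosheaf $\Ranp(\{U_i\})\mapsto\bigotimes_i A(U_i)$ is not locally constant. Along inclusions that merge components, e.g. $\Ranp(\{U_1,U_2\})\subseteq\Ranp(\{V\})$, the transition map is the multiplication $A(U_1)\otimes A(U_2)\to A(V)$, so contractibility of $|\mathcal{S}|$ says nothing about $\colim_{\mathcal{S}}$.

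Note also that ``$\cF(\Ranp(M))=\colim$ over the basis'' holds automatically for a left Kan extension. The real content is codescent for every covering sieve of a basic open. Your constructibility check in Step 2 presupposes exactly this, since constructibility is a condition on a cosheaf.

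What is missing is a \emph{stratified} van Kampen statement. For an open $W\subseteq\Ranp(M)$ and a covering sieve $\mathcal{S}$ of $W$ by basic opens, consider the functor $\mathcal{S}\to\Enter(W)$ sending $\Ranp(\{V_j\})$ to a point of its deepest stratum $\prod_j V_j$. It should exhibit $\Enter(W)$ as the localization of $\mathcal{S}$ at the isotopy equivalences, i.e. inclusions inducing a bijection on components. Localizations are cofinal, and for a basic open $W=\Ranp(\{U_1,\dots,U_n\})$ the contractible deepest stratum $\prod_i U_i$ is terminal in $\Enter(W)$ (this uses the conical structure). Theorem \ref{shvexit} then identifies constructible cosheaves with functors on basic opens inverting isotopy equivalences, with $\cF(\Ranp(\{U_i\}))$ equal to the value at a point of $\prod_i U_i$. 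This single input yields codescent, constructibility, and mutual inverseness of your two constructions at once. Factorization then matches the symmetric monoidal structure as you say.

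Two further corrections. An inclusion $\Ranp(\{U_i\}_{i\in I})\subseteq\Ranp(\{V_j\}_{j\in J})$ exists only when the induced map $I\to J$ is surjective (compare $\operatorname{fSet}$ in Construction \ref{dia}). So neither construction ever uses or recovers unit maps: the equivalence you obtain is with \emph{nonunital} locally constant $\bE_M$-algebras, and the ``tautological on disks'' check must be made in that setting. Also, $A(U):=\cF(\Ranp(U))$ is correct only for a single disk; for $U=\bigsqcup_i U_i$ you need $\cF(\Ranp(\{U_i\}))$.
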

		
		For $A \in \Alg_{\bE_M}(\mathbf{C})$ denote by $A_{\Ranp}$ the corresponding cosheaf on $\Ranp(M)$. Recall the following description of of factorization (chiral) homology of $A$.
		
		\begin{thm}\cite[Theorem 5.5.4.14]{HA}
			Suppose that $M$ is connected and $A$ is locally constant. Then we have a canonical equivalence in $\mathbf{C}$:
			$$\int_M A \cong \Gamma(\Ranp(M), A_{\Ranp}).$$
		\end{thm}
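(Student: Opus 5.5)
We plan to compare two colimit presentations: one computing $\int_M A$ over disks in $M$, the other computing global sections of the cosheaf $A_{\Ranp}$ over a suitable basis of opens of $\Ranp(M)$. Recall that $\int_M A$ is the colimit of $A$ along the functor $\operatorname{Disk}(M)_{/M}\to \Alg_{\bE_M}$-values. Here $\operatorname{Disk}(M)_{/M}$ is the $\infty$-category of embeddings of finite disjoint unions of disks $U=\bigsqcup_{i=1}^n U_i\hookrightarrow M$, and the functor sends $U$ to $\bigotimes_i A(U_i)$. Because $A$ is locally constant, \cite[5.5.2.13]{HA} applies: $\operatorname{Disk}(M)_{/M}$ is obtained from the \emph{poset} $\operatorname{Disj}(M)$ of open subsets of $M$ that are finite disjoint unions of disks by inverting isotopy equivalences. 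Moreover, the colimit may equally be computed over $\operatorname{Disj}(M)$ itself.

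On the Ran side, for $U=\bigsqcup_{i=1}^n U_i\in\operatorname{Disj}(M)$ consider the open subset
$$\Ranp(U_1,\dots,U_n)\subset \Ranp(M)$$
of finite sets $S\subset U$ with $S\cap U_i\neq\emptyset$ for all $i$. These opens form a basis of the topology of $\Ranp(M)$ that is closed under the intersections relevant to the cosheaf condition. By the factorization property of $A_{\Ranp}$, its value on $\Ranp(U_1,\dots,U_n)$ is $\bigotimes_i A_{\Ranp}(\Ranp(U_i))$. Each $\Ranp(U_i)\cong\Ranp(\mathbb{R}^k)$ is weakly contractible (\cite[5.5.1.6]{HA}), and $A_{\Ranp}$ is constructible with respect to the cardinality stratification, so $A_{\Ranp}(\Ranp(U_i))\cong A(U_i)$. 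The cosheaf condition together with hypercompleteness then gives
$$\Gamma(\Ranp(M),A_{\Ranp})\cong \colim_{U\in \operatorname{Disj}(M)}\textstyle\bigotimes_i A(U_i).$$
This is the same diagram as on the factorization homology side, and the comparison map is the tautological one.

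The main obstacle is justifying the displayed identification. It does \emph{not} follow directly from the cosheaf property, because the poset $\{\Ranp(U_1,\dots,U_n)\}$ is not a covering sieve closed under finite intersections in the naive sense. One needs that, for each point $S\in\Ranp(M)$, the subposet of basic opens containing $S$ has weakly contractible nerve. This is the Quillen Theorem A / Seifert–van Kampen type criterion for computing global sections of a cosheaf over a basis (\cite[A.3.1]{HA}). I would prove contractibility by noting two facts. First, the disks around the finitely many points of $S$ can be shrunk arbitrarily. Second, the poset is then filtered up to the choice of how points of $S$ are grouped into disks, and this grouping data ranges over a contractible space. The connectedness of $M$ enters in identifying the total colimit. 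Connectedness ensures that $\operatorname{Disj}(M)$ and $\operatorname{Disk}(M)_{/M}$ have the same colimits, i.e.\ that the localization functor is cofinal, since for connected $M$ the relevant fibers are again Ran spaces of connected manifolds and hence weakly contractible by \cite[5.5.1.6]{HA}. Combining the two cofinality statements yields the canonical equivalence $\int_M A\cong\Gamma(\Ranp(M),A_{\Ranp})$.
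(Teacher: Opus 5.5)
The paper gives no argument of its own here; it only cites Lurie. Your outline has the right overall shape: compute $\int_M A$ over $\operatorname{Disj}(M)$, compute global sections over the basic opens $\Ranp(U_1,\dots,U_n)$, then compare. But the comparison step has a genuine gap.

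Let $\mathcal{U}(M)$ be the poset of basic opens $\Ranp(U_1,\dots,U_n)$, $n\geq 1$, ordered by inclusion \emph{in $\Ranp(M)$}. The cosheaf property and your pointwise contractibility check give only
\[
\Gamma(\Ranp(M),A_{\Ranp})\simeq \colim_{\mathcal{U}(M)} \textstyle\bigotimes_i A(U_i).
\]
This is not ``the same diagram'' as the one over $\operatorname{Disj}(M)$. The objects agree, but the morphisms do not. The inclusion $\Ranp(U_1,\dots,U_n)\subseteq \Ranp(V_1,\dots,V_m)$ forces every $V_j$ to contain some $U_i$, which is much stronger than $\bigsqcup_i U_i\subseteq\bigsqcup_j V_j$. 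For instance, $D_1\subseteq D_1\sqcup D_2$ in $\operatorname{Disj}(M)$ gives the unit-insertion map $A(D_1)\to A(D_1)\otimes A(D_2)$, whereas $\Ranp(D_1)$ and $\Ranp(D_1,D_2)$ are disjoint. Also, the empty configuration, which carries the unit, has no counterpart in $\mathcal{U}(M)$.

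The missing step, which is the real content of the theorem, is that the identity-on-objects map $\mathcal{U}(M)\to\operatorname{Disj}(M)$ is cofinal. Equivalently, the poset $\{V\in\mathcal{U}(M): U\subseteq V\}$ must be weakly contractible for every $U\in\operatorname{Disj}(M)$. This is exactly where connectedness has to be used. For $U=\emptyset$ this poset is $\mathcal{U}(M)$ itself. The basic opens containing a given point form a cofiltered poset, and each $\Ranp(U_1,\dots,U_n)\cong\prod_i\Ranp(U_i)$ is weakly contractible. So Seifert--van Kampen identifies the nerve of $\mathcal{U}(M)$ with $\Ranp(M)$. That space is weakly contractible when $M$ is connected, but has $2^c-1$ components when $M$ has $c$ components.

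Conversely, the place where you do invoke connectedness does not need it. The identification $\int_M A\simeq\colim_{\operatorname{Disj}(M)}A$ holds for every manifold once $A$ is locally constant. It rests on the localization statement you cite, which has no connectedness hypothesis, and a localization functor is automatically cofinal.

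A consistency check exposes the problem: nothing in your argument genuinely uses connectedness, so it would also prove the statement for $M=M_1\sqcup M_2$, where it is false. There $\Ranp(M)=\Ranp(M_1)\sqcup\Ranp(M_2)\sqcup\Ranp(M_1)\times\Ranp(M_2)$. So the global sections are the coproduct of $\int_{M_1}A$, $\int_{M_2}A$ and $\int_{M_1}A\otimes\int_{M_2}A$, not $\int_{M_1}A\otimes\int_{M_2}A$.

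A minor point: the poset of basic opens containing a point $S$ is simply cofiltered. Shrink to small disjoint disks around the points of $S$ inside both opens. No discussion of groupings is needed.
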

		
		\begin{rem}
			For $\mathbf{C} = \DGCat$, we have 
			$$\Gamma(\Ranp(M), A_{\Ranp}) \cong \Fun_{\cshv_A(X,\underline{\DGCat})} (\mathbf{1}_X, A_{\Ranp}) \cong  \operatorname{Sect}^{\operatorname{coCart}}_{/ \Enter_A}(\tilde{A}_{\Ranp}) $$
		\end{rem}
		
		\begin{cor}\cite[Construction 4.5, Proposition 4.6]{cospan}\label{ff}
			For $\mathbf{C} = \DGCat$, a connected $M$ and a locally constant $A$, we have a natural fully faithful embedding
			\begin{equation}\label{factsubcat}
				\int_M A \hookrightarrow \Gamma^{\lax}(\Ranp(M), A_{\Ranp}).
			\end{equation}
		\end{cor}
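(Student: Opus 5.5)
The plan is to realize both sides of \eqref{factsubcat} as categories of sections of the same fibration over $\Enter_A$ of $\Ranp(M)$, and then recognize $\int_M A$ as exactly the coCartesian sections inside all sections. By \cite[Theorem 5.5.4.14]{HA}, $\int_M A \cong \Gamma(\Ranp(M), A_{\Ranp})$. By the preceding Remark, this is $\Fun_{\cshv_A(X,\underline{\DGCat})}(\mathbf{1}_X, A_{\Ranp})$, i.e. the category $\operatorname{Sect}^{\operatorname{coCart}}_{/\Enter_A}(\tilde A_{\Ranp})$ of coCartesian sections of the coCartesian fibration $\tilde A_{\Ranp}\to \Enter_A$ classified by $A_{\Ranp}$. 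On the other side, Remark \ref{sect} (via Theorem \ref{laxnatgroth}) identifies $\Gamma^{\lax}(\Ranp(M), A_{\Ranp})$ with $\operatorname{Sect}_{/\Enter_A}(\tilde A_{\Ranp})$, the category of all sections.

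The map \eqref{factsubcat} is then the inclusion of coCartesian sections into all sections. Equivalently, it is induced by the functor $\mathbf{Fun}(\Enter_A,\underline{\DGCat}) \to \mathbf{Fun}(\Enter_A,\underline{\DGCat})^{\laxnat}$, which is the identity on objects and includes strict natural transformations among lax ones. This is exactly \cite[Construction 4.5]{cospan}. Full faithfulness follows from \cite[Proposition 4.6]{cospan}: strict transformations form a full subcategory of lax transformations on each mapping category. Concretely, $\operatorname{Sect}^{\operatorname{coCart}}$ is by definition the full subcategory of sections sending every morphism of $\Enter_A$ to a coCartesian edge. Hence $\Fun(\mathbf{1}_X, A_{\Ranp}) \hookrightarrow \Fun^{\laxnat}(\Vect_X, A_{\Ranp})$ is fully faithful, using $\mathbf{1}_X=\Vect_X$ in $\DGCat$.

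The main point requiring care is matching the two Grothendieck constructions. The straightening used for strict global sections in the Remark and the lax straightening of Theorem \ref{laxnatgroth} must produce the same fibration $\tilde A_{\Ranp}$, with the comparison functor corresponding to the inclusion of coCartesian sections. I would handle this by noting that \cite[Theorem 3.5.4]{AGH}, restricted to strict natural transformations, recovers the ordinary (un)straightening of coCartesian fibrations, with morphisms the maps preserving coCartesian edges. The same argument as in the Lemma comparing $\Gamma^{\lax}$ with $\Theta_{X,A}$ then identifies the two descriptions. Connectedness of $M$ and local constancy of $A$ are used only to invoke \cite[Theorem 5.5.4.14]{HA} for the identification of $\int_M A$ with strict global sections.
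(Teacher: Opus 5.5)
Your strategy is the one the paper sketches, namely the Remark preceding the corollary together with Remark \ref{sect}. You realize $\Gamma^{\lax}(\Ranp(M),A_{\Ranp})$ as $\operatorname{Sect}_{/\Enter_A}(\tilde A_{\Ranp})$, and $\int_M A$ as a full subcategory cut out by a condition on edges. But the identification you rely on, $\int_M A\cong\Fun_{\cshv_A(X,\underline{\DGCat})}(\mathbf{1}_X,A_{\Ranp})\cong\operatorname{Sect}^{\operatorname{coCart}}_{/\Enter_A}(\tilde A_{\Ranp})$, is wrong. The paper's Remark contains the same slip.

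Strict transformations $\Vect_X\Rightarrow A_{\Ranp}$, equivalently coCartesian sections, compute $\lim_{\Enter_A}A_{\Ranp}$. By contrast, $\Gamma(\Ranp(M),A_{\Ranp})$ is the value of a \emph{cosheaf} on the whole space, i.e.\ $\colim_{\Enter_A}A_{\Ranp}$ in $\DGCat$. These differ whenever the specialization functors are not equivalences. Take $[0,1)$ stratified by $\{0\}$ and $(0,1)$, so $\Enter_A=\{1\to 0\}$ and the cosheaf is a functor $F\colon C_1\to C_0$. Then:
the global cosections are $C_0$;
the coCartesian sections ($F(c_1)\xrightarrow{\sim}c_0$) form a copy of $C_1$;
the Cartesian sections ($c_1\xrightarrow{\sim}F^R(c_0)$) form a copy of $C_0$.
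On $\Ranp(M)$ the specialization functors are the $\bE_M$-multiplications, so in general the subcategory you single out is not $\int_M A$.

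The repair uses the fact that colimits in $\DGCat$ are computed as limits of the diagram of right adjoints. Thus $\int_M A\simeq\lim_{\Enter_A^{\op}}A^R_{\Ranp}$. This is the full subcategory of $\operatorname{Sect}_{/\Enter_A}(\tilde A_{\Ranp})$ on sections sending every morphism to a \emph{Cartesian} edge, i.e.\ those for which each adjoint structure map $a_x\to A_{\Ranp}(x\to y)^R(a_y)$ is an isomorphism. Being Cartesian is a property of edges, so full faithfulness still holds and the corollary survives. In 2-categorical terms, you should compare strict transformations $\Vect_X\Rightarrow A^R_{\Ranp}$ with lax transformations $\Vect_X\Rightarrow A_{\Ranp}$ via the mate correspondence, rather than including strict transformations into $A_{\Ranp}$ itself.

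This corrected description is exactly Remark \ref{factho}, namely $\phi_1(a_I)\xrightarrow{\sim}\phi_2^R(a_J)$, which matches the introduction's condition $V_x\xrightarrow{\sim}m^R_{x\rightsquigarrow y}(V_y)$. It is also what Proposition \ref{criterion} and Theorem \ref{mainthm2} actually use. Your coCartesian condition $\phi_2\phi_1(a_I)\xrightarrow{\sim}a_J$ would not support those later steps.
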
 
		
		\begin{rem}\label{factho}
			In the language of Corollary \ref{strata}, the subcategory (\ref{factsubcat})
			is described as 
			$$\{ a_I\} \in \Gamma^{\lax}(\Ranp(M), \cF) \cong \laxlim_{I \in \operatorname{fSet}}\Gamma(M^I_{\operatorname{disj}}, \cF),$$
			such that for $\alpha: I \twoheadrightarrow J \in \operatorname{fSet}$ the map 
			$$\phi_1 (a_I )\rightarrow \phi_2^R (a_J)$$
			is an isomorphism.
		\end{rem}
		The following criterion for determining whether an object of $\Gamma^{\lax}(\Ranp(M), A_{\Ranp})$ lies inside the factorization homology subcategory will be useful.
		
		\begin{pr}\label{criterion}
			Let $A \in \Alg_{\bE_M}(\DGCat)$ be locally constant and $M$ is connected. Assume in addition that $A$ is rigid. Then for $a \in  \Gamma^{\lax}(\Ranp(M), A_{\Ranp})$, it lies in the subcategory (\ref{factsubcat}) if and only if for every $I \in \operatorname{fSet}$ and $$b \in \lim_{I \twoheadrightarrow K \in \operatorname{fSet}}\Gamma(M^K_{\operatorname{disj}}, ( A^{\vee})_{\Ranp}) \subset \laxlim_{I \twoheadrightarrow K \in \operatorname{fSet}}\Gamma(M^K_{\operatorname{disj}},  A^{\vee})_{\Ranp})$$ we have 
			\begin{equation}\label{criterionformula}
				\langle a_I, \phi_{2, A^{\vee}} \circ \phi_{1, A^{\vee}}( b_I)\rangle \cong \langle \phi_{1}(a_J),  \phi_{1, A^{\vee}} (b_J) \rangle
			\end{equation}
			for every $\alpha: I \twoheadrightarrow J \in \operatorname{fSet}$, where $\langle - , - \rangle$ is the duality pairing.
		\end{pr}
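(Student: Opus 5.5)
By Remark \ref{factho}, $a$ lies in the subcategory (\ref{factsubcat}) if and only if, for every surjection $\alpha: I \twoheadrightarrow J$, the structure map $\phi_1(a_I) \rightarrow \phi_2^R(a_J)$ in $\Gamma(U_\alpha \cap M^I_{\disj}, A_{\Ranp})$ is an isomorphism. The plan is to test this map against enough objects via the duality pairing, so that it suffices to check (\ref{criterionformula}).

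The first step is duality. Since $A$ is rigid and locally constant, each $\Gamma(M^I_{\disj}, A_{\Ranp}) \cong \LocSys(M^I_{\disj}) \otimes A^{\otimes I}$ is dualizable, with dual $\Gamma(M^I_{\disj}, (A^\vee)_{\Ranp})$; the same holds on $U_\alpha \cap M^I_{\disj}$. Under these identifications I will show that the dual of the restriction $\phi_1$ is the cosheaf induction $\phi_{2,A^\vee}$ for $A^\vee$, and that the dual of $\phi_2$ is $\phi_{1,A^\vee}$. For restriction along an open embedding of locally constant sheaves of categories, the dual is $!$-extension, which is the cosheaf pushforward. For the $\bE_M$-multiplication, rigidity of $A$ is exactly what identifies the dual of the multiplication with the multiplication on $A^\vee$, or equivalently its right adjoint composed with the self-duality. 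Consequently, for $c$ in the dual of $\Gamma(U_\alpha\cap M^I_{\disj},A_{\Ranp})$,
\[
\langle \phi_2^R(a_J), c\rangle \cong \langle a_J, \phi_2(\ldots)\rangle\text{-type adjunction identities}
\]
hold, which translate the right side of the structure map into pairings with the $\phi_{1,A^\vee}$-images of objects of the $J$-stratum.

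The second step is generation. Since the source and target are dualizable, the map $\phi_1(a_I)\rightarrow\phi_2^R(a_J)$ is an isomorphism if and only if it induces isomorphisms $\langle -, c\rangle$ for $c$ ranging over a generating family of the dual category. I will take $c = \phi_{1,A^\vee}(b_I)$ (or the corresponding components of $b$), with $b$ running over the strict limit $\lim_{I\twoheadrightarrow K}\Gamma(M^K_{\disj},(A^\vee)_{\Ranp})$. The pairing with $\phi_1(a_I)$ becomes $\langle a_I, \phi_{2,A^\vee}\phi_{1,A^\vee}(b_I)\rangle$. The pairing with $\phi_2^R(a_J)$ becomes $\langle \phi_1(a_J), \phi_{1,A^\vee}(b_J)\rangle$, using the compatibility $b_I \mapsto b_J$ encoded by the fact that $b$ lies in the strict limit. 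So (\ref{criterionformula}) says precisely that the pairing map is an isomorphism. The "only if" direction is then immediate, since an isomorphism of the structure map gives (\ref{criterionformula}) directly.

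The main obstacle is the generation claim for "if". I need objects of the form $\phi_{1,A^\vee}(b_I)$, with $b$ in the strict limit, to generate the dual of $\Gamma(U_\alpha\cap M^I_{\disj},A_{\Ranp})$. My first attempt would use the product decomposition of $U_\alpha\cap M^I_{\disj}$ into $M^J_{\disj}\times\prod_j\Conf_{\alpha^{-1}(j)}$ from the example in Construction \ref{dia}, working locally where the tangent bundle is trivial. The strict limit is itself factorization homology of $A^\vee$, and I would show that restriction from it is surjective up to colimits by using local constancy and the equivalence $\LocSys\otimes A^{\otimes I}$. Here rigidity ensures that $A^\vee\cong A$ and that all functors involved are continuous with continuous adjoints. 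If this generation fails globally, I would instead reduce to checking the isomorphism locally on $M^J_{\disj}$, where the restriction is surjective.
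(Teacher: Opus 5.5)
Your skeleton matches the paper's. You reduce via Remark \ref{factho} to the structure map $\phi_1(a_I)\to\phi_2^R(a_J)$ in $\Gamma(U_\alpha\cap M^I_{\disj},A_{\Ranp})$, pair it with $\phi_{1,A^\vee}(b_I)$, and transpose. But the duality identities you propose, $\phi_1^\vee\simeq\phi_{2,A^\vee}$ and $\phi_2^\vee\simeq\phi_{1,A^\vee}$, are not even correctly typed. $\phi_1^\vee$ maps $\Gamma(U_\alpha\cap M^I_{\disj},(A^\vee)_{\Ranp})$ to $\Gamma(M^I_{\disj},(A^\vee)_{\Ranp})$: it is the extension from the deleted tubular neighbourhood into $M^I_{\disj}$, which is what your open-embedding remark actually computes. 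By contrast, $\phi_{2,A^\vee}$ lands over $M^J_{\disj}$. Similarly, $\phi_2^\vee$ starts over $M^J_{\disj}$ while $\phi_{1,A^\vee}$ starts over $M^I_{\disj}$.

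The identity the paper uses, and the only one needed, is $(\phi_{2,A^\vee})^\vee\simeq\phi_2^R$. It holds for two reasons. Rigidity makes the dual of the multiplication of $A^\vee$ equal to the right adjoint of the multiplication of $A$. On the local-system factor, the dual of homology pushforward along $U_\alpha\cap M^I_{\disj}\to M^J_{\disj}$ is pullback, which is also its right adjoint. This gives $\langle\phi_2^R(a_J),\phi_{1,A^\vee}(b_I)\rangle\simeq\langle a_J,\phi_{2,A^\vee}\phi_{1,A^\vee}(b_I)\rangle$. The other side, $\langle\phi_1(a_I),\phi_{1,A^\vee}(b_I)\rangle$, needs no rewriting, and $b_J$ never appears. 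Your assignment of the two pairings is the reverse of this.

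The root cause is that the subscripts in (\ref{criterionformula}) are garbled: $a_I$ cannot be paired with an object over $M^J_{\disj}$. The intended condition, which is how it is used in the claim proving Theorem \ref{mainthm2}, is $\langle a_J,\phi_{2,A^\vee}\phi_{1,A^\vee}(b_I)\rangle\cong\langle\phi_1(a_I),\phi_{1,A^\vee}(b_I)\rangle$. Trying to reproduce the literal formula is what forced the ill-typed identities.

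You are right that the ``if'' direction also needs the objects $\phi_{1,A^\vee}(b_I)$ to detect isomorphisms in $\Gamma(U_\alpha\cap M^I_{\disj},A_{\Ranp})$. The paper does not argue this either; it simply invokes Remark \ref{factho} after the transposition. Your sketch does not supply the argument. Since $\langle c,\phi_{1,A^\vee}(b_I)\rangle\simeq\langle(\phi_{1,A^\vee})^\vee(c),b_I\rangle$, these tests only see the structure map after extending it from $U_\alpha\cap M^I_{\disj}$ to $M^I_{\disj}$. On the local-system factor, that extension is homology along the homotopy fibres of the inclusion. ``Surjective up to colimits'' is just a restatement of the conservativity of this extension, and that conservativity is not automatic. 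For $M=S^2$, $|I|=2$, $|J|=1$, the inclusion is up to homotopy the circle bundle $\mathbb{RP}^3\to S^2$, and homology along the fibre kills the nontrivial rank-one local system on $\mathbb{RP}^3$. Localizing on $M^J_{\disj}$ does not help either: the test objects are restrictions of objects on all of $M^I_{\disj}$, and local systems cannot be cut off by extension by zero. So this step needs an actual argument, for instance one exploiting the specific form of $\phi_1(a_I)$ and $\phi_2^R(a_J)$, not just rigidity and local constancy.
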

		
		\begin{proof}
			By duality, criterion (\ref{criterionformula}) is equivalent to
			$$	\langle (\phi_{2, A^{\vee}})^{\vee}(a_I),  \phi_{1, A^{\vee}}( b_I)\rangle \cong \langle \phi_{1}(a_J),  \phi_{1, A^{\vee}} (b_J) \rangle ,$$
			but $(\phi_{2, A^{\vee}})^{\vee} \cong \phi_2^R$, and we obtain the result by Remark \ref{factho}.
		\end{proof}

		\section{Betti quantum Lanlgands functor via Whittaker coefficients.}\label{functor}
		
		\subsection{Construction of the local Whittaker coefficients functor.}\label{sswhit}
		
		The goal of this section is to construct the functor 
		\begin{equation}
			\coeff_!^{\loc}:	\Shv_{\kappa, \Nilp}(\Bun_G) \rightarrow \Shv_{\kappa, A_{\Ranp}} (\Gra_{G, \rho(\omega_X)})^{LN, \chi},
		\end{equation}
		where $A_{\Ranp}$ is the stratification on $\Gra_{G, \rho(\omega_X)}$ coming from stratification on $\Ranp$ by  number of distinct points and $LN$-orbits.
		
		\begin{pr}
			Functor $\coeff_!^\loc$ factors through the embedding
			$$\Shv_{\kappa, A_{\Ranp}} (\Gra_{G, \rho(\omega_X)})^{LN, \chi} \subset  \Shv_{\kappa}(\Gra_{G, \rho(\omega_X)})^{LN, \chi}.$$
		\end{pr}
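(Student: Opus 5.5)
We show that for every $\cF \in \Shv_{\kappa, \Nilp}(\Bun_G)$, the object $\coeff_!^{\loc}(\cF)$ is constructible with respect to the stratification $A_{\Ranp}$. This stratification is the product of two pieces: the stratification of $\Ranp$ by the number of distinct points, and the stratification of the fibers by $LN$-orbits. We treat the two directions separately.

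\textbf{Fiber direction.} Here there is nothing substantial to prove. An $(LN,\chi)$-equivariant twisted sheaf on a single $LN$-orbit is locally constant along that orbit, since the orbit is a homogeneous space. Moreover, only the relevant orbits, indexed by $\lambda$ in the appropriate coset of the coweight lattice, support nonzero Whittaker objects. So after restricting to each stratum of $\Ranp$, the only remaining condition is local constancy in the $\Ranp$-direction.

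\textbf{$\Ranp$-direction.} It suffices to argue over each $X^I$, restricted to $X^I_{\disj}$ and, more generally, to each diagonal stratum $X^J_{\disj} \subset X^I$. The restriction of $\coeff^{\loc}_!$ to $\Gra_{G,\rho(\omega_X),X^J_{\disj}}$ is computed by a correspondence with three maps:
\begin{itemize}
\item the $*$-pullback of $\cF \boxtimes \underline{k}_{X^J_{\disj}}$ along $\BunNbZ \to \Bun_G \times X^J_{\disj}$;
\item $\Av_!$ in the Kirillov model;
\item restriction along $\pi_{\cZ}$.
\end{itemize}
Over $X^J_{\disj}$, the Whittaker category factorizes as $\Whit_\kappa(G)^{\otimes J}$ twisted along $X^J_{\disj}$. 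The Ran-direction dependence of the result is then controlled by the singular support of $\cF$ in the horizontal directions, via the Nadler--Yun type estimate.

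\textbf{Key step.} We show the result has singular support in the zero section along $X^J_{\disj}$. There are two routes.
\begin{itemize}
\item \emph{Direct estimate.} Use the singular support estimate of \cite{NY}, adapted to the twisted setting, for the correspondence $\Bun_G \times X^J_{\disj} \leftarrow \BunNbZ$. Twisting does not affect microsupport, so the adaptation should go through.
\item \emph{Via kernels.} Use Corollary~\ref{poincco}: $\Poinc_!\cong(\coeff^\loc_!)^\vee$ is defined and codefined by a kernel. Then pair against test objects with $\underline{k}_{X^J_{\disj}}$-coefficients, and use the reformulation of local constancy in Theorem~\ref{nilpdefined}, namely that $p_{X^k_{\disj},!}(\cF\otimes\cF')$ lands in local systems when $\cF'$ has nilpotent singular support along $\Bun_G$.
\end{itemize}
By the first step, each Whittaker stalk (orbit $\lambda$, points $\ul{x}$) is computed by such a pushforward: $\cF$ paired against $\Poinc_!$ of the standard object on the orbit $\lambda$. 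Moving $\ul{x}$ in $X^J_{\disj}$ gives an object of $\Shv(X^J_{\disj})$ with no singular support off the zero section, hence a local system. This applies provided $\Poinc_!$ lands in $\Shv_{\Nilp}$, which holds by construction because it contains the projector $P$.

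\textbf{Main obstacle.} The delicate point is this last identification: we must show that the stalks of $\coeff_!^\loc(\cF)$ along $X^J_{\disj}$ are exactly the pushforwards $p_{X^J_{\disj},!}(\cF \stackrel{*}{\otimes}\Poinc_{!,X^J_{\disj}}(\delta_\lambda))$. This means controlling the family version over $X^J_{\disj}$ of the adjunction $(\Poinc_!,\coeff^\loc)$ and its dual from Remark~\ref{duall}. Once this is done, the claim follows because on each stratum constructible sheaves form a full subcategory characterized by local constancy.
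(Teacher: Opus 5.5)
Your kernel route uses the same ingredients as the paper: the duality between $\coeff^{\loc}_!$ and $\Poinc_!$, Corollary~\ref{poincco}, and Theorem~\ref{nilpdefined}. However, the key step rests on a misreading of Theorem~\ref{nilpdefined}. That theorem does not say that $p_{X^J_{\disj},!}(\cF\stackrel{*}{\otimes}\cF')$ is a local system whenever $\cF'$ has nilpotent singular support along $\Bun_G$. Its source allows arbitrary singular support in the $X^J_{\disj}$-direction, and for $\cF'=\delta_{\ul{x}}\boxtimes\cF'_0$ the output is supported at the single point $\ul{x}$. What the theorem gives, via \cite[Corollary B.10.7]{AGKRRV2}, is that this functor sends objects ULA relative to $X^J_{\disj}$ to ULA objects of $\Shv(X^J_{\disj})$, i.e.\ to local systems.

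Landing in $\Shv_{\Nilp}$ (via the projector $P$) only puts $\Poinc_!(\cG)$ in the domain of that functor; it does not make the output a local system. What you actually need is that $\Poinc_!(\cG)$ is ULA over $X^J_{\disj}$ for each test object $\cG$. That is exactly what Corollary~\ref{poincco} supplies: being defined and codefined by a kernel, $\Poinc_!$ preserves ULA objects by the same corollary of \cite{AGKRRV2}. So you must also check that your families of standard objects on the orbits are ULA over $X^J_{\disj}$. Your alternative route via \cite{NY} does not fill this gap, since Nadler--Yun control Hecke functors, not the correspondence through $\ol{\Bun}_N$ followed by Kirillov-model averaging.

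With that repaired, your argument differs from the paper's only in the final assembly. You test $\coeff^{\loc}_!(\cF)$ against standard objects orbit by orbit. This needs two extra steps: identifying orbit-stalks with these family pairings (your ``main obstacle''), and showing that local constancy of all stalks gives constructibility for an infinite stratification of an ind-scheme.

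The paper avoids both. It shows that $\langle\coeff^{\loc}_!(\cF)_{X(a)},-\rangle$ sends every $p_a$-ULA object of $\Whit_{-\kappa}(G)_{X(a)}$ to a local system. It then applies $\langle\coeff^{\loc}_!(\cF)_{X(a)},-\rangle\otimes\Id$ to the unit $u$ of the duality between $\Whit_{-\kappa}(G)_{X(a)}$ and $\Whit_{\kappa}(G)_{X(a)}$ over $\Shv(X(a))$. Since $u$ is ULA over $X(a)$ and this recovers $\coeff^{\loc}_!(\cF)_{X(a)}$, the latter is ULA with respect to $p_a$, which is the form of constructibility being checked.

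The one input your obstacle shares with the paper is the family version of $\langle\coeff^{\loc}_!(\cF),\cG\rangle\cong\pi_{X(a),!}(\cF\stackrel{*}{\otimes}\Poinc_!(\cG))$. The paper uses this without further comment.
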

		\begin{proof}
			It suffices to show that for $$\cF \in \Shv_{\kappa \boxtimes \triv, \Nilp \times T^*X(a)}(\Bun_G \times X(a))$$ and $a \in A_{\Ranp}$, the sheaf 
			$$\coeff_!^{\loc}(\cF) _{X(a)}\in \Whit_{\kappa}(G)_{X(a)} \subset \Shv_{\kappa}(\Gra_{G, \rho(\omega_X), {X(a)}})$$ is ULA with respect to the projection 
			$p_a: \Gra_{G, \rho(\omega_X), {X(a)}} \rightarrow X(a).$
			Denote by
			$$\langle -, - \rangle : \Whit_{\kappa}(G)_{X(a)} \otimes_{\Shv(X(a))} (\Whit_{\kappa}(G)_{X(a)})^\vee \rightarrow \Shv(X(a))$$
			the natural pairing induced from the pairing
			$$\Shv_{\kappa}(\Gra_{G, \rho(\omega_X), {X(a)}}) \otimes_{\Shv(X(a))} (\Whit_{\kappa}(G)_{X(a)})^\vee \rightarrow \Shv(X(a))$$
			given by 
			$$(\cF, \cG) \mapsto p_{a, *}(\cF \stackrel{!}{\otimes} \cG).$$
			Let us identify $\Whit_{-\kappa}(G)_{X(a)}$ with $(\Whit_{\kappa}(G)_{X(a)})^\vee$ as in \cite{walg}. 
			
			Note that to check that $\coeff_!^{\loc}(\cF) _{X(a)}$ is ULA with respect to $p_a$ it suffices to show that the functor 
			$$\langle \coeff_!^{\loc}(\cF) _{X(a)}, - \rangle : \Whit_{\kappa}(\Gra_{G, \rho(\omega_X), {X(a)}}) \rightarrow \Shv(X(a))$$
			sends ULA objects with respect to $p_a$ to ULA object with respect to $\Id$. Indeed, in that case the functor 
			$$ \langle \coeff_!^{\loc}(\cF) _{X(a)}, - \rangle  \otimes \Id : \Whit_{-\kappa}(G)_{X(a)} \otimes_{\Shv(X(a))} \Whit_{\kappa}(G)_{X(a)}  \rightarrow  \Whit_{\kappa}(G)_{X(a)}$$
			send ULA objects to ULA objects. However, the unit $$u \in  \Whit_{-\kappa}(G)_{X(a)} \otimes_{\Shv(X(a))} \Whit_{\kappa}(G)_{X(a)}\subset \Shv_{(-\kappa)\boxtimes \kappa}(\Gra_{G, \rho(\omega_X), {X(a)}} \times_{X(a)} \Gra_{G, \rho(\omega_X), {X(a)}})$$
			is ULA with respect to the projection to $X(a)$, and we have 
			$$ \langle \coeff_!^{\loc}(\cF) _{X(a)}, u \rangle  \otimes \Id \cong \coeff^{\loc}(\cF) _{X(a)}.$$
			Fix $\cG \in \Whit(G)_{X(a)}$ ULA with respect to $p_a$. Note that by duality 
			$$\langle \coeff_!^{\loc}(\cF) _{X(a)}, \cG \rangle \cong \langle \cF, \Poinc_{!}(\cG) \rangle:= \pi_{X(a), !}( \cF \stackrel{*}{\otimes} \Poinc_{!}(\cG)),$$
			where $\pi_{X(a)} : \Bun_G \times X(a) \rightarrow X(a)$ is the projection.
			By Corollary \ref{poincco} and \cite[Corollary B.10.7]{AGKRRV2} the sheaf $ \Poinc_{!}(\cG)$ is ULA with respect to $\pi_{X(a)}$. But since $\cF$ has nilpotent singular support, by Theorem \ref{nilpdefined} and \cite[Corollary B.10.7]{AGKRRV2} we get that 
			$$ \pi_{X(a), !}( \cF \stackrel{*}{\otimes} \Poinc_{!}(\cG))$$
			is ULA with respect to $\Id$, which finishes the proof.
		\end{proof}
		
		\subsection{Quantum Fundamental Local Equivalence.}\label{FLE}
		
		One of the key ingredients in the construction of the quantum Langlands functor will be the quantum version of the Fundamental Local Equivalence (\cite[Theorem 6.1.4]{GLCII}).
		A recent result of Gaitsgory and Hayash provides the construction of the functor in our setting:
		
		\begin{thm}\cite{GH}
			There exists a canonical functor 
			\begin{equation}\label{fle}
				\operatorname{FLE}:\Shv_{\kappa, A_{\Ranp}} (\Gra_{G, \rho(\omega_X)})^{LN, \chi} \rightarrow \Gamma^{\lax}(\Ranp, \Rep_{q}(\check{G})).
			\end{equation}
		\end{thm}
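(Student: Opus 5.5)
The plan is to assemble $\operatorname{FLE}$ as a lax compatible family of functors, one for each stratum, using the description of lax global sections in Corollary \ref{strata}. By that corollary,
$$\Gamma^{\lax}(\Ranp, \Rep_{q}(\check{G})) \cong \laxlim_{I \in \operatorname{fSet}} \Gamma(X^I_{\disj}, \Rep_{q}(\check{G})_{\Ranp}),$$
with connecting functors $\phi_2\circ\phi_1$ as in Construction \ref{dia}. Dually, $A_{\Ranp}$-constructibility means that $\Shv_{\kappa, A_{\Ranp}} (\Gra_{G, \rho(\omega_X)})^{LN, \chi}$ is glued from its restrictions to the strata $X^I_{\disj}$. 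On each stratum the restriction consists of Whittaker sheaves that are ULA, hence locally constant, over $X^I_{\disj}$. This gluing is a lax limit via $!$-restriction and nearby-cycle/specialization maps along the closures of the strata. So it suffices to construct compatible functors stratum by stratum, together with lax natural transformations for each surjection $\alpha: I\twoheadrightarrow J$.

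\textbf{Step 1: one stratum.} On $X^I_{\disj}$, factorization identifies the Whittaker category with a local system of categories with fiber $\Whit_{\kappa, x}(G)^{\otimes I}$. The pointwise quantum FLE $\Whit_{\kappa,x}(G)\simeq \Rep_q(\check{G})$ is the Betti analogue of \cite[Theorem 6.1.4]{GLCII}, obtained via Jacquet functors $J_{\Whit}^{-,!*}$ and the semi-infinite $\ic_q^{-,\frac{\infty}{2}}$. Applying it factor by factor and using that the equivalence is monodromy-compatible (it is factorizable) gives an equivalence of locally constant sheaves of categories on $X^I_{\disj}$. Taking global sections yields
$$\Gamma(X^I_{\disj}, \Whit_{\kappa}(G)) \to \Gamma(X^I_{\disj}, \Rep_q(\check{G})_{\Ranp}).$$

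\textbf{Step 2: collisions.} For $\alpha$, one must produce the 2-cell comparing the specialization in the Whittaker category (nearby cycles from $U_\alpha\cap X^I_{\disj}$ to $X^J_{\disj}$) with $\phi_2\circ\phi_1$ on the $\Rep_q$ side. The $\bE_2$-structure on $\Rep_q(\check{G})$ enters here as fusion. Since only a lax transformation is required, a map rather than an isomorphism suffices. I would obtain it from the canonical map from $!$-restriction to nearby cycles for constructible sheaves, combined with the factorization structure of the pointwise FLE: local FLE intertwines fusion on $\Whit$ with the braided tensor product on $\Rep_q$. Higher coherences then come from the fact that the whole pointwise construction is a factorization functor over $\Ranp$.

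\textbf{Main obstacle.} The main obstacle is Step 2, specifically producing the coherent $\bE_2$-compatibility of the quantum FLE in the Betti topological setting, i.e. making the factorization of local FLE into a lax natural transformation of functors on $\Enter_A$. I would first try to deduce it from the de Rham factorization FLE via Riemann--Hilbert on each stratum: the regular singularity is supplied by $A_{\Ranp}$-constructibility. If that fails, I would construct it directly via the Kazhdan--Lusztig/Drinfeld--Jimbo description of factorization modules for the quantum Koszul-dual algebra, in the spirit of \cite{GH}.
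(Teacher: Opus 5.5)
There is a genuine gap, and it is the step you flag as your main obstacle. In Step~2 you want to get the collision 2-cells from two things: the canonical map from $!$-restriction to nearby cycles, and the claims that local FLE intertwines fusion on $\Whit$ with the $\bE_2$-structure on $\Rep_q(\check{G})$ and that ``the whole pointwise construction is a factorization functor over $\Ranp$''.

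The first ingredient cannot produce the required 2-cell. In the lax-limit description, the Whittaker gluing functors are $j_b^! j_{a,!}$ (see the remark following Conjecture~\ref{conjfle}). A map $i^!\to\psi$ only compares two operations on the Whittaker side. The 2-cell you need must relate these to $\phi_2\circ\phi_1$, which is built from the $\bE_2$-multiplication of $\Rep_q(\check{G})$.

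The second ingredient is circular. A functor from Whittaker sheaves to $\Rep_q(\check{G})$ that is factorizable over $\Ranp$ and compatible with fusion, with all higher coherences, is exactly what the statement asserts; once you have it, passing to $\Gamma^{\lax}$ is formal.

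Step~1 has the same problem at a single point:
\begin{itemize}
\item The pointwise quantum FLE is not a formal Betti analogue of the $q=1$ statement. That statement goes through geometric Satake, but at general $q$ the spherical category degenerates: the introduction notes that $\Shv(\Gra_G)^{L^+G}$ is just $\Vect$ at irrational level.
\item The Jacquet functor together with $\ic_q^{-,\frac{\infty}{2}}$ is the input for the small FLE of \cite{GLsmall}, i.e.\ factorization modules over $\Omega_q$ in the toric category. By itself it does not reach $\Rep_q(\check{G})$; the pointwise equivalence is itself part of what \cite{GH} prove.
\item Your Riemann--Hilbert fallback would need a factorizable de Rham FLE, whose target is a Kazhdan--Lusztig category for $\check{G}$ at the dual level rather than $\Rep_q(\check{G})$. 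It would also need a factorizable Kazhdan--Lusztig comparison with the topological $\bE_2$-structure, which is no easier than the statement itself.
\end{itemize}

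The paper does not prove the statement itself; it cites \cite{GH}. The sketch in Remark~\ref{reductiontot} contains the idea you are missing: reduction to the torus. Instead of building the functor stratum by stratum and producing 2-cells by hand, one identifies both sides over all of $\Ranp$ at once:
\begin{itemize}
\item $\Gamma^{\lax}(\Ranp,\Rep_q(\check{G}))\simeq\Gamma^{\lax}(\Ranp,\Omega_q\operatorname{-mod}^{\cE_2}(A_q\Mmod(\Rep_{\check{q}}(T))))$;
\item $\Shv_{\kappa,A_{\Ranp}}(\Gra_G)^{LN,\chi}\simeq\Omega^{\Whit}\operatorname{-FactMod}(A^{\Whit}\Mmod(\Shv_{\kappa,A_{\Ranp}}(\Gra_T)))$.
\end{itemize}
One then checks that $\Omega^{\Whit}\leftrightarrow\Omega_q$ and $A^{\Whit}\leftrightarrow A_q$ under the toric FLE. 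The toric FLE is explicit and manifestly factorizable. So the functor, including all collision data and coherences, is induced functorially from it, and the pointwise equivalence comes out as a consequence rather than being an input. Your option (b) points in this direction, but without the torus reduction and the matching of the algebras $\Omega$ and $A$ it is not yet a construction.
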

		
		We will need a stronger expected result:
		
		\begin{conj}[Gaitsgory, Lurie]\label{conjfle}
			The functor (\ref{fle}) is an equivalence.
		\end{conj}
		
		\begin{rem}
			Let us describe explicitly what Conjecture \ref{conjfle} says. Recall from \cite[Lemma 6.2.1]{CF} (and Verdier duality \cite[Theorem 5.5.5.1]{HA}) that $\Shv_{\kappa, A_{\Ranp}} (\Gra_G)$ is equivalent to the lax limit of $\LS_{\kappa}(\Gra_{G, X(a)})$ for $a \in A_{\Ranp}$, where the connecting functors are $j_b^! j_{a, !}$ for $a \rightarrow b \in A_{\Ranp}$. Moreover, the functors $j_b^! j_{a, !}$ send the subcategory $$\Shv_{\kappa, A_{\Ranp}} (\Gra_G)^{LN, \chi}_{X(a)}$$ to the subcategory $$\Shv_{\kappa, A_{\Ranp}} (\Gra_G)^{LN, \chi}_{X(b)}.$$ In other words, the category $\Whit_{\kappa}$ can also we described as a lax limit with connecting functors $j_b^! j_{a, !}$. 
			
			On the other hand, recall the description of $\Gamma^{\lax}(\Ranp, \Rep_{q}(\check{G}))$ from Corollary \ref{strata}. In \cite{GH}, the authors prove that for every $a \in A_{\Ranp}$ the functor
			$$\Shv_{\kappa, A_{\Ranp}} (\Gra_G)^{LN, \chi}_{X(a)}\rightarrow \Gamma(X(a), \Rep_{q}(\check{G}))$$
			is an equivalence. Thus, Conjecture \ref{conjfle} amounts to show that the map
			\begin{equation}
				\operatorname{FLE}(j_b^! j_{a, !}) \rightarrow \phi_{2, \Rep_{q}(\check{G})} \circ \phi_{1, \Rep_{q}(\check{G})}
			\end{equation}
			is an equivalence.
		\end{rem}
		\begin{rem}\label{reductiontot}
			The construction of the functor (\ref{fle}) from \cite{GH} (and the proof that the functor is equivalence at a point) follows the strategy of \cite{GLsmall}. Namely, it reduces to the know case of $G=T$ as follows. Gaitsgory and Hayash prove that 
			\begin{equation}
				\Gamma^{\lax}(\Ranp, \Rep_{q}(\check{G})) \cong \Gamma^{\lax}(\Ranp, \Omega_q \operatorname{-mod}^{\cE_2}(A_q\Mmod(\Rep_{\check{q}}(T)))),
			\end{equation}
			where the factorization algebra $\Omega_q$ is as in \cite{GLsmall}, and $A_q:= \cO_{N_H}$ is an algebra in $$\Rep(T_H) \subset \Rep_{\check{q}}(\check{T}).$$
			Similarly, they prove that
			\begin{equation}
				\Shv_{\kappa, A_{\Ranp}} (\Gra_G)^{LN, \chi}\cong \Omega^{\Whit} \operatorname{-FactMod}(A^{\Whit}\Mmod(\Shv_{\kappa, A_{\Ranp}} (\Gra_T))).
			\end{equation}
			Finally, they check that the factorization algebras $\Omega_q$ and $\Omega^{\Whit}$, and $A_q$ and  $A^{\Whit}$ correspond to one another under the toric FLE.
		\end{rem}
		
		\subsection{Construction of the quantum Langlands functor in the Betti context.}
		Recall that in section \ref{sswhit} we constructed
		
		\begin{equation}
			\coeff_!^\loc: \Shv_{\kappa, \Nilp}(\Bun_G) \rightarrow \Shv_{\kappa, A_{\Ranp}} (\Gra_{G, \rho(\omega_X)})^{LN, \chi}.
		\end{equation}
		
		We are now ready to prove our main result. 
		\begin{thm}\label{mainthm2}
			The composition 
			$$\operatorname{FLE} \circ \coeff^{\loc}:  \Shv_{\kappa, \Nilp}(\Bun_G) \rightarrow \Shv_{\kappa, A_{\Ranp}} (\Gra_{G, \rho(\omega_X)})^{LN, \chi} \cong \Gamma^{\lax}(\Ranp, \Rep_{q}(\check{G}))$$
			factors through $\int_X \Rep_{q}(\check{G})$.
		\end{thm}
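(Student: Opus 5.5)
We will check the criterion of Proposition \ref{criterion} for the object $a := \operatorname{FLE}\circ\coeff^{\loc}(\cF)$, $\cF \in \Shv_{\kappa,\Nilp}(\Bun_G)$, with $A = \Rep_q(\check{G})$. The algebra $\Rep_q(\check{G})$ is locally constant and rigid, and $X$ is connected, so the proposition applies. It then suffices to fix $\alpha: I \twoheadrightarrow J$ and $b$ as in Proposition \ref{criterion}, and to prove
$$\langle a_I, \phi_{2,A^\vee}\circ\phi_{1,A^\vee}(b_I)\rangle \cong \langle \phi_1(a_J), \phi_{1,A^\vee}(b_J)\rangle.$$
By Conjecture \ref{conjfle} and the description of $\Whit_\kappa$ as a lax limit with connecting functors $j_b^! j_{a,!}$, we can transport everything to the Whittaker side. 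The dual of $\Rep_q(\check{G})$ corresponds to $\Whit_{-\kappa}(G)$ via Remark \ref{dualwhit}, and $\phi_{2}\circ\phi_1$ corresponds to the gluing functor from the stratum $X^I_{\disj}$ to $X^J_{\disj}$. The pairings $\langle -,-\rangle$ become $p_{*}(-\overset{!}{\otimes}-)$ over $X^I_{\disj}$, resp. $X^J_{\disj}$. After this transport the two sides read $\langle \coeff^{\loc}(\cF)_{X^I_{\disj}}, m(\cG)\rangle$ and $\langle \coeff^{\loc}(\cF)_{X^J_{\disj}}, \cG\rangle$, exactly as in (\ref{outline eq}).

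Next we move the pairing to $\Bun_G$ by adjunction. By Remark \ref{duall} and Corollary \ref{poincco}, we have $\langle \coeff^{\loc}(\cF)_{S}, \cG\rangle \cong \langle \cF\boxtimes \omega_S, \Poinc_{*,\co}(\cG)\rangle_{S}$, where $\Poinc_{*,\co}=(\coeff^{\loc})^\vee$. Here $\langle -,-\rangle_S$ is the fiberwise pairing $p_{S,*}(-\overset{!}{\otimes}-)$ on $\Bun_G\times S$; by Corollary \ref{mirtwo} this is the same as the $\Gamma_!(-\overset{*}{\otimes}-)$ pairing up to $\psid^{\mir}$. We thus reduce to two commutation statements. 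First, the family Poincaré functor $\Poinc_{*,\co}$ (over $X^I_{\disj}$ and over a tubular neighbourhood $U_\alpha$) commutes with the restriction $\phi_1$ and with the specialization $\phi_2$. Second, the functor $F:\cF'\mapsto p_{*}(\cF\overset{!}{\otimes}\cF')$ of Theorem \ref{nilpdefined} also commutes with $\phi_1$ and $\phi_2$. For both we use that they are defined and codefined by a kernel: Proposition \ref{poinc} and Corollary \ref{poincco} for the former, Theorem \ref{nilpdefined} for the latter. By Lemma \ref{kernels}, a functor defined by a kernel commutes with $!$-pullbacks and $\blacktriangle$-pushforwards along maps of the parameter space. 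Being codefined by a kernel gives the dual commutations, namely with $*$-pullbacks and pushforwards. The restriction $\phi_1$ is a pullback to the open $U_\alpha\cap X^I_{\disj}$. The cosheaf induction $\phi_2$ is a nearby-cycles type gluing that can be written as $j^! j_!$ (or its $*$-version) along $U_\alpha\cap X^I_{\disj}\hookrightarrow U_\alpha \hookleftarrow X^J_{\disj}$. Hence both operations are intertwined.

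Chaining these steps gives
$$\langle a_I, \phi_2\phi_1 b_I\rangle \cong F(\Poinc_{*,\co}(\phi_2\phi_1 b)) \cong \phi_2\phi_1 F(\Poinc_{*,\co}(b)).$$
The right-hand side of the criterion is the same object restricted to $X^J_{\disj}$. So the two sides agree as sheaves on $X^J_{\disj}$, compatibly with the maps, which proves the theorem.

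The main obstacle is the second step: identifying the topological operations $\phi_1,\phi_2$ on the spectral side, which come from the $\bE_2$-structure, with sheaf-theoretic operations $j^!j_!$ on the Whittaker side, in a way compatible with the kernel formalism. Here I would first use the ULA property established in Section \ref{sswhit}: restricted to each stratum, the relevant objects are locally constant along $X(a)$, so $j^!j_!$ agrees with specialization along the tubular neighbourhood. A secondary subtlety is that $\Bun_G$ is not quasi-compact, whereas Lemma \ref{kernels} is stated for quasi-compact stacks. I would handle this by working with $\Shv_\kappa(\Bun_G)_{\co}$ and the miraculous duality of Theorem \ref{miraculous} and Corollary \ref{mirtwo}, which is exactly why Theorem \ref{nilpdefined} was established.
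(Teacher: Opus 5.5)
Your proposal follows the paper's proof. You reduce to Proposition \ref{criterion} and use the FLE of Conjecture \ref{conjfle} to identify $\phi_2\circ\phi_1$ with the Whittaker-side gluing. You then rewrite the pairing against $\coeff$ as a pairing on $\Bun_G\times X^I$ against the dual Poincar\'e functor, and move $\phi_2\circ\phi_1$ past both the Poincar\'e functor (Corollary \ref{poincco}) and the $\Bun_G$-pairing (Theorem \ref{nilpdefined}), each being defined and codefined by a kernel.

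Two bookkeeping corrections. First, those kernel results are proved for the $!$-versions $\coeff^{\loc}_!$, $\Poinc_!\cong(\coeff^{\loc}_!)^\vee$ and $\pi_!(-\overset{*}{\otimes}-)$, which is what the paper's argument actually uses, not for $\Poinc_{*,\co}$ and $p_*(-\overset{!}{\otimes}-)$ as in your write-up. Second, the closing step is not a restriction to $X^J_{\disj}$: $\Gamma_c$ over $X^J_{\disj}$ of $\phi_{2,\Vect}\circ\phi_{1,\Vect}(-)$ recovers the pairing over $U_\alpha\cap X^I_{\disj}$ after $\phi_1$.
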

		
		\begin{ntn}
			Denote the resulting functor 
			$$ \Shv_{\kappa, \Nilp}(\Bun_G) \rightarrow \int_X \Rep_{q}(\check{G})$$
			by $\bL$.
		\end{ntn}
		The result will follows from Propoosition \ref{criterion} and the following claim:
		\begin{pr}
			In the notation of Construction \ref{dia}, for every $\alpha: I \twoheadrightarrow J$, $\cG \in \Whit_{-\kappa, X^I}$, and $\cF \in \Shv_{\kappa \boxtimes \triv, \Nilp \times T^* X^I }(\Bun_G \times X^I)$ we have 
			\begin{equation}
				\langle \coeff_!(\cF )_{X^J_{\disj}}, \phi_{2, A^{\vee}}\circ \phi_{1, A^{\vee}}(\cG)\rangle \cong \langle \phi_{1}(\coeff_!(\cF )_{X^I_{\disj}}),  \phi_{1, A^{\vee}} (\cG) \rangle
			\end{equation}
		\end{pr}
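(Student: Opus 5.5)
We follow the outline given in the introduction. We move everything to the automorphic side by duality, and then use that the relevant functors are defined and codefined by a kernel. The point is that the maps $\phi_1$ and $\phi_2$ of Construction \ref{dia} are built from two operations on $X^I$. The first is restriction of a locally constant sheaf from $X^I_{\disj}$ to $U_\alpha\cap X^I_{\disj}$, i.e. a $!$- or $*$-pullback along an open embedding. The second is the cosheaf induction along the tubular neighborhood to $X^J_{\disj}$, which is a composite of a pushforward along $U_\alpha\cap X^I_{\disj}\to X^J_{\disj}$ and a specialization map. All of these are sheaf-theoretic operations on the base $\cZ=X^I$ of the kind listed in Lemma \ref{kernels}.

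First, rewrite both sides using adjunction and duality. By Remark \ref{duall} and Corollary \ref{poincco}, $\langle \coeff_!(\cF)_{S},\cG\rangle\cong \pi_{S,!}(\cF\overset{*}{\otimes}\Poinc_!(\cG))$ for any base $S$ mapping to $\Ranp$. With this, the left-hand side becomes
$$\pi_{X^J_{\disj},!}\bigl(\cF|_{X^J_{\disj}}\overset{*}{\otimes}\Poinc_{!,X^J_{\disj}}(\phi_{2,A^\vee}\phi_{1,A^\vee}\cG)\bigr).$$
The right-hand side becomes the analogous expression on $X^I_{\disj}$, first restricted along $\phi_1$ and then paired. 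Next we transport $\phi_{1,A^\vee}$ and $\phi_{2,A^\vee}$ through $\Poinc_!$. Since $\Poinc_!$ is defined and codefined by a kernel (Proposition \ref{poinc}), Lemma \ref{kernels} shows that $\id\boxtimes\Poinc_!$ commutes with pullbacks along $U_\alpha\cap X^I_{\disj}\hookrightarrow X^I_{\disj}$. It also commutes with the pushforwards along $U_\alpha\cap X^I_{\disj}\to X^J_{\disj}$, using the $\blacktriangle$-pushforward and, via codefinedness, the dual operation. This identifies $\Poinc_!(\phi_{2,A^\vee}\phi_{1,A^\vee}\cG)$ with the corresponding geometric operations applied to $\Poinc_!(\cG)$ on $\Bun_G\times X^I$.

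Second, we move these geometric operations across the pairing with $\cF$. For this we apply Theorem \ref{nilpdefined} to the functor
$$\cF'\mapsto p_{!}(\cF\overset{*}{\otimes}\cF'),$$
both over $X^I_{\disj}$ and over the neighborhood $U_\alpha$. Since $\cF$ has nilpotent singular support along $\Bun_G$, this functor is defined and codefined by a kernel, hence commutes with the same restriction and pushforward operations on the base. Combining the two steps, both sides of the claimed isomorphism become the same expression: pair $\cF|_{U_\alpha\cap X^I_{\disj}}$ with $\Poinc_!(\cG)|_{U_\alpha\cap X^I_{\disj}}$, then push forward to $X^J_{\disj}$. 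Base change along $X^J_{\disj}\hookrightarrow U_\alpha$ supplies the identification, using that $\cF$ is defined over all of $X^I$, so its restriction to the diagonal stratum and to the tubular neighborhood are compatible. The ULA property established in the proposition of Section \ref{sswhit} ensures that $!$- and $*$-restrictions agree up to shift on the strata, so the mixed pullbacks match.

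The main obstacle is the first step: identifying the abstract map $\phi_{2,A^\vee}$, which comes from the $\bE_2$/cosheaf structure of $\Rep_q(\check G)$ transported by the FLE, with an honest sheaf operation on $\Whit_{-\kappa}$ over the tubular neighborhood. This is exactly the point where Conjecture \ref{conjfle} enters. The approach is to use that the FLE intertwines $\phi_2$ with the connecting functors $j_b^!j_{a,!}$ of the lax-limit description of $\Whit$, and then to check that $j_b^!j_{a,!}$ is among the operations covered by Lemma \ref{kernels}. The $j_{a,!}$ part is the delicate one: we handle it through codefinedness, i.e. Lemma \ref{cod} together with the duality between $!$- and $*$-pushforwards.
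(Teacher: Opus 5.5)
Your proposal is correct and follows the paper's proof. You dualize to pair $\cF$ with $\Poinc_!$, then use that $\Poinc_!$ is defined and codefined by a kernel (Corollary \ref{poincco}) to move $\phi_{2,A^{\vee}}\circ\phi_{1,A^{\vee}}$ past it, then use Theorem \ref{nilpdefined} to move these base operations past the pairing with $\cF$, which gives the right-hand side. Your extra remarks on ULA matching of restrictions and on Conjecture \ref{conjfle} are not spelled out in the paper's short proof, which absorbs that identification into the setup of Theorem \ref{mainthm2}, but they do not change the argument.
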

		\begin{proof}
			Rewrite by duality 
			\begin{equation}\label{propfacthocoeff}
				\langle \coeff_!(\cF )_{X^J_{\disj}}, \phi_{2, A^{\vee}}\circ \phi_{1, A^{\vee}}(\cG)\rangle \cong \langle (\cF )_{X^J_{\disj}},  \Poinc_{!} \circ \phi_{2, A^{\vee}}\circ \phi_{1, A^{\vee}}(\cG)\rangle.
			\end{equation}
			By Corollary \ref{poincco} we can rewrite (\ref{propfacthocoeff}) as 
			$$
			\langle (\cF )_{X^J_{\disj}},  \phi_{2, A^{\vee}}\circ \phi_{1, A^{\vee}}  \circ \Poinc_{!}(\cG)\rangle,
			$$
			and by Theorem \ref{nilpdefined} we further rewrite it as 
			\begin{equation}\label{propfacthocoeff2}
				\Gamma_{c} \circ \phi_{2, \Vect}\circ \phi_{1, \Vect}  \langle (\cF )_{X^I_{\disj}},   \Poinc_{!}(\cG)\rangle_{X^I},
			\end{equation}
			where $\langle -, - \rangle_{X^I} :=  \pi_{X^I, !}(- \stackrel{*}{\otimes} -).$ However, (\ref{propfacthocoeff2}) agrees with $\langle \phi_{1}(\coeff_!(\cF )_{X^I_{\disj}}),  \phi_{1, A^{\vee}} (\cG) \rangle.$
		\end{proof}
		
		\section{2-Fourier-Mukai transform and the quantum Langlands functor.}\label{FM}
		
		\subsection{Sheaves associated with automorphic and spectral categories.}
		Let $G$ be a semisimple group. Consider categories 
		\begin{equation}
			\Shv_{\kappa, \Nilp}(\Bun_G) \quad\text{and}\quad \int_X \Rep_{q}(\check{G}).
		\end{equation}
		We will now upgrade them to the elements 
		\begin{equation}
			\underline{\Shv_{\kappa, \Nilp}(\Bun_G)} \in \operatorname{ShvCat}(\ger_{Z_G}(X)) \quad\text{and}\quad \underline{\int_X  \Rep_{q}(\check{G})} \in \operatorname{ShvCat}(\ger_{\pi_1(\check{G})}(X)).
		\end{equation}
		
		\subsubsection{} The short exact sequence of groups 
		\begin{equation}
			1 \rightarrow Z_G \rightarrow G \rightarrow G_{\ad} \rightarrow 1
		\end{equation}
		induces a map 
		\begin{equation}\label{bunge}
			\Bun_{G_{\ad}} \rightarrow \ger_{Z_G}(X).
		\end{equation}
		Since $\ger_{Z_G}(X)$ is discrete, the map (\ref{bunge}) induces on $\Shv_{\kappa,\Nilp}(\Bun_{^{\ad}})$ the structure of a sheaf of categories over $\ger_{Z_G}(X)$ denoted by $	\underline{\Shv_{\kappa, \Nilp}(\Bun_G)} $. 
		
		\subsubsection{}\label{repq(G)}
		
		Let us first upgrade the category $\Rep_q(\check{G})$ to $$\underline{ \Rep_{q}(\check{G})} \in \operatorname{ShvCat}(B^2{\pi_1(\check{G})}).$$
		
		Note that since $B^2 \pi_1(\check{G})$ has only one equivalence class of points we have 
		\begin{equation}
			\operatorname{ShvCat}(B^2{\pi_1(\check{G})}) \cong \QCoh(B {\pi_1(\check{G}))}_{\operatorname{conv}} \Mmod \cong \Rep(\pi_1(\check{G}))_{\operatorname{conv}}\Mmod.
		\end{equation}
		Here the first equivalence is given by the fiber at the trivial element of $B^2{\pi_1(\check{G})}$. Further, Fourier-Mukai transform gives 
		\begin{equation}
			\Rep(\pi_1(\check{G}))_{\operatorname{conv}}\Mmod \cong \QCoh(Z_G)\Mmod \cong \operatorname{ShvCat}(Z_G).
		\end{equation}
		Moreover, we have $Z_G \cong \pi_1(\check{G})^{\vee},$ so $$\operatorname{ShvCat}(Z_G) \cong \operatorname{ShvCat}(\pi_1(\check{G})^{\vee}).$$
		Let us now define an element in $\operatorname{ShvCat}(\pi_1(\check{G})^{\vee})$ corresponding to $$\underline{ \Rep_{q}(\check{G})} \in \operatorname{ShvCat}(B^2{\pi_1(\check{G})}).$$
		Note that $\pi_1(\check{G}) \subseteq Z_{\check{G}^{\sic}}$, and thus
		$$\operatorname{char}(Z_{\check{G}^{\sic}})\twoheadrightarrow \pi_1(\check{G})^{\vee}.$$
		Then consider the category $\Rep_{q}(\check{G}^\sic)$. For every $V \in \Rep_{q}(\check{G}^\sic)$ we have a canonical decomposition 
		$$V \cong \oplus_{\chi \in \pi_1(\check{G})^{\vee}} V_{\chi},$$
		which upgrades $\Rep_{q}(\check{G}^\sic)$ to an element in $\operatorname{ShvCat}(\pi_1(\check{G})^{\vee})$.
		\subsubsection{}
		Now let us define
		$$\underline{\int_X \Rep_{q}(\check{G})} \in \operatorname{ShvCat}(\ger_{\pi_1(\check{G})}(X)).$$
		For every affine $S$ with a map 
		$$s: S \rightarrow \ger_{\pi_1(\check{G})}(X)$$ we set
		$$s^*(\underline{\int_X \Rep_{q}(\check{G})} ) := \int_X (\Rep_q(\check{G}^{\sic}))_{\cH_s},$$
		where $\cH_s$ is the corresponding analytic $\pi_1(\check{G})$-gerbe and $(\Rep_q(\check{G}^{\sic}))_{\cH_s}$ is the $\cE_{X^{\operatorname{an}}}$-category obtained by twisting $\Rep_q(\check{G}^{\sic})$.
		
		\subsection{2-Fourier-Mukai transform and \texorpdfstring{$\bL$}{L}.} 
		
		The goal of this section is to prove:
		\begin{thm}\label{lfm}
			There exists a natural functor
			\begin{equation}\label{functorenh}
				\underline{\Shv_{\kappa, \Nilp}(\Bun_G)}  \rightarrow \operatorname{2-FM}(\underline{\int_X \Rep_{q}(\check{G})}) \in \operatorname{ShvCat}(\ger_{Z_G}(X))
			\end{equation}
			compatible with $\bL$.
		\end{thm}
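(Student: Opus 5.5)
The plan is to mimic the de Rham argument of \cite{GLCV}: construct the functor (\ref{functorenh}) fiberwise over affine test schemes mapping to $\ger_{Z_G}(X)$, and reduce compatibility with $\bL$ to compatibility at the trivial gerbe.

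\textbf{Step 1: unwinding the target.} Let $s: S \rightarrow \ger_{Z_G}(X)$, with corresponding $Z_G$-gerbe $\cG_s$. The fiber of $\underline{\Shv_{\kappa, \Nilp}(\Bun_G)}$ at $s$ is $\Shv_{\kappa, \Nilp}(\Bun_G^{\cG_s})$, the category of twisted sheaves on the stack of $\cG_s$-twisted $G$-bundles. I will then compute the fiber of $\operatorname{2-FM}(\underline{\int_X \Rep_{q}(\check{G})})$ at $s$. The 2-Fourier-Mukai transform is given by pulling back along $\ger_{Z_G}(X)\times \ger_{\pi_1(\check{G})}(X)\rightarrow B^2\bG_m$, via the cup product pairing $Z_G\times \pi_1(\check{G})\rightarrow \bG_m$ and Poincar\'e duality on $X$, and then taking global sections along $\ger_{\pi_1(\check{G})}(X)$. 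Using the description of $\underline{\Rep_q(\check{G})}$ in \ref{repq(G)} through the $\pi_1(\check{G})^\vee\cong Z_G$-grading of $\Rep_q(\check{G}^{\sic})$, I expect this fiber to identify with the factorization homology $\int_X \Rep_q(\check{G}^\sic)^{\cG_s}$. This is the factorization homology of the $\bE_X$-category obtained by twisting the $\pi_1(\check{G})^\vee$-graded category $\Rep_q(\check{G}^\sic)$ by $\cG_s$ through the central character grading. It plays the role of a "component" of the spectral side labeled by the gerbe.

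\textbf{Step 2: fiberwise functor.} Next I repeat the construction of Section \ref{functor} for $\Bun_G^{\cG_s}$. On the automorphic side, the Whittaker model $\BunNbZ$, the coefficient functor $\coeff^{\loc}$, and the ULA argument of Section \ref{sswhit} all go through for the twisted form. They rely only on Corollary \ref{poincco} and Theorem \ref{nilpdefined}, and both of these are local on $\Bun_G$ and insensitive to the twisting. The local Whittaker category for the twisted group is the $\cG_s$-twist of $\Whit_\kappa(G)$. Via the FLE of \cite{GH}, which is compatible with the $Z_G$-action on $\Gra_G$ matching the $\pi_1(\check{G})^\vee$-grading on $\Rep_q(\check{G}^\sic)$, I then obtain a functor to $\Gamma^{\lax}(\Ranp, \Rep_q(\check{G}^\sic)^{\cG_s})$. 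Proposition \ref{criterion} and the kernel argument of Theorem \ref{mainthm2}, applied verbatim, show that this functor factors through $\int_X \Rep_q(\check{G}^\sic)^{\cG_s}$. Functoriality in $S$ will follow because every ingredient ($\Poinc_!$, pairings, FLE) is defined by kernels and commutes with base change in $S$, which is exactly the content of Lemma \ref{kernels}. Assembling these gives the map of sheaves of categories (\ref{functorenh}). At the trivial point $s$ it recovers $\bL$, which gives the compatibility.

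\textbf{Main obstacle.} I expect the hardest part to be Step 1 together with the $Z_G$-equivariance of the FLE used in Step 2. Specifically, one must match the action of $\Shv(\Bun_{Z_G})$ by translations on the Whittaker side with the action of $\QCoh(\ger_{\pi_1(\check{G})})$ via the grading on the quantum group side, including the twisting of the factorization gerbe by $\kappa$ under translation by $Z_G$-bundles. My first attempt would be to reduce to the torus case through Remark \ref{reductiontot}. There $Z_G\subset T$, and the toric FLE is explicitly compatible with the lattice/character pairing, so the equivariance can be checked directly on the factorization algebras $\Omega_q$, $\Omega^{\Whit}$ and on $A_q$, $A^{\Whit}$.
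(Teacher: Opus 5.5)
Your overall architecture matches the paper's: work fiber by fiber over $\ger_{Z_G}(X)$, rerun the construction of Section \ref{functor} in the twisted setting, and reduce the one new compatibility of the FLE to tori via Remark \ref{reductiontot}. The torus reduction is exactly the key input the paper uses. But there is a genuine gap: Step 1, and with it the target of Step 2, is wrong as stated.

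\textbf{The misidentified fiber.} A twisted form of the $\bE_X$-category $\Rep_q(\check G^{\sic})$ has stalk $\Rep_q(\check G^{\sic})$ at every point. So at the trivial gerbe your formula returns $\int_X \Rep_q(\check G^{\sic})$. The 2-FM fiber there is instead the global sections of $\underline{\int_X \Rep_q(\check G)}$ over $\ger_{\pi_1(\check G)}(X)$, i.e.\ $\int_X \Rep_q(\check G)$, the target of $\bL$. Your Step 1 therefore contradicts your own claim that the trivial fiber recovers $\bL$.

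The same mismatch appears in Step 2. For twisted bundles, the local Whittaker category at a generic point is still $\Whit_{\kappa}(G)$. The FLE of \cite{GH} sends this to $\Rep_q(\check G)$, not to a twist of $\Rep_q(\check G^{\sic})$. So the category $\Gamma^{\lax}(\Ranp, \Rep_q(\check G^{\sic})^{\cG_s})$ you map into has the wrong stalks.

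\textbf{The missing idea: localize the gerbe at a point.} Choose $x \in X$ and trivialize $\phi$ on $X \setminus x$. Then $\phi$ becomes an element of $\on{Fib}(\ger_{Z_G}(X) \to \ger_{Z_G}(X\setminus x)) \cong Z_G(-1)$, and one works over the pointed Ran space $\Ranp_x$.
\begin{itemize}
\item On the automorphic side, the local category at $x$ is the $\phi$-summand of $\Shv_\kappa(\Gra_{G_{\ad},\rho(\omega_X),x})^{LN,\chi}$, coming from the decomposition by connected components of $\Gra_{G_{\ad}}$. At all other points it is the usual Whittaker category.
\item On the spectral side, via \cite[(8.11)]{GLCV}, the 2-FM fiber becomes $\Gamma(\Ranp_x, \Rep_q(\check G)_{\eta_\phi})$. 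This is factorization homology of $\Rep_q(\check G)$ with the $\eta_\phi$-summand of $\Rep_q(\check G^{\sic})$ inserted as a module at $x$.
\end{itemize}
The class of the gerbe thus enters only through this point insertion, not through a change of the local coefficient everywhere.

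The one new local statement is that the FLE $\Shv_\kappa(\Gra_{G_{\ad},\rho(\omega_X),x})^{LN,\chi} \cong \Rep_q(\check G^{\sic})$ matches the $\phi$-component with the $\eta_\phi$-summand. This decomposition, rather than a $Z_G$-action on $\Gra_G$, is the relevant structure, and it is where your torus reduction applies. After that, the kernel arguments and Proposition \ref{criterion} go through over $\Ranp_x$ as in Theorem \ref{mainthm2}.
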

		
		\begin{rem}\label{lfm2}
			The statement of Theorem \ref{lfm} is equivalent to saying that there exists a natural functor
			\begin{equation}\label{functorenh2}
				\operatorname{2-FM}(\underline{\Shv_{\kappa, \Nilp}(\Bun_G)} ) \rightarrow \underline{\int_X \Rep_{q}(\check{G})} \in \operatorname{ShvCat}(\on{Ge}_{\pi_1(\check{G})}(X))
			\end{equation}
			compatible with $\bL$.
		\end{rem}
		
		Note that Fourier-Mukai equivalence induces
		$$\operatorname{FM}: \Shv(\Bun_{Z_G}) \cong \QCoh(\ger_{\pi_1(\check{G})}(X)).$$
		Taking the fiber of (\ref{functorenh}) at the trivial gerbe $\sigma_0 \in \ger_{Z_G}(X)$ we get 
		\begin{cor}\label{equivariance}
			For $\cF \in \Shv_{\kappa, \Nilp}(\Bun_G)$ and $\cM \in \Shv(\Bun_{Z_G})$ we have
			$$\bL(\cM \textasteriskcentered{} \cF) \cong \operatorname{FM}(\cM) \ast \bL(\cF).$$
		\end{cor}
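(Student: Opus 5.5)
The plan is to obtain the corollary as the fiber at the trivial gerbe $\sigma_0 \in \ger_{Z_G}(X)$ of the morphism of sheaves of categories supplied by Theorem \ref{lfm}. The point is that linearity over the loop object at $\sigma_0$ comes for free from being a morphism in $\operatorname{ShvCat}(\ger_{Z_G}(X))$. The real work is identifying the two resulting actions with the two convolution actions in the statement, and they should match through the 1-categorical equivalence $\operatorname{FM}$.

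\textbf{Step 1 (linearity of the fiber functor).} For any $\underline{\mathbf{C}} \in \operatorname{ShvCat}(\ger_{Z_G}(X))$, the fiber $\sigma_0^*\underline{\mathbf{C}}$ carries a canonical action of sheaves on the loop space $\Omega_{\sigma_0}\ger_{Z_G}(X) \cong \Bun_{Z_G}$, acting by convolution. Any morphism of sheaves of categories induces on fibers a functor linear for this action. I apply this to the functor (\ref{functorenh}). By the compatibility with $\bL$ asserted in Theorem \ref{lfm}, its fiber at $\sigma_0$ is identified with $\bL$, so I get a $\Shv(\Bun_{Z_G})$-linear functor
$$\sigma_0^*\underline{\Shv_{\kappa, \Nilp}(\Bun_G)} \rightarrow \sigma_0^*\operatorname{2-FM}\bigl(\underline{\int_X \Rep_{q}(\check{G})}\bigr).$$

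\textbf{Step 2 (automorphic side).} I would identify the $\Shv(\Bun_{Z_G})$-action on $\sigma_0^*\underline{\Shv_{\kappa, \Nilp}(\Bun_G)} = \Shv_{\kappa,\Nilp}(\Bun_G)$ with the convolution action $\cM \ast \cF$. This is because the sheaf of categories structure came from (\ref{bunge}), and the fiber of $\Bun_{G_{\ad}}\to\ger_{Z_G}(X)$ over $\sigma_0$ is $\Bun_G$ with its natural $\Bun_{Z_G}$-action. I also need that this action preserves nilpotent singular support; this holds because translation by $Z_G$-bundles commutes with Hecke functors, in the same way as Proposition \ref{eisctnilp}.

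\textbf{Step 3 (spectral side, the main obstacle).} Here I must show that for $\underline{\mathbf{D}} \in \operatorname{ShvCat}(\ger_{\pi_1(\check{G})}(X))$ one has
$$\sigma_0^*\operatorname{2-FM}(\underline{\mathbf{D}}) \cong \Gamma(\ger_{\pi_1(\check{G})}(X), \underline{\mathbf{D}}),$$
and that under this identification the loop action of $\Shv(\Bun_{Z_G})$ becomes the $\QCoh(\ger_{\pi_1(\check{G})}(X))$-action through $\operatorname{FM}$. My first attempt would be to reduce to the definition of $\operatorname{2-FM}$ as integral transform along the tautological $B\bG_m$-gerbe on $\ger_{Z_G}(X)\times\ger_{\pi_1(\check{G})}(X)$. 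Restricting this kernel to $\sigma_0\times\ger_{\pi_1(\check{G})}(X)$ gives the trivial gerbe, hence the global-sections description. Its automorphisms over $\Omega_{\sigma_0}$ are exactly the line bundles realizing $\operatorname{FM}$.

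Finally, using the definition in Subsection \ref{repq(G)}, I check that $\Gamma(\ger_{\pi_1(\check{G})}(X), \underline{\int_X \Rep_q(\check{G})})$ recovers $\int_X \Rep_q(\check{G})$, namely the fiber over the trivial gerbe after the grading by $\pi_1(\check{G})^\vee$. Its $\QCoh$-action is the one denoted $\ast$. Combining Steps 1–3 gives $\bL(\cM\ast\cF)\cong\operatorname{FM}(\cM)\ast\bL(\cF)$.
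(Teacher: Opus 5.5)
This is the paper's argument: the corollary is read off as the fiber of (\ref{functorenh}) at the trivial gerbe $\sigma_0$. Your Steps 1--3 make explicit the identifications the paper leaves implicit, namely that the fiber of $\operatorname{2-FM}$ at $\sigma_0$ is global sections over $\ger_{\pi_1(\check{G})}(X)$, and that the loop action of $\Shv(\Bun_{Z_G})$ matches the $\QCoh(\ger_{\pi_1(\check{G})}(X))$-action via $\operatorname{FM}$. One correction to your last paragraph: these global sections are not the degree-zero part of the fiber over the trivial gerbe, because $\ger_{\pi_1(\check{G})}(X)$ has components indexed by $H^2(X,\pi_1(\check{G}))$ and all of them contribute (for $q=1$ one gets all of $\QCoh(\LS_{\check{G}})$, not just the component of local systems liftable to $\check{G}^{\sic}$), so the identification with $\int_X \Rep_q(\check{G})$ should be taken as part of the construction of the upgrade $\underline{\int_X \Rep_q(\check{G})}$ rather than checked on the trivial-gerbe fiber.
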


		\begin{proof}[Proof of Theorem \ref{lfm}]
			Take $\phi \in \on{Ge}_{Z_G}(X)$. Let us construct 
			\begin{equation}\label{twist}
				(\underline{\Shv_{\kappa, \Nilp}(\Bun_G)} )_{\phi} \rightarrow \operatorname{2-FM}(\underline{\int_X \Rep_{q}(\check{G})})_{\phi} \cong (\underline{\int_X \Rep_{q}(\check{G})})_{\cG_{\phi}}.
			\end{equation}
			Here $\cG_{\phi}$ is the gerbe on $\on{Ge}_{\pi_1(\check{G})}(X)$ corresponding to $\phi$, and the last equivalence follows as in \cite[(8.11)]{GLCV}.
			
			Choose a point $x \in X$. We can choose a trivialization of $\phi$ on $X \setminus x$, so we can assume that $\phi$ came from an object $$\phi \in \on{Fib}( \on{Ge}_{Z_G}(X) \rightarrow \on{Ge}_{Z_G}(X \setminus x) ) \cong Z_G(-1).$$
			We claim that the proof of Theorem \ref{mainthm2} applies in the context of (\ref{twist}) as well. Let us derive the $\phi$-analog of Conjecture \ref{FLE}. We replace $\Ranp$ by its pointed version $\Ranp_x$, and work over $\underline{x} \in \Ranp_x$, i.e. $\underline{x} :=\underline{x_1} \sqcup {x}$. Replace $$\Shv_{\kappa, A_{\Ranp}} (\Gra_{G, \rho(\omega_X)})^{LN, \chi}_{\underline{x}}:= \Shv_{\kappa, A_{\underline{x_1}}} (\Gra_{G, \rho(\omega_X)})^{LN, \chi}_{\underline{x_1}} \otimes \Shv_{\kappa, A_{x}} (\Gra_{G, \rho(\omega_X)})^{LN, \chi}$$ by 
			$$\Shv_{\kappa, A_{\Ranp_x}} (\Gra_{G, \rho(\omega_X)})^{LN, \chi}_{\underline{x}, \phi}:= \Shv_{\kappa, A_{\underline{x_1}}} (\Gra_{G, \rho(\omega_X)})^{LN, \chi}_{\underline{x_1}} \otimes \Shv_{\kappa, A_{x}} (\Gra_{G, \rho(\omega_X)})^{LN, \chi}_{\phi},$$
			where $\Shv_{\kappa, A_{x}} (\Gra_{G, \rho(\omega_X)})^{LN, \chi}_{\phi}$ comes from the decomposition coming from connected components of the affine Grassmannian:
			\begin{equation}
				\Shv_{\kappa, A_{x}} (\Gra_{G_{\ad}, \rho(\omega_X)})^{LN, \chi} \cong \oplus_{\phi \in Z_G(-1)} \Shv_{\kappa, A_{x}} (\Gra_{G, \rho(\omega_X)})^{LN, \chi}_{\phi}.
			\end{equation}
			Let $\eta_{\phi}$ be the character of $\pi_1(\check{G})$ corresponding to $\phi$. Replace the factorization cosheaf $\Rep_q(\check{G})$ on $\Ranp_x$ with 
			$$\Rep_q(\check{G})_{\underline{x}} := \Rep_q(\check{G})_{\underline{x_1}} \otimes  \Rep_q(\check{G})$$
			by the factorization cosheaf $\Rep_q(\check{G})_{\eta_{\phi}}$ on $\Ranp_x$ with
			$$\Rep_q(\check{G})_{\eta_{\phi}, \underline{x}} := \Rep_q(\check{G})_{\underline{x_1}} \otimes  \Rep_q(\check{G})_{x, \eta_{\phi}},$$
			where $\Rep_q(\check{G})_{x, \eta_{\phi}}$ is the summand in the decomposition of the category $\Rep_q(\check{G}^{\on{sc}})$ over $\pi_1(\check{G})^\vee$ given in \ref{repq(G)}. 
			Then to construct 
			\begin{equation}
				\on{FLE}_{\phi}: \Shv_{\kappa, A_{\Ranp_x}} (\Gra_{G, \rho(\omega_X)})^{LN, \chi}_{\phi} \rightarrow \Gamma^{\lax}(\Ranp_x, \Rep_q(\check{G})_{\eta_{\phi}})
			\end{equation}
			it suffices to show that subcategories $\Shv_{\kappa, A_{x}} (\Gra_{G, \rho(\omega_X)})^{LN, \chi}_{\phi}$ and $\Rep_q(\check{G})_{x, \eta_{\phi}}$ correspond to one another under the equivalence 
			\begin{equation}
				\on{FLE}_x:\Shv_{\kappa, A_{x}} (\Gra_{G_{\ad}, \rho(\omega_X)})^{LN, \chi} \cong \Rep_q(\check{G}^{\on{sc}})
			\end{equation}
			of \cite{GH}. However, by  Remark \ref{reductiontot} (and since the factorization algebras $\Omega$ and algebras $A$ were independent of the center of the group) the statement reduces to the case of tori, where it is evident.
			
			Note that the $\phi$-analog of $\coeff^\loc$ is constructed similarly, and it is also defined and codefined by a kernel. Finally, note that choosing a trivialization of $\phi$ away from x we get $$(\underline{\int_X \Rep_{q}(\check{G})})_{\cG_{\phi}} \cong \Gamma(\Ranp_x, \Rep_q(\check{G})_{\eta_{\phi}}).$$
		\end{proof}
		
		\section{Application: non-vanishing of quantum Whittaker coefficients for semi-simple groups.}
		
		As an application of Theorem \ref{lfm} we are able to relax the condition on the center of $G$ in the results of  \cite{Bwhit}:
		
		\begin{thm}\label{generalgroup}
			For \emph{any} semi-simple reductive group $G$, the functor 
			$$\coeff^{\loc}: \D_{\kappa}(\Bun_{G})^{\temp} \rightarrow \Whit_{\kappa}(G)$$ is conservative.
		\end{thm}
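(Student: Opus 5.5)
We reduce to the adjoint case, which is \cite{Bwhit}, by using the $Z_G$-equivariance of Corollary \ref{equivariance} (equivalently Theorem \ref{lfm}) to move between $\Bun_G$ and $\Bun_{G_{\ad}}$. The isogeny $G \to G_{\ad}$ gives $\Bun_G \to \Bun_{G_{\ad}}$. By Theorem \ref{lfm}, $\Shv_{\kappa,\Nilp}(\Bun_{G_{\ad}})$, viewed as a sheaf of categories over $\ger_{Z_G}(X)$ via (\ref{bunge}), has fibers the twisted categories $(\underline{\Shv_{\kappa, \Nilp}(\Bun_G)})_\phi$ for $\phi \in \ger_{Z_G}(X)$. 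The fiber at the trivial gerbe is $\Shv_{\kappa,\Nilp}(\Bun_G)$ modulo the action of $\Bun_{Z_G}$. First I would pass to the de Rham/tempered setting of the statement. The same sheaf-of-categories formalism applies to $\D_\kappa(\Bun_G)^{\temp}$, since temperedness is defined via Whittaker/Hecke data compatible with central translations. So I would use the analogous decomposition there, or invoke the Riemann--Hilbert comparison of \cite{GLCI} on the nilpotent part.

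The key steps are as follows. (1) Let $\cF \in \D_\kappa(\Bun_G)^{\temp}$ with $\coeff^{\loc}(\cF)=0$. Since $\Bun_{Z_G}$ acts on $\Bun_G$ and $\coeff^{\loc}$ intertwines this with the action on $\Whit_\kappa(G)$ by translation (the Whittaker model is equivariant for $Z_G$-torsor twists), $\coeff^{\loc}(\cM \ast \cF)=0$ for every $\cM \in \Shv(\Bun_{Z_G})$. (2) Use Corollary \ref{equivariance}: under $\bL$ (built from $\coeff^{\loc}$) the $\Shv(\Bun_{Z_G})$-action becomes the $\QCoh(\ger_{\pi_1(\check{G})})$-action. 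Decompose $\cF$ into eigen-summands $\cF_\eta$ for characters of the finite component group $\pi_0$ of the $\Bun_{Z_G}$-action, indexed by $\pi_1(\check{G})$-gerbes. Vanishing of $\coeff^{\loc}(\cF)$ then gives vanishing of $\coeff^{\loc}(\cF_\eta)$ for each $\eta$. (3) Pushing forward along $\Bun_G \to \Bun_{G_{\ad}}$ identifies $\cF$, after averaging over $\Bun_{Z_G}$, with an object of the $\phi$-twisted summand of $\D_\kappa(\Bun_{G_{\ad}})^{\temp}$. Here I would use the fiber description (\ref{twist}) and the component decomposition of $\Shv(\Gra_{G_{\ad}})^{LN,\chi}$ from the proof of Theorem \ref{lfm}, which shows that the Whittaker coefficient of the pushforward is the corresponding $\phi$-summand of the $G_{\ad}$ Whittaker coefficient. (4) Apply \cite{Bwhit} for $G_{\ad}$ to conclude that the pushforward vanishes. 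The map $\Bun_G \to \Bun_{G_{\ad}}$ is, on each component, a $\Bun_{Z_G}$-torsor onto its image, so pushforward is conservative on $\Bun_{Z_G}$-equivariant objects. Averaging is conservative on the finitely many eigen-summands, so $\cF=0$.

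I expect the main obstacle to be step (3): matching temperedness and the level twist $\kappa$ across the isogeny. One has to check that the gerbe $\cG_G$ on $\Bun_G$ is the pullback of the twisting for $G_{\ad}$ on the relevant component, which may require $\kappa$ to extend, or a twisted version $\kappa$ on the $\phi$-component. One also has to check that tempered objects for $G$ go to tempered objects for $G_{\ad}$. My first approach would be to characterize temperedness by the spherical Hecke action at a point, which is insensitive to the center because $\Sph_q(G)$ embeds into $\Sph_q(G_{\ad})$ as the neutral component. If the twisting does not extend, I would instead rerun the argument of \cite{Bwhit} directly on each $\phi$-summand using $\on{FLE}_\phi$ from the proof of Theorem \ref{lfm}.
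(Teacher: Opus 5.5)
Your overall shape matches the ingredients of the paper's one-line proof: Theorem \ref{lfm}, \cite[Theorem 6.1.1]{Bwhit} for $G_{\ad}$, and a compatibility across the isogeny, which the paper takes from \cite[Proposition 2.5.4.1(4)]{FR}. But there is a genuine gap in steps (2)--(4), and it sits exactly where the 2-Fourier--Mukai input is needed.

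\emph{The decomposition and the averaging step.} The $\Bun_{Z_G}$-action does not split objects of $\D_\kappa(\Bun_G)$ according to characters of $\pi_0(\Bun_{Z_G})=H^1(X,Z_G)$. A finite group acting by translations gives no idempotent splitting, and its characters correspond to $\pi_1(\check G)$-torsors, not gerbes. The only available splitting is by central character, i.e.\ by the action of the generic automorphisms $BZ_G\subset\Bun_{Z_G}$. Its summands $\cF_\eta$ are indexed by characters $\eta$ of $Z_G$, equivalently by $\pi_0(\ger_{\pi_1(\check G)}(X))$.

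Averaging over $\Bun_{Z_G}$, i.e.\ $\pi^*\pi_*$ for $\pi:\Bun_G\to\Bun_{G_{\ad}}$, contains averaging over $BZ_G$, which is the projector onto the trivial central character. So for $\eta\neq1$ you get $\pi_*\cF_\eta=0$ (fiberwise $H^*(BZ_G,\cL_\eta)=0$), and $\cF_\eta$ admits no $\Bun_{Z_G}$-equivariant structure at all. Hence ``averaging is conservative on the eigen-summands'' is false. As written, your argument proves conservativity only on the trivial-central-character summand, which is the part that already descends to $\Bun_{G_{\ad}}$. Nontrivial central characters do occur on the tempered part; they correspond to $\check G$-local systems not liftable to $\check G^{\mathrm{sc}}$.

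\emph{The indexing in step (3).} Step (3) conflates the two indexing sets that 2-FM exchanges. $\Bun_G$ maps only to the components of $\Bun_{G_{\ad}}$ over the trivial $Z_G$-gerbe. For $\phi\neq0$, the $\phi$-summand of $\D_\kappa(\Bun_{G_{\ad}})$ comes from the inner form $\Bun_{G,\phi}$ and matches the $\eta_\phi$-graded piece of $\Rep_q(\check G^{\mathrm{sc}})$, not a central character on $\Bun_G$. Two smaller points:
\begin{itemize}
\item The fiber of $\underline{\Shv_{\kappa,\Nilp}(\Bun_G)}$ at the trivial gerbe is $\Shv_{\kappa,\Nilp}(\Bun_G)$ itself, not its quotient by $\Bun_{Z_G}$.
\item In step (1), $\coeff^{\loc}$ is not equivariant for any evident action on $\Whit_\kappa(G)$, because translating by a $Z_G$-torsor changes the $T$-bundle $\rho(\omega_X)$ of the Whittaker model. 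Propagation of vanishing has to come from Corollary \ref{equivariance}.
\end{itemize}

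\emph{What is missing.} You need an argument for the summands with $\eta\neq1$. Choose a character sheaf on $\Bun_{Z_G}$ restricting to $\eta$ on $BZ_G$, i.e.\ a point $\cH\in\ger_{\pi_1(\check G)}(X)$ of class $\eta$. The correspondingly twisted average is conservative on objects of central character $\eta$. It descends to a sheaf on $\Bun_{G_{\ad}}$ twisted by the extra gerbe pulled back from $\ger_{Z_G}(X)$ via $\cH$. On the spectral side this is the fiber $\int_X(\Rep_q(\check G^{\mathrm{sc}}))_{\cH}$ of $\underline{\int_X\Rep_q(\check G)}$ at $\cH$.

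So you need two further inputs:
\begin{itemize}
\item The adjoint-group conservativity for these gerbe-twisted categories, not just for $\D_\kappa(\Bun_{G_{\ad}})$. One route is trivializing the gerbe away from a point, as in the proof of Theorem \ref{lfm}.
\item The formal reduction of conservativity on $\D_\kappa(\Bun_G)$ to conservativity on every fiber over $\ger_{\pi_1(\check G)}(X)$.
\end{itemize}
This is the role of the full sheaf-of-categories statement of Theorem \ref{lfm} in the paper's proof. In your proposal Theorem \ref{lfm} only propagates vanishing along the $\Bun_{Z_G}$-action, and the subsequent descent to $\Bun_{G_{\ad}}$ discards exactly the nontrivial central characters. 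Your concern about matching temperedness and the level across the isogeny is legitimate but secondary to this.
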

		
		\begin{proof}
			Follows from  \cite[Theorem 6.1.1]{Bwhit}, \cite[Proposition 2.5.4.1(4)]{FR} and Theorem \ref{lfm}.
		\end{proof}

	\typeout{*** ARXIV DEBUG: reading lfunctor2.bbl with real bibitems ***}
	\newcommand{\etalchar}[1]{$^{#1}$}

	\end{document}